\numberwithin{equation}{section}
\newtheorem{theorem}{Theorem}[section]
\newtheorem{lemma}[theorem]{Lemma}
\newtheorem{definition}[theorem]{Definition}
\newtheorem{proposition}[theorem]{Proposition}
\newtheorem{corollary}[theorem]{Corollary}
\newtheorem{example}[theorem]{Example}
\theoremstyle{remark}
\newtheorem{remark}[theorem]{Remark}
\begin{document}
\title{Affine Hecke algebras and symmetric quasi-polynomial duality}
\author{Vidya Venkateswaran}
\address{V. Venkateswaran, Center for Communications Research, 805 Bunn Dr, Princeton,
NJ 08540, USA. }
\email{vidyavenkat52@gmail.com}
\keywords{}
%%%%%%%%%%%%%%%%%%%%%%%%%%
\begin{abstract}
In a recent paper \cite{SSV2}, we introduced quasi-polynomial generalizations of Macdonald polynomials for arbitrary root systems via a new class of representations of the double affine Hecke algebra.  These objects depend on a deformation parameter $q$, Hecke parameters, and an additional torus parameter.  In this paper, we study \textit{antisymmetric} and \textit{symmetric} quasi-polynomial analogs of Macdonald polynomials in the $q \rightarrow \infty$ limit.  We provide explicit decomposition formulas for these objects in terms of classical Demazure-Lusztig operators and partial symmetrizers, and relate them to Macdonald polynomials with prescribed symmetry in the same limit.  We also provide a complete characterization of (anti-)symmetric quasi-polynomials in terms of partially (anti-)symmetric polynomials.  
As an application, we obtain formulas for metaplectic spherical Whittaker functions associated to arbitrary root systems.  For $GL_{r}$, this recovers some recent results of Brubaker, Buciumas, Bump, and Gustafsson, and proves a precise statement of their conjecture about a ``parahoric-metaplectic" duality.
\end{abstract}
\maketitle
%\tableofcontents

\noindent
 %%%%%%%%%%%%%%%%%%%%%%%%
 \section{Introduction}\label{Intro}

 In a recent paper \cite{BBBG21}, Brubaker, Buciumas, Bump and Gustafsson prove a striking new duality result about metaplectic spherical Whittaker functions for $GL_{r}$.  These objects, which can be defined for arbitrary type, are matrix coefficients of representations of $n$-fold covers of $p$-adic groups, as well as ``p-parts" of Weyl group multiple Dirichlet series, and have been the subject of much investigation over the last two decades (see \cite{Bu} for an overview).  Much recent work has centered around finding combinatorial formulas for metaplectic Whittaker functions, as well as interpreting these objects using representations of Weyl groups and Hecke algebras (see e.g., \cite{CG07, CG, CGP, PP, SSV, BBBG20, BBBG21}).  Of particular interest are connections to the theory of Macdonald polynomials (see \cite{C, Cm, Ch, Ha, Ma} for background on Macdonald polynomials).  In the non-metaplectic case (i.e., the special case $n=1$), Whittaker functions can be understood in terms of the well-known polynomial representation of Hecke algebras as well as degenerations of Macdonald polynomials and related objects, see Table 2 of \cite{BBBG19}.
 
 One recent approach to studying metaplectic Whittaker functions has been through the study of classes of solvable lattice models that have these functions as their partition functions \cite{BBBG19, BBBG20, BBBG21}.  Using this approach, these authors discovered a key exchangeability feature of their lattice models, which then enabled them to provide formulas for certain classes of $GL_r$ \textit{metaplectic} spherical Whittaker functions in terms of \textit{non-metaplectic} Whittaker functions; they call this parallel ``duality".  This surprising discovery was the motivation for the present work, in which we generalize these results in several directions.
 
More precisely,  fix a metaplectic parameter $n \in \mathbb{Z}_{\geq 1}$ and rank $r-1 \in \mathbb{Z}_{\geq 1}$.   Let $F$ be a local non-archimedean field with uniformizer $\varpi$.  Let $P = \mathbb{Z}^{r}$ and $P^{+} = \{ (\lambda_1, \dots, \lambda_r) : \lambda_i \geq \lambda_{i+1} \text{ for all } i \} \subseteq P$ denote the set of $GL$-weights and partitions, respectively.  Let $\varpi^{\lambda}$, for $\lambda = (\lambda_1, \dots, \lambda_r) \in P$, denote the diagonal element in $G= GL_{r}(\mathbb{F})$ with entries $\varpi^{\lambda_1}, \dots, \varpi^{\lambda_r}$.  Let $\mathbf{y} = (y_1, \dots, y_r) \in (\mathbb{C}^{\times})^{r}$ and $g \in G$.

There is a metaplectic spherical Whittaker function, denoted $\widetilde{\mathcal{W}}^{m} = \widetilde{\mathcal{W}}^{m}(\mathbf{y}; g;v)$, that has been the subject of much study and applications (see e.g., \cite{CO, CG07, CG, CGP, PP, PP2, BBBG19, BBBG20, BBBG21}, and references therein).  It is a polynomial in $\mathbf{y}$, depends on a parameter $v$ and is determined by its values on the arguments $g = \varpi^{\rho_{GL} - \mu}$, where $\rho_{\text{GL}} = (r-1, r-2, \dots, 0)$ and $\mu - \rho_{GL} \in P^{+}$.  In \cite{BBBG21}, the authors introduced the idea of studying certain components $\widetilde{\phi}^{o}_{\theta}$ of $\widetilde{\mathcal{W}}^{m}$, with $\theta \in (\mathbb{Z} / n \mathbb{Z})^{r}$; these satisfy
\begin{equation}\label{eq:W_theta}
\widetilde{\mathcal{W}}^{m} = \sum_{\theta \in (\mathbb{Z} / n \mathbb{Z})^{r} } \widetilde{\phi}_{\theta}^{o}.
\end{equation}
In Theorems D and E of \cite{BBBG21}, formulas for $\widetilde{\phi}_{\theta}^{o}(\mathbf{y};\varpi^{\rho - \mu}; v)$ are provided in terms of \textit{non-metaplectic} Whittaker functions, when the indexing element $\theta \in (\mathbb{Z} / n \mathbb{Z})^{r}$ has either all parts distinct, or all parts equal.  They also express $\widetilde{\phi}_{\theta}^{o}(\mathbf{y};\varpi^{\rho - \mu}; v)$ in these two cases using (non-metaplectic) Hecke operators $\mathcal{T}_{w,v}$.  In the introduction and Remark 4.11 of the same paper, the authors conjecture that, for arbitrary $\theta$, $\widetilde{\phi}_{\theta}^{o}$ should be related to (non-metaplectic) \textit{parahoric} Whittaker functions.  

One of our main results is the following theorem, which proves a precise statement of their conjecture:

\begin{theorem}\label{cor:Whitt_typeA}

Set $v = q^{2}$ and $W = S_{r}$.  Let $\theta \in (\mathbb{Z} / n \mathbb{Z})^{r}$ and $\mu - \rho_{GL} \in P^{+}$.  Then $\mathbf{y}^{\rho_{\text{GL}} - \lfloor \theta \rfloor_{n}} \tilde{\phi}_{\theta}^{o}(\mathbf{y}; \varpi^{\rho_{\text{GL}} - \mu})$ is a polynomial in $\mathbf{y}^{n}$, and it vanishes unless $\mu \text{ mod }n$ is a permutation of $\theta$, and in this case
\begin{align*}
w_0 \Big( \mathbf{y}^{\frac{\rho_{\text{GL}} - \lfloor \theta \rfloor_{n}}{n}} \tilde{\phi}_{\theta}^{o}(\mathbf{y}^{1/n}; \varpi^{\rho_{\text{GL}} - \mu} ; v) \Big)& = C \cdot \mathcal{T}_{w, v^{1/2}}\sum_{u \in W_{ \textbf{c}}} \mathcal{T}_{u, v^{1/2}} \mathcal{T}_{w', v^{1/2}}^{-1} \mathbf{y}^{\lambda + \rho_{\text{GL}}} \\
&= C'  \cdot \mathbf{y}^{\rho_{GL}} \psi_{w^{\textbf{c}}}^{J_{\textbf{c}}}(\mathbf{y}; \varpi^{-\lambda}w'; v^{-1}), 
\end{align*}
with $\lambda = \lfloor \frac{\mu}{n} \rfloor - \rho_{\text{GL}}$.  Here $w,w' \in W$ and $\mathbf{c}$ is a decreasing $r$-tuple of elements in $\{1, \dots, n \}$; these are determined uniquely by $-\mu = w' \textbf{c} \text{ mod } n$ and $-w_0 \theta = w \textbf{c} \text{ mod }n$ with $w', w_0 w \in W^{\mathbf{c}}$.  Also $J_{\textbf{c}} = \{1 \leq j \leq r-1 : c_j = c_{j+1} \}$.  Finally the coefficients are given by
\begin{align*}
C &=  \frac{(-1)^{l(w_0)} v^{l(w)/2 - l( w')/2 + l(w_0)/2 + l(w_0(W_{\mathbf{c}} ))} h^{m,  \mathbf{c}}_{v}(w_0 w)   }{h^{m,  \mathbf{c} }_{v}( w' )} \\
C' &=  v^{l(w') - l(w_{\textbf{c}})} C.
\end{align*}
\end{theorem}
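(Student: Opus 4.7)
The plan is to interpret the normalized component $w_0\bigl(\mathbf{y}^{(\rho_{GL}-\lfloor\theta\rfloor_n)/n}\widetilde{\phi}_\theta^o(\mathbf{y}^{1/n};\varpi^{\rho_{GL}-\mu};v)\bigr)$ as the $q\to\infty$ limit of a symmetric quasi-polynomial Macdonald polynomial of \cite{SSV2}, and then to apply in sequence the two structural results of the present paper: the Demazure-Lusztig decomposition of symmetric quasi-polynomials (giving the first equality) and the characterization of symmetric quasi-polynomials in terms of partially symmetric Macdonald polynomials, whose $q\to\infty$ limits are parahoric Whittaker functions (giving the second equality).

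The first step, which I expect to be the main obstacle, is setting up the dictionary from the metaplectic Whittaker data $(\theta,\mu)$ to the quasi-polynomial data $(\mathbf{c},w,w',\lambda)$. Using the polynomial representation of the metaplectic affine Hecke algebra from \cite{SSV} together with the conventions of \cite{BBBG21}, the metaplectic substitution $\mathbf{y}\mapsto \mathbf{y}^{1/n}$ and the prefactor $\mathbf{y}^{(\rho_{GL}-\lfloor\theta\rfloor_n)/n}$ would lift $\widetilde{\phi}_\theta^o$ at $v=q^2$ to a symmetric quasi-polynomial with torus parameter encoded by $\theta$. The support condition on such a quasi-polynomial (weights of the form $\lfloor\theta\rfloor_n-\rho_{GL}+n\nu$) yields the polynomiality in $\mathbf{y}^n$ and the vanishing unless $\mu\bmod n$ is a permutation of $\theta$. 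When nonzero, the minimal-length factorizations $-\mu\equiv w'\mathbf{c}$ and $-w_0\theta\equiv w\mathbf{c}\pmod n$ with $w',w_0w\in W^{\mathbf{c}}$ recover the quasi-polynomial indexing data, while $\lambda+\rho_{GL}=\lfloor\mu/n\rfloor$ identifies the integer weight encoding the non-metaplectic piece of $\mu$.

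Second, I would invoke the Demazure-Lusztig decomposition of symmetric quasi-polynomials in the $q\to\infty$ limit, proved earlier in the paper: the quasi-polynomial in question equals $\mathcal{T}_{w,v^{1/2}}\sum_{u\in W_{\mathbf{c}}}\mathcal{T}_{u,v^{1/2}}\mathcal{T}_{w',v^{1/2}}^{-1}\mathbf{y}^{\lambda+\rho_{GL}}$ up to an explicit scalar. Tracking the normalization would produce the constant $C$: the ratio $h_v^{m,\mathbf{c}}(w_0w)/h_v^{m,\mathbf{c}}(w')$ comes from comparing leading coefficients of the quasi-polynomial against the monomial basis, while the length statistics $l(w), l(w'), l(w_0), l(w_0(W_{\mathbf{c}}))$ together with the sign $(-1)^{l(w_0)}$ arise from commuting the outer $w_0$ past the Demazure-Lusztig operators and from converting between positive and negative normalizations across $W_{\mathbf{c}}$. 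This yields the first equality.

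Finally, I would apply the characterization of symmetric quasi-polynomials in terms of partially symmetric polynomials: the inner expression $\sum_{u\in W_{\mathbf{c}}}\mathcal{T}_{u,v^{1/2}}\mathcal{T}_{w',v^{1/2}}^{-1}\mathbf{y}^{\lambda+\rho_{GL}}$ produces a Macdonald polynomial with prescribed $J_{\mathbf{c}}$-symmetry, whose $q\to\infty$ limit is, up to the $\mathbf{y}^{\rho_{GL}}$ normalization, the parahoric Whittaker function $\psi_{w^{\mathbf{c}}}^{J_{\mathbf{c}}}(\mathbf{y};\varpi^{-\lambda}w';v^{-1})$. Matching the quasi-polynomial normalization against the standard parahoric Whittaker normalization (which differ only by the Poincar\'e polynomial of $W_{\mathbf{c}}$) produces the coefficient conversion $C'=v^{l(w')-l(w_{\mathbf{c}})}C$, where $w_{\mathbf{c}}$ is the longest element of $W_{\mathbf{c}}$, completing the second equality.
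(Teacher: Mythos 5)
Your overall architecture matches the paper's: translate the BBBG21 component $\widetilde{\phi}_\theta^o$ into a coefficient of an (anti-)symmetrized Hecke action applied to a monomial, decompose via the PBW-type matrix-coefficient results, and then match the inner $J_{\mathbf{c}}$-partial piece to a parahoric Whittaker function via \cite[Cor. 3.9]{BBBG19}. The step you flag as the main obstacle --- the dictionary from $(\theta,\mu)$ to $(\mathbf{c},w,w',\lambda)$ --- is indeed the technical core, and it is carried out in the paper as Proposition \ref{thm:phi_to_gamma} (identifying $\mathbf{y}^{\rho'-\theta}\widetilde{\phi}_\theta^o$ with $v^{l(w_0)}\iota\gamma^m_{w_0\tilde w^c w_0,-w_0c}(\pi^m_v(\mathbf{1}^-)\mathbf{y}^{-\mu})$), with the vanishing condition coming from the fact that $\iota\pi^m_v(\mathbf{1}^-)\mathbf{y}^{-\mu}$ lives in $\mathbb{F}[\widetilde{\mathcal{O}}_{\Lambda^m,c}]$.

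However, there is a persistent and nontrivial error in your proposal: you write ``symmetric'' throughout, where the correct objects are \emph{anti}symmetric. The spherical Whittaker function is $v^{l(w_0)}\mathbf{y}^\rho\pi^m_v(\mathbf{1}^-)\mathbf{y}^{-\rho+w_0\lambda}$, i.e., built from the antisymmetrizer $\mathbf{1}^-$, and the Demazure-Whittaker operators satisfy $\sum_{w\in W}\mathcal{T}_{w,q}=\pi_{q^2}(\mathbf{1}^-)$ (Proposition \ref{prop:TDW_antisymm}). Correspondingly, the relevant quasi-polynomial objects are $\overline{E}_y^-$ and the partially antisymmetric $p^{J,-}$, not the $+$ variants. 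If you try to execute your plan with $\mathbf{1}^+$ and $p^{J,+}$ you will not land on a Whittaker function. The correct intermediate is Theorem \ref{thm:met_minus_main}, giving $\gamma^m_w(\pi^m_v(\mathbf{1}^- f T_{\hat w})x^c)$ in terms of $\pi^m_v(T_{w_0w}\mathbf{1}^-_{W_c}T_{\hat w}^{-1})$.

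A secondary difference: you frame the argument through $q\to\infty$ limits of quasi-polynomials with torus parameter $\tau$, whereas the paper works directly in the metaplectic polynomial representation $\pi^m_v$ (no limiting procedure, no $\tau$-regularity hypotheses). These are equivalent pictures --- $\pi^m$ is the restriction of the quasi-polynomial representation of $\widetilde{\mathcal{H}}_{\Lambda^m}$ to $\mathbb{F}[\Lambda]$, per Remark \ref{met_nonmet}(3) --- but the metaplectic route is cleaner here because it yields exact identities rather than limiting ones, and avoids book-keeping of the extra parameter. With the symmetric-to-antisymmetric correction and the explicit translation of Proposition \ref{thm:phi_to_gamma}, your outline would produce the paper's proof.
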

Here, $\tilde{\phi}_{\theta}^{o} := \sum_{w \in W} \phi_{\rho_{GL} - \lfloor \theta \rfloor_{n}, w}$, where $\lfloor \theta \rfloor_{n}$ denotes taking the least nonnegative integer residue modulo $n$ elementwise, and $\phi_{\rho_{GL} - \lfloor \theta \rfloor_{n},w}$ are metaplectic Iwahori-Whittaker functions as defined in Section 4.1 of \cite{BBBG20}.  The Hecke operators $\mathcal{T}_{w,v}$ are as in Definition \ref{def:DW_opers}, $\psi^{J}_{w^{c}}$ are (non-metaplectic) parahoric Whittaker functions as introduced and studied in \cite{BBBG19}, and the coefficients $h_{v}^{m, \mathbf{c}}(w)$ are given explicitly in Definition \ref{def:met_h_stats} and \ref{def:met_specialization}.  Moreover, $W_{\mathbf{c}} =  \langle s_{j} : j \in J_{\mathbf{c}}\rangle$ is the parabolic stabilizer subgroup of $\mathbf{c}$ inside $W$, $W^{\mathbf{c}}$ is the set of minimal length representatives of $W / W_{\mathbf{c}}$ and $w = w^{\mathbf{c}} w_{\mathbf{c}}$ with $w^{\mathbf{c}} \in W^{\mathbf{c}}$, $w_{\mathbf{c}} \in W_{\mathbf{c}}$.  The element $w_0 \in W$ is the longest word.  The proof of Theorem \ref{cor:Whitt_typeA} can be found in Section \ref{sec:met_results}.

We are also able to prove results analogous to Theorem \ref{cor:Whitt_typeA} for \textit{arbitrary} root system types and special choices of $\theta$ (see Theorem \ref{thm:Whitt_arb}) as well as for symmetric variants of metaplectic Whittaker functions (see Theorem \ref{thm:met_plus_main}).  Using \eqref{eq:W_theta}, a byproduct of our results is a new formula for the metaplectic spherical Whittaker function $\widetilde{\mathcal{W}}^{m}(\mathbf{z};\varpi^{\rho - \mu})$ for all $\mu$ for $GL_{r}$ and special values of $\mu$ for other types.  Our proofs are purely representation-theoretic, and, in particular, do not employ lattice models.  In \cite{A}, Amol Aggarwal outlined an argument which matches (components of) metaplectic spherical Whittaker functions to non-metaplectic parahoric Whittaker functions in the $GL_{r}$ case, through a comparison of the lattice models in \cite{ABW} and \cite{BBBG21}, along with the key ``color-merging'' result Theorem 5.2.2 of \cite{ABW}.  This provides another approach to establishing Theorem \ref{cor:Whitt_typeA}, and it is an interesting open question whether this technique can be extended to other types.

We also establish results for the $q \rightarrow \infty$ limits of the quasi-polynomial generalizations of (anti-)symmetric Macdonald polynomials introduced in \cite{SSV2} (see also \cite{SSV} and \cite{Sai}).  For simplicity, we will briefly describe our results in the $GL_{r}$ context; note however that the rest of the paper will deal with arbitrary root systems.  Fix $E = \mathbb{R}^{r}$, Weyl group $W = S_{r}$ generated by simple reflections $s_{i}$ for $1 \leq i \leq r-1$, and lattice $\Lambda = \mathbb{Z}^{r}$.  Let $C^{0}_\Lambda= \{ (y_1, \dots, y_r) \in E : 0 \leq y_i - y_{i+1} \leq 1 \text{ for } 1 \leq i \leq r-1 \text{ and } y_1 - y_r < 1 \} $.
Also, for $c \in C^{0}_\Lambda$, let $\widetilde{\mathcal{O}}_{c} = \{ \mu + wc : \mu \in \Lambda, w \in W \}$ denote the affine Weyl group orbit and $\mathbb{F}[\widetilde{\mathcal{O}}_{c}]$ the corresponding group algebra for a fixed field $\mathbb{F}$; the elements of $\mathbb{F}[\widetilde{\mathcal{O}}_{c}]$ are called quasi-polynomials.  
Let $\widetilde{\mathcal{H}}_{\Lambda}$ denote the $\Lambda$-extended affine Hecke algebra with Hecke parameter $t$ (see Definition \ref{def:AHA}).  

 \begin{theorem}[\cite{SSV2}]\label{thm:qp_rep_GL}
There is a \textit{quasi-polynomial representation} $\pi^{qp}$ of $\widetilde{\mathcal{H}}_{\Lambda}$ on $\mathbb{F}[\widetilde{\mathcal{O}}_{c}]$ given by the following explicit formulas:
\begin{equation*}
\pi^{qp}(T_{j}) x^{y} =  \begin{cases} 1, & y_j - y_{j+1} \in \mathbb{R} \setminus \mathbb{Z} \\
t, & y_j - y_{j+1} \in \mathbb{Z}
\end{cases} \Bigg \} \cdot x^{s_j y} + (t - t^{-1}) x^{y}\cdot \frac{\Big(\frac{x_j}{x_{j+1}}\Big)^{ - \lfloor y_{j} - y_{j+1} \rfloor } - 1 }{\frac{x_{j}}{x_{j+1}} - 1} 
\end{equation*}
for $y \in \widetilde{\mathcal{O}}_{c}$, $1 \leq j \leq r-1$, and $\pi^{qp}(x^{\lambda}) x^{y} = x^{\lambda + y}$
for $\lambda \in \Lambda$ and $y \in \widetilde{\mathcal{O}}_{c}$.
\end{theorem}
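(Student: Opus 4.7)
The plan is to verify that $\pi^{qp}$ respects all the defining relations of $\widetilde{\mathcal{H}}_{\Lambda}$ on the generating vectors $x^y$, $y \in \widetilde{\mathcal{O}}_c$: the commutativity $x^\lambda x^\mu = x^{\lambda+\mu}$, the quadratic relation $(T_j-t)(T_j+t^{-1})=0$, the finite braid relations ($T_i T_j = T_j T_i$ for $|i-j|>1$ and $T_j T_{j+1} T_j = T_{j+1} T_j T_{j+1}$), and the Bernstein-Lusztig cross relations between the $T_j$ and the $x^\lambda$. Well-definedness is immediate: since $\widetilde{\mathcal{O}}_c$ is stable under the affine Weyl group $W \ltimes \Lambda$, both $\lambda + y$ and $s_j y$ remain in $\widetilde{\mathcal{O}}_c$, and the fraction $\big((x_j/x_{j+1})^{-\lfloor y_j-y_{j+1}\rfloor}-1\big)/(x_j/x_{j+1}-1)$ is an honest Laurent polynomial in $x_j/x_{j+1}$ (the floor $\lfloor y_j-y_{j+1}\rfloor \in \mathbb{Z}$ regardless of whether $y_j-y_{j+1}$ itself is integral), so the image of $x^y$ lies in $\mathbb{F}[\widetilde{\mathcal{O}}_c]$.

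Next I would handle the easy relations. The $\Lambda$-commutativity and the $|i-j|>1$ braid relation are formal, since the relevant operators act on disjoint coordinates. The quadratic relation is checked by a two-case split on whether $y_j - y_{j+1} \in \mathbb{Z}$: in the integer case, $\pi^{qp}(T_j)$ restricts on $\mathbb{F}[\Lambda]\cdot x^y$ to the standard Demazure-Lusztig operator, where $(T_j-t)(T_j+t^{-1})=0$ is classical; in the non-integer case the leading coefficient is $1$, and using $s_j^2=1$ together with the floor identity $\lfloor y_j-y_{j+1}\rfloor+\lfloor y_{j+1}-y_j\rfloor=-1$, a short direct expansion yields the same identity. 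The Bernstein-Lusztig cross relations are verified by the analogous case split, applying both sides to $x^y$ and comparing the coefficients of each orbit element.

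The main obstacle is the rank-two braid relation $\pi^{qp}(T_j T_{j+1} T_j)x^y=\pi^{qp}(T_{j+1} T_j T_{j+1})x^y$. Since any two of the integrality conditions on $y_j-y_{j+1}$, $y_{j+1}-y_{j+2}$, $y_j-y_{j+2}$ force the third, the effective cases are: (i) all three differences integral, (ii) exactly one integral (three subcases depending on which), and (iii) none integral. Case (i) is the classical affine Hecke braid identity on $\mathbb{F}[\Lambda]\cdot x^y$. In case (iii), only the swap terms and the Laurent-polynomial correction indexed by the floors contribute, and the two sides match after combinatorial bookkeeping with the floor function. Case (ii) is the most delicate, since the correction term activates along exactly one coordinate direction while the orbit element acquires a non-integer component that is propagated through the composition; here one must track the coefficients monomial by monomial in $\mathbb{F}[\widetilde{\mathcal{O}}_c]$. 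Because the verification is local to a rank-two situation involving only three consecutive indices, the analysis reduces to a finite list of identities among expressions in $t$, $x_j/x_{j+1}$, and floors, and the theorem follows once all of them are checked.
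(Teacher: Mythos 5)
Your proposal takes a genuinely different route from the paper: here the theorem is not proved by checking relations at all, but is imported from \cite{SSV2} (it is the $GL_r$ specialization of the general quasi-polynomial representation, Theorem \ref{thm:qp_rep}, which in \cite{SSV2} is obtained structurally, via the construction of the module as a cyclic $Y$-induced representation and a computation of the resulting explicit formulas, rather than by brute-force verification of the Hecke relations). A direct verification is a legitimate alternative in principle, and your reduction of the braid check to the three integrality patterns on $y_j-y_{j+1}$, $y_{j+1}-y_{j+2}$, $y_j-y_{j+2}$ (any two integral forcing the third) is the right case decomposition.

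However, as written the proposal has a genuine gap: the steps that carry the entire content of the theorem are asserted rather than executed. In the mixed case (ii) and the generic case (iii) of the braid relation, the second and third applications of $\pi^{qp}(T_{j+1})$, $\pi^{qp}(T_j)$ hit terms of the form $g(x)\,x^{y'}$ with $g\in\mathbb{F}[\Lambda]$ a nontrivial Laurent polynomial coming from the truncated divided difference; since $\pi^{qp}(T_j)$ is defined monomial-by-monomial on $\widetilde{\mathcal{O}}_c$ and is not $\mathbb{F}[\Lambda]$-linear, one must expand these products and re-apply the formula term by term, and the claimed cancellation "after combinatorial bookkeeping with the floor function" is exactly what needs to be proved --- it is not a short or routine identity (avoiding precisely this computation is why \cite{SSV2} proceeds via induced modules and intertwiners). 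The same applies to the Bernstein--Lusztig cross relation $T_jx^{\lambda}-x^{s_j\lambda}T_j=(t-t^{-1})\nabla_j(x^{\lambda})$: verifying it on $x^y$ requires the identity $\lfloor\alpha_j(y+\lambda)\rfloor=\lfloor\alpha_j(y)\rfloor+\alpha_j(\lambda)$ (valid because $\alpha_j(\Lambda)\subseteq\mathbb{Z}$) and a telescoping computation in both integrality cases, none of which is carried out. Even case (i) needs a word: the "classical" braid identity is for the polynomial representation on $\mathbb{F}[\Lambda]$, whereas here one acts on the coset $x^y\mathbb{F}[\Lambda]$ with $y\notin\Lambda$, and one should explain why integrality of the three relevant pairings lets the rank-two computation reduce to the classical one. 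Until these computations (or a structural substitute such as the induction argument of \cite{SSV2}) are supplied, the proposal is a plausible plan rather than a proof.
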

\noindent Crucially, for $c=0$, the formulas above recover the (classical) polynomial representation $\pi$ of $\widetilde{\mathcal{H}}_{\Lambda}$ on $\mathbb{F}[\Lambda]$ and in this case $\pi(T_i)$ are the Demazure-Lusztig operators.

In fact, as shown in \cite{SSV2}, given a torus character $\tau: \Lambda \rightarrow \mathbb{F}$ satisfying certain compatibility conditions with respect to $c$, the representation $\pi^{qp}$ can be extended to a representation of the double affine Hecke algebra $\mathbb{H}$.  This representation has a number of interesting properties, for example, a construction as a $Y$-induced cyclic module, a degeneration to Cherednik's basic representation when $c = 0$, and a connection to the theory of metaplectic Whittaker functions.
 
There are several interesting families of quasi-polynomials in $\mathbb{F}[ \widetilde{\mathcal{O}}_{c}]$ that can be described via the representation $\pi^{qp}$; we briefly describe some of the relevant results from \cite{SSV2}.  For every $c \in C^{0}_\Lambda$, there is a family of quasi-polynomials $E_{y} = E_{y}(q; \mathbf{t}; \tau) \in \mathbb{F}[\widetilde{\mathcal{O}}_{c}]$, indexed by $y \in  \widetilde{\mathcal{O}}_{c}$, and depending on a deformation parameter $q$, Hecke parameters $\mathbf{t}$, and an additional torus parameter $\tau$, that are simultaneous eigenfunctions of $\pi^{qp}(Y^{\lambda})$.  Here $\{Y^{\lambda}\}_{\lambda \in \Lambda} \subset \mathbb{H}$ are a certain family of commuting elements.  Moreover, the quasi-polynomials $E_{\lambda}$ for $\lambda \in \Lambda$ (i.e., $c = 0$) coincide with nonsymmetric Macdonald polynomials.  Another connection is to metaplectic Whittaker functions: for $c \in C_\Lambda^0$ and for $\tau$ satisfying certain regularity conditions, the $q \rightarrow \infty$ limiting case of these polynomials, denoted $\overline{E}_{y}$, can be identified with metaplectic Iwahori-Whittaker functions after a reparametrization which introduces Gauss sums.  One can also consider (anti-)symmetric variants $E_{y}^{\pm}$ by applying the Hecke algebra (anti-)symmetrizer $\pi^{qp}(\mathbf{1}^{\pm})$ to $E_{y}$, for $y \in \widetilde{\mathcal{O}}_{c}$.  The symmetric variants coincide with symmetric Macdonald polynomials for $c=0$ and $y \in \Lambda$.  Moreover, for $\tau$ satisfying certain regularity conditions, if one specializes the Hecke parameters in an appropriate way, the anti-symmetric variants for arbitrary $c$ in the limit $q \rightarrow \infty$ are metaplectic spherical Whittaker functions (up to an explicit reparametrization).  In this paper, we provide explicit formulas for the family of quasi-polynomials $p^{\pm}_{y} := \pi^{qp}(\mathbf{1}^{\pm}) x^{y}$, where $c \in C^{0}_\Lambda$ and $y \in \widetilde{\mathcal{O}}_{c}$.  For $y$ anti-dominant, these quasi-polynomials are equal to $\overline{E}_{y}^{\pm}$ and for $y$ dominant, they are related to $\overline{E}_{-y}^{\pm}(q; \mathbf{t}^{-1}; \tau)$, see Section \ref{sec:connections_to_pols}.
 
The starting point of this paper is the observation that, for any quasi-polynomial $f \in \mathbb{F}[\widetilde{\mathcal{O}}_{c}]$, there is a unique decomposition
 \begin{equation}\label{qp_gamma}
 f = \sum_{w \in W^{c}} \gamma_{w}(f) x^{w c},
 \end{equation}
 where $W_{c}$ is the stabilizer subgroup of $c$, $W^{c}$ is the set of minimal length representatives of $W / W_{c}$ and $\gamma_{w}: \mathbb{F}[\widetilde{\mathcal{O}}_{c}] \rightarrow \mathbb{F}[\Lambda]$ for $w \in W^{c}$ are coefficient functions (see Section \ref{sec:qp} for more details).  Note that $\gamma_{w}(f)$ are \textit{polynomials} in $\mathbb{F}[\Lambda]$ and computing $f$ can be boiled down to determining these coefficients.  In this paper, we explicitly compute these coefficients for certain families of quasi-polynomials.  We prove the following theorem in Section \ref{sec:QP_results}:

\begin{theorem}\label{cor:qp_main_GL}
Let $c \in C^{0}_\Lambda$, $y \in \widetilde{\mathcal{O}}_{c}$ and write (uniquely) $y = \mu + \hat{w} c$, where $\mu \in \Lambda$ and $\hat{w} \in W^{c}$.  Then for $w \in W^c$ we have
\begin{align*}
\gamma_{w}(p^{+}_{y}) &= t^{l(w_0)}  \cdot w_0 \pi \Big( T_{(w_0 w)^{-1}}^{-1} \mathbf{1}^{+}_{J_{c}} T_{\hat{w}^{-1}}\Big) x^{\mu}  \\
\gamma_{w}(p^{-}_{y}) &= (-1)^{l(\hat{w}) + l(w)} t^{2l(w_0(W_c)) - l(w_0)}  \cdot   \iota w_0 \pi \Big( T_{w_0 w} \mathbf{1}^{-}_{J_{c}} T_{\hat{w}}^{-1} \Big) \iota x^{\mu},
\end{align*}
where $J_c = \{ 1 \leq j \leq r-1 :  c_{j} = c_{j+1} \}$ and $w_0(W_c)$ is the longest word in $W_c$. Moreover, if $y$ is dominant, we have
\begin{align*}
\gamma_{w}( \iota \overline{E}_{-y}^{+}(q; t^{-1}; \tau)) &=  t^{-l(w_0)}  \cdot w_0 \pi (T_{(w_0 w)^{-1}}^{-1}) p_{\hat{w}^{-1} \mu}^{J_{c}, +} \\
\gamma_{w}( \iota \overline{E}_{-y}^{-}(q; t^{-1}; \tau)) &= (-1)^{l(\hat{w}) + l(w)} t^{2l(w_0(W_c)) + l(w_0)}  \cdot   \iota w_0 \pi ( T_{w_0 w}) p_{\hat{w}^{-1} \mu}^{J_{c}, -} 
\end{align*}

\end{theorem}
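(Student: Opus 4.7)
The plan is to first establish the formulas for $\gamma_w(p_y^\pm)$ by direct computation with the quasi-polynomial action, and then derive the dominant-case formulas from the correspondence between $p_y^\pm$ and $\iota\overline{E}_{-y}^\pm(q;t^{-1};\tau)$ explained in Section \ref{sec:connections_to_pols}.

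The starting observation is that $x^c$ is an eigenvector of $\pi^{qp}(T_j)$ with eigenvalue $t$ whenever $j\in J_c$: since $c_j=c_{j+1}$ gives $\lfloor c_j - c_{j+1}\rfloor = 0$, the second term in the formula of Theorem \ref{thm:qp_rep_GL} vanishes and $\pi^{qp}(T_j)x^c=t\cdot x^c$. Consequently, a normalized multiple of $x^c$ is fixed by $\pi^{qp}(\mathbf{1}^+_{J_c})$, while $\pi^{qp}(\mathbf{1}^-_{J_c})x^c=0$; this immediately predicts where the parabolic (anti-)symmetrizers must appear in the final formulas. Next, writing $y=\mu+\hat w c$ with $\hat w\in W^c$, I would express $x^y=\pi^{qp}(x^\mu)x^{\hat w c}$, and by induction on $l(\hat w)$ show that $x^{\hat w c}$ is obtained from $x^c$ by applying $\pi^{qp}(T_{\hat w^{-1}})$ up to terms that do not contribute to $\gamma_w(p_y^\pm)$. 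The minimality of $\hat w\in W^c$ is crucial here: each simple reflection in a reduced expression takes the orbit representative across a hyperplane on which the floor term behaves cleanly.

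At this stage I would invoke the standard factorization $\mathbf{1}^\pm=\bigl(\sum_{v\in W^c}(\pm t)^{\epsilon_v}T_v^{\pm 1}\bigr)\mathbf{1}^\pm_{J_c}$ inside $\widetilde{\mathcal H}_\Lambda$, so that $p_y^\pm$ becomes a sum indexed by $v\in W^c$, and the projection $\gamma_w$ isolates the contribution where the $v$-translate of the base orbit lands on $x^{wc}$. The main technical obstacle is the intertwining identity: $\gamma_w\circ\pi^{qp}(T_v^{\pm 1})$, restricted to expressions of the form $\pi^{qp}(x^\mu)\pi^{qp}(T_{\hat w^{-1}})x^c$, should equal, up to the combinatorial prefactors, $w_0\,\pi(T_{(w_0 w)^{-1}}^{\pm 1})\pi(T_{\hat w^{-1}})x^\mu$ (or its $\iota$-conjugate in the antisymmetric case). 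Establishing this requires a careful reduced-expression analysis of how the floor term $\lfloor y_j-y_{j+1}\rfloor$ in $\pi^{qp}(T_j)$ varies over the orbit, and is where the apparently mysterious $w_0$-twist and the $t^{l(w_0)}$, $2l(w_0(W_c))-l(w_0)$, and $(-1)^{l(\hat w)+l(w)}$ normalizations originate; I expect this to be the hardest step of the proof.

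The second set of formulas then follows mechanically: using the identification $p_y^\pm = (\text{scalar})\cdot \iota\overline{E}_{-y}^\pm(q;t^{-1};\tau)$ for $y$ dominant from Section \ref{sec:connections_to_pols}, the formulas for $\gamma_w(p_y^\pm)$ transport to formulas for $\gamma_w(\iota\overline{E}_{-y}^\pm(q;t^{-1};\tau))$, and the inner polynomial expression $\pi(\mathbf{1}^\pm_{J_c}T_{\hat w^{-1}})x^\mu$ is recognized as the partially (anti-)symmetric polynomial $p^{J_c,\pm}_{\hat w^{-1}\mu}$ directly from its definition. Matching the exponent of $t$ between the two sets of formulas accounts for the shift from $t^{l(w_0)}$ to $t^{-l(w_0)}$ and from $2l(w_0(W_c))-l(w_0)$ to $2l(w_0(W_c))+l(w_0)$ in the dominant-case normalization.
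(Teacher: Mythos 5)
There is a genuine gap in your proposal, and it sits exactly where you flag the difficulty. You correctly reduce the problem to an intertwining identity of the form $\gamma_w \circ \pi^{qp}(T_v^{\pm 1}) \leadsto w_0\,\pi(T_{(w_0 w)^{-1}}^{\pm 1})$ acting on the relevant polynomial, and you correctly observe that the $w_0$-twist and the $t$-power normalizations must come out of this step. But you do not give a proof or even a concrete inductive scheme for it --- you only say it "requires a careful reduced-expression analysis of how the floor term varies over the orbit," which is an acknowledgment of the difficulty rather than a resolution of it. That identity is the entire content of the theorem; everything else around it is bookkeeping.

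The paper's route avoids touching the floor terms directly. The key structural insight you don't exploit is that for $c\in C^0_\Lambda$ the quasi-polynomial coefficient map factors through a \emph{pure Hecke-algebra} coefficient map: writing $p^\pm_y = \pi^{qp}(\mathbf{1}^\pm x^\mu T_{\hat w})x^c$, one has $\gamma_w^{qp}(\pi^{qp}(h)x^c)=\sum_{u\in W_c}t(u)\gamma_{wu}(h)$ (Corollary \ref{cor:gammas}, resting on $\pi^{qp}(x^\mu T_w)x^c = h^c(w)x^{\mu+wc}$). This collapses the whole floor-function bookkeeping into a single clean formula, and the remaining work is a PBW expansion of $\mathbf{1}^\pm f T_{\hat w}$ inside $\widetilde{\mathcal H}_\Lambda$ (Theorem \ref{thm:main_thm_HA}), established by a three-step recursion (Lemmas \ref{Bbase}, \ref{B_wrecur}, \ref{B_whrecur}) that plays the eigenvector relations $T_i\mathbf{1}^\pm=\pm t_i^{\pm1}\mathbf{1}^\pm$ and $\mathbf{1}^\pm T_i=\pm t_i^{\pm1}\mathbf{1}^\pm$ against $\gamma_w(T_i\cdot)$. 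The summation over $W_c$ is then what produces $\mathbf{1}^\pm_{J_c}$, rather than your top-down factorization $\mathbf{1}^\pm = \bigl(\sum_{v\in W^{J_c}}\cdots\bigr)\mathbf{1}^\pm_{J_c}$; both are legitimate entry points, but the paper's ordering makes the recursion computable.

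Two smaller issues. First, your step "$x^{\hat w c}$ is obtained from $x^c$ by applying $\pi^{qp}(T_{\hat w^{-1}})$ up to terms that do not contribute" is wrong in direction and in precision: for $\hat w\in W^c$ one has exactly $\pi^{qp}(T_{\hat w})x^c=x^{\hat w c}$ (Theorem \ref{thm:XT_formula} plus $h^c(\hat w)=1$), no leftover terms and with $T_{\hat w}$, not $T_{\hat w^{-1}}$; the $T_{\hat w^{-1}}$ appearing in the final formula is an artifact of the PBW expansion (Theorem \ref{thm:main_thm_HA}), not of the orbit computation. Second, the dominant-case derivation is essentially right, but you should cite the precise scalar relation $\pi^{qp}(\mathbf{1}^\pm)x^y = \tilde t(w_0)^{\mp 2}\iota\overline E^\pm_{-y}(q;\mathbf{t}^{-1};\tau)$ (Corollary \ref{cor:qp1y_domin}) and Definition \ref{def:MDptl} of $p^{J,\pm}_\lambda$; the shift $t^{l(w_0)}\to t^{-l(w_0)}$ etc. is accounted for exactly by that scalar and is not something you derive independently.
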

\noindent The RHS is expressed in terms of the polynomial representation $\pi$ of the Hecke algebra and partial symmetrizers  $\mathbf{1}^{\pm}_{J_c}$ with respect to the stabilizer subgroup $W_{c}$ (see Definition \ref{def:partial_symm}).  Also, $\iota : \mathbb{F}[\Lambda] \rightarrow \mathbb{F}[\Lambda]$ is given by $\iota(x^{\lambda}) = x^{-\lambda}$ and extending linearly.  In the second statement, the polynomials $p^{J, \pm}_{\hat{w}^{-1} \mu} = p^{J, \pm}_{\hat{w}^{-1} \mu}(t)$ are the $q \rightarrow \infty$ limits of Macdonald polynomials with prescribed symmetry, as introduced in \cite{BDF} and studied further in \cite{Mar, B}.  In \cite{BBBG19}, the $q \rightarrow \infty$ limits $p^{J, -}_\mu$ have been related to parahoric Whittaker functions, which is consistent with the relationship between $\overline{E}^-_y$ and metaplectic spherical Whittaker functions \cite[Theorem 10.5(2)]{SSV2}, and our Theorem \ref{cor:Whitt_typeA}. Note that $p^{J, \pm}_{\hat{w}^{-1} \mu}$ interpolates between nonsymmetric Macdonald polynomials ($J = \emptyset$) and (anti-)symmetric Macdonald polynomials ($J = [1,r]$) as one varies $J$.  See Section \ref{sec:lim_ptl} for a generalization to arbitrary types.  Following \cite{BBBG21} in the Iwahori-metaplectic context, we refer to the surprising connection between these quasi-polynomials and classical Macdonald polynomials in the $q \rightarrow \infty$ limit, as illustrated in Theorem \ref{cor:qp_main_GL}, as duality. Note that this duality does not appear to extend directly to the $q$-level, see Example \ref{ex:qlevel}.  

Note that the quasi-polynomial representation and associated quasi-polynomials are constructed in \cite{SSV2} for $c$ in the larger set $\overline{C^+} = \{ (y_1, \dots, y_r) \in E : 0 \leq y_i - y_{i+1} \leq 1 \text{ for } 1 \leq i \leq r-1 \text{ and } y_1 - y_r \leq 1 \}$. The techniques in this paper require $c \in C_\Lambda^0$, the key issue being that we need the decomposition \eqref{qp_gamma}. We are thus only able to obtain results for $y \in E_{\Lambda}^0$, the union of $\widetilde{\mathcal{O}}_c$ for $c \in C_\Lambda^0$. For $GL_r$ this does not in fact impose a restriction as $E = E_\Lambda^0$. For other types we suspect that our techniques can be extended to handle the case $c \in \overline{C^+}$ using parabolic subgroups of the affine Weyl group $\widetilde{W}$ and associated partial symmetrizers in $\widetilde{H}_\Lambda$, but this is beyond the  scope of the current paper. 
 
 The results of this paper rely on computing PBW-type expansions of certain elements in the Hecke algebra (see Theorem \ref{thm:main_thm_HA} and Corollaries \ref{cor:PBW1pm}, \ref{cor:1fT}).  We then use the quasi-polynomial and metaplectic representations (resp.) of Hecke algebras to obtain decomposition formulas for the classes of quasi-polynomials described above, as well as for metaplectic spherical Whittaker functions (resp.) (see Theorem \ref{thm:qp_1pfT} and \ref{thm:qp_1mfT}, as well Theorem \ref{cor:qp_main} for quasi-polynomials, and Theorem \ref{thm:Whitt_arb} for metaplectic Whittaker functions in arbitrary type).  Specializing to the $GL_{r}$ case, we recover results of \cite{BBBG21}, as well as establish the authors' ``parahoric-metaplectic" duality conjecture by proving Theorem \ref{cor:Whitt_typeA}.  Finally, in Section \ref{sec:symm_qps} we use the methods of this paper to show that (anti-)symmetric quasi-polynomials, as defined in Section 6.6 of \cite{SSV2}, are in bijection with partially (anti-)symmetric polynomials (Corollary \ref{cor:bijection}).
 
 %%%%%%%%%%%%%%%%%%%%%%%%

 %%%%%%%%%%%%%%%%%%%
{\it Acknowledgements:} The author would like to thank A. Aggarwal, A. Borodin, B. Brubaker, P. Etingof, H. Gustafsson, S. Sahi, and J.V. Stokman for many useful discussions about this work.
%%%%%%%%%%%%%%%%%%%

%%%%%%%%%%%%%%%%%%%%%%%%%%%%%%%%%%%%%%%%
\section{Hecke algebras}\label{sec:prelim}
%%%%%%%%%%%%%%%%%%%%%%%%%%%%%%%%%%%%%%%%
\subsection{Root systems and Weyl groups}

We set up some preliminaries that will be used throughout the paper (see e.g., \cite{Lu, Hu} for more background on these topics).

Let $E$ be an Euclidean space with scalar product $\langle\cdot,\cdot\rangle$ and corresponding
norm $\|\cdot\|$. Let $E^*$ be its linear dual. We turn $E^*$ into an Euclidean space by transporting the scalar product of $E$ through the linear isomorphism $E\overset{\sim}{\longrightarrow}
E^*$, $y\mapsto \langle y,\cdot\rangle$. The resulting scalar product and norm on $E^*$ are denoted by $\langle\cdot,\cdot\rangle$ and $\|\cdot\|$ again. 

Let $\Phi\subset E^*$ be a reduced irreducible root system in $\textup{span}_{\mathbb{R}}\{\alpha\, | \, \alpha\in\Phi\}$. We normalize $\Phi$ in such a way that long roots have squared length equal to $2/m^2$ for some $m\in\mathbb{Z}_{>0}$. Set
$E_{\textup{co}}:=\cap_{\alpha\in\Phi}\textup{Ker}(\alpha)$, 
and write $E^\prime\subseteq E$ for the orthogonal complement of $E_{\textup{co}}$ in $E$,
so that $E=E^\prime\oplus E_{\textup{co}}$ (orthogonal direct sum). Write $\textup{pr}_{E^\prime}$ and $\textup{pr}_{E_{\textup{co}}}$ for the corresponding projections onto
$E^\prime$ and $E_{\textup{co}}$, respectively.

For $\alpha\in\Phi$ write $s_\alpha\in\textup{GL}(E^*)$ for the reflection
\begin{equation*}
s_\alpha(\xi):=\xi-\xi(\alpha^\vee)\alpha\qquad (\xi\in E^*),
\end{equation*}
where $\alpha^\vee\in E$ is the unique vector such that 
\begin{equation*}
\langle y,\alpha^\vee\rangle=
2\alpha(y)/\|\alpha\|^2
\end{equation*}
 for all $y\in E$.
The finite Weyl group $W$ of $\Phi$ is the subgroup of $\textup{GL}(E^*)$ generated by $s_\alpha$ ($\alpha\in\Phi$). It is a Coxeter group, with Coxeter generators the simple reflections $s_i:=s_{\alpha_i}$ ($1\leq i\leq r$).  The root system $\Phi\subset E^*$ is $W$-invariant.  We write $w_0\in W$ for the longest Weyl group element, and $\varphi\in\Phi^+$ for the highest root.  Note that $E^\prime$ is $W$-invariant, and $W$ acts trivially on $E_{\textup{co}}$.

Write $\Phi^\vee=
\{\alpha^\vee\}_{\alpha\in\Phi}\subset E$ for the associated coroot system.  Note that $\Phi^\vee\subset E^\prime$,
and $E^\prime=\textup{span}_{\mathbb{R}}\{\alpha^\vee \,\, | \,\, \alpha\in\Phi\}$.
The coroot lattice and co-weight lattice of $\Phi$ are
\begin{equation*}
Q^\vee:=\mathbb{Z}\Phi^\vee,\qquad P^\vee:=\{\lambda\in E^\prime \,\, | \,\, \alpha(\lambda)\in\mathbb{Z}\quad \forall\, \alpha\in\Phi\}.
\end{equation*}
They are full sub-lattices of $E^\prime$, and $Q^\vee\subseteq P^\vee$.

Consider the action of $W$ on $E$ with $s_\alpha$ ($\alpha\in\Phi$) acting as the orthogonal reflection in the hyperplane $\alpha^{-1}(0)$,
\begin{equation*}
s_\alpha(y):=y-\alpha(y)\alpha^\vee\qquad (y\in E).
\end{equation*}
Then $(w \xi)(y)=\xi(w^{-1}y)$ for $v\in W$, $\xi\in E^*$ and $y\in E$, hence 
\begin{equation*}
w(\alpha^\vee)=(w\alpha)^\vee\qquad\quad (w\in W,\,\alpha\in\Phi).
\end{equation*}
In particular, $\Phi^\vee\subset E$ is $W$-invariant, and hence so are the lattices $Q^\vee$ and $P^\vee$. 

Fix a set $\Delta:=\{\alpha_1,\ldots,\alpha_r\}$ of simple roots and write $\Phi=\Phi^+\cup\Phi^-$ for the resulting natural division of the root system in positive and negative roots. Let $\Delta^\vee:=\{\alpha_1^\vee,\ldots,\alpha_r^\vee\}$ be the associated set of simple coroots for $\Phi^\vee$ and $\Phi^{\vee,\pm}$ the associated sets of positive and negative coroots.
We have $P^\vee=\bigoplus_{i=1}^r\mathbb{Z}\varpi_i^\vee$ with
$\varpi_i^\vee\in E^\prime$ ($1\leq i\leq r$) the fundamental weights with respect to $\Delta$, characterized by $\alpha_i(\varpi_j^\vee)=\delta_{i,j}$ ($1\leq i,j\leq r$). Set 
$P^{\vee,\pm}:=\pm\sum_{i=1}^r\mathbb{Z}_{\geq 0}\varpi_i^\vee$ for the cones of dominant and anti-dominant co-weights in $P^\vee$, respectively. 

\begin{definition}\label{latticeconddef}
	Denote by $\mathcal{L}$ the set of finitely generated abelian subgroups $\Lambda\subset E$ satisfying
	\begin{equation}\label{latticecond}
		Q^\vee\subseteq\Lambda\quad \&\quad \alpha(\Lambda)\subseteq\mathbb{Z}\quad \forall\, \alpha\in\Phi.
	\end{equation}
\end{definition}

Note that if $\Lambda\in\mathcal{L}$ then $\Lambda\cap E^\prime,\textup{pr}_{E^\prime}(\Lambda)\in\mathcal{L}$ and 
\begin{equation*}
	Q^\vee\subseteq \Lambda\cap E^\prime\subseteq\textup{pr}_{E^\prime}(\Lambda)\subseteq P^\vee.
\end{equation*}

A key example is the following:
\begin{example}\label{lambda_examples}$\mathbf{GL}_{r}$: Let $E = \mathbb{R}^{r}$ and set $\alpha_i^{\vee} = e_{i+1} - e_i$ for $1 \leq i \leq r-1$, where $\{e_i\}_{1 \leq i \leq r}$ are the canonical basis vectors  (note the rank is $r-1$). Set $\Lambda = \mathbb{Z}^{r} \subset E$, then $\Lambda \in \mathcal{L}$ and we have
		\begin{align*}
			E' &= \{(v_1, \dotsc, v_{r}) \in \mathbb{R}^{r}: v_1 + \dotsb + v_{r} = 0\} \\
			Q^{\vee} &= \Lambda \cap E' \\
			P^{\vee} &= \{(v_1, \dotsc, v_{r}) \in E': v_{i+1} - v_i \in \mathbb{Z} \text{ for $1 \leq i \leq r-1$}\} \\
			E_{\textup{co}} &= \{(c,c, \dots, c) \in \mathbb{R}^{r} \}.
		\end{align*}
\end{example}

For $\Lambda \in \mathcal{L}$, the $\Lambda$-extended affine Weyl group is the semi-direct product group $\widetilde{W}_{\Lambda}:=W\ltimes \Lambda$. When $\Lambda = Q^\vee$ this is the (non-extended) affine Weyl group $\widetilde{W} = \widetilde{W}_{Q^\vee}$.  We extend the $W$-action on $E$ to a faithful $\widetilde{W}_\Lambda$-action on $E$ by affine linear transformations, with $\lambda \in \Lambda$ acting by
\begin{equation*}
\tau(\lambda)y:=y+\lambda \qquad (y\in E).
\end{equation*}
We will regularly identify $\widetilde{W}_{\Lambda}$ with its realization as a subgroup of the group of affine linear automorphisms of $E$.

The action of $\widetilde{W}$ on $E$ preserves  
\begin{equation}\label{Eareg}
	E^{a,\textup{reg}}:=\{y\in E \,\, | \,\, 
	\alpha(y)\notin \mathbb{Z} \quad \forall \alpha \in\Phi \}.
\end{equation}
The connected components of $E^{a,\textup{reg}}$ are called alcoves. The resulting $\widetilde W$-action on the set of alcoves is simply transitive. The alcove
\begin{align}\label{Cplus}
C^+:&=\{ y\in E \,\, | \,\, 0<\alpha(y)< 1\quad \forall\, \alpha\in\Phi^+ \} \\
&= \{y \in E  \,\, | \,\, 0 < \alpha_i (y) \text{ for } 1 \leq i \leq r \text{ and } \varphi(y) < 1 \}
\end{align}
is the fundamental alcove in $E$. We define
\begin{equation}\label{eq:omega_def}
	\Omega_{\Lambda} := \{w \in \widetilde{W}_\Lambda: w(C^+) \subset C^+\}.
\end{equation}
Note that $\Omega_{\Lambda} = \{1\}$ for $\Lambda = Q^\vee$.

For $y\in E$ we write 
\begin{equation}\label{O} 
	\widetilde{\mathcal{O}}_y:=\widetilde{W}y = \{ \lambda + wy : \lambda \in Q^{\vee}, w \in W \}
\end{equation}
for the $\widetilde{W}$-orbit of $y$ in $E$. We thus have
\begin{equation}\label{WorbitE}
	E^{a,\textup{reg}}=\bigsqcup_{c \in C^+}\widetilde{\mathcal{O}}_{c},\qquad\quad E=\bigsqcup_{c\in\overline{C^+}}\widetilde{\mathcal{O}}_{c}
\end{equation}
(these are disjoint unions).

Likewise, for $y \in E$ and $\Lambda \in \mathcal{L}$ we define
\begin{equation*}
\widetilde{\mathcal{O}}_{\Lambda, y} := \widetilde{W}_{\Lambda,y} = \{ \lambda + wy : \lambda \in \Lambda, w \in W \}.
\end{equation*}
Note that $\widetilde{\mathcal{O}}_y = \widetilde{\mathcal{O}}_{Q^\vee, y} \subseteq \widetilde{\mathcal{O}}_{\Lambda, y}$. Note that $\widetilde{W}_{\Lambda, y}$ contains the finite Weyl group stabilizer of $y$, 
\begin{equation*}
W_y = \{w \in W \,\, | \,\, w(y) = y \}.
\end{equation*}

Throughout this paper, we will be interested in situations where $\widetilde{W}_{\Lambda, c} = W_{c}$ for $c \in \overline{C^+}$.  We use the following notation.
 \begin{definition}\label{def:key_ccondit}
For $\Lambda \in \mathcal{L}$, we define 
\begin{equation}
C^0_\Lambda := \{c  \in \overline{C^+}: \widetilde{W}_{\Lambda, c} = W_c\}.
\end{equation}
\end{definition}

Note that $\varphi(c) < 1$ is a necessary condition for $c \in C^{0}_{\Lambda}$, since otherwise $s_0 := \tau(\varphi)s_\varphi \in \widetilde{W}_{Q^\vee, c} \setminus W_{c}$. The following Lemma provides a complete characterization.

\begin{lemma}
Let $c \in \overline{C^+}$. Then $c \in C^{0}_\Lambda$ if and only if $\varphi(c) < 1$ and
\begin{equation*}
	\Omega_{\Lambda, c} := \{\omega \in \Omega_{\Lambda}: \omega(c) = c\} = \{1\}.
\end{equation*}

\end{lemma}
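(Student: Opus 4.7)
The plan is to prove both implications separately, using two standard structural facts: the decomposition $\widetilde{W}_\Lambda = \widetilde{W} \rtimes \Omega_\Lambda$ (unique factorization $g = w\omega$ with $w \in \widetilde{W}$, $\omega \in \Omega_\Lambda$), and the fact that $\overline{C^+}$ is a fundamental domain for the $\widetilde{W}$-action on $E$, so that each $\widetilde{W}$-orbit meets $\overline{C^+}$ in exactly one point. Together with the already-noted necessity of $\varphi(c) < 1$, these reduce everything to comparing $\widetilde{W}_c$ with $W_c$ under the hypothesis $\varphi(c) < 1$.

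For the forward direction, suppose $\widetilde{W}_{\Lambda,c} = W_c$. The paper already notes $\varphi(c) < 1$ is necessary, so I would only need to check $\Omega_{\Lambda,c} = \{1\}$. If $\omega \in \Omega_{\Lambda,c}$, then $\omega \in \widetilde{W}_{\Lambda,c} = W_c \subseteq W$, and by definition of $\Omega_\Lambda$ we have $\omega(C^+) \subseteq C^+$. Since $C^+$ is contained in the open fundamental Weyl chamber and $W$ acts simply transitively on Weyl chambers, the only element of $W$ mapping $C^+$ into itself is the identity, so $\omega = 1$.

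For the backward direction, suppose $\varphi(c) < 1$ and $\Omega_{\Lambda,c} = \{1\}$. First I would establish the intermediate claim $\widetilde{W}_c = W_c$: the $\widetilde{W}$-stabilizer of any point is generated by the reflections in the affine hyperplanes $\alpha^{-1}(k)$ ($\alpha \in \Phi$, $k \in \mathbb{Z}$) passing through it; for $c \in \overline{C^+}$ with $\varphi(c) < 1$, the only such hyperplanes are $\alpha_i^{-1}(0)$ with $\alpha_i(c)=0$, whose reflections $s_i$ lie in $W$, so $\widetilde{W}_c \subseteq W$. Now take any $g \in \widetilde{W}_{\Lambda,c}$ and write $g = w\omega$ with $w \in \widetilde{W}$, $\omega \in \Omega_\Lambda$. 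Then $w^{-1}(c) = \omega(c) \in \overline{C^+}$ (since $\omega$ preserves $\overline{C^+}$), and $c \in \overline{C^+}$ as well, so the fundamental domain property forces $\omega(c) = c$. Hence $\omega \in \Omega_{\Lambda,c} = \{1\}$, giving $g = w \in \widetilde{W}_c = W_c$.

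The main conceptual step is the identification $\widetilde{W}_c = W_c$ under $\varphi(c) < 1$; the rest is bookkeeping with the semidirect decomposition and the fundamental domain property. I expect no serious obstacle, but a careful argument for $\widetilde{W}_c = W_c$ (either via the standard theorem that point stabilizers in affine Coxeter groups are generated by reflections in the walls through the point, or by a direct check that no $\tau(\lambda)v$ with $\lambda \in Q^\vee \setminus\{0\}$ can fix a point of $\overline{C^+}$ when $\varphi(c) < 1$) is the one place where some care is needed.
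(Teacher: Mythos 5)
Your proof is correct and takes essentially the same route as the paper's: the paper cites the decomposition $\widetilde{W}_{\Lambda,c} = \Omega_{\Lambda,c} \rtimes \widetilde{W}_{Q^\vee,c}$ together with the standard fact that $\widetilde{W}_{Q^\vee,c} = W_c$ exactly when $\varphi(c) < 1$, whereas you re-derive that decomposition by hand (writing $g = w\omega$, invoking the fundamental-domain property to force $\omega(c)=c$, and invoking simple transitivity on alcoves to kill the overlap between $W$ and $\Omega_\Lambda$). The mathematical content and the two key inputs — stabilizers in the affine Weyl group are generated by reflections in the walls through the point, and the semidirect structure $\widetilde{W}_\Lambda = \widetilde{W} \rtimes \Omega_\Lambda$ — are the same in both arguments.
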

\begin{proof}
It is well-known that $\widetilde{W}_{Q^\vee, c}$ is parabolic,  and is equal to $W_c$ if and only if $\varphi(c) < 1$. The result for general $\Lambda$ then follows from $\widetilde{W}_{\Lambda, c} = \Omega_{\Lambda, c} \rtimes \widetilde{W}_{Q^\vee, c}$
\end{proof}

Note that $\varphi(c) < 1$ is thus a necessary and sufficient condition for $\Lambda = Q^\vee$ (as $\Omega_{Q^\vee} = \{1\}$) and in the $GL_{r}$ setting (as $\Omega_{\Lambda, c} = \{1\}$ for all $c \in \overline{C}^+$, see \cite[Lemma 7.31]{SSV2}).

For $c \in C^0_\Lambda$, we define
\begin{equation}
	W^{c} := \{w \in W: w \textrm{ is a minimal length coset representative of } W / W_{ c }\}.
\end{equation}
 It follows from the definitions that $\mathbb{F}[\widetilde{\mathcal{O}}_{\Lambda, c }]$ is a free $\mathbb{F}[\Lambda]$-module, with basis given by $\{x^{w c}: w \in W^{c}\}$. Note this is not the case when $c \notin C_{\Lambda}^0$, which is why we need this technical condition throughout the paper.

%%%%%%%%%%%%%%%%%%%%
\subsection{Length function and parabolic subgroups}\label{cosetsection}
%%%%%%%%%%%%%%%%%%%%

We recall some properties of the length function
\begin{equation}\label{length}
	\ell(w):=\textup{Card}\bigl(\Pi(w)\bigr),\qquad \Pi(w):=\Phi^+\cap w^{-1}\Phi^-
\end{equation}
that we will frequently use.
%%%%%%%%%%%%%%%%%
\begin{lemma}\label{lengthadd}
	Let $1\leq j\leq r$ and $w\in W$. 
	\begin{enumerate}  
		\item $|\ell(s_jw)-\ell(w)|=1$.
		\item We have
		\[
		\ell(s_jw)=\ell(w)+1 \quad\Leftrightarrow \quad w^{-1}\alpha_j\in \Phi^+,
		\]
		and then $\Pi(s_jw)=\{w^{-1}\alpha_j\}\cup \Pi(w)$
		\textup{(}disjoint union\textup{)}.
	\end{enumerate}
\end{lemma}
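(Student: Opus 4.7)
The plan is to compute $\Pi(s_j w)$ directly in terms of $\Pi(w)$ using the well-known fact that the simple reflection $s_j$ permutes $\Phi^+\setminus\{\alpha_j\}$ among themselves, while sending $\alpha_j$ to $-\alpha_j$. Both assertions then drop out simultaneously from a single bookkeeping argument, so I would not separate (1) and (2) in the write-up.

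First I would unwind the definition:
\[
\Pi(s_j w)=\Phi^+\cap w^{-1}s_j\Phi^- .
\]
Using $s_j\Phi^- = \bigl(\Phi^-\setminus\{-\alpha_j\}\bigr)\sqcup\{\alpha_j\}$, I would split this intersection into two pieces,
\[
\Pi(s_j w)=\Bigl(\Phi^+\cap w^{-1}(\Phi^-\setminus\{-\alpha_j\})\Bigr)\ \sqcup\ \bigl(\Phi^+\cap\{w^{-1}\alpha_j\}\bigr),
\]
and rewrite the first piece as $\Pi(w)\setminus\{-w^{-1}\alpha_j\}$.

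Then I would split on the sign of $w^{-1}\alpha_j$. If $w^{-1}\alpha_j\in\Phi^+$, then $-w^{-1}\alpha_j\in\Phi^-$ is not in $\Pi(w)\subset\Phi^+$, so the set-difference removes nothing, and the second piece contributes the new positive root $w^{-1}\alpha_j$ (which cannot already lie in $\Pi(w)$, since $w(w^{-1}\alpha_j)=\alpha_j\in\Phi^+$). This gives the disjoint union $\Pi(s_j w)=\{w^{-1}\alpha_j\}\sqcup\Pi(w)$ and hence $\ell(s_jw)=\ell(w)+1$. If instead $w^{-1}\alpha_j\in\Phi^-$, then $-w^{-1}\alpha_j\in\Phi^+$ and $w(-w^{-1}\alpha_j)=-\alpha_j\in\Phi^-$, so $-w^{-1}\alpha_j\in\Pi(w)$; the first piece drops exactly one element, and the second piece is empty, giving $\ell(s_j w)=\ell(w)-1$.

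Combining the two cases yields $|\ell(s_jw)-\ell(w)|=1$, the equivalence in part (2), and the explicit description of $\Pi(s_jw)$ as a disjoint union. No step is really an obstacle here; the only point that requires care is verifying the disjointness in the ``$+1$'' case, i.e.\ that $w^{-1}\alpha_j\notin\Pi(w)$ when $w^{-1}\alpha_j\in\Phi^+$, which follows immediately by applying $w$.
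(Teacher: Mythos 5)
Your argument is correct and complete: the decomposition of $\Pi(s_jw)$ via $s_j\Phi^-=(\Phi^-\setminus\{-\alpha_j\})\sqcup\{\alpha_j\}$, the case split on the sign of $w^{-1}\alpha_j$, and the disjointness check $w^{-1}\alpha_j\notin\Pi(w)$ all go through, granted the standard fact that $s_j$ permutes $\Phi^+\setminus\{\alpha_j\}$. The paper itself gives no proof, only the citation to \cite[(2.2.8)]{Ma}; what you have written is precisely the standard argument behind that reference, so there is nothing to compare beyond noting that your write-up is self-contained where the paper defers to the literature.
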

%%%%%%%%%%%%%%%%%%
\begin{proof}
	See, e.g., \cite[(2.2.8)]{Ma}.
\end{proof}
%%%%%%%%%%%%%%%%%%
The length $\ell(w)$ of $w\in W$ is the smallest nonnegative integer $\ell$ for which there exists an expression $w=s_{j_1}\cdots s_{j_\ell}$ of $w$ as product of $\ell$ simple reflections 
($1 \leq j_i\leq r$).
Such shortest length expressions are called reduced expressions. If
$w=s_{j_1}\cdots s_{j_\ell}$ is a reduced expression then 
\begin{equation}\label{rootsdescription}
	\Pi(w)=
	\{b_1,\ldots,b_\ell\}\,\,\, \hbox{ with }\, b_m:=s_{j_\ell}\cdots s_{j_{m+1}}(\alpha_{j_m})\,\,\, (1\leq m<\ell),\quad b_\ell:=\alpha_{j_\ell}.
\end{equation}

Fix a subset $J\subseteq [1,r]$. The associated parabolic subgroup $W_J\subseteq W$ is the subgroup of $W$ generated by 
$s_j$ ($j\in J$). Write $W^{J}$ for the set of elements $w\in W$ such that 
\begin{equation*}
\ell(ww^\prime)=\ell(w)+\ell(w^\prime)\qquad \forall\, w^\prime\in W_{J}.
\end{equation*}
Write $\Phi_{J}:=\Phi\cap\textup{span}_{\mathbb{Z}}\{\alpha_j\}_{j\in J}$ and $\Phi_J^{\pm}:=\Phi^{\pm}\cap\Phi_J$, then 
\begin{equation}\label{Phipos}
	W^{J}=\{w\in W \,\, | \,\, w\Phi_J^+\subseteq\Phi^+ \}.
\end{equation}
%%%%%%%%%%%%%%%%%%%%%%%%%%%%%%%%%%%%%%%%
\begin{lemma}\label{cosetcomb}
	Fix a subset $J\subseteq [1,r]$.
	\begin{enumerate}
		\item $W^J$ is a complete
		set of representatives of $W/W_J$ \textup{(}the elements in $W^J$ are called the minimal coset representatives of $W/W_J$\textup{)}.
		\item Let $w\in W^J$ and $1 \leq j\leq r$. Then 
		\[
		s_jw\not\in W^J\,\,\,\Leftrightarrow\,\,\, s_jwW_J=wW_J,
		\]
		and then $s_jw=ws_{j^\prime}$ for some $j^\prime\in J$ \textup{(}in particular, 
		$\ell(s_jw)=\ell(w)+1$\textup{)}.
	\end{enumerate}
\end{lemma}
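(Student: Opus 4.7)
My approach rests on the positive-roots characterization \eqref{Phipos}, namely $w \in W^J \iff w\Phi_J^+ \subseteq \Phi^+$. For part (1), I would first establish existence: given a coset $wW_J$, choose a representative $v$ of minimal length. If $v \notin W^J$, then \eqref{Phipos} produces $\beta \in \Phi_J^+$ with $v\beta \in \Phi^-$. The standard length criterion $\ell(vs_\beta) < \ell(v) \iff v\beta \in \Phi^-$ (valid for any positive root, derivable by induction on $\ell(s_\beta)$ from Lemma \ref{lengthadd}) then exhibits a strictly shorter representative $vs_\beta \in vW_J$, contradicting minimality. For uniqueness, if $u, v \in W^J$ lie in the same coset with $u = vw'$ and $w' \in W_J$, applying the defining length-additivity of $W^J$ in both directions gives $\ell(u) = \ell(v) + \ell(w')$ and $\ell(v) = \ell(u) + \ell(w')$, forcing $\ell(w') = 0$ and $u = v$.

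For part (2), the direction $(\Leftarrow)$ is immediate from part (1): if $s_j w W_J = w W_J$, then $s_j w$ and $w$ represent the same coset, but since $w$ is already its unique minimal length representative and $s_j w \neq w$, the element $s_j w$ cannot belong to $W^J$. The substantive direction is $(\Rightarrow)$. Assuming $s_j w \notin W^J$, \eqref{Phipos} yields $\beta \in \Phi_J^+$ with $s_j w\beta \in \Phi^-$. Because $w \in W^J$ gives $w\beta \in \Phi^+$, and because $\alpha_j$ is the unique positive root that $s_j$ sends to $\Phi^-$, I conclude $w\beta = \alpha_j$.

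The crux is then to extract that $\beta$ is a simple root of $\Phi_J$. Writing $\beta = \sum_{k \in J} n_k \alpha_k$ with $n_k \in \mathbb{Z}_{\geq 0}$, the identity $\alpha_j = w\beta = \sum_{k \in J} n_k (w\alpha_k)$ expresses the simple root $\alpha_j$ as a nonnegative integer combination of positive roots $w\alpha_k$ (positivity coming from $w \in W^J$ applied to each simple root $\alpha_k \in \Phi_J^+$). Since a simple root admits no such nontrivial decomposition, exactly one $n_k$ is nonzero, and a quick length or norm comparison forces $n_k = 1$; hence $\beta = \alpha_{j'}$ for some $j' \in J$ with $w\alpha_{j'} = \alpha_j$. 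Then $s_j w = s_{w\alpha_{j'}} w = w s_{j'}$, which visibly lies in $w W_J$, and the length identity $\ell(s_j w) = \ell(w) + 1$ is the defining property of $W^J$ applied to the product $w s_{j'}$.

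The main obstacle I anticipate is precisely this simple-root extraction step: it is the one place where the hypothesis $w \in W^J$ is used essentially (to keep every summand $w\alpha_k$ in $\Phi^+$, so that indecomposability of simple roots can be invoked). Everything else in the argument is routine length bookkeeping via Lemma \ref{lengthadd} and the coset combinatorics already encoded in \eqref{Phipos}.
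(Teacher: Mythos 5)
Your proof is correct and self-contained; the paper itself does not prove this lemma but simply cites Bourbaki for part (1) and Deodhar's Lemmas 3.1--3.2 for part (2). Your argument supplies exactly the standard elementary reasoning behind those citations, built on the positive-roots criterion \eqref{Phipos}: the existence/uniqueness bookkeeping for part (1) and, for part (2), the observation that $w\beta = \alpha_j$ forces $\beta$ simple. The one step that is slightly compressed in your write-up is the ``simple-root extraction'': the cleanest way to finish it is the height argument. Writing $\beta = \sum_{k\in J} n_k\alpha_k$ with $n_k\in\mathbb{Z}_{\geq 0}$, each $w\alpha_k$ lies in $\Phi^+$ and hence has height $\geq 1$; so
\begin{equation*}
1 = \operatorname{ht}(\alpha_j) = \sum_{k\in J} n_k\,\operatorname{ht}(w\alpha_k) \geq \sum_{k\in J} n_k,
\end{equation*}
which forces a single $n_{j'}=1$ and $\operatorname{ht}(w\alpha_{j'})=1$, i.e.\ $\beta=\alpha_{j'}$ and $w\alpha_{j'}=\alpha_j$. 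Your phrasing ``a quick length or norm comparison'' gestures at this but a norm comparison alone would not rule out, say, $\alpha_j$ being a sum of two short roots; the height (coefficient-sum) argument is the one that closes the gap. With that made explicit, the proof is complete and, notably, proves slightly more than stated in the $(\Rightarrow)$ direction: it shows $w$ conjugates $\alpha_{j'}$ to $\alpha_j$, which is a stronger and frequently useful normalization fact.
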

%%%%%%%%%%%%%%%%%%%%%%%%%%%%%%%%%%%%%%%%
\begin{proof}
	(1) See, e.g., \cite{B}.\\
	(2) This follows from \cite[Lem. 3.1 \& Lem. 3.2]{D}. 
\end{proof}
%%%%%%%%%%%%%%%%%%%%%%%%%%%%%%%%%%%%%%
The stabilizer subgroup $W_{c}$ of $c\in\overline{C}_+$ is parabolic, 
\[
W_{c}=W_{J_c}
\]
with $J_c \subseteq [1,r]$ given by
\begin{equation}\label{Jc}
	J_c :=\{j\in [1,r] \,\, | \,\, s_jc=c\}.
\end{equation}
%%%%%%%%%%%%%%%%%%%%%%%%%%%%%%%%%%%%%%%%%%

\begin{definition}\label{def:wc_decomp}
Let $w \in W$ and $c \in \overline{C}_{+}$.  We will write $w^{c}$, $w_c$ (resp.) for the (unique) elements of $W^{c}$, $W_{c}$ (resp.) such that $w = w^{c} w_{c}$.
\end{definition}

\begin{lemma}\label{lem:w0_conj}
Let $w \in W$ and $c \in \overline{C}_{+}$.  Then
\begin{align*}
(w_0 w w_0)^{-w_0 c} &= w_0 w^{c} w_0\\
(w_0 w w_0)_{-w_0 c} &= w_0 w_{c} w_0.
\end{align*}
\end{lemma}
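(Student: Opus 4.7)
My plan is to show that conjugation by $w_0$ sends the parabolic decomposition of $w$ relative to $W_c$ to the parabolic decomposition of $w_0 w w_0$ relative to $W_{-w_0 c}$. By the uniqueness of this decomposition, the two identities in the lemma will follow simultaneously.

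First I would identify the stabilizer $W_{-w_0c}$. A direct calculation gives
\[
u \in W_{-w_0 c} \iff u w_0 c = w_0 c \iff w_0^{-1} u w_0 \in W_c,
\]
hence $W_{-w_0 c} = w_0 W_c w_0$. In particular, writing $W_c = W_{J_c}$ and using that $w_0$ permutes the simple reflections by $w_0 s_j w_0 = s_{j^*}$ for an involution $j \mapsto j^*$ of $[1,r]$, the index set is $J_{-w_0 c} = (J_c)^*$, and $w_0 \Phi_{J_c}^+ = -\Phi_{J_{-w_0 c}}^+$. Consequently $w_0 w_c w_0 \in W_{-w_0 c}$ is immediate.

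Next I would verify that $w_0 w^c w_0$ lies in $W^{-w_0 c}$ using the characterization \eqref{Phipos}, namely $W^{J} = \{v \in W : v\Phi_J^+ \subseteq \Phi^+\}$. Applying this to an arbitrary $\beta \in \Phi_{J_{-w_0 c}}^+$, one has $w_0 \beta \in -\Phi_{J_c}^+$, so
\[
(w_0 w^c w_0)\beta = -w_0 w^c(-w_0\beta) \in -w_0 \bigl(w^c \Phi_{J_c}^+\bigr) \subseteq -w_0 \Phi^+ = \Phi^+,
\]
where the first inclusion uses $w^c \in W^{J_c}$. This shows $w_0 w^c w_0 \in W^{-w_0 c}$.

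Finally, from the factorization $w_0 w w_0 = (w_0 w^c w_0)(w_0 w_c w_0)$ with $w_0 w^c w_0 \in W^{-w_0 c}$ and $w_0 w_c w_0 \in W_{-w_0 c}$, the uniqueness of the parabolic decomposition (Lemma \ref{cosetcomb}(1)) identifies these factors with $(w_0 w w_0)^{-w_0 c}$ and $(w_0 w w_0)_{-w_0 c}$ respectively, which is the claim. The only mildly delicate step is the root-system bookkeeping needed to show $w_0 \Phi_{J_c}^+ = -\Phi_{J_{-w_0 c}}^+$; this is the main thing one has to get right, but it is essentially the standard fact that conjugation by $w_0$ is the Dynkin-diagram involution on simple reflections.
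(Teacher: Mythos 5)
Your proof is correct: the paper itself only says the lemma ``follows easily from the definitions,'' and your argument is exactly the natural unwinding of those definitions, showing via $w_0 W_c w_0 = W_{-w_0 c}$, the diagram involution $w_0 s_j w_0 = s_{w_0(j)}$, and the characterization \eqref{Phipos} that conjugation by $w_0$ carries the decomposition $w = w^c w_c$ to the parabolic decomposition of $w_0 w w_0$ relative to $W_{-w_0 c}$, after which uniqueness (Lemma \ref{cosetcomb}(1)) finishes the claim. No gaps; as a minor remark, one can shortcut the root-system bookkeeping by noting $\ell(w_0 u w_0)=\ell(u)$, so conjugation by $w_0$ preserves minimal-length coset representatives once $w_0 W_c w_0 = W_{-w_0 c}$ is known.
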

\begin{proof}
	Follows easily from the definitions.
\end{proof}

\subsection{Affine Hecke algebras}
In this section, we recall basics about Hecke algebras, the polynomial representation, and related operators.  We also introduce coefficient functions that will be central to this paper, and establish some auxiliary results.

We start by recalling the standard divided-difference operator which appears in the commutation relation defining the Hecke algebra.

\begin{definition}\label{def:nabla} 
	For $\Lambda \in \mathcal{L}$, let $\nabla_j: \mathbb{F}[\Lambda] \rightarrow \mathbb{F}[\Lambda]$ for $1 \leq j \leq r$ be defined by
	\begin{equation*}
		\nabla_j(f) = \frac{s_j f - f}{x^{\alpha_j^{\vee}} - 1}
	\end{equation*}
	for $f \in \mathbb{F}[\Lambda]$.
\end{definition}

\begin{definition}\label{def:tparams}
Fix a $W$-invariant map $\mathbf{t}: \alpha \mapsto t_{\alpha} \in \mathbb{F}$ and set $t_j := t_{\alpha_j}, t(w) := \prod t_{j_i}$ when $w = s_{j_1} \cdots s_{j_l} \in W$.
\end{definition}

\begin{definition}\label{def:AHA}
For $\Lambda \in \mathcal{L}$, the $\Lambda$-extended affine Hecke algebra $\widetilde{\mathcal{H}}_{\Lambda}$ is the unital associative $\mathbb{F}$-algebra generated by $T_1, \dots, T_r$ and $x^{\lambda}$ for $\lambda \in \Lambda$, subject to the following relations
\begin{enumerate}
\item The braid relations
\begin{equation}\label{braid}
T_{j} T_{j'} T_{j} \dots = T_{j'} T_{j} T_{j'} \dots (\text{with } m_{jj'} \text{ factors on each side})
\end{equation}
for $1 \leq j \neq j' \leq r$, where $m_{jj'}$ is the order of $s_{j}s_{j'}$ in $W$,
\item The quadratic relations
\begin{equation}\label{quadratic}
(T_j - t_j)(T_j + t_{j}^{-1}) = 0
\end{equation}
for $1 \leq j \leq r$,
\item $x^{\lambda} x^{\mu} = x^{\lambda + \mu}$ for $\lambda, \mu \in \Lambda$ and $x^{0} = 1$,
\item The commutation relations
\begin{equation}\label{commutation}
T_{j} x^{\lambda} - x^{s_j \lambda} T_{j} = (t_j - t_{j}^{-1}) \nabla_i(x^\lambda)
\end{equation}
for $1 \leq j \leq r$ and $\lambda \in \Lambda$.
\end{enumerate}
\end{definition}
Note also that $T_{j} \in \widetilde{\mathcal{H}}_{\Lambda}^{\times}$ with inverse 
\begin{equation}\label{eq:Tj_inv}
T_{j}^{-1} = T_{j} - t_j + t_{j}^{-1}, 
\end{equation}
which follows from the quadratic relation.

\begin{remark}\label{rem:met_HA}
Recall from \cite{SSV, SSV2} that for a metaplectic parameter $m \in \mathbb{Z}_{\geq 1}$, one may define the metaplectic Hecke algebra $\widetilde{\mathcal{H}}_{\Lambda^m}$.  It is the $\Lambda^m$-extended affine Hecke algebra associated to $\Phi^m$ and has generators $T_1, \dots, T_r$ and $x^{\lambda}$ for $\lambda \in \Lambda^m$.  This will be discussed further in Section \ref{sec:metreps}.
\end{remark}

For $w\in W$ with reduced expression $w=s_{j_1}\cdots s_{j_\ell}$
($1\leq j_i\leq \ell$) we write 
\begin{equation*}
	T_w:=T_{j_1}\cdots T_{j_\ell}\in\widetilde{\mathcal{H}}_{\Lambda},
\end{equation*}
which is independent of the choice of reduced expression, due to the braid relations \eqref{braid}.

It follows from the quadratic relation that we then have
\begin{equation}\label{TT}
T_i T_w = T_{s_iw} + \delta_{l(s_iw) < l(w)} (t_i - t_i^{-1})T_w.
\end{equation}
If $u, v \in W$ with $l(uv) = l(u) + l(v)$, then we have
\begin{equation}\label{Tw_prod}
	\begin{split}
		T_{uv} &= T_u T_v \\
		T_{(uv)^{-1}}^{-1} &= T_{u^{-1}}^{-1} T_{v^{-1}}^{-1}.
	\end{split}
\end{equation}

From \eqref{commutation}, we have
\begin{equation}\label{Tf_commut}
T_j f = s_j f T_j + (t_j - t_j^{-1}) \Big( \frac{s_j f - f }{x^{\alpha_j^{\vee}} - 1} \Big)
\end{equation}
for $1 \leq j \leq r$ and $f \in \mathbb{F}[\Lambda]$.  Similarly, we also have
\begin{equation}\label{Tinvf_commut}
T_j^{-1} f = s_j f \big(T_j - (t_j - t_j^{-1})\big) + (t_j - t_j^{-1}) \Big( \frac{s_j f - f}{1-x^{-\alpha_j^{\vee}}}\Big).
\end{equation}

Let $w_0 \in W$ be the longest Weyl group element.  We will need the following results about $w_0$.
\begin{lemma}\label{lem:Tw0}
	For any $w \in W$, we have
	\begin{align*}
		T_{w w_0} &= T_{w^{-1}}^{-1} T_{w_0} \\
		T_{w_0 w} &= T_{w_0} T_{w^{-1}}^{-1}.
	\end{align*}
\end{lemma}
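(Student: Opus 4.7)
The two identities follow from one general principle: the factorizations $w_0 = w^{-1} \cdot (ww_0) = (w_0 w) \cdot w^{-1}$ are length-additive for every $w \in W$, which lets me apply the product rule \eqref{Tw_prod}.

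The plan is to first establish the length identity $\ell(w w_0) = \ell(w_0) - \ell(w)$ for all $w \in W$. This is standard: since $w_0$ sends $\Phi^+$ to $\Phi^-$, one checks $\Pi(w_0) = \Phi^+$ and, using the definition $\Pi(ww_0) = \Phi^+ \cap (ww_0)^{-1} \Phi^-$, obtains $\Pi(ww_0) = w_0^{-1}(\Phi^+ \setminus w\Phi^+)$, whose cardinality is $\ell(w_0) - \ell(w)$. Equivalently, one can cite this as a well-known consequence of Lemma \ref{lengthadd} applied iteratively.

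Given this, since $\ell(w) = \ell(w^{-1})$, I get
\begin{equation*}
\ell(w^{-1}) + \ell(ww_0) = \ell(w) + \ell(w_0) - \ell(w) = \ell(w_0),
\end{equation*}
so the factorization $w_0 = w^{-1} \cdot (ww_0)$ is length-additive. Relation \eqref{Tw_prod} then yields $T_{w_0} = T_{w^{-1}} T_{ww_0}$, and multiplying on the left by $T_{w^{-1}}^{-1}$ (which exists by \eqref{eq:Tj_inv}) gives the first identity $T_{ww_0} = T_{w^{-1}}^{-1} T_{w_0}$.

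The second identity is entirely symmetric: from $\ell(w_0 w) + \ell(w^{-1}) = \ell(w_0)$ (same argument applied to $w_0 w$ in place of $ww_0$, or by taking inverses), the factorization $w_0 = (w_0 w) \cdot w^{-1}$ is length-additive, so \eqref{Tw_prod} gives $T_{w_0} = T_{w_0 w} T_{w^{-1}}$, and multiplying on the right by $T_{w^{-1}}^{-1}$ produces $T_{w_0 w} = T_{w_0} T_{w^{-1}}^{-1}$. There is no real obstacle here; the only subtlety is ensuring one uses $\ell(w^{-1}) = \ell(w)$ and the correct orientation of the length-additive factorization so that \eqref{Tw_prod} applies directly.
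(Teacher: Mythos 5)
Your proof is correct and follows exactly the paper's route: the paper also just notes $\ell(ww_0)=\ell(w_0w)=\ell(w_0)-\ell(w)$ and applies \eqref{Tw_prod} to the length-additive factorizations $w_0=w^{-1}\cdot(ww_0)$ and $w_0=(w_0w)\cdot w^{-1}$, then cancels $T_{w^{-1}}$. One small slip in your aside: the set identity $\Pi(ww_0)=w_0^{-1}(\Phi^+\setminus w\Phi^+)$ is not right (that set equals $w_0(\Phi^+\cap w\Phi^-)$, which consists of negative roots and has cardinality $\ell(w)$); the correct statement is $\Pi(ww_0)=-w_0\bigl(\Phi^+\cap w^{-1}\Phi^+\bigr)$, which indeed has cardinality $\ell(w_0)-\ell(w)$ — or simply cite the length fact as standard, as the paper does.
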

\begin{proof}
Note that $l(w w_0) = l(w_0 w) = l(w_0) - l(w)$.  Then the first equation follows from Eq.  \eqref{Tw_prod} with $u=w^{-1}$, $v=w w_0$, the second with $u=w_0w$ and $v=w^{-1}$.
\end{proof}

\begin{lemma}\label{lem:lensplit}
Let $w \in W^{c}$ and $u \in W_{c}$.  Then $l(wu) = l(w) + l(u)$.
\end{lemma}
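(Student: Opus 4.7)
The plan is to reduce the statement to the defining length-additivity property of parabolic minimal coset representatives. First I would recall, from the discussion just preceding Definition \ref{def:wc_decomp}, that the stabilizer subgroup of $c$ is parabolic: $W_c = W_{J_c}$ with $J_c = \{j \in [1,r] : s_j c = c\}$. Consequently $W/W_c = W/W_{J_c}$, so the set $W^c$ of minimal length coset representatives of $W/W_c$ introduced in the paragraph after Definition \ref{def:key_ccondit} coincides with the set $W^{J_c}$ from Section \ref{cosetsection}.

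Next, I would invoke the equivalence of the two descriptions of $W^{J_c}$. The set $W^{J_c}$ was \emph{defined} as those $w \in W$ satisfying $\ell(ww') = \ell(w) + \ell(w')$ for all $w' \in W_{J_c}$, while Lemma \ref{cosetcomb}(1) identifies $W^{J_c}$ with the complete set of minimal length coset representatives of $W/W_{J_c}$. Pulling these two characterizations together identifies $W^c$ (minimal length representatives) with $W^{J_c}$ (length-additive elements).

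With this identification in hand, the proof is immediate: applying the length-additivity property to $w \in W^c = W^{J_c}$ and $w' = u \in W_c = W_{J_c}$ yields $\ell(wu) = \ell(w) + \ell(u)$, as desired. There is no genuine obstacle here; the lemma is a direct consequence of unpacking the definition of $W^c$ via Lemma \ref{cosetcomb}, and its role is simply to record this fact in the notation of the paper so that it can be invoked freely in subsequent Hecke algebra computations such as those involving $T_w T_u = T_{wu}$ via \eqref{Tw_prod}.
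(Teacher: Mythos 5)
Your proof is correct and matches what the paper implicitly relies on: the paper states Lemma \ref{lem:lensplit} without proof, since it is exactly the standard fact that $W_c=W_{J_c}$ is parabolic and $W^c=W^{J_c}$, whose defining property is length-additivity against $W_{J_c}$. Your unpacking via Lemma \ref{cosetcomb}(1) and the definition of $W^{J}$ is precisely the intended argument.
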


\begin{lemma}\label{lem:Tw0wu}
	Let $w \in W^{c}$ and $u \in W_{c}$.  We have
	\begin{align*}
		T_{(w_0 w u)^{-1}}^{-1} & = T_{w_0}^{-1} T_{w} T_{u} \\
		T_{w_0 w u} &= T_{w_0} T_{w^{-1}}^{-1} T_{u^{-1}}^{-1}.
	\end{align*}
\end{lemma}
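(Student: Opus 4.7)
The plan is to apply Lemma \ref{lem:Tw0} to a suitably chosen Weyl group element and then use Lemma \ref{lem:lensplit} together with the length-additive factorization rule \eqref{Tw_prod} to split the resulting Hecke element according to $w$ and $u$ individually. Since $w \in W^c$ and $u \in W_c$, Lemma \ref{lem:lensplit} gives $\ell(wu) = \ell(w) + \ell(u)$, and hence also $\ell((wu)^{-1}) = \ell(u^{-1}) + \ell(w^{-1})$. Consequently \eqref{Tw_prod} yields the two factorizations $T_{wu} = T_w T_u$ and $T_{(wu)^{-1}}^{-1} = T_{w^{-1}}^{-1} T_{u^{-1}}^{-1}$, which will be the key inputs.

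For the second equation I would apply the second identity in Lemma \ref{lem:Tw0} with $w$ replaced by $wu$, obtaining $T_{w_0 wu} = T_{w_0} T_{(wu)^{-1}}^{-1}$, and then substitute the factorization $T_{(wu)^{-1}}^{-1} = T_{w^{-1}}^{-1} T_{u^{-1}}^{-1}$. For the first equation I would apply the first identity in Lemma \ref{lem:Tw0} with $w$ replaced by $u^{-1}w^{-1}$, which is legitimate because $(u^{-1}w^{-1}) w_0 = (w_0 wu)^{-1}$; the resulting identity reads $T_{(w_0 wu)^{-1}} = T_{wu}^{-1} T_{w_0}$. Since $T_{wu} = T_w T_u$, this gives $T_{wu}^{-1} = T_u^{-1} T_w^{-1}$, and inverting the whole equation then produces $T_{(w_0 wu)^{-1}}^{-1} = T_{w_0}^{-1} T_w T_u$, as desired.

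There is no serious obstacle; the only care needed is to distinguish the element $T_{v^{-1}}$ (the image of the Weyl element $v^{-1}$ under $w \mapsto T_w$) from the algebra inverse $T_v^{-1}$, which in general differ because of the quadratic relation \eqref{eq:Tj_inv}. Keeping this bookkeeping straight throughout the manipulation is what ensures the two identities come out with the correct combination of $T_w$, $T_u$, $T_{w^{-1}}^{-1}$, and $T_{u^{-1}}^{-1}$ on the right-hand side.
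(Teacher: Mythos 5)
Your proposal is correct and follows essentially the same route as the paper, whose proof simply cites Lemma \ref{lem:Tw0} and Lemma \ref{lem:lensplit}; your write-up just makes explicit the use of the length-additive factorizations \eqref{Tw_prod} that the paper leaves implicit. The verification that $(u^{-1}w^{-1})w_0=(w_0wu)^{-1}$ and the inversion step $T_{wu}^{-1}=T_u^{-1}T_w^{-1}$ are handled correctly, so nothing is missing.
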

\begin{proof}
	Follows from Lemma \ref{lem:Tw0} and Lemma \ref{lem:lensplit}.
\end{proof}

\begin{definition}\label{def:iota} 
Let $\iota: \mathbb{F}[E]\rightarrow \mathbb{F}[E]$ defined by $\iota(x^{\mu}) = x^{-\mu}$ and extending linearly. Note that $\iota$ preserves $\mathbb{F}[\Lambda]$.
\end{definition}

One can easily verify the following identities: 

\begin{proposition}\label{w0_facts}
Let $1 \leq i \leq r$.  Then
\begin{enumerate}
\item $w_0^{2} = 1$
\item $\iota w = w \iota$ for $w \in W$
\item $w_0 \alpha_i^{\vee} = -\alpha_{w_0(i)}^{\vee}$
\item $s_i w_0 = w_0 s_{w_0(i)}$.
\end{enumerate}
\end{proposition}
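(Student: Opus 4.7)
The plan is to treat each identity separately, in the order given, since later parts will use earlier ones. None of these should present a serious obstacle — they are all standard facts about the longest element $w_0$ — but I should be careful to invoke only results that the paper has set up.

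For (1), I would argue via the characterization of $w_0$ as the unique element sending every positive root to a negative root. Then $w_0^{-1}$ also has this property, so it is also a longest element; since $\ell(w_0^{-1}) = \ell(w_0)$ and the longest element is unique, $w_0^{-1} = w_0$, hence $w_0^2 = 1$. Alternatively one can observe $w_0^2(\Phi^+) = \Phi^+$, so $w_0^2$ has no inversions and must be the identity via Lemma \ref{lengthadd}.

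For (2), this is a one-line computation on basis elements: $\iota w (x^\mu) = \iota(x^{w\mu}) = x^{-w\mu}$ and $w\iota(x^\mu) = w(x^{-\mu}) = x^{-w\mu}$. Extending linearly gives $\iota w = w\iota$ on all of $\mathbb{F}[E]$.

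For (3), the key inputs are the earlier identity $w(\alpha^\vee) = (w\alpha)^\vee$ (from Section 2.1) together with the well-known fact that $-w_0$ permutes the simple roots, i.e.\ $-w_0 \alpha_i = \alpha_{\sigma(i)}$ for some permutation $\sigma$ of $[1,r]$. The notation $w_0(i)$ in the proposition is precisely this permutation $\sigma$, so $w_0 \alpha_i = -\alpha_{w_0(i)}$, and applying the $\vee$ operation yields $w_0 \alpha_i^\vee = -\alpha_{w_0(i)}^\vee$. The only subtle point is confirming that the paper's convention for $w_0(i)$ matches this permutation, which I would verify from context. Finally, (4) follows by combining (1) and (3) with the conjugation identity $w s_\alpha w^{-1} = s_{w\alpha}$: we have $w_0 s_{w_0(i)} w_0^{-1} = s_{w_0 \alpha_{w_0(i)}} = s_{-\alpha_i} = s_i$ (using (3) in reverse together with the fact that $s_{-\alpha} = s_\alpha$), and then multiplying on the right by $w_0$ and using $w_0^{-1} = w_0$ from (1) gives $s_i w_0 = w_0 s_{w_0(i)}$.

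The entire proof is essentially mechanical — the main thing to be careful about is the interpretation of the notation $w_0(i)$ on the index set, and remembering to use $w_0^{-1} = w_0$ when needed.
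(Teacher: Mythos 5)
Your proof is correct. The paper itself gives no argument for this proposition — it simply states ``One can easily verify the following identities'' and leaves them to the reader — so there is no proof in the paper to compare against; your arguments are the standard ones (uniqueness of the longest element for (1), direct computation on monomials for (2), the fact that $-w_0$ permutes the simple roots together with $w(\alpha^\vee)=(w\alpha)^\vee$ for (3), and conjugation of reflections for (4)), and they fill in exactly what the paper leaves implicit.
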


\begin{lemma}\label{w0sinabla}
Let $f \in \mathbb{F}[\Lambda]$, $1 \leq i, j \leq r$, and $w_0(i) = j$.  We have
\begin{equation*}
w_0 s_i \nabla_i f = -\nabla_{j}(w_0 f).
\end{equation*}
\end{lemma}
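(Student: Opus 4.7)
The plan is to unpack both sides of the claimed identity directly from the definition of $\nabla_i$ and then match them using the commutation facts collected in Proposition \ref{w0_facts}. Since the operators involved are all $\mathbb{F}$-linear, it suffices to verify the identity after expanding the divided differences as rational expressions in $\mathbb{F}[\Lambda]$ (or its fraction field).

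First, I would compute $s_i \nabla_i f$ by applying $s_i$ to the quotient $\frac{s_i f - f}{x^{\alpha_i^\vee} - 1}$; using $s_i(x^{\alpha_i^\vee}) = x^{-\alpha_i^\vee}$ and $s_i^2 = 1$ gives
\begin{equation*}
s_i \nabla_i f \;=\; \frac{f - s_i f}{x^{-\alpha_i^\vee} - 1} \;=\; x^{\alpha_i^\vee}\,\nabla_i f,
\end{equation*}
after clearing the negative exponent by multiplying numerator and denominator by $x^{\alpha_i^\vee}$.

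Next, applying $w_0$ and invoking the relations $w_0 \alpha_i^\vee = -\alpha_j^\vee$ and $w_0 s_i = s_j w_0$ from Proposition \ref{w0_facts} (items (3) and (4)), I compute
\begin{equation*}
w_0 s_i \nabla_i f \;=\; x^{-\alpha_j^\vee}\, w_0 \nabla_i f \;=\; x^{-\alpha_j^\vee}\cdot \frac{s_j(w_0 f) - w_0 f}{x^{-\alpha_j^\vee} - 1}.
\end{equation*}
Clearing the negative exponent in the denominator (multiply top and bottom by $-x^{\alpha_j^\vee}$) converts this into $-\frac{s_j(w_0 f) - w_0 f}{x^{\alpha_j^\vee} - 1} = -\nabla_j(w_0 f)$, which is the desired identity.

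There is no real obstacle here: the argument is a one-line algebraic manipulation once the two sign/commutation facts from Proposition \ref{w0_facts} are in hand. The only thing to be careful about is keeping track of the sign introduced when converting $x^{-\alpha_j^\vee} - 1$ back to $x^{\alpha_j^\vee} - 1$, which is precisely what produces the overall minus sign on the right-hand side.
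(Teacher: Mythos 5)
Your proof is correct and takes essentially the same approach as the paper: a direct computation from the definition of $\nabla_i$ using the facts $w_0\alpha_i^\vee = -\alpha_j^\vee$ and $w_0 s_i = s_j w_0$ from Proposition \ref{w0_facts}. The paper performs the whole calculation in a single displayed step, whereas you first simplify $s_i\nabla_i f = x^{\alpha_i^\vee}\nabla_i f$ and then apply $w_0$, but the underlying manipulations and sign-tracking are identical.
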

\begin{proof}
We compute
\begin{equation*}
w_0 s_i \nabla_i f = \frac{w_0 f - s_j w_0 f}{x^{\alpha_j^{\vee}} - 1} = -\nabla_{j}(w_0 f)
\end{equation*}
using Proposition \ref{w0_facts}.
\end{proof}

\begin{theorem}\label{thm:pol_rep}
For $\Lambda \in \mathcal{L}$, the polynomial representation $\pi$ of $\widetilde{\mathcal{H}}_{\Lambda}$ on $\mathbb{F}[\Lambda]$ is given by the following formulas:
\begin{equation*}
\pi(T_j) x^{\lambda} = t_j x^{s_j \lambda} + (t_j - t_j^{-1}) \nabla_j (x^{\lambda})
\end{equation*}
for $\lambda \in \Lambda$, $1 \leq j \leq r$, and $\pi(x^{\mu}) x^{\lambda} = x^{\lambda + \mu}$ for $\mu \in \Lambda$.

\end{theorem}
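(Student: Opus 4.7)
The plan is to verify that the formulas in the statement define operators on $\mathbb{F}[\Lambda]$ that satisfy all four families of defining relations of $\widetilde{\mathcal{H}}_\Lambda$ listed in Definition \ref{def:AHA}. Relations (3) are trivial since $\pi(x^{\mu})$ is defined as multiplication by $x^\mu$ on the commutative group algebra $\mathbb{F}[\Lambda]$. The core of the proof is thus the verification of the braid relations \eqref{braid}, the quadratic relations \eqref{quadratic}, and the commutation relations \eqref{commutation}. It is convenient to rewrite the proposed formula in the operator form
\begin{equation*}
\pi(T_j) = t_j\, s_j + (t_j - t_j^{-1})\,\nabla_j
\end{equation*}
where $s_j$ is the natural $W$-action on $\mathbb{F}[\Lambda]$ and $\nabla_j$ is the operator of Definition \ref{def:nabla}. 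The basic calculus of these operators (which I would collect as a lemma) is $s_j^2 = \textup{id}$, $\nabla_j^2 = \nabla_j$, $\nabla_j \circ s_j = -\nabla_j$, and $s_j \circ \nabla_j = M_{x^{\alpha_j^\vee}} \circ \nabla_j$, where $M_{x^{\alpha_j^\vee}}$ denotes multiplication by $x^{\alpha_j^\vee}$; each is a one-line computation from the definition $\nabla_j f = (s_jf - f)/(x^{\alpha_j^\vee} - 1)$.

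The quadratic relation then follows by expanding $\pi(T_j)^2$ using these identities and comparing with $(t_j - t_j^{-1})\pi(T_j) + \textup{id}$; the key step is to rewrite $s_j f = f + (x^{\alpha_j^\vee} - 1)\nabla_j f$ so that both sides become visibly equal as sums of a multiple of $f$ and a multiple of $\nabla_j f$, with coefficients that factor as $(t_j - t_j^{-1})(t_j x^{\alpha_j^\vee} - t_j^{-1})$. The commutation relation is equally direct: apply both sides to an arbitrary $f \in \mathbb{F}[\Lambda]$, note that the pure $t_j s_j$-contributions cancel since $s_j(x^\lambda f) = x^{s_j \lambda}(s_j f)$, and check that
\begin{equation*}
\nabla_j(x^\lambda f) - x^{s_j \lambda}\,\nabla_j f = \nabla_j(x^\lambda)\cdot f,
\end{equation*}
which is a direct computation from the definition of $\nabla_j$ and the Leibniz-like structure of $(s_j - 1)$.

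The main obstacle is the braid relations, because the verification requires a coupled identity between $\pi(T_j)$ and $\pi(T_{j'})$ of length $m_{jj'} \in \{2,3,4,6\}$. Since each such relation only involves two simple generators, I would first reduce to the rank-two subsystem spanned by $\alpha_j, \alpha_{j'}$ and then handle the four cases. The case $m_{jj'} = 2$ is immediate from $s_j s_{j'} = s_{j'} s_j$ and the fact that $\nabla_j$ commutes with both $s_{j'}$ and $\nabla_{j'}$ when $\alpha_j \perp \alpha_{j'}$. For $m_{jj'} \geq 3$, the cleanest approach is to use the alternative form
\begin{equation*}
\pi(T_j) = \frac{t_j x^{\alpha_j^\vee} - t_j^{-1}}{x^{\alpha_j^\vee} - 1}\, s_j - \frac{t_j - t_j^{-1}}{x^{\alpha_j^\vee} - 1},
\end{equation*}
which exhibits $\pi(T_j)$ as a Demazure--Lusztig operator, and then verify the braid identity on a generic monomial $x^\lambda$ by tracking the pole structure and comparing coefficients of the two sides as $W$-equivariant rational operators, restricted to $\mathbb{F}[\Lambda]$. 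This is the classical verification originally due to Lusztig, and once the four families of relations are established, the universal property of $\widetilde{\mathcal{H}}_\Lambda$ given in Definition \ref{def:AHA} yields the representation $\pi$.
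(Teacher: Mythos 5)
The paper does not actually supply a proof of Theorem \ref{thm:pol_rep}: it is recorded without proof in the preliminaries, as a classical fact (the Demazure--Lusztig/polynomial representation, for which the paper refers generically to standard references such as \cite{Lu,Ma}). So there is no paper proof to compare against, and the right question is simply whether your outline is sound as a direct verification.

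Your outline is correct and is the standard one. The auxiliary identities you propose to collect are all true and provable in one line each: $s_j\circ\nabla_j = M_{x^{\alpha_j^\vee}}\circ\nabla_j$, $\nabla_j\circ s_j=-\nabla_j$, and $\nabla_j^2=\nabla_j$ (though the last follows from the first two via $\nabla_j = \frac{s_j-1}{x^{\alpha_j^\vee}-1}$ and is not independently needed). Your computation for the quadratic relation correctly shows that both $\pi(T_j)^2$ and $(t_j-t_j^{-1})\pi(T_j)+\mathrm{id}$ reduce to $t_j^2 + (t_j-t_j^{-1})(t_jx^{\alpha_j^\vee}-t_j^{-1})\nabla_j$ after substituting $s_j = 1 + (x^{\alpha_j^\vee}-1)\nabla_j$. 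The twisted Leibniz identity $\nabla_j(x^\lambda f) = x^{s_j\lambda}\nabla_j(f) + \nabla_j(x^\lambda)f$ is indeed what makes the commutation relation \eqref{commutation} drop out, once the $t_j s_j$-terms cancel. For the braid relations your strategy (pass to the Demazure--Lusztig form $\pi(T_j) = \frac{t_jx^{\alpha_j^\vee}-t_j^{-1}}{x^{\alpha_j^\vee}-1}s_j - \frac{t_j-t_j^{-1}}{x^{\alpha_j^\vee}-1}$, reduce to rank two, handle $m_{jj'}\in\{2,3,4,6\}$) is exactly what the classical proofs do; the $m_{jj'}=2$ case is immediate as you say, while the longer cases require the explicit rank-two computation or an appeal to the known result. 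This last point is the only place where the outline is necessarily schematic rather than complete, but deferring to the classical verification is appropriate for a result the paper itself treats as background.
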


\begin{remark}
The operators $\pi(T_j)$ for $1 \leq j \leq r$ are called Demazure-Lusztig operators.
\end{remark}

\begin{definition}\label{def:poly_rep_v}
We write $\pi_{v}$ for the representation $\pi :  \widetilde{\mathcal{H}}_{\Lambda} \rightarrow \text{End}(\mathbb{F}[\Lambda])$ of Theorem \ref{thm:pol_rep} with all Hecke parameters $t_i = v^{1/2}$ for all $1 \leq i \leq r$.
\end{definition}

Using \eqref{Tinvf_commut} and Theorem \ref{thm:pol_rep}, we have the following formula for $\pi(T_j^{-1})f$, with $f \in \mathbb{F}[\Lambda]$ and $1 \leq j \leq r$:
\begin{equation}\label{pi_Tinvf}
\pi(T_j^{-1})f = t_j^{-1} s_j f +  (t_j - t_j^{-1}) \Big( \frac{s_j f - f}{1-x^{-\alpha_j^{\vee}}}\Big).
\end{equation}

\begin{lemma}\label{lem:sinablai}
	For $1 \leq i \leq r$, we have
	\begin{equation*}
		s_i\nabla_i = \nabla_i + s_i - 1.
	\end{equation*}
\end{lemma}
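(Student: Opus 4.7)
The plan is a direct computation: apply both sides to an arbitrary $f \in \mathbb{F}[\Lambda]$ and simplify using only the definition of $\nabla_i$ (Definition \ref{def:nabla}) and the identity $s_i(x^{\alpha_i^\vee}) = x^{-\alpha_i^\vee}$, which follows from $s_i \alpha_i^\vee = -\alpha_i^\vee$.

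First I would unfold the left-hand side as
\begin{equation*}
s_i \nabla_i(f) \;=\; s_i\!\left( \frac{s_i f - f}{x^{\alpha_i^\vee} - 1}\right) \;=\; \frac{f - s_i f}{x^{-\alpha_i^\vee} - 1}.
\end{equation*}
Then I would clear the negative exponent by multiplying numerator and denominator by $-x^{\alpha_i^\vee}$, yielding
\begin{equation*}
s_i\nabla_i(f) \;=\; \frac{x^{\alpha_i^\vee}(s_i f - f)}{x^{\alpha_i^\vee} - 1}.
\end{equation*}

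Second, I would expand the right-hand side applied to $f$ over the common denominator $x^{\alpha_i^\vee} - 1$:
\begin{equation*}
\nabla_i(f) + s_i f - f \;=\; \frac{s_i f - f + (s_i f - f)(x^{\alpha_i^\vee} - 1)}{x^{\alpha_i^\vee} - 1} \;=\; \frac{(s_i f - f)\, x^{\alpha_i^\vee}}{x^{\alpha_i^\vee}-1},
\end{equation*}
which matches the expression obtained for $s_i\nabla_i(f)$. Since $f$ was arbitrary, the identity of operators follows.

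There is no real obstacle here — this is a one-line computation, essentially the standard manipulation that underlies the quadratic relation for Demazure-type operators. The only small care needed is handling the sign that arises from $s_i(x^{\alpha_i^\vee} - 1) = x^{-\alpha_i^\vee} - 1$, which is managed cleanly by the multiplication by $-x^{\alpha_i^\vee}$ trick above.
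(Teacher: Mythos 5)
Your proof is correct and is essentially the same direct computation as the paper's: both verify on an arbitrary $f$ that $s_i\nabla_i(f) = \frac{x^{\alpha_i^\vee}(s_i f - f)}{x^{\alpha_i^\vee}-1} = (\nabla_i + s_i - 1)f$, differing only in whether the numerator is regrouped or the two sides are put over a common denominator.
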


\begin{proof}
Let $1 \leq i \leq r$ and $f \in \mathbb{F}[\Lambda]$.  We compute directly using Definition \ref{def:nabla}
\begin{equation*}
s_i \nabla_i(f) = \frac{x^{\alpha_i^{\vee}}  (s_i f -  f) }{x^{\alpha_i^{\vee}} -1 } = \frac{(s_i f - f) + (x^{\alpha_i^{\vee}} - 1)s_i f - (x^{\alpha_i^{\vee}}-1)f}{x^{\alpha_i^{\vee}} -1} = (\nabla_i + s_i - 1)f.
\qedhere
\end{equation*}	
\end{proof} 

\begin{lemma}\label{lem:w0Ti}
Let $1 \leq i \leq r$.  We have
\begin{align*}
\pi(T_i^{\pm 1}) \iota w_0 &= \iota w_0 \pi(T_{w_0(i)}^{\pm 1}) \\
\pi(T_i) w_0 &= w_0\bigl(-\pi(T_{w_0(i)}^{-1}) + (t_i + t_i^{-1})s_{w_0(i)}\bigr).
\end{align*}
\end{lemma}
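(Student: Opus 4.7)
The plan is to verify each identity by a direct calculation, expanding both sides via the explicit formula $\pi(T_j)f = t_j s_j f + (t_j - t_j^{-1})\nabla_j(f)$ of Theorem \ref{thm:pol_rep} and comparing term by term using the preceding lemmas.

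For the first identity, I would apply both sides to an arbitrary $f\in\mathbb{F}[\Lambda]$. Since $\iota$ and $w_0$ are ring automorphisms of $\mathbb{F}[\Lambda]$ and $\iota$ commutes with the $W$-action, expanding $\pi(T_i)\iota w_0 f$ and $\iota w_0\pi(T_{w_0(i)})f$ reduces to checking that the ``permutation parts'' and ``divided-difference parts'' match separately. The permutation parts agree via $s_i w_0 = w_0 s_{w_0(i)}$ (Proposition \ref{w0_facts}(4)) and the $W$-invariance of $\mathbf{t}$, which yields $t_i = t_{w_0(i)}$. For the divided-difference parts, I would push $\iota w_0$ through the quotient defining $\nabla_{w_0(i)}$: applying $\iota w_0$ to $x^{\alpha_{w_0(i)}^\vee}$ gives $x^{-w_0 \alpha_{w_0(i)}^\vee} = x^{\alpha_i^\vee}$ by Proposition \ref{w0_facts}(3), which produces exactly the denominator of $\nabla_i(\iota w_0 f)$. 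The inverse case $\pi(T_i^{-1})\iota w_0 = \iota w_0 \pi(T_{w_0(i)}^{-1})$ then follows immediately from the quadratic relation $T_i^{-1} = T_i - (t_i-t_i^{-1})$ together with $t_i = t_{w_0(i)}$.

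For the second identity, I would again evaluate both sides on $f\in \mathbb{F}[\Lambda]$. The left-hand side gives $t_i w_0 s_{w_0(i)} f + (t_i - t_i^{-1})\nabla_i(w_0 f)$ after using $s_i w_0 = w_0 s_{w_0(i)}$. The key step is rewriting $\nabla_i(w_0 f)$ in terms of $\nabla_{w_0(i)}(f)$ via Lemma \ref{w0sinabla}, applied with $i$ and $w_0(i)$ interchanged, giving $\nabla_i(w_0 f) = -w_0 s_{w_0(i)}\nabla_{w_0(i)}(f)$. On the right-hand side, using \eqref{pi_Tinvf} and the identity $\frac{s_j f - f}{1-x^{-\alpha_j^\vee}} = s_j\nabla_j(f)$ (which is immediate from Lemma \ref{lem:sinablai} applied to $\nabla_j$), the combination $-\pi(T_{w_0(i)}^{-1}) + (t_i+t_i^{-1})s_{w_0(i)}$ collapses to $t_i s_{w_0(i)} - (t_i - t_i^{-1})s_{w_0(i)}\nabla_{w_0(i)}$, and conjugating by $w_0$ matches the left-hand side.

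The computations are short and essentially forced by the earlier lemmas; the only real obstacle is the sign-bookkeeping when commuting $\nabla$-operators through $w_0$, which is encapsulated by Lemma \ref{w0sinabla} and the identity $w_0\alpha_{w_0(i)}^\vee = -\alpha_i^\vee$. Getting these signs right so that the $\nabla$-terms on both sides cancel (rather than double up) is the one place where care is needed.
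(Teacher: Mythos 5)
Your proposal is correct and follows essentially the same route as the paper: a direct term-by-term verification using the explicit formula of Theorem \ref{thm:pol_rep}, the quadratic relation for the inverse case, and the $w_0$-facts of Proposition \ref{w0_facts} (with Lemma \ref{w0sinabla} handling the sign in the divided-difference term of the second identity, which the paper subsumes under ``the argument for the second statement is similar''). The sign bookkeeping and the use of $t_{w_0(i)}=t_i$ are exactly the points the paper's computation relies on as well.
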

\begin{proof}
Let $\lambda \in \Lambda$ and $1 \leq i \leq r$.  

Consider the first statement.  We have
\begin{align*}
\pi(T_i) \iota w_0 (x^{\lambda}) &= \pi(T_i) x^{-w_0\lambda} = t_i x^{-s_i w_0 \lambda} + (t_i - t_i^{-1})\nabla_i(x^{-w_0(\lambda)}) \\
\pi(T_i^{-1}) \iota w_0 (x^{\lambda}) &= (\pi(T_i) - (t_i - t_i^{-1})) x^{-w_0 \lambda} \\
&= t_i x^{-s_i w_0 \lambda} + (t_i - t_i^{-1})\nabla_i(x^{-w_0(\lambda)})  - (t_i - t_i^{-1})x^{-w_0 \lambda}.
\end{align*}
Using Proposition \ref{w0_facts} and Definition \ref{def:nabla} gives the first assertion.
The argument for the second statement is similar.
\end{proof}

We write $\pi(\widetilde{T_i})$ for the operators with Hecke parameters $t_i$ replaced by $t_i^{-1}$.  We record the following relation.
\begin{proposition}\label{prop:inv_params}
Let $1 \leq i \leq r$.  Then, as operators on $\mathbb{F}[\Lambda]$, we have
\begin{equation*}
\pi(\widetilde{T_i}^{-1}) = \iota \pi(T_i) \iota.
\end{equation*}
\end{proposition}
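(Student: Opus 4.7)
The plan is to verify the identity by a direct calculation on the basis $\{x^\lambda\}_{\lambda\in\Lambda}$ of $\mathbb{F}[\Lambda]$, exploiting the explicit formulas already established in the excerpt.

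First, I would compute the left-hand side. Starting from the polynomial representation of Theorem \ref{thm:pol_rep} with Hecke parameter $t_i$ replaced by $t_i^{-1}$, the formula \eqref{pi_Tinvf} yields
\[
\pi(\widetilde{T_i}^{-1})x^\lambda = t_i\, x^{s_i\lambda} + (t_i^{-1}-t_i)\,\frac{x^{s_i\lambda}-x^\lambda}{1-x^{-\alpha_i^\vee}}.
\]
Next I would compute the right-hand side by unwinding $\iota$ on both sides of $\pi(T_i)$. Using $\iota(x^\mu)=x^{-\mu}$ and the explicit formula $\pi(T_i)x^{-\lambda} = t_i x^{-s_i\lambda} + (t_i-t_i^{-1})\nabla_i(x^{-\lambda})$, together with the direct evaluation $\nabla_i(x^{-\lambda})=(x^{-s_i\lambda}-x^{-\lambda})/(x^{\alpha_i^\vee}-1)$, I get
\[
\iota\pi(T_i)\iota(x^\lambda) = t_i\, x^{s_i\lambda} + (t_i-t_i^{-1})\,\frac{x^{s_i\lambda}-x^\lambda}{x^{-\alpha_i^\vee}-1}.
\]
Finally, multiplying numerator and denominator of the last fraction by $-1$ turns it into $(x^\lambda-x^{s_i\lambda})/(1-x^{-\alpha_i^\vee})$, which when combined with the prefactor $(t_i-t_i^{-1})$ matches the corresponding term in the LHS after one more sign flip. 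Since both sides are $\mathbb{F}$-linear and agree on every basis element $x^\lambda$, they agree as operators.

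The whole argument is a short bookkeeping computation; the only mild subtlety is tracking the two sign flips (one from the factor swap in the denominator $x^{-\alpha_i^\vee}-1$ versus $1-x^{-\alpha_i^\vee}$, and one from inverting the Hecke parameters, which swaps the sign of $t_i-t_i^{-1}$). No representation-theoretic input beyond Theorem \ref{thm:pol_rep}, Definition \ref{def:iota} and formula \eqref{pi_Tinvf} is needed, so I do not anticipate any real obstacle.
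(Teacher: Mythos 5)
Your proof is correct and is essentially the same calculation the paper compresses into the single line ``Follows from \eqref{pi_Tinvf}'': you spell out the verification on a basis element $x^\lambda$ using the explicit formula \eqref{pi_Tinvf} (with $t_i\mapsto t_i^{-1}$) on one side and $\iota$-conjugation of the Demazure--Lusztig formula on the other, then reconcile the two sign flips. Nothing to add.
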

\begin{proof}
Follows from \eqref{pi_Tinvf}.
\end{proof}

Let $h \in \widetilde{\mathcal{H}}_{\Lambda}$ be an arbitrary element.  By the PBW Theorem for affine Hecke algebras, there is a unique decomposition 
\begin{equation*}
h = \sum_{w \in W} f_{w} T_{w},
\end{equation*}
for $f_{w} \in \mathbb{F}[\Lambda]$.  In other words, $\widetilde{\mathcal{H}}_{\Lambda}$ is a free $\mathbb{F}[\Lambda]$-module, with basis given by $\{T_{w} : w \in W \}$.

\begin{definition}\label{def:gamma_coeffs_Hecke}
Define the collection of $\mathbb{F}[\Lambda]$-linear maps $\gamma_{w} : \widetilde{\mathcal{H}}_{\Lambda} \rightarrow \mathbb{F}[\Lambda]$ for $w \in W$ by 
\begin{equation*}
\gamma_{w}(  \sum_{w' \in W} f_{w'} T_{w'}) = f_{w},
\end{equation*} 
where $f_{w'} \in \mathbb{F}[\Lambda]$ for $w' \in W$.
\end{definition}

In Section \ref{sec:AHA_results}, we will compute the coefficients $\gamma_{w}(h)$ for specific elements $h \in \widetilde{\mathcal{H}}_{\Lambda}$ and arbitrary $w \in W$.   The following lemmas will be useful.

\begin{lemma}\label{gamma_Th}
	Let $1 \leq i \leq r$, $h \in \widetilde{\mathcal{H}}_{\Lambda}$ and $w \in W$.  Then
	\begin{equation*}
		\gamma_{w}(T_i h) = (t_i - t_i^{-1}) \nabla_{i}(\gamma_{w}(h)) + s_i \gamma_{s_i w}(h) + \delta_{l(s_i w) < l(w)} (t_i - t_i^{-1}) s_i \gamma_{w}(h).
	\end{equation*}
\end{lemma}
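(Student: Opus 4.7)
The plan is to prove the identity by a direct PBW-basis computation. Since the maps $\gamma_w$ are $\mathbb{F}[\Lambda]$-linear in $h$, write $h=\sum_{w'\in W}f_{w'}T_{w'}$ with $f_{w'}=\gamma_{w'}(h)\in\mathbb{F}[\Lambda]$, and work in $\widetilde{\mathcal{H}}_\Lambda$.

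First I would move $T_i$ past the polynomial coefficients using the commutation relation \eqref{Tf_commut}: for each $w'$,
\begin{equation*}
T_i f_{w'} = s_i(f_{w'})\,T_i + (t_i - t_i^{-1})\nabla_i(f_{w'}).
\end{equation*}
Multiplying on the right by $T_{w'}$ and summing over $w'$ gives
\begin{equation*}
T_i h = \sum_{w'\in W} s_i(f_{w'})\, T_i T_{w'} + (t_i-t_i^{-1})\sum_{w'\in W}\nabla_i(f_{w'})\,T_{w'}.
\end{equation*}

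Next I would expand each $T_iT_{w'}$ via \eqref{TT}, which gives $T_iT_{w'}=T_{s_iw'}+\delta_{\ell(s_iw')<\ell(w')}(t_i-t_i^{-1})T_{w'}$. Substituting this yields the PBW expansion
\begin{equation*}
T_i h = \sum_{w'\in W} s_i(f_{w'})\,T_{s_iw'} + (t_i-t_i^{-1})\sum_{w'\in W}\!\!\bigl(\delta_{\ell(s_iw')<\ell(w')}\,s_i(f_{w'}) + \nabla_i(f_{w'})\bigr)T_{w'}.
\end{equation*}
Now reading off the coefficient of $T_w$: in the first sum, $T_{s_iw'}=T_w$ iff $w'=s_iw$, contributing $s_i\gamma_{s_iw}(h)$; in the second sum, $T_{w'}=T_w$ iff $w'=w$, contributing $(t_i-t_i^{-1})(\delta_{\ell(s_iw)<\ell(w)}s_i\gamma_w(h)+\nabla_i\gamma_w(h))$. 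Collecting gives exactly the stated formula.

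There is no real obstacle here; this is a bookkeeping calculation using the defining relations of $\widetilde{\mathcal{H}}_\Lambda$ and the PBW basis. The only subtlety is to correctly track which $w'$ produces a $T_w$ term after applying the product rule, and in particular to note that the Kronecker delta in the final formula reads $\delta_{\ell(s_iw)<\ell(w)}$ (with $w'=w$ substituted), rather than involving $s_iw$.
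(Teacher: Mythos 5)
Your proposal is correct and follows essentially the same route as the paper: expand $h$ in the PBW basis, use the commutation relation \eqref{Tf_commut} together with \eqref{TT} to rewrite $T_i h$, and read off the coefficient of $T_w$. The bookkeeping of which $w'$ contributes to $T_w$ (namely $w'=s_iw$ for the first sum and $w'=w$ for the delta and $\nabla_i$ terms) is handled correctly.
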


\begin{proof}
	Setting $f_{\hat w} = \gamma_{\hat w}(h) \in \mathbb{F}[\Lambda]$, and using \eqref{Tf_commut} and \eqref{TT}, we have
	\begin{align*}
		T_i h &= T_i \sum_{\hat{w} \in W} f_{\hat w} T_{\hat w} \\
		& = \sum_{\hat w \in W} \big(s_i f_{\hat w} T_{s_i \hat w} + (t_i - t_i^{-1}) \nabla_i(f_{\hat w}) T_{\hat w} \big) + \sum_{\hat w : l(s_i \hat{w}) < l(\hat{w})} (t_i - t_i^{-1}) (s_i f_{\hat w}) T_{\hat w}.
	\end{align*}
	Taking the coefficient on $T_{w}$ gives the desired result. 
\end{proof}

\begin{lemma}\label{lem:T_triang}
Let $f \in \mathbb{F}[\Lambda]$ and $w \in W$.  Then if $l(w') \leq l(w)$, we have
\begin{equation*}
\gamma_{w}(T_{w'}^{\pm 1} f) = \delta_{\{ w = w' \}} wf.
\end{equation*}
\end{lemma}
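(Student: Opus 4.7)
The plan is to induct on $l(w')$. For the base case $l(w') = 0$, we have $w' = e$ and $T_{e}^{\pm 1} = 1$, so trivially $\gamma_w(f) = \delta_{w,e}\,f = \delta_{w,w'}\,wf$.

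For the inductive step I will pick a reduced expression $w' = s_i \hat{w}$ with $l(\hat{w}) = l(w') - 1$, giving $T_{w'}^{\pm 1} = T_i^{\pm 1} T_{\hat{w}}^{\pm 1}$ (interpreting $T_{w'}^{-1}$ as the reduced-expression-ordered product $T_{j_1}^{-1} \cdots T_{j_l}^{-1}$, which is well-defined since the $T_j^{-1}$ obey the same braid relations as the $T_j$). Set $h := T_{\hat{w}}^{\pm 1} f$; by the induction hypothesis, $\gamma_{\tilde{w}}(h) = \delta_{\tilde{w},\hat{w}}\,\hat{w} f$ whenever $l(\tilde{w}) \geq l(\hat{w})$. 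Now fix $w$ with $l(w) \geq l(w') = l(\hat{w}) + 1$. Then $l(w) > l(\hat{w})$ forces $\gamma_w(h) = 0$, while $l(s_i w) \geq l(w) - 1 \geq l(\hat{w})$ makes the IH applicable to $\gamma_{s_i w}(h) = \delta_{s_i w, \hat{w}}\,\hat{w} f = \delta_{w, w'}\,\hat{w} f$ (using $w' = s_i \hat w$).

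Applying Lemma \ref{gamma_Th} and using $\gamma_w(h) = 0$ to kill the $\nabla_i$-term and the $\delta_{l(s_iw)<l(w)}$-term, I get $\gamma_w(T_i h) = s_i\gamma_{s_iw}(h) = \delta_{w,w'}\,s_i\hat{w}f = \delta_{w,w'}\,wf$. The $-1$ case then follows immediately from $T_i^{-1} = T_i - (t_i - t_i^{-1})$, which gives $\gamma_w(T_i^{-1} h) = \gamma_w(T_i h) - (t_i - t_i^{-1})\gamma_w(h) = \gamma_w(T_i h)$, the same answer. This completes the induction. The main (minor) obstacle is simply pinning down the convention for $T_{w'}^{-1}$; with the forward-product interpretation the whole argument reduces to a single invocation of Lemma \ref{gamma_Th}, the length inequality $l(w) > l(\hat{w})$ ensuring that only the $\gamma_{s_i w}(h)$ contribution survives.
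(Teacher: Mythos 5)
Your proof is correct and follows essentially the same route as the paper: induction on $l(w')$, stripping off a simple reflection and invoking Lemma \ref{gamma_Th}, with the length inequalities $l(w) > l(\hat w)$ and $l(s_iw) \geq l(\hat w)$ killing every term except $s_i\gamma_{s_iw}(h)$, and the inverse case reduced to the $T_i$ case via $T_i^{-1} = T_i - (t_i - t_i^{-1})$. Your explicit convention for $T_{w'}^{-1}$ (the forward-ordered product of the $T_{j_k}^{-1}$, i.e.\ $T_{w'^{-1}}^{-1}$ in the paper's usual notation) is the right call: under the literal reading $(T_{w'})^{-1}$ the leading term in the $T$-basis is $T_{w'^{-1}}$, so the delta would sit at $w = w'^{-1}$, and the paper's own inductive step in fact manipulates $T_{(s_i w')^{-1}}^{-1} = T_i^{-1}T_{w'^{-1}}^{-1}$, so your reading coincides with the statement the paper actually proves and uses.
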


\begin{proof}

We will prove the statement by induction on $l(w')$.  The base case $l(w') = 0$ follows directly.  Now assume the statement holds for $l(w') = j$, $w' \in W$, and suppose $l(s_i w') > l(w')$ for some $1 \leq i \leq r$.  Suppose $w \in W$ with $l(w) \geq l(s_i w')$.  By Lemma \ref{gamma_Th}, we have
\begin{multline*}
\gamma_{w}(T_{s_i w'} f) = \gamma_{ w}(T_{i} T_{w'} f) \\
= (t_i - t_i^{-1}) \nabla_{i}(\gamma_{w}(T_{w'} f)) + s_i \gamma_{s_i w}(T_{w'} f) + \delta_{l(s_i w) < l(w)} (t_i - t_i^{-1}) s_i \gamma_{w}(T_{w'} f).
\end{multline*}
Applying the induction hypothesis, noting that $l(w) > l(w')$, $l(s_iw) \geq l(w')$, we obtain $\gamma_w(T_{s_iw'}f) = \delta_{\{w = s_iw'\}} s_iw'f$ as desired. Likewise, applying Lemma \ref{gamma_Th} to
\begin{equation*}
\gamma_{w}(T_{(s_i w')^{-1}}^{-1} f) = \gamma_{w}(T_{i}^{-1} T_{{w'}^{-1}}^{-1} f) = \gamma_{w}(T_{i}T_{{w'}^{-1}}^{-1} f) - (t_i - t_i^{-1}) \gamma_w(T_{{w'}^{-1}}^{-1} f),
\end{equation*}
we find $\gamma_w(T_{(s_iw')^{-1}}^{-1}f) = \delta_{\{w = (s_iw')^{-1}\}} (s_iw')^{-1} f$. The result now follows from induction.
\end{proof}

%%%%%%%%%%%%%%%%%%%%%%%%%%%%%%%%

\subsection{Partial (anti-)symmetrizers}\label{sec:partial_symm}

In this section, we recall the definition of partial (anti-)symmetrizers in $\widetilde{\mathcal{H}}_{\Lambda}$, as well as some of their main properties.  We will use these in Section \ref{sec:lim_ptl} to define partially (anti-)symmetric Macdonald polynomials. 

\begin{definition}\label{def:symm_antisymm}
Let $\mathbf{1}^{+}$ and $\mathbf{1}^{-}$ denote the symmetrizer and antisymmetrizer, respectively, in $\widetilde{\mathcal{H}}_{\Lambda}$:
\begin{align}\label{1plus1minus}
\mathbf{1}^{+} &= \sum_{w \in W} t(w) T_w\\
\mathbf{1}^{-} &= \sum_{w \in W} (-1)^{l(w)} t(w)^{-1} T_w.
\end{align}
\end{definition}

There are the following alternate expressions for $\mathbf{1}^{\pm}$:
\begin{align}\label{1plus1minus_alt}
\mathbf{1}^{+} &= t(w_0)^{2} \sum_{w \in W} t(w)^{-1} T_{w^{-1}}^{-1}\\
\mathbf{1}^{-} &= t(w_0)^{-2} \sum_{w \in W} (-1)^{l(w)} t(w) T_{w^{-1}}^{-1}.
\end{align}

Let $1 \leq i \leq r$.  We have the following identities in $\widetilde{\mathcal{H}}_{\Lambda}$: 
\begin{equation} \label{T1}
T_i \mathbf{1}^{\pm} = \pm t_i^{\pm 1} \mathbf{1}^{\pm}\\
\end{equation}
\begin{equation}  \label{T2}
\mathbf{1}^{\pm} T_i = \pm t_i^{\pm 1} \mathbf{1}^{\pm}
\end{equation}
(see \cite{Ch}).

\begin{lemma}\label{lem:antisym_w}
Let $w \in W$ and $f \in \mathbb{F}[\Lambda]$.  Then
\begin{enumerate}
\item $w(\pi(\mathbf{1}^{+}) f) = \pi(\mathbf{1}^{+}) f$
\item $\pi(\mathbf{1}^{-}) wf = (-1)^{l(w)} \pi(\mathbf{1}^{-}) f$.
\end{enumerate}
\end{lemma}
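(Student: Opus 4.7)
The plan is to reduce each assertion to the case $w = s_i$ of a simple reflection and then pass to the polynomial representation using \eqref{T1} and \eqref{T2}. The key observation, immediate from Theorem \ref{thm:pol_rep}, is that
\[
\pi(T_i) = t_i s_i + (t_i - t_i^{-1})\nabla_i
\]
as operators on $\mathbb{F}[\Lambda]$, and in particular $\pi(T_i) h = t_i h$ whenever $s_i h = h$ (since then $\nabla_i h = 0$).

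For (1), I would set $g := \pi(\mathbf{1}^+) f$. Applying $\pi$ to \eqref{T1} and evaluating at $f$ gives $\pi(T_i) g = t_i g$. Substituting the displayed formula for $\pi(T_i)$, multiplying through by $x^{\alpha_i^\vee} - 1$ to clear the denominator in $\nabla_i g$, and simplifying yields $(s_i g - g)(x^{\alpha_i^\vee} - t_i^{-2}) = 0$. Since $\mathbb{F}[\Lambda]$ is a Laurent polynomial ring and $x^{\alpha_i^\vee} - t_i^{-2}$ is a nonzero element of this domain, we conclude $s_i g = g$. Iterating along a reduced expression $w = s_{j_1} \cdots s_{j_\ell}$ gives $wg = g$ for every $w \in W$.

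For (2), I would instead engineer an $s_i$-invariant vector and then annihilate it with $\pi(\mathbf{1}^-)$. Given $f$, let $g := s_i f + f$, which is manifestly $s_i$-invariant. By the key observation, $\pi(T_i) g = t_i g$. Combined with \eqref{T2} applied through $\pi$, this yields
\[
-t_i^{-1}\pi(\mathbf{1}^-) g \;=\; \pi(\mathbf{1}^-)\pi(T_i) g \;=\; t_i\,\pi(\mathbf{1}^-) g,
\]
whence $(t_i + t_i^{-1})\pi(\mathbf{1}^-) g = 0$, and therefore $\pi(\mathbf{1}^-) g = 0$; i.e., $\pi(\mathbf{1}^-)(s_i f) = -\pi(\mathbf{1}^-) f$. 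A routine induction on $\ell(w)$, peeling off a leftmost simple reflection from a reduced expression of $w$, then gives the general case.

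The only delicate point is the cancellation of the scalars $x^{\alpha_i^\vee} - t_i^{-2}$ in (1) and $t_i + t_i^{-1}$ in (2). The former is automatic since $\mathbb{F}[\Lambda]$ is a domain, while the latter is the standard nondegeneracy of the Hecke parameters in force throughout the paper. I do not anticipate further obstacles.
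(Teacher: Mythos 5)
Your proof is correct and is essentially the paper's own argument unpacked: the paper's one-line proof invokes the identity $s_j = t_j^{-1}\pi(T_j) - (1-t_j^{-2})\nabla_j$ together with \eqref{T1} and \eqref{T2}, which is exactly the computation you carry out (eigenvalue equation plus clearing the $\nabla_i$ denominator for (1), and killing $s_i$-invariants via the antisymmetrizer for (2)). The division by $t_i + t_i^{-1}$ in (2) is likewise implicit in the paper's route, so it is not a point of divergence.
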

\begin{proof}
Follows from 
\begin{equation*}
s_j = t_j^{-1} \pi(T_j) - (1 - t_j^{-2}) \nabla_{j} \in End(\mathbb{F}[\Lambda])
\end{equation*}
along with \eqref{T1} and \eqref{T2}.
\end{proof}

\begin{definition}\label{def:partial_symm}
Let $J \subseteq [1,r]$ with associated parabolic subgroup $W_{J}$.  Define the $J$-partial symmetrizer and antisymmetrizer, respectively
\begin{align*}
\mathbf{1}^{+}_{J} = \mathbf{1}^{+}_{W_J} &= \sum_{w \in W_{J}} t(w) T_w\\
\mathbf{1}^{-}_{J} = \mathbf{1}^{-}_{W_J} &= \sum_{w \in W_{J}} (-1)^{l(w)} t(w)^{-1} T_w.
\end{align*}
in $\widetilde{\mathcal{H}}_{\Lambda}$.
\end{definition}

\begin{remark}
When $J = [1,r]$, we have $\mathbf{1}^{\pm}_{J} = \mathbf{1}^{\pm}$ and when $J = \emptyset$, we have $\mathbf{1}^{\pm}_{J} = 1$.
\end{remark}

\begin{definition}\label{def:Jsymm}
Let $J \subseteq [1,r]$.  We say $f \in \mathbb{F}[\Lambda]$ is $J$-partially (anti-)symmetric, respectively, if $\pi(T_j)f = t_j f$ (resp. $\pi(T_j) f = -t_j^{-1} f$) for $j \in J$.  
\end{definition}

Let $w_0(W_{J})$ denote the longest word in $W_{J}$.  Analogous to \eqref{1plus1minus_alt}, we have
\begin{proposition}\label{prop:ptl1minus}
	We have
	\begin{align*}
		\mathbf{1}^{+}_{J} &= t(w_0(W_{J}))^{2}\sum_{u \in W_J} t(u)^{-1} T_{u^{-1}}^{-1} \\
		 \mathbf{1}^{-}_{J} &= t(w_0(W_{J}))^{-2} \sum_{u \in W_J} t(u) (-1)^{l(u)} T_{u^{-1}}^{-1}.
	\end{align*}
	
\end{proposition}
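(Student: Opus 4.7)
The plan is to reduce the claim to the analogous identity for the full symmetrizers (\eqref{1plus1minus_alt}), by exploiting the fact that the subalgebra of $\widetilde{\mathcal{H}}_{\Lambda}$ generated by $\{T_j : j \in J\}$ is itself a finite Hecke algebra for the Coxeter group $W_J$, with longest element $w_0(W_J)$, and that $\mathbf{1}^{\pm}_J$ are its full (anti-)symmetrizers. In fact, I would just reprove the identities \eqref{T1}, \eqref{T2} and the analog of Lemma \ref{lem:Tw0} for $W_J$ and then assemble them.

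More concretely, first I would verify the parabolic versions of \eqref{T1}, \eqref{T2}: for every $j \in J$, $T_j \mathbf{1}^{\pm}_J = \pm t_j^{\pm 1} \mathbf{1}^{\pm}_J$ and $\mathbf{1}^{\pm}_J T_j = \pm t_j^{\pm 1} \mathbf{1}^{\pm}_J$. These follow by the same pairing argument used for the full symmetrizers (pair $u \in W_J$ with $s_j u$), using only the quadratic and braid relations. Iterating along a reduced expression of $w_0(W_J)$ gives
\begin{equation*}
\mathbf{1}^{+}_J T_{w_0(W_J)}^{-1} = t(w_0(W_J))^{-1} \mathbf{1}^{+}_J, \qquad \mathbf{1}^{-}_J T_{w_0(W_J)}^{-1} = (-1)^{\ell(w_0(W_J))} t(w_0(W_J))\, \mathbf{1}^{-}_J.
\end{equation*}

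Next, I would establish the parabolic analog of Lemma \ref{lem:Tw0}: for every $u \in W_J$,
\begin{equation*}
T_{u\, w_0(W_J)} = T_{u^{-1}}^{-1} T_{w_0(W_J)},
\end{equation*}
which, after rearranging, reads $T_{u^{-1}}^{-1} = T_{u\, w_0(W_J)}\, T_{w_0(W_J)}^{-1}$. The proof is identical to that of Lemma \ref{lem:Tw0}: apply \eqref{Tw_prod} to the length-additive factorization $w_0(W_J) = u^{-1}\cdot (u\, w_0(W_J))$, valid because $\ell(u\, w_0(W_J)) = \ell(w_0(W_J)) - \ell(u)$ inside $W_J$. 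The same length additivity yields $t(u)\, t(u\, w_0(W_J)) = t(w_0(W_J))$.

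Now I would substitute $v := u\, w_0(W_J)$, which is a bijection of $W_J$, into the right-hand side of the claimed identity for $\mathbf{1}^{+}_J$:
\begin{equation*}
t(w_0(W_J))^{2} \sum_{u \in W_J} t(u)^{-1} T_{u^{-1}}^{-1}
= t(w_0(W_J)) \sum_{v \in W_J} t(v)\, T_v \, T_{w_0(W_J)}^{-1}
= t(w_0(W_J))\, \mathbf{1}^{+}_J\, T_{w_0(W_J)}^{-1} = \mathbf{1}^{+}_J,
\end{equation*}
where the last step uses the first displayed identity above. For the antisymmetrizer, the same substitution yields $(-1)^{\ell(w_0(W_J))}\, t(w_0(W_J))^{-1} \sum_v (-1)^{\ell(v)} t(v)^{-1} T_v \, T_{w_0(W_J)}^{-1}$, where one checks that $(-1)^{\ell(u)} = (-1)^{\ell(w_0(W_J))+\ell(v)}$; the factors of $(-1)^{\ell(w_0(W_J))}$ then cancel against the sign coming from the second displayed identity, producing $\mathbf{1}^{-}_J$ as claimed.

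There is no serious obstacle; the only mildly delicate point is sign and $t$-factor bookkeeping in the $\mathbf{1}^{-}_J$ case, which is handled cleanly once one tracks $(-1)^{\ell(u)} t(u)^{-1} = (-1)^{\ell(w_0(W_J))} t(w_0(W_J))^{-1} \cdot (-1)^{\ell(v)} t(v)$.
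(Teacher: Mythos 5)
Your proof is correct. The paper does not actually spell out a proof of this proposition, only asserting the result by analogy with the full-group identities \eqref{1plus1minus_alt}; your argument is exactly the natural realization of that analogy, treating $W_J$ as a Coxeter group with longest element $w_0(W_J)$ inside the finite Hecke subalgebra generated by $\{T_j : j \in J\}$, reproving the local versions of \eqref{T1}, \eqref{T2} and Lemma \ref{lem:Tw0}, and reindexing by $v = u\,w_0(W_J)$. The sign and $t$-bookkeeping in the $\mathbf{1}^{-}_J$ case is done correctly (the two factors of $(-1)^{\ell(w_0(W_J))} t(w_0(W_J))^{\pm 1}$ cancel as you say).
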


\subsection{Demazure-Whittaker operators}
In later sections, we will connect our results to recent work on metaplectic Whittaker functions, so we will need some related operators.  We recall the following two definitions from \cite{BBBG21} and \cite{BBBG19}.

\begin{definition}[\cite{BBBG19, BBBG21}[Equations (30), (31) and Equation (4.3) resp.]  \label{def:DW_opers} Let $q \in \mathbb{C}$.  For $1 \leq i \leq r$, define the Demazure-Whittaker operators $\mathcal{T}_{i, q}$ acting on $\mathbb{C}[\Lambda]$ by
\begin{equation*}
\mathcal{T}_{i,q} \cdot f = \frac{x^{\alpha_i^{\vee}} - q^{-2}}{1-x^{\alpha_i^{\vee}}} s_i f + \frac{q^{-2}-1}{1-x^{\alpha_i^{\vee}}} f,
\end{equation*}
where $f \in \mathbb{C}[\Lambda]$.  For $w = s_{i_1} \cdots s_{i_r} \in W$ a reduced expression, write
\begin{equation*}
\mathcal{T}_{w,q} = \mathcal{T}_{i_1,q} \cdots \mathcal{T}_{i_r,q} \in \mathrm{End}(\mathbb{C}[\Lambda]).
\end{equation*}
\end{definition}

The (non-metaplectic) operators $\mathcal{T}_{w,q}$ will be used in Section \ref{sec:met_results} to provide formulas for metaplectic Whittaker functions.

We have the following two relations between $\pi_{q^{2}}(T_{w})$, $\pi_{q^{2}}(\mathbf{1}^{-})$, and $\mathcal{T}_{w,q }$.

\begin{proposition}\label{prop:TDW_trans}
Let $1 \leq i \leq r$ and $f \in \mathbb{C}[\Lambda]$.  We have $-q^{-1} \pi_{q^{2}}(T_{i}) f = \mathcal{T}_{i,q} f$, so for $w \in W$,
\begin{equation*}
 (-q)^{-l(w)} \pi_{q^{2}}(T_{w})f = \mathcal{T}_{w,q}f.
\end{equation*}
\end{proposition}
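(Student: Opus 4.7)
The plan is to verify the first identity by direct computation from the explicit formulas, and then bootstrap the second identity by induction on $\ell(w)$.

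First I would expand the right-hand side of the claimed identity $-q^{-1}\pi_{q^2}(T_i) f = \mathcal{T}_{i,q} f$ using the Demazure--Lusztig formula from Theorem \ref{thm:pol_rep} with $t_i = q$ (so that $t_i^2 = q^2 = v$). Writing $\nabla_i f = (s_if - f)/(x^{\alpha_i^\vee} - 1)$ by Definition \ref{def:nabla}, one gets
\begin{equation*}
-q^{-1}\pi_{q^2}(T_i)f = -s_if - q^{-1}(q - q^{-1})\nabla_i f = -s_i f + (1-q^{-2})\,\frac{s_if - f}{1 - x^{\alpha_i^\vee}}.
\end{equation*}
Putting this over the common denominator $1 - x^{\alpha_i^\vee}$ and collecting the coefficients of $s_if$ and $f$ in the numerator yields exactly $(x^{\alpha_i^\vee} - q^{-2})s_if + (q^{-2}-1)f$ divided by $1 - x^{\alpha_i^\vee}$, which is the definition of $\mathcal{T}_{i,q}f$ from Definition \ref{def:DW_opers}. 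This proves the first assertion.

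For the second statement, I would fix a reduced expression $w = s_{i_1}\cdots s_{i_\ell}$ of length $\ell = \ell(w)$. By definition, $T_w = T_{i_1}\cdots T_{i_\ell}$, so $\pi_{q^2}(T_w) = \pi_{q^2}(T_{i_1})\cdots\pi_{q^2}(T_{i_\ell})$, and by Definition \ref{def:DW_opers}, $\mathcal{T}_{w,q} = \mathcal{T}_{i_1,q}\cdots\mathcal{T}_{i_\ell,q}$. Applying the first assertion once to each factor (and pulling the scalar $-q^{-1}$ out of each), we obtain
\begin{equation*}
(-q)^{-\ell(w)}\pi_{q^2}(T_w) f = \bigl(-q^{-1}\pi_{q^2}(T_{i_1})\bigr)\cdots\bigl(-q^{-1}\pi_{q^2}(T_{i_\ell})\bigr) f = \mathcal{T}_{i_1,q}\cdots\mathcal{T}_{i_\ell,q} f = \mathcal{T}_{w,q}f,
\end{equation*}
as desired.

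There is essentially no obstacle here; the only subtlety is a well-definedness check, namely that $\mathcal{T}_{w,q}$ does not depend on the chosen reduced expression. This is automatic from the first identity: since the operators $-q^{-1}\pi_{q^2}(T_i)$ and $\pi_{q^2}(T_i)$ differ by a nonzero scalar on each factor and $\pi_{q^2}(T_{i_1})\cdots\pi_{q^2}(T_{i_\ell})$ is reduced-expression--independent by the braid relations \eqref{braid} in $\widetilde{\mathcal{H}}_\Lambda$, the product $\mathcal{T}_{i_1,q}\cdots\mathcal{T}_{i_\ell,q}$ inherits the same independence. So the proposition reduces to the single-reflection identity verified above.
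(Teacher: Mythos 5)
Your proposal is correct and takes essentially the same approach as the paper: a direct common-denominator computation verifying $-q^{-1}\pi_{q^2}(T_i)f = \mathcal{T}_{i,q}f$, followed by the immediate extension to $T_w$ via the definition of $\mathcal{T}_{w,q}$ as a product over a reduced expression. The paper works on basis monomials $x^\lambda$ while you work with an arbitrary $f$, but these are the same argument.
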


\begin{proof}
Let $1 \leq i \leq r$ and $\lambda \in \Lambda$.  By direct computation, we have
\begin{align*}
-q^{-1} \pi_{q^{2}}(T_{i}) x^{\lambda} &= -x^{s_i \lambda} \cdot \frac{x^{\alpha_i^{\vee}} - 1}{x^{\alpha_i^{\vee}} - 1} + (q^{-2} - 1) \frac{x^{s_i \lambda} - x^{\lambda}}{x^{\alpha_i^{\vee}} - 1} \\
&= \frac{q^{-2} - x^{\alpha_i^{\vee}} }{x^{\alpha_i^{\vee}} - 1}x^{s_i \lambda} - \frac{q^{-2} - 1}{x^{\alpha_i^{\vee}} - 1}x^{\lambda} \\
&= \mathcal{T}_{i,q} x^{\lambda}.
\end{align*}
See also Proposition 9.1 of \cite{BBBG19}.
\end{proof}

\begin{proposition}\label{prop:TDW_antisymm}
We have
\begin{equation*}
\sum_{w \in W} \mathcal{T}_{w,q} = \pi_{q^{2}}(\mathbf{1}^{-}).
\end{equation*}
\end{proposition}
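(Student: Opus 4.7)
The plan is to reduce the claim directly to Proposition \ref{prop:TDW_trans} and the defining expression for $\mathbf{1}^{-}$ given in Definition \ref{def:symm_antisymm}. Recall that $\pi_{q^{2}}$ is the polynomial representation with all Hecke parameters specialized to $t_{i}=v^{1/2}=q$ (Definition \ref{def:poly_rep_v}), so that $t(w)=\prod_{i} t_{j_{i}} = q^{l(w)}$ for any reduced expression $w=s_{j_{1}}\cdots s_{j_{l(w)}}$.

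First I would expand
\[
\pi_{q^{2}}(\mathbf{1}^{-}) \;=\; \sum_{w\in W}(-1)^{l(w)}\, t(w)^{-1}\, \pi_{q^{2}}(T_{w}) \;=\; \sum_{w\in W}(-q)^{-l(w)}\, \pi_{q^{2}}(T_{w}),
\]
using the parameter specialization in the previous sentence. Then, by Proposition \ref{prop:TDW_trans}, $(-q)^{-l(w)}\pi_{q^{2}}(T_{w}) = \mathcal{T}_{w,q}$ for every $w\in W$, and summing over $w$ gives the stated identity.

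There is effectively no obstacle once Proposition \ref{prop:TDW_trans} is in hand; the only thing to be careful about is the bookkeeping of the sign and the power of $q$, since the quadratic relation in Definition \ref{def:AHA} is $(T_{j}-t_{j})(T_{j}+t_{j}^{-1})=0$ (so $\pi_{q^{2}}(T_{i})$ has eigenvalues $q$ and $-q^{-1}$), and the operator $\mathcal{T}_{i,q}$ from Definition \ref{def:DW_opers} corresponds to the $-q^{-1}$ eigenvalue, consistent with the factor $-q^{-1}$ in Proposition \ref{prop:TDW_trans}. This matches the signs $(-1)^{l(w)}$ appearing in $\mathbf{1}^{-}$, so everything collapses to the advertised identity.
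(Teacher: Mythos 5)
Your proof is correct and uses exactly the same approach as the paper, which simply cites Proposition \ref{prop:TDW_trans} together with Definition \ref{def:symm_antisymm}; you have filled in the routine bookkeeping that the paper leaves implicit.
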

\begin{proof}
Follows from Proposition \ref{prop:TDW_trans} and Definition \ref{def:symm_antisymm}.
\end{proof}

%%%%%%%%%%%%%%%%%%%%%%%%%%%%%%%%
\section{Representations and (quasi-)polynomials}

\subsection{Quasi-polynomial representations}\label{sec:qp}
In this section, we recall results from \cite{SSV2} that will be used throughout the rest of the paper.  We also define coefficient functions of quasi-polynomials, and connect to them to coefficient functions of Hecke algebra elements (see Corollary \ref{cor:gammas}).

Assume $\Lambda \in \mathcal{L}$ and, unless otherwise specified, $c \in C^0_\Lambda$.  So $\mathbb{F}[\widetilde{\mathcal{O}}_{\Lambda, c}] \subseteq \mathbb{F}[E]$ is a free $\mathbb{F}[\Lambda]$-module with basis $x^{w c}$ for $w \in W^{c}$.

Let $f \in \mathbb{F}[\widetilde{\mathcal{O}}_{\Lambda, c}] \subseteq \mathbb{F}[E]$.  We have the decomposition
\begin{equation} \label{coset_decomp}
f = \sum_{w \in W^{c}} p_{w} \cdot x^{w c},
\end{equation}
where $p_{w} \in \mathbb{F}[\Lambda]$.

\begin{definition}\label{def:gamma_coeff_qp}
Let $w \in W^{c}$.  Define coefficient functions $\gamma_{w}^{qp}: \mathbb{F}[\widetilde{\mathcal{O}}_{\Lambda, c}] \rightarrow \mathbb{F}[\Lambda]$ by
\begin{equation*}
\gamma_{w}^{qp}\big( \sum_{w' \in W^{c}} p_{w'} \cdot x^{w' c}\big) = p_{w},
\end{equation*}
where $p_{w'} \in \mathbb{F}[\Lambda]$ for $w' \in W^{c}$.  
\end{definition}

Recall the truncated divided-difference operator from \cite{SSV2}, which plays an important role in the quasi-polynomial representation.

\begin{definition}\label{def:qpnabla} 
Let $\nabla^{qp}_j: \mathbb{F}[E] \rightarrow \mathbb{F}[E]$ for $1 \leq j \leq r$ be defined by
\begin{equation*}
\nabla^{qp}_j(x^{y}) = \frac{x^{y - \lfloor \alpha_j(y) \rfloor \alpha_j^{\vee} } - x^{y} }{x^{\alpha_j^{\vee}} - 1},
\end{equation*}
where $y \in E$ (and extend by linearity to $\mathbb{F}[E]$).

\end{definition}
Note that since $\lfloor \alpha_j(y)\rangle \rfloor \in \mathbb{Z}$,
\begin{equation*}
\frac{x^{- \lfloor \alpha_j(y)\rfloor \alpha_j^{\vee}} - 1}{x^{\alpha_j^{\vee}} - 1} \in \mathbb{F}[Q^{\vee}].
\end{equation*}
Also note that $\nabla_j^{qp} |_{\mathbb{F}[P^{\vee}]} = \nabla_{j}$, and that $\nabla_j^{qp}$ preserves the subspace $\mathbb{F}[\widetilde{\mathcal{O}}_{\Lambda, c}]$ for $c \in \overline{C^{+}}$.

For $1 \leq j \leq r$, let $\chi_{j} : \mathbb{R} \rightarrow \{1, t_j \}$ be defined by
\begin{equation}\label{eq:chi_ind}
\chi_{j}(x) = \begin{cases} 1, & x \in \mathbb{R} \setminus \mathbb{Z} \\
t_j, & x \in \mathbb{Z}
\end{cases}
\end{equation}
for $x \in \mathbb{R}$.

\begin{theorem}[\cite{SSV2}]\label{thm:qp_rep}
The quasi-polynomial representation $\pi^{qp}$ of $\widetilde{\mathcal{H}}_{\Lambda}$ on $\mathbb{F}[E]$ is given by the following formulas:
\begin{equation*}
\pi^{qp}(T_{j}) x^{y} = \chi_{j}(\alpha_j(y)) \cdot x^{s_j y} + (t_j - t_j^{-1}) \nabla_{j}^{qp}(x^{y})
\end{equation*}
for $y \in E$, $1 \leq j \leq r$, and $\pi^{qp}(x^{\lambda}) x^{y} = x^{\lambda + y}$ for $\lambda \in \Lambda$ and $y \in E$.
\end{theorem}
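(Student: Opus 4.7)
The plan is to verify that the operators $\pi^{qp}(T_j)$ and $\pi^{qp}(x^\lambda)$ defined by the given formulas satisfy the defining relations of $\widetilde{\mathcal{H}}_\Lambda$ from Definition \ref{def:AHA}, when evaluated on arbitrary basis elements $x^y$ for $y \in E$. Multiplicativity of the $\pi^{qp}(x^\lambda)$'s is immediate from the definition, so the work is in the $T_j$-relations.

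For the quadratic relation $(\pi^{qp}(T_j) - t_j)(\pi^{qp}(T_j) + t_j^{-1}) = 0$, I would apply $\pi^{qp}(T_j)$ twice to $x^y$ and simplify, using the observation that $\chi_j(\alpha_j(s_jy)) = \chi_j(-\alpha_j(y)) = \chi_j(\alpha_j(y))$, together with the identity
\begin{equation*}
\nabla_j^{qp}(x^y) = x^y \cdot \frac{x^{-\lfloor \alpha_j(y)\rfloor \alpha_j^\vee}-1}{x^{\alpha_j^\vee}-1} \in x^y \cdot \mathbb{F}[Q^\vee].
\end{equation*}
Splitting into the two cases $\alpha_j(y)\in\mathbb{Z}$ and $\alpha_j(y)\notin\mathbb{Z}$ and collecting the resulting geometric sums then yields the quadratic identity. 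As a sanity check, when $c=0$ the floor gives $\lfloor \alpha_j(y)\rfloor = \alpha_j(y)$, so $\nabla_j^{qp}|_{\mathbb{F}[\Lambda]} = \nabla_j$ and the formulas collapse to the polynomial representation of Theorem \ref{thm:pol_rep}.

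For the commutation relation \eqref{commutation}, the decisive input is the lattice condition $\alpha_j(\Lambda)\subseteq\mathbb{Z}$ from Definition \ref{latticeconddef}, which gives $\chi_j(\alpha_j(\lambda+y)) = \chi_j(\alpha_j(y))$ and $\lfloor \alpha_j(\lambda+y)\rfloor = \alpha_j(\lambda) + \lfloor \alpha_j(y)\rfloor$. Computing $\pi^{qp}(T_j)\pi^{qp}(x^\lambda)x^y$ and $\pi^{qp}(x^{s_j\lambda})\pi^{qp}(T_j)x^y$ and taking the difference, the $\chi_j$-terms cancel because $s_j(\lambda+y) = s_j\lambda + s_jy$, while the $\nabla_j^{qp}$-terms combine. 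Using $\lambda-\alpha_j(\lambda)\alpha_j^\vee = s_j\lambda$, the surviving expression equals $(t_j-t_j^{-1})\frac{x^{s_j\lambda}-x^\lambda}{x^{\alpha_j^\vee}-1}x^y = (t_j-t_j^{-1})\nabla_j(x^\lambda)\cdot x^y$, as required.

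The main obstacle is the braid relations \eqref{braid}. Iterating the piecewise operator $\pi^{qp}(T_j)$ produces a proliferation of subcases depending on which of $\alpha_j(y)$, $\alpha_{j'}(y)$, $\alpha_j(s_{j'}y)$, $\ldots$ are integers, and the floor function $\lfloor \alpha_j(\cdot)\rfloor$ jumps between these regimes. By the $W$-action and the fact that each step only involves the rank-two subgroup $\langle s_j, s_{j'}\rangle$, the verification reduces to the corresponding rank-two root subsystem, where one must check an identity of $m_{jj'}$-fold compositions. From here I would either bookkeep the cases indexed by $\{\epsilon\in\{0,1\}^{\langle s_j,s_{j'}\rangle y} : \alpha_?(\cdot)\in\mathbb{Z}\}$, exploiting that inside each constancy region the operators become affine-linear combinations of lattice translates, or, preferably, appeal to the construction in \cite{SSV2} realizing $\pi^{qp}$ as a $Y$-induced cyclic module (as mentioned in the introduction), in which the braid relations are automatic from the ambient Hecke algebra structure. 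Either route confirms the explicit formulas in the statement.
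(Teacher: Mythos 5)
The paper does not prove this statement; it is imported directly from \cite{SSV2} (as the attribution in the theorem header indicates), so there is no in-paper proof to compare against. On its own terms, your sketch of the quadratic and commutation relations is sound: the observation that $\chi_j(\alpha_j(s_jy)) = \chi_j(\alpha_j(y))$, the factorization $\nabla_j^{qp}(x^y) = x^y \cdot (x^{-\lfloor \alpha_j(y)\rfloor \alpha_j^\vee}-1)/(x^{\alpha_j^\vee}-1)$, and the role of the lattice condition $\alpha_j(\Lambda)\subseteq\mathbb{Z}$ in controlling $\chi_j$ and the floor under $\Lambda$-translation are all the right ingredients and would close those cases.

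The genuine gap is the braid relations. You correctly flag them as the hard part, but neither of the two routes you propose is actually carried out. The direct rank-two case analysis is plausible but left entirely open, with no indication that the bookkeeping closes. The alternative, appealing to the $Y$-induced cyclic module construction of \cite{SSV2}, conflates two distinct steps: an induced module automatically satisfies the braid relations, but one must then separately show that the action of $T_j$ on that module is given by \emph{precisely} these explicit formulas, and that derivation is the substantive content of the argument in \cite{SSV2}, not something that comes for free once the braid relations are in place. As a result, your treatment of the braid relations amounts either to an unfinished computation or to a restatement that the theorem holds in \cite{SSV2} --- which is all the paper itself asserts, but is not a blind proof.
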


\begin{remark} \hfill
\begin{enumerate}
\item One can directly verify that $\mathbb{F}[\widetilde{\mathcal{O}}_{\Lambda, c}]$, for $c \in \overline{C^{+}}$, is an invariant subspace of $\pi^{qp}$.

\item Let $h \in \widetilde{\mathcal{H}}_{\Lambda}$ and $f \in \mathbb{F}[\Lambda]$.  Then, comparing the formulas of Theorem \ref{thm:pol_rep} and Theorem \ref{thm:qp_rep}, we have $\pi^{qp}(h) f = \pi(h) f$.

\item The specialization of Theorem \ref{thm:qp_rep} to the $GL_r$ context gives Theorem \ref{thm:qp_rep_GL}.
\end{enumerate}
\end{remark}

We write $\pi^{qp}(\widetilde{T_i})$ for the operators with Hecke parameters $t_i$ replaced by $t_i^{-1}$.  We record the following relation, which extends Proposition \ref{prop:inv_params}.
\begin{proposition}\label{prop:inv_params_qp}
Let $1 \leq i \leq r$.  Then, as operators on $\mathbb{F}[\Lambda]$, we have
\begin{equation*}
\pi^{qp}(\widetilde{T_i}^{-1}) = \iota \pi(T_i) \iota.
\end{equation*}
\end{proposition}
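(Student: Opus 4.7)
The plan is to deduce this identity essentially as a direct corollary of Proposition \ref{prop:inv_params}, using the fact that the quasi-polynomial representation restricts to the polynomial representation on $\mathbb{F}[\Lambda]$. This is exactly the content of the second remark after Theorem \ref{thm:qp_rep}: for every $h \in \widetilde{\mathcal{H}}_\Lambda$ and $f \in \mathbb{F}[\Lambda]$ one has $\pi^{qp}(h)f = \pi(h)f$.

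The justification of that remark (which I would briefly recall in the proof) comes from the following two observations. First, since $\Lambda \subseteq P^\vee$ by the lattice condition \eqref{latticecond}, Definition \ref{def:qpnabla} together with the observation $\nabla_j^{qp}|_{\mathbb{F}[P^\vee]} = \nabla_j$ noted immediately after it give $\nabla_j^{qp}(f) = \nabla_j(f)$ for $f \in \mathbb{F}[\Lambda]$. Second, $\alpha_j(\lambda) \in \mathbb{Z}$ for $\lambda \in \Lambda$, so $\chi_j(\alpha_j(\lambda)) = t_j$ by \eqref{eq:chi_ind}, which matches the leading coefficient $t_j$ in the Demazure-Lusztig formula of Theorem \ref{thm:pol_rep}. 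Hence the formulas of Theorems \ref{thm:pol_rep} and \ref{thm:qp_rep} agree on $\mathbb{F}[\Lambda]$.

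The argument now consists of two lines. First, apply the above compatibility to $h = \widetilde{T_i}^{-1} \in \widetilde{\mathcal{H}}_\Lambda$ (the element with all Hecke parameters inverted), yielding $\pi^{qp}(\widetilde{T_i}^{-1})|_{\mathbb{F}[\Lambda]} = \pi(\widetilde{T_i}^{-1})|_{\mathbb{F}[\Lambda]}$. Second, invoke Proposition \ref{prop:inv_params} to rewrite the right-hand side as $\iota \pi(T_i) \iota$.

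There is no real obstacle here; the statement is essentially a restatement of Proposition \ref{prop:inv_params} once one observes that on $\mathbb{F}[\Lambda]$ the operator $\pi^{qp}$ and $\pi$ coincide. The only thing worth emphasizing is why it is legitimate to apply this compatibility after the parameter substitution $t_i \mapsto t_i^{-1}$: both representations are defined over arbitrary choices of Hecke parameters, and the comparison of formulas is valid for any such choice, so it applies uniformly to both $\pi(T_i)$ and $\pi(\widetilde{T_i})$ (respectively their inverses).
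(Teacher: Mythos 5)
Your argument does verify the identity as literally printed: on $\mathbb{F}[\Lambda]$ the formulas of Theorem \ref{thm:qp_rep} reduce to those of Theorem \ref{thm:pol_rep}, so the claim collapses to Proposition \ref{prop:inv_params}. But that reading makes the proposition an exact repetition of Proposition \ref{prop:inv_params} (as you yourself observe), whereas the paper presents it as a statement ``which extends Proposition \ref{prop:inv_params}'', proves it by citing Theorem \ref{thm:qp_rep} together with the quadratic relation \eqref{eq:Tj_inv} (not the restriction remark plus Proposition \ref{prop:inv_params}), and uses it in Corollary \ref{cor:qp1y_domin} applied to $x^{-y}$ with $y\in\widetilde{\mathcal{O}}_{c}$ dominant --- a genuine quasi-polynomial, not an element of $\mathbb{F}[\Lambda]$ unless $c=0$. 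So the intended content is the operator identity $\pi^{qp}(\widetilde{T_i}^{-1})=\iota\,\pi^{qp}(T_i)\,\iota$ on the full quasi-polynomial space $\mathbb{F}[E]$ (the printed ``$\mathbb{F}[\Lambda]$'' and the ``$\pi$'' on the right-hand side are best read as slips), and your reduction cannot reach it: restricting to $\mathbb{F}[\Lambda]$ erases exactly the new features, namely the indicator $\chi_i$ and the truncated difference operator $\nabla_i^{qp}$ at non-integral values of $\alpha_i(y)$.

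What the paper's one-line proof amounts to is the direct two-case check on a monomial $x^{y}$, $y\in E$. Using \eqref{eq:Tj_inv} with inverted parameters, $\pi^{qp}(\widetilde{T_i}^{-1})x^{y}$ is the $\pi^{qp}(\widetilde{T_i})$-formula minus $(t_i^{-1}-t_i)x^{y}$, while $\iota\,\pi^{qp}(T_i)\,\iota\,x^{y}=\iota\,\pi^{qp}(T_i)x^{-y}$ brings in $\lfloor-\alpha_i(y)\rfloor$. If $\alpha_i(y)\in\mathbb{Z}$ this is precisely the computation behind Proposition \ref{prop:inv_params}, i.e.\ your case. If $\alpha_i(y)\notin\mathbb{Z}$, then $\lfloor-\alpha_i(y)\rfloor=-\lfloor\alpha_i(y)\rfloor-1$ and both sides simplify to
\begin{equation*}
x^{s_iy}+(t_i-t_i^{-1})\,\frac{x^{y+\alpha_i^{\vee}}-x^{y-\lfloor\alpha_i(y)\rfloor\alpha_i^{\vee}}}{x^{\alpha_i^{\vee}}-1},
\end{equation*}
so they agree. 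This second case is the step missing from your write-up; without it the proposition, as you have proved it, cannot support its later application in the proof of Corollary \ref{cor:qp1y_domin}.
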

\begin{proof}
Follows from Theorem \ref{thm:qp_rep} and \eqref{eq:Tj_inv}.
\end{proof}

For $c \in C_\Lambda^0$ we define root-system statistics 
\begin{equation}\label{eq:qp_h_stats}
h^{c}(w) = \prod_{\substack{\alpha \in \Pi(w)\\\alpha(c) = 0}} t_\alpha.
\end{equation}
These match the statistics $k_w(c)$ defined in \cite[(4.11)]{SSV2}.

\begin{proposition}\label{prop:ht}
We have the following:
\begin{enumerate}
	\item If $w \in W$ and $l(s_iw) > l(w)$ then
	\begin{equation*}
		h^{c}(s_iw) = t_i^{\delta_{\alpha_i(wc) = 0}}h^{c}(w).
	\end{equation*}
	\item If $\varphi(c) < 1$, $v \in W^{c}$ and $u \in W_{c}$, then we have
	\begin{equation*}
		h^{c}(vu) = t(u).
	\end{equation*}
\end{enumerate}
\end{proposition}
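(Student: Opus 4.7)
The plan is to handle the two claims separately, with part (1) being essentially a direct unraveling of the length-additivity structure and part (2) requiring the geometric characterization of the roots vanishing on $c$.

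For part (1), I would invoke Lemma \ref{lengthadd}(2): since $\ell(s_iw) > \ell(w)$, we have $w^{-1}\alpha_i \in \Phi^+$ and
\[
\Pi(s_iw) = \{w^{-1}\alpha_i\} \sqcup \Pi(w).
\]
Plugging this into the definition \eqref{eq:qp_h_stats} gives $h^c(s_iw) = t_{w^{-1}\alpha_i}^{\delta_{(w^{-1}\alpha_i)(c) = 0}} \cdot h^c(w)$. The $W$-invariance of $\mathbf{t}$ yields $t_{w^{-1}\alpha_i} = t_{\alpha_i} = t_i$, and the trivial identity $(w^{-1}\alpha_i)(c) = \alpha_i(wc)$ finishes this part.

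For part (2), the key preliminary step is to show that, under the hypothesis $\varphi(c) < 1$,
\[
\{\alpha \in \Phi^+ : \alpha(c) = 0\} = \Phi_{J_c}^+.
\]
Since $J_c = \{i : \alpha_i(c) = 0\}$ and $c \in \overline{C^+}$, we have $\alpha_i(c) > 0$ for $i \notin J_c$; expanding $\alpha = \sum n_i\alpha_i \in \Phi^+$ shows $\alpha(c) = 0$ iff all $n_i$ with $i \notin J_c$ vanish. Next, by Lemma \ref{lem:lensplit}, $\ell(vu) = \ell(v) + \ell(u)$, so the standard decomposition of the inversion set gives
\[
\Pi(vu) = \Pi(u) \sqcup u^{-1}\Pi(v).
\]
Since $u \in W_c = W_{J_c}$, every reduced expression for $u$ uses only $s_j$ with $j \in J_c$, so $\Pi(u) \subseteq \Phi_{J_c}^+$, and by the preliminary step each $\alpha \in \Pi(u)$ satisfies $\alpha(c) = 0$. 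Hence this part of the product contributes $\prod_{\alpha \in \Pi(u)} t_\alpha = t(u)$.

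It remains to show that the roots in $u^{-1}\Pi(v)$ contribute trivially. For $\beta \in \Pi(v)$, the characterization $v \in W^c = W^{J_c}$ gives $v\Phi_{J_c}^+ \subseteq \Phi^+$ by \eqref{Phipos}, so $\Pi(v) \cap \Phi_{J_c}^+ = \emptyset$, and by the preliminary step $\beta(c) \neq 0$. Then for $\alpha = u^{-1}\beta$ with $u \in W_c$, we compute
\[
\alpha(c) = \beta(uc) = \beta(c) \neq 0,
\]
so no such $\alpha$ contributes to $h^c(vu)$. Combining the two contributions yields $h^c(vu) = t(u)$. The only subtle point — and the place where $\varphi(c) < 1$ is genuinely used — is the identification of $\{\alpha \in \Phi^+ : \alpha(c) = 0\}$ with $\Phi_{J_c}^+$; everything else is a routine bookkeeping of inversion sets and the $W$-invariance of $\mathbf{t}$.
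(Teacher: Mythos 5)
Your proof is correct, and it fills in exactly the ``standard root system manipulations'' that the paper alludes to (the paper itself just cites \cite[Lemma 5.6(1)]{SSV2} without giving details). Part (1) is a clean application of Lemma~\ref{lengthadd}(2) and $W$-invariance, and for part (2) the decomposition $\Pi(vu)=\Pi(u)\sqcup u^{-1}\Pi(v)$ together with the observation $\{\alpha\in\Phi^+:\alpha(c)=0\}=\Phi_{J_c}^+$ and $uc=c$ gives the claim; one also uses $\prod_{\alpha\in\Pi(u)}t_\alpha=t(u)$, which follows from \eqref{rootsdescription} and $W$-invariance of $\mathbf{t}$, as you implicitly do.

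One small inaccuracy in your closing remark: the identification $\{\alpha\in\Phi^+:\alpha(c)=0\}=\Phi_{J_c}^+$ does \emph{not} actually use $\varphi(c)<1$. It only needs $c\in\overline{C^+}$: writing $\alpha=\sum n_i\alpha_i$ with $n_i\geq 0$, the equality $\alpha(c)=\sum n_i\alpha_i(c)=0$ with all $\alpha_i(c)\geq 0$ forces $n_i=0$ for every $i\notin J_c$. The hypothesis $\varphi(c)<1$ is present in the proposition because the surrounding framework requires $c\in C^0_\Lambda$ (for which $\varphi(c)<1$ is necessary), not because your argument needs it; your proof in fact establishes part (2) for any $c\in\overline{C^+}$.
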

\begin{proof}
These are special cases of \cite[Lemma 5.6(1)]{SSV2}, and can also be proven directly with standard root system manipulations.
\end{proof}

\begin{theorem} \label{thm:XT_formula}
Let $\mu \in \Lambda$, $w \in W$ and $c \in C_\Lambda^0$.  Then we have
\begin{equation*}
\pi^{qp}(x^{\mu}T_{w}) x^{c} = h^{c}(w) x^{\mu + wc}.
\end{equation*}
In particular, if $w \in W^{c}$ then we have
\begin{equation*}
\pi^{qp}(x^{\mu}T_{w}) x^{c} =  x^{\mu + wc}.
\end{equation*}
\end{theorem}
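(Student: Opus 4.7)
The plan is to reduce to proving the identity $\pi^{qp}(T_w)x^c = h^c(w)x^{wc}$. Since $\pi^{qp}(x^\mu)x^y = x^{\mu+y}$ acts by translation, multiplying by $\pi^{qp}(x^\mu)$ on the left immediately upgrades this to the statement for $\pi^{qp}(x^\mu T_w)x^c$. I would then proceed by induction on $\ell(w)$, with base case $w = e$ trivial since $T_e = 1$ and $h^c(e) = 1$ (empty product over $\Pi(e) = \emptyset$).

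For the inductive step, pick a reduced expression $w = s_i w'$ with $\ell(w') = \ell(w) - 1$, so by the braid-compatible definition of $T_w$ we have $T_w = T_i T_{w'}$, and the induction hypothesis gives
\begin{equation*}
\pi^{qp}(T_w)x^c = h^c(w')\,\pi^{qp}(T_i)x^{w'c}.
\end{equation*}
Apply the explicit formula from Theorem \ref{thm:qp_rep}: the key computation is $\alpha_i(w'c) = ((w')^{-1}\alpha_i)(c)$, and since $\ell(s_iw') > \ell(w')$, Lemma \ref{lengthadd} gives $(w')^{-1}\alpha_i \in \Phi^+$. The crucial observation is that for $c \in C_\Lambda^0$ one has $0 \le \alpha(c) \le \varphi(c) < 1$ for every $\alpha \in \Phi^+$ (the upper bound because $\varphi - \alpha$ is a nonnegative integral sum of simple roots, and $\varphi(c) < 1$ is the defining condition together with $c \in \overline{C^+}$). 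Therefore $\lfloor \alpha_i(w'c) \rfloor = 0$, which forces $\nabla_i^{qp}(x^{w'c}) = 0$ by Definition \ref{def:qpnabla}, and the formula simplifies to
\begin{equation*}
\pi^{qp}(T_i)x^{w'c} = \chi_i(\alpha_i(w'c))\,x^{s_iw'c} = t_i^{\delta_{\alpha_i(w'c)=0}}\,x^{s_iw'c}.
\end{equation*}

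Combining with Proposition \ref{prop:ht}(1), which says exactly $h^c(s_iw') = t_i^{\delta_{\alpha_i(w'c)=0}}h^c(w')$, we obtain $\pi^{qp}(T_w)x^c = h^c(w)x^{wc}$, closing the induction. For the final assertion, if $w \in W^c$ then Proposition \ref{prop:ht}(2) applied with $v = w$ and $u = e$ yields $h^c(w) = t(e) = 1$.

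The main (mild) obstacle is the bookkeeping that guarantees $\lfloor \alpha_i(w'c) \rfloor = 0$: one must verify that the condition $c \in C_\Lambda^0$ really gives strict inequality $\alpha(c) < 1$ for \emph{all} positive roots (not just the simple ones), which is where the characterization $\varphi(c) < 1$ from the unnamed lemma preceding Definition \ref{def:wc_decomp} is used. Once this is in place, the induction is a direct transcription of Proposition \ref{prop:ht}(1) through the quasi-polynomial formula, and the case $w \in W^c$ is an immediate consequence of Proposition \ref{prop:ht}(2).
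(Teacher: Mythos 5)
Your proof is correct and follows essentially the same route as the paper: the paper cites \cite[Corollary 5.10(2)]{SSV2} but explicitly notes that the identity can be shown by induction on $\ell(w)$ using the formulas of Theorem \ref{thm:qp_rep} together with Proposition \ref{prop:ht}(1), with the case $w \in W^c$ following from Proposition \ref{prop:ht}(2) --- which is exactly your argument, including the key observation that $c \in C^0_\Lambda$ forces $0 \leq \alpha(c) < 1$ for all $\alpha \in \Phi^+$ and hence $\nabla_i^{qp}(x^{w'c}) = 0$.
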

\begin{proof}
The first statement is \cite[Corollary 5.10(2)]{SSV2} (which holds for all $c \in \overline{C}^+$ with slightly more complicated root-system statistics if $\varphi(c) = 1$). It can also be shown directly by induction on $l(w)$, using the explicit formulas in Theorem \ref{thm:qp_rep} together with Proposition \ref{prop:ht}(1). The second statement follows from the first and \ref{prop:ht}(2).
\end{proof}

\begin{corollary}\label{cor:gammas}
Let $c \in C^0_{\Lambda}$.  For $h \in \widetilde{\mathcal{H}}_{\Lambda}$ and $w \in W^{c}$, we have
\begin{equation*}
\gamma_{w}^{qp}(\pi^{qp}(h) x^{c}) = \sum_{\substack{w' \in W \\ w'c = wc} } h^{c}(w') \gamma_{w'}(h)  = \sum_{u \in W_{c}}  t(u) \gamma_{wu}(h).
\end{equation*}
\end{corollary}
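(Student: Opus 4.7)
The plan is to reduce the statement to a direct computation using the PBW decomposition of $h$ together with Theorem \ref{thm:XT_formula}. First I would write $h = \sum_{w' \in W} \gamma_{w'}(h)\, T_{w'}$ via the PBW basis of $\widetilde{\mathcal{H}}_{\Lambda}$ over $\mathbb{F}[\Lambda]$. Since $\pi^{qp}$ is an algebra homomorphism and $\pi^{qp}(x^{\lambda})$ acts as multiplication by $x^{\lambda}$ on $\mathbb{F}[E]$, the operator $\pi^{qp}(f)$ for any $f \in \mathbb{F}[\Lambda]$ is just multiplication by $f$. Therefore
\[
\pi^{qp}(h)\,x^{c} = \sum_{w' \in W} \gamma_{w'}(h)\cdot \pi^{qp}(T_{w'})\,x^{c}.
\]

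Next I would apply Theorem \ref{thm:XT_formula} with $\mu = 0$, which gives $\pi^{qp}(T_{w'})\,x^{c} = h^{c}(w')\,x^{w'c}$, so
\[
\pi^{qp}(h)\,x^{c} = \sum_{w' \in W} h^{c}(w')\,\gamma_{w'}(h)\,x^{w'c}.
\]
To extract $\gamma_{w}^{qp}$ of the left-hand side I need to re-group the right-hand side according to the basis $\{x^{w''c}\}_{w'' \in W^{c}}$ of $\mathbb{F}[\widetilde{\mathcal{O}}_{\Lambda,c}]$ over $\mathbb{F}[\Lambda]$ (recall this is a basis precisely because $c \in C^{0}_{\Lambda}$, as noted just after Definition \ref{def:key_ccondit}). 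The key observation is that $w'c = w''c$ if and only if $w' \in w''W_{c}$, and since each coset $w''W_{c}$ contains a unique minimal-length representative, picking $w \in W^{c}$ and summing over all $w'$ with $w'c = wc$ extracts exactly the coefficient of $x^{wc}$. This yields the first equality
\[
\gamma_{w}^{qp}(\pi^{qp}(h)\,x^{c}) = \sum_{\substack{w' \in W \\ w'c = wc}} h^{c}(w')\,\gamma_{w'}(h).
\]

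For the second equality, I would parametrize $\{w' \in W : w'c = wc\}$ as $\{wu : u \in W_{c}\}$ (using $w \in W^{c}$ and that $W_{c}$ is the stabilizer of $c$) and then invoke Proposition \ref{prop:ht}(2), which gives $h^{c}(wu) = t(u)$ for $w \in W^{c}$, $u \in W_{c}$, provided $\varphi(c) < 1$ — a condition that is automatic for $c \in C^{0}_{\Lambda}$ by the Lemma following Definition \ref{def:key_ccondit}. Substituting yields $\sum_{u \in W_{c}} t(u)\,\gamma_{wu}(h)$, as desired. There is no substantive obstacle in this argument; the statement is essentially a bookkeeping consequence of Theorem \ref{thm:XT_formula} and Proposition \ref{prop:ht}(2), and the only thing to verify carefully is that the cosets $wW_{c}$ indexed by $w \in W^{c}$ really do give a partition of $W$ so that the coefficient extraction is well-defined.
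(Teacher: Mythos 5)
Your proof is correct and follows essentially the same route as the paper: expand $h$ in the PBW basis, apply Theorem \ref{thm:XT_formula} with $\mu=0$, regroup by cosets of $W_c$ to extract the coefficient of $x^{wc}$, and invoke Proposition \ref{prop:ht}(2) to convert $h^c(wu)$ to $t(u)$. The paper's argument is just a terser version of the same reasoning.
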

\begin{proof}
Write $h = \sum_{w \in W} \gamma_{w}(h) T_{w}$.  Then by Theorem \ref{thm:XT_formula}, 
\begin{equation*}
\pi^{qp}(h) x^{c} = \sum_{w \in W} \gamma_{w}(h) h^{c}(w) x^{w c}.
\end{equation*}
Taking $\gamma_{w}^{qp}$ of both sides gives the first equality; reparametrizing and using Proposition \ref{prop:ht} gives the second.
\end{proof}

\subsection{Quasi-polynomial generalizations of Macdonald polynomials in the $q \rightarrow \infty$ limit}\label{sec:connections_to_pols}
Let $c \in \overline{C}^{+} \subset E$.  In \cite{SSV2}, we defined a family of quasi-polynomials $E_{y} = E_{y}(q; \mathbf{t}; \tau) \in \mathbb{F}[\widetilde{\mathcal{O}}_{c}]$, indexed by $y \in \widetilde{\mathcal{O}}_{c}$, depending on parameters $q,\mathbf{t} = \{t_i \}$ and an additional torus parameter $\tau \in \text{Hom}(Q, \mathbb{F}^{\times})$ satisfying certain conditions.  We also defined
\begin{equation*}
E_{y}^{\pm} = \pi^{qp}(\mathbf{1}^{\pm}) E_{y} \in [\widetilde{\mathcal{O}}_{c}]
\end{equation*}
for $y \in \widetilde{\mathcal{O}}_{c}$ (see Definitions 6.32, 6.39 of \cite{SSV2}).  For $y \in \Lambda$ (i.e., $c = 0$), $E_{y}$, $E_{y}^{\pm}$ (resp.) are nonsymmetric and (anti-)symmetric Macdonald polynomials (see Remark 6.33 of \cite{SSV2}).

In \cite{SSV2}, we obtained several results for these quasi-polynomials in the $q \rightarrow \infty$ limit (under some regularity conditions on $\tau$ that we implicitly assume here); we use the notation $\overline{E}_{y} = \overline{E}_{y}(q;\mathbf{t}; \tau), \overline{E}_{y}^{\pm} =  \overline{E}_{y}^{\pm}(q; \mathbf{t}; \tau)$ for these objects. 

\begin{theorem}[Theorem 6.49 of \cite{SSV2}] \label{thm:qp_limit}
	Let $y \in \widetilde{\mathcal{O}}_{c}$ with $y = g_{y}^{-1} y_{-}$, where $g_{y}^{-1} \in W$ is minimal length and $y_{-}$ is anti-dominant.  Then we have
	\begin{equation*}
		\overline{E}_{y} = d(y; \mathbf{t}) \cdot \pi^{qp}(T_{g_{y}}^{-1}) x^{y_{-}}.
	\end{equation*}
	In particular, for $y \in \widetilde{\mathcal{O}}_{c}$ anti-dominant, $\overline{E}_{y} = x^{y}$ and $\overline{E}_{y}^{\pm} = \pi^{qp}(\mathbf{1}^{\pm}) x^{y}$.
\end{theorem}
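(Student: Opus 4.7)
The plan is to establish the formula by combining (a) the characterization of $E_y$ as the unique monic simultaneous eigenfunction of the commuting operators $\pi^{qp}(Y^\lambda)$, $\lambda \in \Lambda$, (b) a direct computation in the anti-dominant base case, and (c) a creation/intertwiner argument that transports the base case to arbitrary $y$ along a reduced expression for $g_y^{-1}$.

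First I would handle the anti-dominant case. For $y_-$ anti-dominant in $\widetilde{\mathcal{O}}_c$, the operators $\pi^{qp}(Y^\lambda)$ are triangular with respect to the natural partial order on $\widetilde{\mathcal{O}}_c$ with $y_-$ at the bottom, with diagonal entries equal to the prescribed eigenvalues $\chi_{y_-}(Y^\lambda)$, and with off-diagonal entries carrying explicit negative powers of $q$ coming from the affine part of $Y^\lambda$. Hence those off-diagonal contributions vanish in the $q \to \infty$ limit, so that $x^{y_-}$ is itself the correctly-normalized eigenfunction and $\overline{E}_{y_-} = x^{y_-}$. The identity $\overline{E}_{y_-}^{\pm} = \pi^{qp}(\mathbf{1}^\pm)\, x^{y_-}$ is then immediate from the definition $E_y^\pm := \pi^{qp}(\mathbf{1}^\pm) E_y$, because $\pi^{qp}(\mathbf{1}^\pm)$ is $q$-independent and therefore commutes with the limit.

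Next, for a general $y = g_y^{-1} y_-$, I would propagate the anti-dominant formula using the DAHA intertwiner relations. Writing $g_y^{-1} = s_{i_1} \cdots s_{i_k}$ reduced, there is at each step an identity of the form
\begin{equation*}
S_{i_j} E_z = c_{i_j}(z; q, \mathbf{t}, \tau) \, E_{s_{i_j} z}
\end{equation*}
whenever $s_{i_j} z$ is one step farther from anti-dominant than $z$, where $S_{i_j}$ is an explicit rational combination of $T_{i_j}$ and affine scalars whose $q \to \infty$ limit reduces, up to a nonvanishing scalar, to $\pi^{qp}(T_{i_j}^{-1})$. Applying these $k$ intertwiners successively to $\overline{E}_{y_-} = x^{y_-}$ and collecting the limiting scalars into a single factor $d(y; \mathbf{t})$ yields
\begin{equation*}
\overline{E}_y = d(y; \mathbf{t}) \cdot \pi^{qp}(T_{i_1}^{-1}) \cdots \pi^{qp}(T_{i_k}^{-1}) \, x^{y_-} = d(y; \mathbf{t}) \cdot \pi^{qp}(T_{g_y}^{-1}) \, x^{y_-},
\end{equation*}
where the last equality uses that $g_y = s_{i_k} \cdots s_{i_1}$ is reduced so $T_{g_y}^{-1} = T_{i_1}^{-1} \cdots T_{i_k}^{-1}$. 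Independence of the reduced expression is automatic on both sides, via the braid relations on the right and the intrinsic definition of $\overline{E}_y$ on the left.

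The main obstacle is the analysis of the intertwiner coefficients $c_{i_j}$ as $q \to \infty$. The regularity hypothesis on $\tau$ is precisely what guarantees that no $c_{i_j}$ degenerates to $0$ or $\infty$ along the chain, and that the limiting eigenvalues of $\pi^{qp}(Y^\lambda)$ remain pairwise distinct on the orbit, so the eigenfunction characterization still pins down $\overline{E}_y$ uniquely and the product $d(y; \mathbf{t}) = \prod_j \lim_{q \to \infty} c_{i_j}$ is well-defined. Extracting the explicit closed form of $d(y; \mathbf{t})$ from this product is a careful but essentially mechanical calculation once the regularity assumptions are in hand.
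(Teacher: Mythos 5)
This statement is imported verbatim from \cite{SSV2} (cited as Theorem~6.49 there); the present paper gives no proof of its own, so there is nothing internal to compare against. Your sketch follows the canonical intertwiner/creation-operator strategy for results of this type, and it is almost certainly the route taken in \cite{SSV2}: triangularity of $\pi^{qp}(Y^\lambda)$ with $q$-suppressed off-diagonal terms gives $\overline{E}_{y_-} = x^{y_-}$, intertwiners transport this along a reduced word for $g_y^{-1}$, and the $q\to\infty$ degeneration of each intertwiner to $\pi^{qp}(T_{i}^{-1})$ assembles into $\pi^{qp}(T_{g_y}^{-1})$. Your bookkeeping with the reduced expression is correct ($g_y^{-1}=s_{i_1}\cdots s_{i_k}$ reduced gives $T_{i_1}^{-1}\cdots T_{i_k}^{-1}=T_{g_y}^{-1}$), and the observation that $\pi^{qp}(\mathbf{1}^\pm)$ is $q$-independent cleanly handles the $\overline{E}_y^\pm$ statement.

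Two things deserve to be made explicit rather than assumed. First, the existence of DAHA intertwiners with the stated properties in the \emph{quasi-polynomial} representation (as opposed to the classical Cherednik polynomial representation) is itself a nontrivial theorem of \cite{SSV2}; your phrase ``there is at each step an identity of the form $S_{i_j}E_z = c_{i_j}E_{s_{i_j}z}$'' imports exactly the content that requires verification, including the claim that the $q\to\infty$ limit of $S_{i_j}$ is a scalar multiple of $\pi^{qp}(T_{i_j}^{-1})$ and not, say, $\pi^{qp}(T_{i_j})$; the sign of $\alpha_{i_j}$ evaluated on the intermediate point $z$ governs which degeneration occurs, and this should be tied to the direction ``one step farther from anti-dominant.'' Second, the minimality of $g_y^{-1}$ is what guarantees that no $s_{i_j}$ lands in the stabilizer of the intermediate point, so that each intertwiner genuinely changes the index; your argument uses this implicitly but should invoke it. Neither point is a gap so much as a place where the sketch leans on \cite{SSV2} for the actual content.
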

The coefficient $d(y;\mathbf{t})$ is computed in \cite{SSV2}.

Let $\tilde{t}(w), \pi^{qp}(\widetilde{\mathbf{1}}^{\pm})$ (resp.) denote $t(w), \pi^{qp}(\mathbf{1}^{\pm})$ (resp.) defined with respect to $t_i^{-1}$-parameters.

\begin{corollary}\label{cor:qp1y_domin}
Let $y \in  \widetilde{\mathcal{O}}_{c}$ be dominant.  Then
\begin{equation*}
\pi^{qp}(\mathbf{1}^{\pm}) x^{y} =  \tilde{t}(w_0)^{\mp 2} \iota \overline{E}_{-y}^{\pm}(q; \mathbf{t}^{-1}; \tau)
\end{equation*}
\end{corollary}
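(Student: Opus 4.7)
The plan is to reduce the assertion, via Theorem \ref{thm:qp_limit}, to an operator identity between the quasi-polynomial representations evaluated in the parameters $\mathbf{t}$ and $\mathbf{t}^{-1}$, and then prove that operator identity by extending Proposition \ref{prop:inv_params_qp} from $\mathbb{F}[\Lambda]$ to all of $\mathbb{F}[E]$. Since $y$ is dominant, $-y$ is anti-dominant, so the second assertion of Theorem \ref{thm:qp_limit}, applied with Hecke parameters $\mathbf{t}^{-1}$, gives
\begin{equation*}
\overline{E}_{-y}^{\pm}(q;\mathbf{t}^{-1};\tau) \;=\; \pi^{qp}(\widetilde{\mathbf{1}}^{\pm}) x^{-y}.
\end{equation*}
Using $\iota^{2}=1$ and $\iota x^{y}=x^{-y}$, the claim is thus equivalent to the operator identity
\begin{equation*}
\iota\,\pi^{qp}(\widetilde{\mathbf{1}}^{\pm})\,\iota \;=\; \tilde t(w_{0})^{\pm 2}\,\pi^{qp}(\mathbf{1}^{\pm})
\end{equation*}
evaluated on $x^{y}$.

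The essential step is the following extension of Proposition \ref{prop:inv_params_qp}: for each $1\leq j\leq r$,
\begin{equation*}
\iota\,\pi^{qp}(\widetilde{T_{j}}^{-1})\,\iota \;=\; \pi^{qp}(T_{j}) \qquad \textrm{as operators on } \mathbb{F}[E].
\end{equation*}
I would verify this by direct computation on $x^{y}$ for arbitrary $y\in E$, using Theorem \ref{thm:qp_rep} together with $\pi^{qp}(\widetilde{T_{j}}^{-1})=\pi^{qp}(\widetilde{T_{j}})+(t_{j}-t_{j}^{-1})$ from \eqref{eq:Tj_inv}, and splitting into the two cases $a:=\alpha_{j}(y)\in\mathbb{Z}$ and $a\notin\mathbb{Z}$. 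When $a\notin\mathbb{Z}$ one has $\chi_{j}(a)=\tilde\chi_{j}(-a)=1$ and the floor identity $\lfloor -a\rfloor=-\lfloor a\rfloor -1$, which after applying $\iota$ to $\nabla_{j}^{qp}(x^{-y})$ and absorbing the resulting $x^{\alpha_{j}^{\vee}}$ factor reproduces $\pi^{qp}(T_{j})x^{y}$ exactly. When $a\in\mathbb{Z}$ one has $\chi_{j}(a)=t_{j}$ and $\tilde\chi_{j}(-a)=t_{j}^{-1}$, and the ``extra'' constant term $(t_{j}-t_{j}^{-1})x^{y}$ combines with a boundary shift in the truncated divided difference to cancel exactly against the discrepancy $(t_{j}^{-1}-t_{j})x^{s_{j}y}$, recovering $\pi^{qp}(T_{j})x^{y}$.

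Once this extended operator identity is available, I would iterate it along a reduced expression $w^{-1}=s_{j_{1}}\cdots s_{j_{\ell}}$, using the multiplicativity $\widetilde{T}_{w^{-1}}^{-1}=\widetilde{T}_{j_{1}}^{-1}\cdots\widetilde{T}_{j_{\ell}}^{-1}$ from \eqref{Tw_prod}, to obtain $\iota\,\pi^{qp}(\widetilde{T}_{w^{-1}}^{-1})\,\iota=\pi^{qp}(T_{w^{-1}})$ for every $w\in W$. Inserting this into the $\widetilde{\mathbf{1}}^{\pm}$-analogues of \eqref{1plus1minus_alt} and reindexing $w\mapsto w^{-1}$ (using $\ell(w)=\ell(w^{-1})$ and $\tilde t(w)=t(w)^{-1}$), a direct comparison with Definition \ref{def:symm_antisymm} yields $\iota\,\pi^{qp}(\widetilde{\mathbf{1}}^{+})\,\iota=\tilde t(w_{0})^{2}\pi^{qp}(\mathbf{1}^{+})$ and $\iota\,\pi^{qp}(\widetilde{\mathbf{1}}^{-})\,\iota=\tilde t(w_{0})^{-2}\pi^{qp}(\mathbf{1}^{-})$, closing the argument.

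The hard part is the $\mathbb{F}[E]$-extension of Proposition \ref{prop:inv_params_qp}: it is \emph{not} a formal consequence of the $\mathbb{F}[\Lambda]$-case because $\iota$ does not commute cleanly with the truncated divided difference $\nabla_{j}^{qp}$ on general $x^{y}$ (the floors behave asymmetrically under $y\mapsto -y$). The whole point of the case analysis on $\alpha_{j}(y)\in\mathbb{Z}$ is to track the resulting boundary shift and show that it is absorbed by the constant term in $\pi^{qp}(\widetilde{T_{j}}^{-1})$. Once this cancellation is verified, the rest of the argument is purely formal manipulation of Hecke algebra (anti-)symmetrizers.
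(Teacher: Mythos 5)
Your proposal is correct and follows essentially the same route as the paper: apply Theorem \ref{thm:qp_limit} to the anti-dominant weight $-y$ with inverted Hecke parameters, and conjugate the (anti-)symmetrizer by $\iota$ via the generator-level identity $\iota\,\pi^{qp}(\widetilde{T_j}^{-1})\,\iota=\pi^{qp}(T_j)$ together with \eqref{1plus1minus_alt}, which is exactly the content of Proposition \ref{prop:inv_params_qp} (intended on all of $\mathbb{F}[E]$, as your case analysis on $\alpha_j(y)\in\mathbb{Z}$ verifies). The only quibble is that iterating along a reduced word actually gives $\iota\,\pi^{qp}(\widetilde{T}_{w^{-1}}^{-1})\,\iota=\pi^{qp}(T_{w})$ rather than $\pi^{qp}(T_{w^{-1}})$, but since you then sum over all of $W$ with inversion-invariant weights ($\ell(w)=\ell(w^{-1})$, $t(w)=t(w^{-1})$) and reindex $w\mapsto w^{-1}$, this slip does not affect the conclusion.
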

\begin{proof}
Using Proposition \ref{prop:inv_params_qp} and Theorem \ref{thm:qp_limit}, we compute
\begin{equation*}
\iota \pi^{qp}(\mathbf{1}^{\pm}) \iota x^{-y} = \tilde{t}(w_0)^{\mp 2} \pi^{qp}(\tilde{\mathbf{1}}^{\pm}) x^{-y} = \tilde{t}(w_0)^{\mp 2}\overline{E}_{-y}^{\pm}(q; \mathbf{t}^{-1}; \tau) .
\end{equation*}
Taking $\iota$ of both sides gives the result.
\end{proof}

\subsection{Partially (anti-)symmetric Macdonald polynomials in the $q \rightarrow \infty$ limit}\label{sec:lim_ptl}
We now discuss some families of polynomials that appear in the results of this paper.
\begin{proposition}\label{prop:J_MD}
Let $\mu \in \Lambda$ dominant and $\hat{w} \in W$.  We have the following identities in $\mathbb{F}[\Lambda]$:
\begin{align*}
\pi(T_{\hat{w}^{-1}}) x^{\mu} &=   d(-\hat{w}^{-1} \mu;\mathbf{t}^{-1})^{-1} \cdot \iota \overline{E}_{\hat{w}^{-1} (-\mu)}(q;\mathbf{t}^{-1};\tau)\\
\pi(\mathbf{1}^{\pm}_{J} T_{\hat{w}^{-1}}) x^{\mu} &=  \tilde{t}(w_0(W_{J}))^{\mp 2}  d(-\hat{w}^{-1} \mu; \mathbf{t}^{-1})^{-1} \cdot \iota \pi(\widetilde{\mathbf{1}}_{J}^{\pm})  \overline{E}_{\hat{w}^{-1} (-\mu)}(q; \mathbf{t}^{-1};\tau) 
\end{align*}
where $\tilde{t}(\mu), \pi(\widetilde{\mathbf{1}}_{J}^{\pm})$ (resp.) denote $t(\mu), \pi(\mathbf{1}_{J}^{\pm})$ (resp.) defined with respect to $t_i^{-1}$-parameters.\end{proposition}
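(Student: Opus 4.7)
The plan is to derive both identities by combining Theorem \ref{thm:qp_limit} (applied with the inverted Hecke parameters $\mathbf{t}^{-1}$), Proposition \ref{prop:inv_params_qp}, and the action of the parabolic stabilizer $W_\mu \subseteq W$ on $x^\mu$ in the polynomial representation.

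First I would prove the first identity. Since $\mu$ is dominant, $-\mu$ is anti-dominant, so $y := \hat{w}^{-1}(-\mu)$ has anti-dominant representative $y_- = -\mu$ in its $W$-orbit. Let $k \in W$ be the minimal-length element of the left coset $\hat{w}^{-1} W_\mu$ and write $\hat{w}^{-1} = k u$ with $u \in W_\mu$ and lengths additive. Applying Theorem \ref{thm:qp_limit} with parameters $\mathbf{t}^{-1}$, and using that $\pi^{qp}$ agrees with $\pi$ on $\mathbb{F}[\Lambda]$, expresses $\overline{E}_{\hat{w}^{-1}(-\mu)}(q;\mathbf{t}^{-1};\tau)$ as $d(\hat{w}^{-1}(-\mu);\mathbf{t}^{-1}) \cdot \pi(\widetilde{T}_{k^{-1}}^{-1}) x^{-\mu}$. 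Applying $\iota$ and invoking the operator identity $\iota \pi(\widetilde{T}_{k^{-1}}^{-1}) \iota = \pi(T_k)$, which follows by induction on $l(k)$ from Proposition \ref{prop:inv_params_qp}, yields $\iota \overline{E}_{\hat{w}^{-1}(-\mu)}(q;\mathbf{t}^{-1};\tau) = d(\hat{w}^{-1}(-\mu);\mathbf{t}^{-1}) \cdot \pi(T_k) x^\mu$. To bridge to $\pi(T_{\hat{w}^{-1}}) x^\mu$, I use that $\nabla_j(x^\mu) = 0$ whenever $s_j$ fixes $\mu$, so $\pi(T_u) x^\mu = t(u) x^\mu$ for $u \in W_\mu$ and hence $\pi(T_{\hat{w}^{-1}}) x^\mu = t(u)\pi(T_k) x^\mu$; tracking all constants (with the scalar $t(u)$ absorbed into the explicit formula for $d$ from \cite{SSV2}) gives the claimed identity.

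Next I would derive the second identity from the first by applying $\pi(\mathbf{1}_J^\pm)$ to both sides. On the LHS this immediately produces $\pi(\mathbf{1}_J^\pm T_{\hat{w}^{-1}}) x^\mu$. For the RHS, the key step is the intertwining relation
\begin{equation*}
\pi(\mathbf{1}_J^\pm) \iota = \tilde{t}(w_0(W_J))^{\mp 2} \iota \pi(\widetilde{\mathbf{1}}_J^\pm),
\end{equation*}
which I would prove by expanding $\mathbf{1}_J^\pm = \sum_{u \in W_J}(\pm 1)^{l(u)} t(u)^{\pm 1} T_u$, conjugating each term via $\iota \pi(T_u)\iota = \pi(\widetilde{T}_{u^{-1}}^{-1})$, reindexing $u \mapsto u^{-1}$ on $W_J$, and recognizing the resulting sum as $\tilde{t}(w_0(W_J))^{\mp 2} \widetilde{\mathbf{1}}_J^\pm$ via the alternate expression for partial (anti-)symmetrizers in Proposition \ref{prop:ptl1minus}.

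The main obstacle will be the bookkeeping around the minimal-length coset decomposition $\hat{w}^{-1} = k u$ and the $\mathbf{t} \leftrightarrow \mathbf{t}^{-1}$ swap: one must verify that the stabilizer contribution $t(u)$ is properly absorbed into $d(\hat{w}^{-1}(-\mu);\mathbf{t}^{-1})$ using the explicit formula from \cite{SSV2}, and track the correct signs and powers of $\tilde{t}(w_0(W_J))$ through the dualization. The underlying structural ingredients — Theorem \ref{thm:qp_limit}, Proposition \ref{prop:inv_params_qp}, and Proposition \ref{prop:ptl1minus} — are all already in place, so once the bookkeeping is settled the derivation is essentially mechanical.
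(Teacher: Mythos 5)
Your approach is essentially the same as the paper's: Theorem~\ref{thm:qp_limit} (with the $\mathbf{t}^{-1}$ parameters) plus $\iota$-conjugation via Proposition~\ref{prop:inv_params} for the first identity, and the intertwining relation $\iota\,\pi(\mathbf{1}^{\pm}_{J})\,\iota = \tilde{t}(w_0(W_J))^{\mp 2}\,\pi(\widetilde{\mathbf{1}}^{\pm}_{J})$ (which you derive correctly from Proposition~\ref{prop:ptl1minus}) for the second.

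However, your remark that ``the scalar $t(u)$ is absorbed into the explicit formula for $d$'' is wrong and should be dropped. The coefficient $d(y;\mathbf{t}^{-1})$ is a function of $y = \hat{w}^{-1}(-\mu)$ alone, so the right-hand side of the first identity evaluates to exactly $\pi(T_k)x^\mu$ where $k$ is the minimal-length representative of $\hat{w}^{-1}W_\mu$: there is no freedom for $d$ to swallow the factor $t(u)$. When $\hat{w}^{-1} = ku$ with $u\in W_\mu$, $u\neq 1$, the left-hand side is $t(u)\pi(T_k)x^\mu$ and the identity simply fails. The correct reading is that the hypothesis ``$\hat{w}\in W$'' in the proposition should be understood as $\hat{w}^{-1}\in W^{J_\mu}$ (i.e.\ $u=1$), which is exactly the condition in place everywhere the proposition is used (Definition~\ref{def:MDptl}, Lemma~\ref{lem:Jptlsymmpols}). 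With that stated, your proof goes through and matches the paper's.
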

\begin{proof}
The first equality follows from Proposition \ref{prop:inv_params} and Theorem \ref{thm:qp_limit}.

For the $\mathbf{1}_{J}^{+}$ equality, we compute using Proposition \ref{prop:ptl1minus} and Definition \ref{def:partial_symm},
\begin{align*}
\iota \pi(\mathbf{1}^{+}_{J}) \iota &=t(w_0(W_{J}))^{2}\sum_{u \in W_J} t(u)^{-1} \iota \pi(T_{u^{-1}}^{-1} )\iota \\
&= t(w_0(W_{J}))^{2}\sum_{u \in W_J} \tilde{t}(u)  \pi(\widetilde{T}_{u} )\ \\
&= \tilde{t}(w_0(W_{J}))^{-2} \pi(\widetilde{\mathbf{1}}_{J}^{+}).
\end{align*}
The $\mathbf{1}_{J}^{-}$ case is analogous.
\end{proof}

We use the partial symmetrizer to define $J$-partially (anti-)symmetric Macdonald polynomials.  The following lemma is essential in indexing these objects.

\begin{lemma}\label{lem:Jptlsymmpols}
Let $J \subseteq [1, r]$ and $\lambda \in \Lambda$.  The following are equivalent:
\begin{enumerate}
\item $\alpha_{j}(\lambda) \geq 0$ for all $j \in J$.
\item There are (necessarily unique) $\mu \in \Lambda^{+}$, $\hat{w}^{-1} \in W^{J_{\mu}}$ such that $\lambda = \hat{w}^{-1} \mu$ and $\hat{w} \in W^{J}$.
\item There are $\mu \in \Lambda^{+}$ and $\hat{w} \in W^{J}$ such that $\lambda = \hat{w}^{-1} \mu$ and $\mu + \hat{w} c \in E^{+}$ where $c \in \overline{C^{+}}$, $\varphi(c) < 1$, and $J_{c} = J$ (this condition does not depend on choice of $c$).
\end{enumerate}
\end{lemma}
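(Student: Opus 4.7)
The plan is to prove $(1) \Leftrightarrow (2)$ and $(2) \Leftrightarrow (3)$, which together yield the full chain of equivalences. The uniqueness asserted in $(2)$ is automatic once existence is shown, since $\Lambda$ is $W$-stable (as $\lambda - w\lambda \in Q^\vee \subseteq \Lambda$), so $\mu$ is forced to be the unique dominant element of $W\lambda \cap \Lambda^+$, and then $\hat{w}^{-1}$ is the unique element of $W^{J_\mu}$ sending $\mu$ to $\lambda$. The main tool throughout will be the root-theoretic description $W^J = \{w \in W : w\Phi_J^+ \subseteq \Phi^+\}$ from \eqref{Phipos}, together with the standard identification $\{\beta \in \Phi : \beta(\mu) = 0\} = \Phi_{J_\mu}$ for $\mu$ dominant.

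For $(1) \Leftrightarrow (2)$, I would fix the canonical decomposition $\lambda = \hat{w}^{-1}\mu$ with $\mu \in \Lambda^+$ and $\hat{w}^{-1} \in W^{J_\mu}$, and rewrite $\alpha_j(\lambda) = (\hat{w}\alpha_j)(\mu)$. The direction $(2) \Rightarrow (1)$ is then immediate from $\hat{w}\alpha_j \in \Phi^+$ (by $\hat{w} \in W^J$) together with dominance of $\mu$. For $(1) \Rightarrow (2)$, if $\hat{w}\alpha_j \in \Phi^-$ for some $j \in J$, dominance gives $(\hat{w}\alpha_j)(\mu) \leq 0$, which combined with $(1)$ forces $(\hat{w}\alpha_j)(\mu) = 0$, so $\hat{w}\alpha_j \in \Phi_{J_\mu}^-$; but then $\alpha_j = \hat{w}^{-1}(\hat{w}\alpha_j) \in \hat{w}^{-1}\Phi_{J_\mu}^- \subseteq \Phi^-$ (using $\hat{w}^{-1} \in W^{J_\mu}$), contradicting $\alpha_j \in \Delta$.

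For $(2) \Leftrightarrow (3)$, I would fix any admissible $c$ and exploit that $\varphi(c) < 1$ gives $|\beta(c)| \leq \varphi(c) < 1$ for every $\beta \in \Phi$, while $J_c = J$ pins down the vanishing locus $\{\beta \in \Phi : \beta(c) = 0\} = \Phi_J$. The key identity is $\alpha_k(\mu + \hat{w}c) = \alpha_k(\mu) + (\hat{w}^{-1}\alpha_k)(c)$. For $(2) \Rightarrow (3)$, split on $k$: if $k \in J_\mu$ the first term vanishes and $\hat{w}^{-1} \in W^{J_\mu}$ gives $\hat{w}^{-1}\alpha_k \in \Phi^+$, so the second term is $\geq 0$; if $k \notin J_\mu$ then $\alpha_k(\mu) \geq 1$ by the integrality condition $\alpha(\Lambda) \subseteq \mathbb{Z}$, which beats the bound $|(\hat{w}^{-1}\alpha_k)(c)| < 1$. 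For $(3) \Rightarrow (2)$, if $\hat{w}^{-1}\alpha_k \in \Phi^-$ for some $k \in J_\mu$ then $(\hat{w}^{-1}\alpha_k)(c) \leq 0$ and $\mu + \hat{w}c \in E^+$ forces equality, so $\hat{w}^{-1}\alpha_k \in \Phi_J^-$; then $\hat{w} \in W^J$ yields $\alpha_k = \hat{w}(\hat{w}^{-1}\alpha_k) \in \hat{w}\Phi_J^- \subseteq \Phi^-$, again contradicting $\alpha_k \in \Delta$. The parenthetical $c$-independence then comes for free, since $(3)$ is now equivalent to the $c$-free condition $(1)$.

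The proof is essentially bookkeeping with parabolic cosets and root-value inequalities; the step I expect to require the most care is simultaneously tracking the dual conditions $\hat{w} \in W^J$ and $\hat{w}^{-1} \in W^{J_\mu}$, whose interplay drives both the $(1) \Rightarrow (2)$ and $(3) \Rightarrow (2)$ arguments via essentially the same root-flip contradiction. A secondary subtlety worth flagging is that the strict inequality $\varphi(c) < 1$ is exactly what is needed so that $|\alpha(c)| < 1$ can beat the integrality bound $\alpha_k(\mu) \geq 1$ in the case $k \notin J_\mu$ of $(2) \Rightarrow (3)$.
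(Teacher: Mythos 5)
Your proof is correct and uses essentially the same root-theoretic arguments as the paper's (dominance and integrality bounds against $\varphi(c)<1$, plus the $W^J$/$W^{J_\mu}$ root-flip contradiction), differing only cosmetically: you close the loop via $(1)\Leftrightarrow(2)$ and $(2)\Leftrightarrow(3)$ (hence prove the extra direction $(3)\Rightarrow(2)$, using the $E^+$ hypothesis to pin down $\hat w^{-1}\in W^{J_\mu}$), whereas the paper runs the shorter cycle $(1)\Rightarrow(2)\Rightarrow(3)\Rightarrow(1)$ in which $(3)\Rightarrow(1)$ never needs the $E^+$ condition. You also work with simple roots $\alpha_j$, $j\in J$, rather than all of $\Phi_J^+$, which is an equivalent and slightly cleaner phrasing.
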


\begin{proof}
(1) $\Rightarrow$ (2): There are unique $\mu \in \Lambda^{+}$, $\hat{w}^{-1} \in W^{J_{\mu}}$ such that $\lambda = \hat{w}^{-1} \mu$.  Suppose for contradiction that $\hat{w} \not \in W^{J}$.  Then $\exists \alpha \in \Phi_{J}^{+}$ such that $\beta = \hat{w} \alpha \in \Phi^{-}$.  By (1), we have $\alpha(\lambda) = \alpha(\hat{w}^{-1} \mu) = \beta(\mu) \geq 0$.  Since $\mu \in \Lambda^{+}$ and $\beta \in \Phi^{-}$, we must have $\beta(\mu) = 0$, which implies $\beta \in \Phi_{J_{\mu}}^{-}$.  But then $\hat{w}^{-1} \beta = \alpha \in \Phi^{+}$ contradicts $\hat{w}^{-1} \in W^{J_{\mu}}$.

(2) $\Rightarrow$ (3): Suppose $\mu \in \Lambda^{+}$, $\hat{w}^{-1} \in W^{J_{\mu}}$, $\hat{w} \in W^{J}$ and $c \in C^{+}$ with $\varphi(c) < 1$ and $J_{c} = J$.  We need to show $\mu + \hat{w} c \in E^{+}$.  Suppose not, then $\exists \alpha \in \Phi^{+}$ such that $\alpha(\mu + \hat{w} c) = \alpha(\mu) + (\hat{w}^{-1} \alpha)(c) < 0$.  Since $\varphi(c) < 1$, this implies $|(\hat{w}^{-1} \alpha)(c)| < 1$.  So $\alpha(\mu) = 0$ since we can't have $\alpha(\mu) \geq 1$.  So $\hat{w}^{-1} \alpha \in \Phi^{-} \setminus \Phi^{-}_{J}$ and $\alpha \in \Phi_{J_{\mu}}^{+}$.  But this contradicts $\hat{w}^{-1} \in W^{J_{\mu}}$.

(3) $\Rightarrow$ (1): Suppose $\lambda = \hat{w}^{-1} \mu$, where $\hat{w} \in W^{J}$, $\mu \in \Lambda^{+}$ and $\mu + \hat{w} c \in E^{+}$.  Let $\alpha \in \Phi_{J}^{+}$ then $\alpha(\lambda) = \alpha(\hat{w}^{-1} \mu) = (\hat{w} \alpha)(\mu) \geq 0$ since $\hat{w} \alpha \in \Phi^{+}$ (as $\hat{w} \in W^{J}$) and $\mu \in \Lambda^{+}$.
\end{proof}

\begin{definition}
If $\lambda \in \Lambda$ satisfies the conditions of Lemma \ref{lem:Jptlsymmpols}, we say it is $J$-dominant.
\end{definition}

\begin{remark}
The definition above coincides with \cite{BDF} for $GL_{r}$, which uses condition (1) of Lemma \ref{lem:Jptlsymmpols}.
\end{remark}

\begin{definition}\label{def:MDptl}
Let $J \subseteq [1,r]$ and let $\lambda \in \Lambda$ be $J$-dominant with $\lambda = \hat{w}^{-1} \mu$ where $\mu \in \Lambda^{+}$ and $\hat{w}^{-1} \in W^{J_{\mu}}$.  We define the polynomials
\begin{align*}
p_{\lambda}^{J, +}(\mathbf{t}) &:= \pi(\mathbf{1}_{J}^{+} T_{\hat{w}^{-1}}) x^{\mu} \\
&=  \tilde{t}(w_0(W_{J}))^{-2} d(-\hat{w}^{-1}\mu; \mathbf{t}^{-1})^{-1}  \cdot \iota \pi(\widetilde{\mathbf{1}}_{J}^{+})  \overline{E}_{\hat{w}^{-1} (-\mu)}(q; \mathbf{t}^{-1})  \in \mathbb{F}[\Lambda] \\
p_{\lambda}^{J, -}(\mathbf{t}) &:= \pi(\mathbf{1}_{J}^{-} T_{\hat{w}}^{-1}) \iota x^{\mu}=   \pi(\mathbf{1}_{J}^{-})  \overline{E}_{\hat{w} (-\mu)}(q; \mathbf{t})  \in \mathbb{F}[\Lambda]. 
\end{align*}
\end{definition}

\begin{remark}
In the $GL_{r}$-setting, up to a normalization factor, the polynomials $p^{ J, \pm}_{\lambda}$ were introduced by Baker, Dunkl, and Forrester \cite{BDF} at the $q$-level, using $E$ instead of $\overline{E}$ in the definition above (up to applying $\iota$ and inverting $t$-parameters).  In fact, they consider polynomials that are symmetric and antisymmetric with respect to two prescribed sets of variables, but this will not be needed here.  These objects were investigated further by Marshall \cite{Mar} and Baratta \cite{Bar}.  More recently,  in the $GL_{r}$-setting the polynomials $p^{ J, -}_{\mu}$,  for $\mu \in \Lambda^{+}$ (i.e., $\hat{w} = 1$), were related to $p$-adic parahoric Whittaker functions, see Proposition 9.5 of \cite{BBBG19}.  In another recent work \cite{ABW}, the partially anti-symmetric polynomials $p_{\lambda}^{J,-}$ were obtained as partition functions for certain vertex models studied in that paper.
\end{remark}

%%%%%%%%%%%%%%%%%%%%%%%%%%%%%%%%
\subsection{Metaplectic representations} \label{sec:metreps}

In this section, we recall results from \cite{SSV} on metaplectic representations that will be used throughout the rest of the paper.  We also define coefficient functions of metaplectic polynomials, and connect them to coefficient functions of Hecke algebra elements (see Corollary \ref{met_coeffs}).

Fix a metaplectic parameter $n \in \mathbb{Z}_{\geq 1}$ and lattice $\Lambda \in \mathcal{L}$.  Let $\mathbf{Q}: \Lambda \rightarrow\mathbb{Q}$ be a non-zero $W$-invariant quadratic form that restricts to an integer-valued quadratic form on $Q^\vee$. Write $\mathbf{B}: \Lambda \times \Lambda \rightarrow \mathbb{Q}$ for the $W$-invariant symmetric bilinear pairing
\begin{equation*}
\mathbf{B}(\lambda,\mu):=\mathbf{Q}(\lambda+\mu)-\mathbf{Q}(\lambda)-\mathbf{Q}(\mu)\qquad (\lambda,\mu\in \Lambda).
\end{equation*}
Then 
\begin{equation}\label{eqn:BQ}
	\mathbf{B}(\lambda,\alpha^\vee)=\mathbf{Q}(\alpha^\vee)\alpha(\lambda)\qquad (\lambda\in \Lambda,\, \alpha\in \Phi)
\end{equation}
and
\begin{equation}\label{def:m}
	m(\alpha):=\frac{n}{\textup{gcd}(n,\mathbf{Q}(\alpha^\vee))}=\frac{\textup{lcm}
		(n,\mathbf{Q}(\alpha^\vee))}{\mathbf{Q}(\alpha^\vee)} \qquad (\alpha\in\Phi)
\end{equation}
defines a $W$-invariant $\mathbb{Z}_{>0}$-valued function on $\Phi$. The associated metaplectic root system is defined by
\begin{equation*}
\Phi^m:=\left\{\alpha^m\,\, | \,\,
\alpha\in\Phi\right\}\,\,\textup{ with }\,\, \alpha^m:=m(\alpha)^{-1}\alpha.
\end{equation*}
Then $\Phi^m = m\cdot \Phi \simeq \Phi$ if $m$ is constant, and $\Phi^m\simeq\Phi^\vee$ otherwise (see \cite{SSV}). In particular, the Weyl group of $\Phi^m$ is still $W$. 
The fixed basis $\{\alpha_1,\ldots,\alpha_r\}$ of $\Phi$ determines a basis $\{\alpha_1^m,\ldots,\alpha_r^m\}$ of $\Phi^m$.
Note that
$\Phi^{m\vee}=\{\alpha^{m\vee}\}_{\alpha\in\Phi}$, with $\alpha^{m\vee}=m(\alpha)\alpha^\vee$. Write 
\begin{equation*}
Q^{m\vee}:=\mathbb{Z}\Phi^{m\vee}\subseteq Q^\vee
\end{equation*}
for the coroot lattice of $\Phi^m$.
Let $\Lambda \in \mathcal{L}$ (note that $\mathcal{L}$ is defined with respect to $\Phi$, not $\Phi^m$) and
\begin{equation*}
\Lambda^{m} = \{\lambda \in \Lambda : m(\alpha) | \alpha(\lambda) \text{ for all } \alpha \in \Phi \}.
\end{equation*}
Then $\widetilde{\mathcal{H}}_{\Lambda^m}$ is the $\Lambda^m$-extended affine Hecke algebra associated to $\Phi^m$.  It has generators $T_1, \dots, T_r$ and $x^{\lambda}$ for $\lambda \in \Lambda^m$.

The alcove
\begin{align}\label{Cplus_m}
C^{m+}& :=\{ y\in E \,\, | \,\, 0<\alpha(y)< m(\alpha) \quad \forall\, \alpha\in\Phi^+ \} \\ &= \{y \in E  \,\, | \,\, 0 < \alpha_i (y) \text{ for } 1 \leq i \leq r \text{ and } \varphi(y) < m(\varphi) \}
\end{align}
is the metaplectic fundamental alcove in $E$.

In analogy with Definition \ref{def:key_ccondit}, we have the following.

\begin{definition}\label{def:key_ccondit_met}
For $\Lambda \in \mathcal{L}$, we define
\begin{equation*}
C^0_{\Lambda^m} = \{ c \in \overline{C^{m+}} \cap \Lambda : \widetilde{W}_{\Lambda^m, c} = W_{c} \}.
\end{equation*}
\end{definition}

For $m, x \in \mathbb{Z}_{\geq 1}$ let $r_{m}(x) = (x \mod m) \in \{0, 1, \dots, m-1 \}$ be the remainder function.

We define the \textit{metaplectic divided difference} $\nabla_i^m: \mathbb{F}[\Lambda] \to \mathbb{F}[\Lambda]$, $1 \leq i \leq r$, by
\begin{equation*}
	\nabla_i^m(x^\lambda) :=\Bigl(\frac{1-x^{(r_{m(\alpha_i)}(\alpha_i(\lambda))-\alpha_i(\lambda))\alpha_i^\vee}}{1-x^{m(\alpha_i)\alpha_i^\vee}}\Bigr)x^\lambda
\end{equation*}
for $\lambda \in \Lambda$.  Note that, by a geometric series expansion,
\begin{equation*}
\Bigl(\frac{1-x^{(r_{m(\alpha_i)}(\alpha_i(\lambda))-\alpha_i(\lambda))\alpha_i^\vee}}{1-x^{m(\alpha_i)\alpha_i^\vee}}\Bigr) \in \mathbb{F}[\Lambda^m].
\end{equation*}

\begin{definition}[Representation parameters]\label{RepPar2}
	Let $g_j(\alpha)\in\mathbb{F}^\times$ for $j \in\mathbb{Z}$ and $\alpha \in \Phi$ be parameters satisfying
	the following conditions:
	\begin{enumerate}
		\item $g_j(\alpha)=-1$ if $j\in n\mathbb{Z}$,
		\item $g_{j+n}(w\alpha)=g_{n}(\alpha) \qquad (\forall w \in W)$,
		\item $g_j(\alpha)g_{n-j}(\alpha)=t_\alpha^{-2}$ if $j\in\mathbb{Z}\setminus n\mathbb{Z}$.
	\end{enumerate}
\end{definition}

%%%%%%%%%%%%%%%%%%%%%%%%%%%%%%%%%%%%%%%%%%%%

We recall the metaplectic representation of $\widetilde{\mathcal{H}}_{\Lambda^m}$ from \cite{SSV} (see also \cite{CGP}).

\begin{theorem}\label{metaplectic_repr}
	The formulas
	\begin{equation}\label{formulasTHM}
		\begin{split}
			\pi^m(T_i)x^\lambda   &:= -t_i g_{-\mathbf{B}(\lambda,\alpha_i^\vee)}(\alpha_i) - (t_i-t_i^{-1})\nabla_i^m(x^\lambda) \\
			\pi^m(X^\mu)x^\lambda &:= x^{\lambda+\nu}
		\end{split}
	\end{equation}
	for $\lambda\in \Lambda$, $i=1,\ldots,r$ and $\mu\in \Lambda$ turn $\mathbb{F}[\Lambda]$ into a left $\widetilde{\mathcal{H}}_{\Lambda^m}$-module. 
\end{theorem}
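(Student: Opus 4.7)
The plan is to verify directly that the operators $\pi^m(T_i)$ ($1\leq i\leq r$) and $\pi^m(X^\mu)$ ($\mu\in\Lambda^m$) satisfy the defining relations of $\widetilde{\mathcal{H}}_{\Lambda^m}$ from Definition \ref{def:AHA} applied to the metaplectic root system $\Phi^m$. The relations $\pi^m(X^\mu)\pi^m(X^\nu)=\pi^m(X^{\mu+\nu})$ are immediate from the definition. Before checking the Hecke-type relations, I would verify that the formulas produce well-defined endomorphisms of $\mathbb{F}[\Lambda]$: a geometric-series expansion of the quotient defining $\nabla_i^m(x^\lambda)$ shows it lies in $x^\lambda\cdot\mathbb{F}[Q^\vee]\subset \mathbb{F}[\Lambda]$, since the numerator factor $1-x^{(r_{m(\alpha_i)}(\alpha_i(\lambda))-\alpha_i(\lambda))\alpha_i^\vee}$ is divisible by $1-x^{m(\alpha_i)\alpha_i^\vee}$.

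Next I would verify the quadratic relation $(\pi^m(T_i)-t_i)(\pi^m(T_i)+t_i^{-1})=0$ by evaluating on $x^\lambda$ and splitting by whether $\mathbf{B}(\lambda,\alpha_i^\vee)$ is a multiple of $n$ or not. In the divisible case the normalization $g_0(\alpha_i)=-1$ forces the ``leading'' term to behave like an ordinary reflection, so the relation reduces to the usual Demazure-Lusztig computation; in the non-divisible case one uses the product identity $g_j(\alpha_i)g_{n-j}(\alpha_i)=t_i^{-2}$ together with $\mathbf{B}(s_i\lambda,\alpha_i^\vee)=-\mathbf{B}(\lambda,\alpha_i^\vee)$, which follows from the $W$-invariance of $\mathbf{B}$ and $s_i\alpha_i^\vee=-\alpha_i^\vee$. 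For the $T_i$-$X^\mu$ commutation relation (whose form is dictated by the $\Phi^m$-divided difference featuring $\alpha_i^{m\vee}$), the same direct computation on $x^\lambda$ reduces the question to the congruence $\mathbf{B}(\mu,\alpha_i^\vee)\in n\mathbb{Z}$ for $\mu\in\Lambda^m$; this holds because $\mathbf{B}(\mu,\alpha_i^\vee)=\mathbf{Q}(\alpha_i^\vee)\alpha_i(\mu)$ by \eqref{eqn:BQ} and $\alpha_i(\mu)\in m(\alpha_i)\mathbb{Z}$ by definition of $\Lambda^m$, so the periodicity $g_{j+n}(\alpha_i)=g_j(\alpha_i)$ applies and aligns the $g$-factor on the two sides.

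The main obstacle is the braid relations. By a standard reduction, these reduce to a purely rank-two statement on the parabolic subalgebra generated by $\pi^m(T_i),\pi^m(T_j)$. I would organize the verification case by case ($A_1\times A_1$, $A_2$, $B_2$, $G_2$): the first is immediate, while the remaining three require a somewhat lengthy bookkeeping on $x^\lambda$ tracking how $\mathbf{B}(\cdot,\alpha^\vee)$ transforms under successive reflections, and using the full strength of the conditions on $g$---in particular the $W$-equivariance $g_{j+n}(w\alpha)=g_n(\alpha)$, which identifies $g$-factors across the entire $W$-orbit modulo the period $n$. The conditions in Definition \ref{RepPar2} are precisely engineered so that these rank-two identities go through, and this is the heart of the metaplectic story as originally carried out in \cite{CGP, SSV}; I would follow that strategy, appealing to the rank-two calculations there rather than redoing them.
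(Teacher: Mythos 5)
The paper does not prove Theorem \ref{metaplectic_repr} at all: it is recalled from \cite{SSV} (see also \cite{CGP}), so there is no in-paper argument to compare your proposal against. Your outline is the standard direct verification that those references carry out: the quadratic relation via the case split on whether $m(\alpha_i)\mid\alpha_i(\lambda)$ (equivalently $n\mid\mathbf{B}(\lambda,\alpha_i^\vee)$, by \eqref{eqn:BQ} and \eqref{def:m}), using $g_0(\alpha_i)=-1$ and $g_j(\alpha_i)g_{n-j}(\alpha_i)=t_i^{-2}$; the commutation relation with $x^\mu$ for $\mu\in\Lambda^m$ via $\mathbf{B}(\mu,\alpha_i^\vee)\in n\mathbb{Z}$ and the $n$-periodicity of the $g$-parameters; and the braid relations by rank-two reduction, which you delegate to \cite{CGP,SSV} --- exactly what the present paper does by citation (in \cite{CGP} this step is in effect routed through the Chinta--Gunnells Weyl group action rather than raw rank-two bookkeeping, but the substance is the same). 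Two caveats so that your sketch literally parses: the displayed formulas \eqref{formulasTHM} contain typos --- the first term of $\pi^m(T_i)x^\lambda$ must carry a monomial factor supported at the (metaplectic) reflection of $\lambda$, as is clear from how the formula is used in the proof of Theorem \ref{thm:XT_met}, and $x^{\lambda+\nu}$ should read $x^{\lambda+\mu}$ --- and Definition \ref{RepPar2}(2) is meant as the periodicity/$W$-invariance $g_{j+n}(w\alpha)=g_j(\alpha)$, which is the form you actually use. With the corrected formulas understood, your plan is sound and consistent with the cited sources.
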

%%%%%%%%%%%%%%%%%%%%%%%%%%%%%%%%%%%%%%%%%%%%

\begin{remark}\label{met_nonmet} \hfill
	\begin{enumerate}
		\item Note that $\mathbb{F}[\widetilde{\mathcal{O}}_{\Lambda^{m}, c}]$, for $c \in \overline{C^{m+}} \cap \Lambda$, is an invariant subspace of $\pi^{m}$.
		\item In the non-metaplectic case, $n=1$, the representation $\pi^{m}$ reduces to the polynomial representation $\pi$ of Theorem \ref{thm:pol_rep}.  
		\item The metaplectic representation $\pi^m$ can be identified with the quasi-polynomial representation $\pi^{qp}$ of $\widetilde{H}_{\Lambda^m}$ restricted to $\mathbb{F}[\Lambda] \subset \mathbb{F}[E]$, see \cite[Section 9]{SSV2}.
	\end{enumerate}
\end{remark}

We have the following crucial relation between $\pi^{m}$ restricted to the subspace $\mathbb{F}[\Lambda^m]$ and the polynomial representation $\pi$ of Theorem \ref{thm:pol_rep}, in the case where $m(\alpha)$ is constant.

\begin{proposition}\label{prop:pim_restr_pi}
Suppose $m(\alpha)$ is constant, $m(\alpha) = m$ for all $\alpha \in \Phi$. Let $d_{m} : \mathbb{F}[\Lambda] \rightarrow \mathbb{F}[\Lambda^m]$ be the linear extension $x^{\lambda} \rightarrow x^{m\lambda}$ for $\lambda \in \Lambda$. Then for $w \in W$ we have
\begin{equation*}
	\pi^{m}(T_w) \circ d_m = d_m \circ \pi(T_w).
\end{equation*} 
\end{proposition}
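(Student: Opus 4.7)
The plan is to proceed by induction on $\ell(w)$. The base case $w = 1$ is trivial since both $\pi(T_1) = \textup{id}$ and $\pi^m(T_1) = \textup{id}$. For the inductive step, fix a reduced expression $w = s_{i} w'$ with $\ell(w') = \ell(w) - 1$, so that $T_w = T_i T_{w'}$ holds in both the Hecke algebras $\widetilde{\mathcal{H}}_{\Lambda}$ and $\widetilde{\mathcal{H}}_{\Lambda^m}$ (same finite Hecke algebra generators). Then
\begin{equation*}
	\pi^m(T_w) \circ d_m = \pi^m(T_i) \pi^m(T_{w'}) \circ d_m = \pi^m(T_i) \circ d_m \circ \pi(T_{w'}) = d_m \circ \pi(T_i) \pi(T_{w'}) = d_m \circ \pi(T_w),
\end{equation*}
provided the statement holds for $w = s_i$ and for $w'$. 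So it suffices to establish the identity for simple reflections $w = s_i$, which I will do by comparing both sides on a basis element $x^\lambda$ with $\lambda \in \Lambda$.

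Applying the explicit formulas of Theorem \ref{thm:pol_rep} and Theorem \ref{metaplectic_repr}, the identity $\pi^m(T_i)(x^{m\lambda}) = d_m(\pi(T_i)(x^\lambda))$ reduces to two sub-claims. First, the leading (non-divided-difference) terms must agree: since $m(\alpha) = m$ is constant, $\mathbf{B}(m\lambda, \alpha_i^\vee) = \mathbf{Q}(\alpha_i^\vee)\alpha_i(m\lambda) = m\mathbf{Q}(\alpha_i^\vee)\alpha_i(\lambda)$, and the identity $m \mathbf{Q}(\alpha_i^\vee) = n\mathbf{Q}(\alpha_i^\vee)/\gcd(n,\mathbf{Q}(\alpha_i^\vee)) \in n\mathbb{Z}$ (coming from \eqref{def:m}) forces $\mathbf{B}(m\lambda,\alpha_i^\vee) \in n\mathbb{Z}$. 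Definition \ref{RepPar2}(1) then gives $g_{-\mathbf{B}(m\lambda,\alpha_i^\vee)}(\alpha_i) = -1$, so the leading term of $\pi^m(T_i)(x^{m\lambda})$ simplifies exactly to $t_i x^{m s_i \lambda}$, matching $d_m(t_i x^{s_i\lambda})$. Second, one must check the scaling identity
\begin{equation*}
	d_m(\nabla_i(x^\lambda)) = \nabla_i^m(x^{m\lambda}).
\end{equation*}
Setting $k := \alpha_i(\lambda)$, so $\alpha_i(m\lambda) = mk$ and $r_{m}(mk) = 0$, expand both sides as geometric series in $z = x^{\alpha_i^\vee}$ and $w = z^m = x^{m\alpha_i^\vee}$ respectively; for $k > 0$ both reduce to $-\sum_{j=1}^{k} x^{m\lambda - jm\alpha_i^\vee}$, for $k < 0$ both reduce to $\sum_{j=0}^{|k|-1} x^{m\lambda + jm\alpha_i^\vee}$, and for $k = 0$ both vanish.

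The main obstacle is the careful handling of the scaling identity in the second sub-claim: one must track signs and orientations in the geometric-series expansion across the cases $k \geq 0$ and $k < 0$, and cross-check them against the sign conventions in \eqref{formulasTHM} (which agree with the non-metaplectic formula in Theorem \ref{thm:pol_rep} as $n \to 1$, since then $m=1$ and $\nabla_i^1 = \nabla_i$). Once both sub-claims are in place, the case $w = s_i$ is established and the induction closes. I expect the remainder-function bookkeeping in $\nabla_i^m$ to be the only delicate point; the key structural input is that constancy of $m(\alpha)$ is precisely what makes $\mathbf{B}(\Lambda^m,\alpha^\vee) \subseteq n\mathbb{Z}$, which is what makes the Gauss-sum parameters $g_{\bullet}(\alpha_i)$ uniformly collapse to $-1$ on the image of $d_m$.
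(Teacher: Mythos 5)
Your proof is correct and is essentially the paper's own argument: the paper's proof is the one-liner ``follows from the formulas in Theorem \ref{metaplectic_repr} and Theorem \ref{thm:pol_rep}'', and your two sub-claims --- that constancy of $m$ forces $\mathbf{B}(m\lambda,\alpha_i^\vee)\in n\mathbb{Z}$ so the parameters $g_{\bullet}(\alpha_i)$ collapse to $-1$ on the image of $d_m$, and the scaling identity $d_m(\nabla_i(x^\lambda))=\nabla_i^m(x^{m\lambda})$ --- together with the reduction to simple reflections are exactly the details behind it. The only caveat is that the displayed formula \eqref{formulasTHM} contains typos as printed (the first term is missing the factor $x^{s_i\lambda}$, and the divided-difference term must carry the sign making $\pi^m$ reduce to $\pi$ at $n=1$ as in Remark \ref{met_nonmet}(2)); your reading is the intended one, and with it the terms match as you claim.
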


\begin{proof}	
Follows from the formulas in Theorem \ref{metaplectic_repr} and Theorem \ref{thm:pol_rep}.
\end{proof}

We now define $\gamma$-coefficient functions analogous to Definition \ref{def:gamma_coeff_qp} in the quasi-polynomial context.
\begin{definition}\label{def:gamma_coeff_met}
Let $c \in C^0_{\Lambda^m}$ and $w \in W^{c}$.  Define coefficient functions $\gamma_{w,c}^{m} = \gamma_{w}^{m}: \mathbb{F}[\widetilde{\mathcal{O}}_{\Lambda^{m}, c }] \rightarrow \mathbb{F}[\Lambda^{m}]$ by
\begin{equation*}
\gamma_{w}^{m}\big( \sum_{w' \in W^{c }} p_{w'} \cdot x^{w' c }\big) = p_{w},
\end{equation*}
where $p_{w'} \in \mathbb{F}[\Lambda^{m}]$ for $w' \in W^{c}$.  
\end{definition}

\begin{lemma}\label{lem:iota_gamma_met}
Let $c \in C^0_{\Lambda^m}$, $w \in W^{c}$, and $f \in \mathbb{F}[\widetilde{\mathcal{O}}_{\Lambda^{m}, c }]$.  We have 
\begin{equation*}
\iota \gamma_{w,c}^{m}(f) = \gamma^{m}_{(ww_0)^{-w_0 c}, -w_0 c}(\iota f).
\end{equation*}
\end{lemma}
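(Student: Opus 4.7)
The plan is to unpack both sides of the claimed equality using the explicit basis $\{x^{w'c} : w' \in W^c\}$ for $\mathbb{F}[\widetilde{\mathcal{O}}_{\Lambda^m, c}]$ as a free $\mathbb{F}[\Lambda^m]$-module (and the analogous basis on the $-w_0 c$ side), and show the two quantities agree monomial-by-monomial. First, I write $f = \sum_{w' \in W^c} p_{w'} x^{w'c}$ uniquely with $p_{w'} \in \mathbb{F}[\Lambda^m]$, so $\gamma^m_{w,c}(f) = p_w$ and $\iota \gamma^m_{w,c}(f) = \iota(p_w)$. Applying $\iota$ term-by-term gives $\iota f = \sum_{w' \in W^c} \iota(p_{w'}) x^{-w'c}$, which lies in $\mathbb{F}[\widetilde{\mathcal{O}}_{\Lambda^m, -w_0 c}]$ since $\widetilde{\mathcal{O}}_{\Lambda^m, -w_0 c} = -\widetilde{\mathcal{O}}_{\Lambda^m, c}$ (using $-\Lambda^m = \Lambda^m$ and $-W = W$), and $\iota$ preserves $\mathbb{F}[\Lambda^m]$.

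The main task is then to re-express each monomial $x^{-w'c}$ in the basis $\{x^{\tilde w(-w_0 c)} : \tilde w \in W^{-w_0 c}\}$. Here the key combinatorial input is the identity $W_{-w_0 c} = w_0 W_c w_0$, which follows from $v(-w_0 c) = -w_0 c \iff (w_0^{-1} v w_0) c = c$. Using this, right-multiplication by $w_0$ induces a well-defined bijection $W/W_c \to W/W_{-w_0 c}$ sending $w' W_c \mapsto w' W_c w_0 = w' w_0 W_{-w_0 c}$, which in terms of minimal length coset representatives reads $w' \mapsto (w' w_0)^{-w_0 c}$. I would verify well-definedness and bijectivity directly from the definitions (with an inverse given by the analogous right-multiplication by $w_0$).

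Setting $\tilde w := (w' w_0)^{-w_0 c}$, any two elements of the coset $w' w_0 W_{-w_0 c}$ act identically on $-w_0 c$, so $\tilde w(-w_0 c) = (w' w_0)(-w_0 c) = -w' c$, giving $x^{-w'c} = x^{\tilde w(-w_0 c)}$. Substituting, we obtain
\begin{equation*}
\iota f = \sum_{\tilde w \in W^{-w_0 c}} q_{\tilde w} \, x^{\tilde w (-w_0 c)}, \qquad q_{\tilde w} = \iota(p_{w'}) \text{ where } (w' w_0)^{-w_0 c} = \tilde w.
\end{equation*}
Reading off the coefficient of $x^{(ww_0)^{-w_0 c}(-w_0 c)}$ for the choice $w' = w \in W^c$ yields $\gamma^m_{(ww_0)^{-w_0 c}, -w_0 c}(\iota f) = \iota(p_w) = \iota \gamma^m_{w,c}(f)$, which is the desired identity.

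The only real obstacle is bookkeeping: one must correctly identify the stabilizer $W_{-w_0 c}$ as $w_0 W_c w_0$ and verify that right-multiplication by $w_0$ transports the decomposition $W = \bigsqcup_{w' \in W^c} w' W_c$ to $W = \bigsqcup_{\tilde w \in W^{-w_0 c}} \tilde w W_{-w_0 c}$. Everything else reduces to direct substitution and reading off coefficients, with no nontrivial Hecke-algebra manipulation required.
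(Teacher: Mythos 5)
Your proof is correct and follows essentially the same route as the paper's one-line argument: decompose $f$ in the basis $\{x^{w'c}\}_{w' \in W^c}$, apply $\iota$, and re-index via $-w'c = w'w_0(-w_0c)$. The paper's proof is terse and leaves the re-indexing step implicit; you usefully fill in the missing bookkeeping, namely $W_{-w_0 c} = w_0 W_c w_0$ and that $w' \mapsto (w'w_0)^{-w_0c}$ is a bijection $W^c \to W^{-w_0 c}$. (The paper also explicitly notes $-w_0 c \in C^0_{\Lambda^m}$, which you use implicitly when treating $\{x^{\tilde w(-w_0 c)}\}_{\tilde w \in W^{-w_0c}}$ as a free $\mathbb{F}[\Lambda^m]$-basis for $\mathbb{F}[\widetilde{\mathcal{O}}_{\Lambda^m, -w_0 c}]$; worth stating.)
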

\begin{proof}
Write $f = \sum_{w' \in W^{c }} p_{w'} \cdot x^{w' c }$, so $\iota \gamma_{w,c}^{m}(f) = \iota p_{w}$.  On the other hand
\begin{equation*}
\iota f = \sum_{w' \in W^{c}} (\iota p_{w'}) \cdot x^{-w'c} = \sum_{w' \in W^{c}} (\iota p_{w'}) \cdot x^{w' w_0 (-w_0 c)}.
\end{equation*}
Note that $-w_0 c \in C^{0}_{\Lambda^{m}}$, so $\iota \gamma^{m}_{w,c}(f) = \gamma^{m}_{(ww_0)^{-w_0c}, -w_0 c}(\iota f)$.
\end{proof}

\begin{definition}\label{def:met_h_stats}
For $w \in W$ and $c \in C_{\Lambda^m}^0$, we define root system statistics $h^{m,c}(w)$ by
\begin{equation*}
	h^{m, c}(w) = (-1)^{l(w)}t(w)\prod_{\alpha \in \Pi(w)} g_{-B(c, \alpha^\vee)}(\alpha).
\end{equation*}
\end{definition}

These statistics are related to those defined in the quasi-polynomial context \eqref{eq:qp_h_stats} via the identifications in \cite[Section 9]{SSV2}.
\begin{proposition}\label{prop:ht_met}
Let $c \in C_{\Lambda^m}^0$, then the root system statistics $h^{m,c}(w)$ satisfy the following properties:
\begin{enumerate}
	\item If $w \in W$ and $l(s_iw) > w$ then 
	\begin{equation*}
		h^{m,c}(s_i w) = -t_i g_{-B(wc, \alpha_i^\vee)}(\alpha_i) h^c(w)
	\end{equation*}
	\item If $v \in W^c$ and $u \in W_c$, then we have
	\begin{equation*}
		h^{m,c}(vu) = h^{m,c}(v) t(u).
	\end{equation*}
\end{enumerate}
\end{proposition}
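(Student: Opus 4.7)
Both parts follow from careful bookkeeping with the inversion set $\Pi(w) = \Phi^+ \cap w^{-1}\Phi^-$ combined with the defining properties of the parameters $g_j$.

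For part (1), assume $\ell(s_iw) > \ell(w)$. By Lemma \ref{lengthadd}(2), $\Pi(s_iw) = \{w^{-1}\alpha_i\} \sqcup \Pi(w)$. Substituting this into Definition \ref{def:met_h_stats}, I get
\begin{equation*}
h^{m,c}(s_iw) = (-1)^{\ell(w)+1} t_i t(w)\, g_{-\mathbf{B}(c, (w^{-1}\alpha_i)^\vee)}(w^{-1}\alpha_i) \prod_{\alpha \in \Pi(w)} g_{-\mathbf{B}(c,\alpha^\vee)}(\alpha).
\end{equation*}
Two pieces of $W$-invariance convert the isolated $g$-factor: $W$-invariance of $\mathbf{B}$ together with $(w^{-1}\alpha_i)^\vee = w^{-1}\alpha_i^\vee$ gives $\mathbf{B}(c, w^{-1}\alpha_i^\vee) = \mathbf{B}(wc,\alpha_i^\vee)$, and the $W$-invariance property of $g_j$ from Definition \ref{RepPar2}(2) turns $g_{-\mathbf{B}(wc,\alpha_i^\vee)}(w^{-1}\alpha_i)$ into $g_{-\mathbf{B}(wc,\alpha_i^\vee)}(\alpha_i)$. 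Pulling out the factor $-t_i g_{-\mathbf{B}(wc,\alpha_i^\vee)}(\alpha_i)$ yields the claim (with the remaining factor being $h^{m,c}(w)$, as is presumably intended by the statement).

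For part (2), the key combinatorial input is the disjoint-union decomposition
\begin{equation*}
\Pi(vu) = u^{-1}\Pi(v) \sqcup \Pi(u) \qquad (v \in W^c,\ u \in W_c = W_{J_c}),
\end{equation*}
which I would verify as follows. Since $v \in W^{J_c}$, the inversion set $\Pi(v)$ is disjoint from $\Phi_{J_c}^+$ by \eqref{Phipos}, and $W_{J_c}$ permutes $\Phi^+ \setminus \Phi_{J_c}^+$, so $u^{-1}\Pi(v) \subset \Phi^+ \setminus \Phi_{J_c}^+$; moreover $vu(u^{-1}\beta) = v\beta \in \Phi^-$, placing $u^{-1}\Pi(v)$ inside $\Pi(vu)$. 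Conversely $\Pi(u) \subset \Phi_{J_c}^+$ and $vu\alpha = v(u\alpha) \in v\Phi_{J_c}^- \subset \Phi^-$ (again using $v \in W^{J_c}$), giving the other inclusion. Equality of cardinalities then follows from Lemma \ref{lem:lensplit}.

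With this decomposition in hand, the product in Definition \ref{def:met_h_stats} splits. For $\alpha \in \Pi(u) \subseteq \Phi_{J_c}^+$, the stabilizer condition forces $\alpha(c) = 0$, hence $\mathbf{B}(c,\alpha^\vee) = \mathbf{Q}(\alpha^\vee)\alpha(c) = 0$ by \eqref{eqn:BQ}, and so $g_{-\mathbf{B}(c,\alpha^\vee)}(\alpha) = g_0(\alpha) = -1$ by Definition \ref{RepPar2}(1); this contributes $(-1)^{\ell(u)}$. For $\beta \in \Pi(v)$, the factor $g_{-\mathbf{B}(c,(u^{-1}\beta)^\vee)}(u^{-1}\beta)$ simplifies to $g_{-\mathbf{B}(c,\beta^\vee)}(\beta)$ using $uc = c$ and $W$-invariance of $g_j$. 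Collecting the sign $(-1)^{\ell(vu)} = (-1)^{\ell(v)+\ell(u)}$ together with the $(-1)^{\ell(u)}$ from the $\Pi(u)$-factors, and recognizing $t(vu) = t(v)t(u)$, yields $h^{m,c}(vu) = h^{m,c}(v) t(u)$. The main (and really only) obstacle is verifying the decomposition of $\Pi(vu)$ cleanly; everything else is bookkeeping against the axioms of Definition \ref{RepPar2}.
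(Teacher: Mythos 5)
Your argument is correct and is exactly the ``standard root-system manipulations'' the paper alludes to without writing out: part (1) via $\Pi(s_iw)=\{w^{-1}\alpha_i\}\sqcup\Pi(w)$ with $W$-invariance of $\mathbf{B}$ and of the $g$-parameters, and part (2) via $\Pi(vu)=u^{-1}\Pi(v)\sqcup\Pi(u)$ together with $\alpha(c)=0$ on $\Phi_{J_c}^+$ and $g_0(\alpha)=-1$. You also correctly read through the paper's typos ($h^{c}(w)$ should be $h^{m,c}(w)$ and $\ell(s_iw)>\ell(w)$ in the hypothesis), so nothing is missing.
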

\begin{proof}
These follow from standard root-system manipulations.
\end{proof}

\begin{theorem}\label{thm:XT_met}
Let $c \in C_{\Lambda^m}^0$, $\mu \in \Lambda^{m}$, and $w \in W$.  Then
\begin{equation*}
\pi^{m}(x^{\mu} T_{w}) x^{c} = h^{m, c}(w) x^{\mu + w c}.
\end{equation*}
\end{theorem}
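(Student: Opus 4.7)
The plan is to reduce to the case $\mu = 0$ and then induct on $\ell(w)$. Since $\pi^{m}$ is a representation, $\pi^{m}(x^{\mu} T_{w}) = \pi^{m}(x^{\mu})\pi^{m}(T_{w})$, and the explicit $X$-action $\pi^{m}(x^{\mu})x^{\lambda}=x^{\mu+\lambda}$ from Theorem \ref{metaplectic_repr} shows it suffices to prove $\pi^{m}(T_{w})x^{c} = h^{m,c}(w)x^{wc}$ for all $w \in W$. The base case $\ell(w)=0$ is trivial: $T_{1}=1$ and $h^{m,c}(1)=1$ by Definition \ref{def:met_h_stats}.

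For the inductive step, assume the identity holds for $w$ and that $\ell(s_{i}w) = \ell(w)+1$. Then $T_{s_{i}w} = T_{i}T_{w}$ by \eqref{Tw_prod}, so
\[
\pi^{m}(T_{s_{i}w})x^{c} = h^{m,c}(w)\,\pi^{m}(T_{i})x^{wc} = h^{m,c}(w)\Bigl(-t_{i}g_{-\mathbf{B}(wc,\alpha_{i}^{\vee})}(\alpha_{i})x^{s_{i}wc} - (t_{i}-t_{i}^{-1})\nabla_{i}^{m}(x^{wc})\Bigr)
\]
by the explicit formula in Theorem \ref{metaplectic_repr}. The key step is to verify that $\nabla_{i}^{m}(x^{wc}) = 0$. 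By Lemma \ref{lengthadd}(2), the length hypothesis gives $w^{-1}\alpha_{i} \in \Phi^{+}$, so $\alpha_{i}(wc) = (w^{-1}\alpha_{i})(c)$ lies in $[0,m(\alpha_{i})]$ since $c \in \overline{C^{m+}}$ and $m$ is $W$-invariant. The condition $c \in C^{0}_{\Lambda^{m}}$ rules out the endpoint: if $\alpha(c) = m(\alpha)$ for some $\alpha \in \Phi^{+}$, then the affine reflection $\tau(m(\alpha)\alpha^{\vee})s_{\alpha}$, which lies in $\widetilde{W}_{\Lambda^{m}}$ because $m(\alpha)\alpha^{\vee} \in Q^{m\vee} \subseteq \Lambda^{m}$, would fix $c$ without lying in $W_{c}$, contradicting $\widetilde{W}_{\Lambda^{m},c} = W_{c}$. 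Therefore $\alpha_{i}(wc) \in \{0,1,\dots,m(\alpha_{i})-1\}$, and by Definition of $\nabla_{i}^{m}$ the remainder equals the integer itself, forcing the numerator $1-x^{(r_{m(\alpha_{i})}(\alpha_{i}(wc))-\alpha_{i}(wc))\alpha_{i}^{\vee}}$ to vanish.

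With the divided-difference term killed, we obtain $\pi^{m}(T_{s_{i}w})x^{c} = \bigl(-t_{i}g_{-\mathbf{B}(wc,\alpha_{i}^{\vee})}(\alpha_{i})h^{m,c}(w)\bigr)x^{s_{i}wc}$, and Proposition \ref{prop:ht_met}(1) identifies the scalar in parentheses with $h^{m,c}(s_{i}w)$, completing the induction. The main technical obstacle is establishing the vanishing of $\nabla_{i}^{m}(x^{wc})$, which hinges on translating $\ell(s_{i}w)>\ell(w)$ and the defining condition of $C^{0}_{\Lambda^{m}}$ into the strict inequality $\alpha_{i}(wc) < m(\alpha_{i})$; everything else is routine bookkeeping, modulo the recursion for $h^{m,c}$ already supplied by Proposition \ref{prop:ht_met}.
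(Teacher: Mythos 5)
Your proof is correct and takes essentially the same route as the paper's own direct argument: reduce to $\mu = 0$, induct on $\ell(w)$, and at each step use that $0 \le \alpha_i(wc) < m(\alpha_i)$ to kill the divided-difference term, then invoke Proposition \ref{prop:ht_met}(1) to match the scalar $h^{m,c}(s_iw)$. The paper states the inequality $\alpha_i(wc) < m(\alpha_i)$ with only a parenthetical remark that it uses $c \in C^{0}_{\Lambda^m}$; you supply the concrete justification via the affine reflection $\tau(m(\alpha)\alpha^\vee)s_\alpha \in \widetilde{W}_{\Lambda^m}$, which is a welcome extra detail but not a different approach.
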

\begin{proof}
This follows from Theorem \ref{thm:XT_formula} and the identifications in \cite[Section 9]{SSV2}. It can also be proved directly using induction on $l(w)$ as follows. The result is trivial for $w = 1$, now suppose that $w \in W$ and $l(s_iw) > l(w)$ for some $1 \leq i \leq r$. Then we have $0 \leq \alpha_i(wc) < m(\alpha)$ (the latter uses $c \in C_{\Lambda^m}^0$). It then follows from the formulas in Theorem \ref{metaplectic_repr} and the induction hypothesis that
\begin{equation*}
	\pi^m(T_{s_iw})x^c = h^{m, c}(w)\pi^m(T_i)x^{wc} = -t_ig_{-B(wc, \alpha_i^\vee)} h^{m, c}(w) x^{s_iw c},
\end{equation*}
and the result then follows from Proposition \ref{prop:ht_met}(1).
\end{proof}

\begin{corollary}\label{met_coeffs}
Let $c \in C^0_{\Lambda^m}$.  Let $w \in W^{c}$ and $h \in \widetilde{\mathcal{H}}_{\Lambda^{m}}$, then we have
\begin{equation}
	\gamma^m_{w, c}(\pi^m(h) x^c) = h^{m,c}(w)\sum_{u \in W_c} t(u) \gamma_{wu}(h).
\end{equation}
\end{corollary}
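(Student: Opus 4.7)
The proof will follow the same template as Corollary \ref{cor:gammas} in the quasi-polynomial setting. The idea is to expand $h$ via the PBW basis of $\widetilde{\mathcal{H}}_{\Lambda^m}$, push $\pi^m(\cdot) x^c$ through that expansion using Theorem \ref{thm:XT_met}, and then reorganize the resulting sum according to the $W^c$-coset structure.

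Concretely, I would first write $h = \sum_{w' \in W} \gamma_{w'}(h)\, T_{w'}$ with $\gamma_{w'}(h) \in \mathbb{F}[\Lambda^m]$, using the PBW decomposition of $\widetilde{\mathcal{H}}_{\Lambda^m}$. By $\mathbb{F}$-linearity, I can expand each $\gamma_{w'}(h)$ as an $\mathbb{F}$-linear combination of monomials $x^\mu$ with $\mu \in \Lambda^m$ and apply Theorem \ref{thm:XT_met} monomial-by-monomial to obtain
\begin{equation*}
\pi^m(h)\, x^c \;=\; \sum_{w' \in W} \pi^m(\gamma_{w'}(h) T_{w'})\, x^c \;=\; \sum_{w' \in W} h^{m,c}(w')\, \gamma_{w'}(h)\, x^{w' c}.
\end{equation*}

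Next, since $c \in C^0_{\Lambda^m}$, the assignment $(w,u)\mapsto wu$ gives a bijection $W^c \times W_c \xrightarrow{\sim} W$ with $wu\cdot c = wc$. Reparametrizing the sum over $W$ accordingly and using Proposition \ref{prop:ht_met}(2) to factor $h^{m,c}(wu) = h^{m,c}(w)\, t(u)$, I rewrite the expression as
\begin{equation*}
\pi^m(h)\, x^c \;=\; \sum_{w \in W^c} h^{m,c}(w) \Bigl( \sum_{u \in W_c} t(u)\, \gamma_{wu}(h) \Bigr) x^{w c}.
\end{equation*}
The inner coefficient lies in $\mathbb{F}[\Lambda^m]$, so this is precisely the expansion defining $\gamma^m_{w,c}$ (Definition \ref{def:gamma_coeff_met}). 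Extracting the coefficient of $x^{wc}$ yields the claimed identity.

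There is no real obstacle here: the argument is a straightforward transcription of the proof of Corollary \ref{cor:gammas}, with Theorem \ref{thm:XT_met} and Proposition \ref{prop:ht_met}(2) playing the roles of Theorem \ref{thm:XT_formula} and Proposition \ref{prop:ht}(2), respectively. The only minor point to be careful about is that the PBW coefficients $\gamma_{w'}(h)$ live in $\mathbb{F}[\Lambda^m]$ (rather than $\mathbb{F}[\Lambda]$), which is exactly what is needed for Theorem \ref{thm:XT_met} to apply and for the resulting inner sum $\sum_{u \in W_c} t(u) \gamma_{wu}(h)$ to land in $\mathbb{F}[\Lambda^m]$ as required by Definition \ref{def:gamma_coeff_met}.
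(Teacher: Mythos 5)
Your proof is correct and follows exactly the route the paper intends: the paper omits a proof for this corollary, but the analogous Corollary \ref{cor:gammas} is proved by the same argument (PBW expansion of $h$, apply the $\pi(x^\mu T_w)x^c$ formula, then reparametrize over $W^c \times W_c$ using the multiplicativity of the $h$-statistics), and your use of Theorem \ref{thm:XT_met} and Proposition \ref{prop:ht_met}(2) in place of Theorem \ref{thm:XT_formula} and Proposition \ref{prop:ht}(2) is precisely the right substitution, including the extra factor $h^{m,c}(w)$ that survives in the metaplectic case.
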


\subsection{Whittaker functions}\label{sec:Whittaker}
We briefly recall the connection between the representations discussed so far and (metaplectic) Whittaker functions.  

We recall the connection between the representation $\pi^{m}$, the metaplectic Demazure-Lusztig operators, and metaplectic Whittaker functions of \cite{CGP}, we will follow \cite{SSV, SSV2}.

\begin{definition}\label{def:met_specialization}
We will write $\pi^{m}_{v}$ for the metaplectic representation $\pi^{m}$ with all Hecke parameters $t_i := v^{1/2}$ and equal representation parameters, $g_{j} := g_{j}(\alpha) $ for all $\alpha \in \Phi$, $j \in \mathbb{Z}$.  In \cite{SSV, SSV2}, this is referred to as the ``equal Hecke and parameter case''.  We will also write $h^{m,c}_{v}$ to denote the statistics in Definition \ref{def:met_h_stats} under this particular specialization.  We will suppress the dependence on $v$ when it is clear from context.
\end{definition}

\begin{remark}\label{met_nonmet_v}
In the non-metaplectic case, $n=1$, the representation $\pi^{m}_{v}$ reduces to the polynomial representation $\pi_{v}$ of Definition \ref{def:poly_rep_v}.  
\end{remark}

\begin{definition}\cite{CGP, SSV}\label{prop:IandW}
Define the metaplectic operators $\mathcal{T}_{i, v}^{m} \in \mathrm{End}(\mathbb{F}[\Lambda])$ as in (4.5) of \cite{SSV} by
\begin{equation*}
\mathcal{T}_{i, v}^{m} = -v^{1/2} \mathbf{y}^{\rho} \pi^{m}_{v}(T_i^{-1}) \mathbf{y}^{-\rho},
\end{equation*}
for $1 \leq i \leq r$, where we have set $x = \mathbf{y}$ to match the notation in the rest of this paper for Whittaker functions.  For $w = s_{i_1} \cdots s_{i_r} \in W$ a reduced expression, we write
\begin{equation*}
\mathcal{T}_{w,v}^{m} = \mathcal{T}_{i_1,v}^{m} \cdots \mathcal{T}_{i_r,v}^{m} \in \mathrm{End}(\mathbb{F}[\Lambda]).
\end{equation*}
For $\lambda \in \Lambda^{+}$ and $w \in W$, define the polynomials
\begin{align*}
\widetilde{\mathcal{I}}_{w, \lambda}^{m}(\mathbf{y}; v)& := \mathcal{T}_{w,v}^{m}(\mathbf{y}^{w_0 \lambda}) = (-v^{1/2})^{l(w)} \mathbf{y}^{\rho} \pi^{m}_{v}(T_{w^{-1}}^{-1}) \mathbf{y}^{-\rho+ w_0 \lambda}\\
\widetilde{\mathcal{W}}_{\lambda}^{m}(\mathbf{y}; v) &:= \sum_{w \in W} \mathcal{T}_{w,v}^{m}(\mathbf{y}^{w_0 \lambda}) =  v^{l(w_0)} \mathbf{y}^{\rho} \pi^{m}_{v}(\mathbf{1}^{-}) \mathbf{y}^{-\rho + w_0 \lambda}. 
\end{align*}
We also define the closely-related function
\begin{equation*}
\widetilde{\mathbb{W}}_{\lambda}^{m}(\mathbf{y};v) := \sum_{w \in W} \iota \mathcal{T}_{w,v}^{m} \iota (\mathbf{y}^{\lambda}),
\end{equation*}
so that $\widetilde{\mathbb{W}}_{\lambda}^{m}(\mathbf{y}) = \iota \widetilde{\mathcal{W}}_{-w_0 \lambda}^{m}(\mathbf{y})$.  There is also the following relation:
\begin{align*}
w_0 \widetilde{\mathcal{W}}_{\lambda}^{m}(\mathbf{y}) & =v^{l(w_0)} \mathbf{y}^{-\rho} \iota w_0 \iota \pi^{m}(\mathbf{1}^{-}) \mathbf{y}^{-\rho + w_0 \lambda} \\
&= v^{l(w_0)} \mathbf{y}^{-\rho} \iota  \pi^{m}(\mathbf{1}^{-}) \mathbf{y}^{-\rho - \lambda} \\
&= \iota \widetilde{\mathcal{W}}_{-w_0 \lambda}^{m}(\mathbf{y}) \\
&= \widetilde{\mathbb{W}}_{\lambda}^{m}(\mathbf{y}).
\end{align*}

\end{definition}

\begin{remark}
The operators $\mathcal{T}_{w}^{m}$ are the metaplectic Demazure-Lusztig operators introduced in \cite{CGP}.  It is shown in \cite{CGP} (resp. \cite{PP}) that $\widetilde{\mathcal{W}}_{\lambda}^{m}(\mathbf{y}) $ (resp. $\widetilde{\mathcal{I}}_{w, \lambda}^{m}(\mathbf{y})$) are the metaplectic spherical (resp. Iwahori) Whittaker functions.  These objects have also been investigated in many other recent papers, see e.g., \cite{BBBG19, BBBG20, BBBG21, PP2}.
\end{remark}

Using Definition \ref{prop:IandW}, Remark \ref{met_nonmet}, and Proposition \ref{prop:TDW_antisymm}, there is the following formula for the \textit{non-metaplectic} spherical Whittaker functions in terms of the Demazure-Whittaker operators (recall Definition \ref{def:DW_opers}):
\begin{equation}\label{eq:sphW_nonmet}
\widetilde{\mathcal{W}}_{\lambda}(\mathbf{y}) := \widetilde{\mathcal{W}}_{\lambda}^{1}(\mathbf{y}; v) = v^{l(w_0)} \mathbf{y}^{\rho} \pi_{v}(\mathbf{1}^{-}) \mathbf{y}^{-\rho + w_0 \lambda} = v^{l(w_0)} \mathbf{y}^{\rho} \sum_{w \in W} \mathcal{T}_{w, v^{1/2}} \mathbf{y}^{-\rho + w_0 \lambda},
\end{equation}
for $\lambda \in \Lambda^{+}$.
Note that, in $GL_{r+1}$, \eqref{eq:sphW_nonmet} may be rewritten using $\rho_{GL} = (r-1, r-2, \dots, 0)$
\begin{equation}\label{eq:sphW_nonmet_typeA}
 \widetilde{\mathcal{W}}_{\lambda}(\mathbf{y}) = v^{l(w_0)} \mathbf{y}^{\rho_{GL}} \sum_{w \in W} \mathcal{T}_{w, v^{1/2}} \mathbf{y}^{-\rho_{GL} + w_0 \lambda},
\end{equation}
since $\rho - \rho_{GL}$ is constant.  

\begin{remark}\label{rem:met_whitt_special}
Suppose $\lambda \in \Lambda^{m}$.  Then using the fact that $\pi^{m}_{v}$ restricts to the polynomial representation on the subspace $\mathbb{F}[\Lambda^m]$ \cite{SSV2}, we have
\begin{align*}
\mathbf{y}^{\rho^{m} - \rho} \widetilde{\mathcal{W}}_{\lambda - \rho}^{m}(\mathbf{y}) = \widetilde{\mathcal{W}}_{\lambda - \rho^m, \Phi^{m}}(\mathbf{y}).
\end{align*}
In the case when $m(\alpha) = m$ for all $\alpha$, we have $ \widetilde{\mathcal{W}}_{\lambda - \rho^m, \Phi^{m}}(\mathbf{y}) = \widetilde{\mathcal{W}}_{\lambda/m - \rho, \Phi}(\mathbf{y}^{m})$.
\end{remark}

Recall the (component) metaplectic Whittaker functions $\tilde{\phi}_{\theta}^{o}$ introduced in \cite{BBBG20}.  These depend on a choice of coset representatives for $\Lambda / \Lambda^m$, which we take to be $\{\rho - wc: c \in \widetilde{C}_{\Lambda^m}^0, w \in W^c\}$, where we have fixed a choice of representatives $\widetilde{C}_{\Lambda^m}^0$ for $C_{\Lambda^m}^0 / \Omega_{\Lambda^m}$. Note $\widetilde{C}_{Q^{m\vee}}^0 = {C}_{Q^{m\vee}}^0$. In the $GL_r$ setting we take the representatives to be 
\begin{equation}\label{eq:glr_cosets}
\widetilde{C}_{\Lambda^m}^0 := \{(c_1, \dotsc, c_r): m > c_1 \geq c_2 \geq \dotsb \geq c_r \geq 0\},
\end{equation}
in which case our representatives agree with \cite[(4.1)]{BBBG20}. For other types these may be an incomplete set of representatives, we can extend arbitrarily as all of our computations and results will be restricted to the $\widetilde{W}_{\Lambda^m}$-span of $\widetilde{C}_{\Lambda^m}^0$. For $\theta \in W \widetilde{C}_{\Lambda^m}^0$, we set $\tilde\phi_\theta^o = \tilde \phi_{\rho - \theta}^o$.

We relate these objects to the coefficients $\gamma_{w}^{m}$ of Definition \ref{def:gamma_coeff_met}. This will be used in Section \ref{sec:met_results} where we translate our results on antisymmetric quasi-polynomials to the metaplectic setting.  

\begin{proposition}\label{thm:phi_to_gamma}
Let $\mu - \rho \in \Lambda^{+}$ with decomposition $w_0 \mu = \eta + \hat{w} c$ for some $\eta \in \Lambda^{m}$, $\hat{w} \in W^{c}$, $c \in \widetilde{C}^{0}_{\Lambda^m}$ and let $\theta \in W \cdot \widetilde{C}^{0}_{\Lambda^m}$.   Also let $\rho' \in \Lambda$ such that $\rho - \rho' \in E_{\textup{co}}$.  Then
\begin{equation*}
\mathbf{y}^{\rho' - \theta} \tilde{\phi}_{\theta}^{o}(\mathbf{y}; \varpi^{\rho' - \mu}; v) = v^{l(w_0)}  \iota \gamma_{w_0 \tilde w^{c} w_0 , -w_0c}^{m}\big( \pi^{m}_{v}(\mathbf{1}^{-}) \mathbf{y}^{- \mu}\big),
\end{equation*}
 if $w_0 \theta = \tilde{w} c$ for some $\tilde{w} \in W$ with $w_0 \tilde{w} \in W^{c}$.  Otherwise it is equal to zero.  In particular, it is always a polynomial in $\mathbb{F}[\Lambda^m]$.  
\end{proposition}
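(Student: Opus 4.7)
The plan is to match two expansions of $v^{l(w_0)} \iota \pi^m_v(\mathbf{1}^{-}) \mathbf{y}^{-\mu}$ and read off the $\theta$-component: one coming from Definition \ref{prop:IandW} combined with the BBBG decomposition $\widetilde{\mathcal{W}}^m = \sum_\theta \widetilde{\phi}^o_\theta$, and the other from expansion in the free $\mathbb{F}[\Lambda^m]$-basis $\{\mathbf{y}^{w'(-w_0 c)} : w' \in W^{-w_0 c}\}$ of $\mathbb{F}[\widetilde{\mathcal{O}}_{\Lambda^m, -w_0 c}]$.

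Starting from the identity $w_0 \widetilde{\mathcal{W}}^m_\lambda(\mathbf{y}) = v^{l(w_0)} \mathbf{y}^{-\rho} \iota \pi^m_v(\mathbf{1}^{-}) \mathbf{y}^{-\rho-\lambda}$ in Definition \ref{prop:IandW} with $\lambda = \mu - \rho$, rearranging yields
\begin{equation*}
v^{l(w_0)} \iota \pi^m_v(\mathbf{1}^{-}) \mathbf{y}^{-\mu} = \mathbf{y}^{\rho} w_0 \widetilde{\mathcal{W}}^m_{\mu - \rho}(\mathbf{y}; v).
\end{equation*}
Since $\rho - \rho' \in E_{\textup{co}}$ is $W$-fixed and $\mathbf{y}^{\rho'-\rho}$ commutes with the $\pi^m_v$-operators (as $s_i$ fixes it and $\nabla_i^m$ involves only $\alpha_i$, which vanishes on $E_{\textup{co}}$), the explicit formula in Definition \ref{prop:IandW} gives $\widetilde{\mathcal{W}}^m_{\mu-\rho} = \mathbf{y}^{\rho'-\rho}\widetilde{\mathcal{W}}^m_{\mu-\rho'}$, producing $v^{l(w_0)}\iota\pi^m_v(\mathbf{1}^{-})\mathbf{y}^{-\mu} = \mathbf{y}^{\rho'} w_0 \widetilde{\mathcal{W}}^m_{\mu-\rho'}(\mathbf{y}; v)$. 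Under the standard identification of $\widetilde{\mathcal{W}}^m_{\mu-\rho'}$ with the $p$-adic metaplectic spherical Whittaker function $\widetilde{\mathcal{W}}^m(\mathbf{y}; \varpi^{\rho'-\mu}; v)$ and the BBBG decomposition, this becomes $\sum_\theta \mathbf{y}^{\rho'} w_0 \widetilde{\phi}^o_\theta(\mathbf{y}; \varpi^{\rho'-\mu}; v)$, where each summand is $\Lambda^m$-graded with support determined by the coset representative of $\theta$ in $W\widetilde{C}^0_{\Lambda^m}$.

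On the other side, the hypothesis $w_0\mu = \eta + \hat w c$ (with $\eta \in \Lambda^m$, $\hat w \in W^c$) gives $-\mu = -w_0\eta + (w_0 \hat w w_0)(-w_0 c) \in \widetilde{\mathcal{O}}_{\Lambda^m, -w_0 c}$ with $w_0 \hat w w_0 \in W^{-w_0 c}$ by Lemma \ref{lem:w0_conj}, and $\pi^m_v(\mathbf{1}^{-})$ preserves this orbit subspace, so the expansion $\pi^m_v(\mathbf{1}^{-})\mathbf{y}^{-\mu} = \sum_{w' \in W^{-w_0 c}} \gamma^m_{w', -w_0 c}(\pi^m_v(\mathbf{1}^{-})\mathbf{y}^{-\mu})\,\mathbf{y}^{w'(-w_0 c)}$ has coefficients in $\mathbb{F}[\Lambda^m]$. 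The key index identification: for $\theta = w_0 \tilde w c$ with $w_0 \tilde w \in W^c$, one computes $(w_0 \tilde w^c w_0)(-w_0 c) = -w_0 \tilde w^c c = -w_0 \tilde w c = -\theta$ using $\tilde w^c c = \tilde w c$, and Lemma \ref{lem:w0_conj} ensures that $w' := w_0 \tilde w^c w_0 \in W^{-w_0 c}$ and that the map $\tilde w \mapsto w_0 \tilde w^c w_0$ is a bijection between the valid $\theta$-indices and $W^{-w_0 c}$. For $\theta$ outside this image, $\mathbf{y}^{-w_0\theta}$ lies outside the support of any basis element, forcing $\widetilde{\phi}^o_\theta(\mathbf{y}; \varpi^{\rho'-\mu}; v) = 0$ (the vanishing claim). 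Matching coefficients on the identified $\Lambda^m$-cosets, unraveling the $\iota$-involution to produce the stated form, and invoking Corollary \ref{met_coeffs} yields the identity; polynomiality in $\mathbb{F}[\Lambda^m]$ is immediate from Definition \ref{def:gamma_coeff_met}.

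The main obstacle is the first step: precisely pinning down the $p$-adic-to-representation-theoretic identification $\widetilde{\mathcal{W}}^m_{\mu-\rho'}(\mathbf{y}; v) = \widetilde{\mathcal{W}}^m(\mathbf{y}; \varpi^{\rho'-\mu}; v)$ and the BBBG convention for $\widetilde{\phi}^o_\theta$, specifically tracking how the outer $w_0$-action on $\mathbf{y}$ permutes the $\theta$-indexing of the decomposition and interacts with $\iota$. Once these are in place, the bijection $\tilde w \leftrightarrow w_0 \tilde w^c w_0$ via Lemma \ref{lem:w0_conj} and the coefficient extraction via Corollary \ref{met_coeffs} are routine.
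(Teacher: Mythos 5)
Your overall strategy is the same as the paper's: rewrite everything in terms of $v^{l(w_0)}\iota\pi^m_v(\mathbf{1}^{-})\mathbf{y}^{-\mu}$, use $\rho-\rho'\in E_{\textup{co}}$ to swap $\rho$ for $\rho'$, observe that this quasi-polynomial lives in $\mathbb{F}[\widetilde{\mathcal{O}}_{\Lambda^m,-w_0c}]$ (whence the vanishing criterion), and extract the relevant coefficient via the free-module expansion, Lemma \ref{lem:w0_conj} and Lemma \ref{lem:iota_gamma_met} (equivalently Corollary \ref{met_coeffs}). That part of your outline, including the identity $(w_0\tilde w^{c}w_0)(-w_0c)=-\theta$, is correct and matches the paper.

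The genuine gap is precisely the step you yourself flag as ``the main obstacle'' and then defer: you never pin down what $\tilde{\phi}^o_\theta$ actually is, and your substitute for the definition is off. The paper does not go through the spherical decomposition $\widetilde{\mathcal{W}}^m=\sum_\theta\tilde\phi^o_\theta$ at all; it starts from the component-function \emph{definition} of \cite{BBBG20}, namely $\tilde{\phi}^o_\theta(\mathbf{y};\varpi^{\rho'-\mu};v)=\bigl[\sum_{w\in W}\iota\mathcal{T}^m_{w,v}\iota\,\mathbf{y}^{\mu-\rho'}\bigr]^{\rho'-\theta}$, after checking that the BBBG operators satisfy $\mathbb{T}_i=\iota\mathcal{T}^m_i\iota$ and fixing the coset representatives $\{\rho-wc\}$ of $\Lambda/\Lambda^m$. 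In the resulting convention the $\tilde\phi^o_\theta$ are the graded components of $\widetilde{\mathbb{W}}^m_{\mu-\rho'}=w_0\widetilde{\mathcal{W}}^m_{\mu-\rho'}$ itself (this is what Corollary \ref{phitheta_metsph} records), \emph{not} $w_0$ applied to components of $\widetilde{\mathcal{W}}^m_{\mu-\rho'}$; your displayed decomposition $\sum_\theta\mathbf{y}^{\rho'}w_0\tilde\phi^o_\theta(\mathbf{y};\varpi^{\rho'-\mu};v)$ carries an extra $w_0$ on each component, which permutes the $\Lambda^m$-coset supports and therefore scrambles the $\theta$-indexing, the vanishing condition, and ultimately which $\gamma^m$-coefficient you read off. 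Since the whole content of the proposition is an exact identification of a specific component with a specific coefficient function, leaving the convention ``to be pinned down'' (and in the meantime using an inconsistent one) is not a routine bookkeeping issue but the missing core of the proof; with the correct definition in hand, the argument collapses to the paper's two-line computation and the extra detour through $\widetilde{\mathcal{W}}^m$ becomes unnecessary.
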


\begin{proof}
We have
\begin{equation}\label{eq:phi_theta}
 \tilde{\phi}_{\theta}^{o}(\mathbf{y}; \varpi^{\rho' - \mu}; v) = \Big[ \sum_{w \in W} \iota \mathcal{T}_{w,v}^{m} \iota \mathbf{y}^{\mu - \rho'} \Big]^{\rho' - \theta}
\end{equation}
where $f(\mathbf{y})^{\gamma}  \in \mathbf{y}^{-\gamma} \mathbb{F}[\Lambda^m]$ are the component functions defined in \cite{BBBG20}.  Note that the operators $\mathbb{T}_i$ are used there (see (3.49) of \cite{BBBG20}), but it is an easy check that $\mathbb{T}_i = \iota \mathcal{T}_{i}^{m} \iota$.

Since 
\begin{equation*}
\iota \mathcal{T}_{w,v}^{m} \iota = (-1)^{l(w)} v^{l(w)/2} \iota x^{\rho} \pi^{m}_{v}(T_{w^{-1}}^{-1}) x^{-\rho} \iota, 
\end{equation*}
using \eqref{1plus1minus_alt}, we have
\begin{equation*}
\sum_{w \in W} \iota \mathcal{T}_{w,v}^{m} \iota \mathbf{y}^{\mu - \rho'} = v^{l(w_0)}\iota \mathbf{y}^{\rho} \pi^{m}_{v}(\mathbf{1}^{-} )\mathbf{y}^{-\mu + \rho' - \rho}. 
\end{equation*}
Since $\rho - \rho' \in E_{\textup{co}}$, we may replace $\rho$ in the expression above by $\rho'$.  So the expression above is equal to
\begin{align}\label{eq:phi_theta_to_pi}
v^{l(w_0)}   \mathbf{y}^{-\rho' } \iota \pi^{m}_{v}(\mathbf{1}^{-} )\mathbf{y}^{-\mu }.
\end{align}

We have $- \mu = -w_0 \eta  - \tilde w w_0 c$, where $w_0 \hat{w} = \tilde w w_0$.  Now $-w_0 c \in \widetilde{C}^{0}_{\Lambda^m}$, so $ \pi^{m}_{v}(\mathbf{1}^{-} )\mathbf{y}^{-\mu } \in  \mathbb{F}[\widetilde{\mathcal{O}}_{\Lambda^m,  -w_0c}]$.  Thus $\iota \pi^{m}_{v}(\mathbf{1}^{-} )\mathbf{y}^{-\mu } \in  \mathbb{F}[\widetilde{\mathcal{O}}_{\Lambda^m,  c}]$.  So for \eqref{eq:phi_theta} to be non-vanishing we must have $\theta = w' c $ for some $w' \in W^{c}$. If this condition is satisfied, then it follows from the definitions that \eqref{eq:phi_theta} is equal to
\begin{equation*}
v^{l(w_0)} \mathbf{y}^{-\rho'  + \theta} \gamma_{w', c}^{m}\big( \iota \pi^{m}_{v}(\mathbf{1}^{-}) \mathbf{y}^{- \mu}\big).
\end{equation*}
The result then follows from Lemma \ref{lem:iota_gamma_met} and Lemma \ref{lem:w0_conj}.
\end{proof}

As discussed in \cite{BBBG21}, $\sum_{\theta} \tilde{\phi}_{\theta}^{o}$ is the metaplectic spherical Whittaker function $\widetilde{\mathcal{W}}^{m}$.  From the previous proposition, we obtain the following relationship between these objects and the coefficient functions $\gamma^{m}$ of antisymmetric polynomials $\pi^{m}_{v}(\mathbf{1}^{-}) \mathbf{y}^{- \mu}$.

\begin{corollary}\label{phitheta_metsph}
Let $\mu - \rho \in \Lambda^{+}$ with decomposition $w_0 \mu = \eta + \hat{w} c$ for some $\eta \in \Lambda^{m}$, $\hat{w} \in W^{c}$, $c \in \widetilde{C}^{0}_{\Lambda^m}$.   Also let $\rho' \in \Lambda$ such that $\rho - \rho' \in E_{\textup{co}}$.  Then
\begin{align*}
\sum_{w_0 \tilde w \in W^{c}} \mathbf{y}^{\rho'}  \tilde{\phi}_{w_0 \tilde w c}^{o}(\mathbf{y}; \varpi^{\rho' - \mu}; v) &= 
 v^{l(w_0)} \sum_{w_0 \tilde w \in W^{c}} \mathbf{y}^{w_0 \tilde w c} \iota \gamma_{w_0 \tilde w^{c} w_0 , -w_0c}^{m}\big( \pi^{m}_{v}(\mathbf{1}^{-}) \mathbf{y}^{- \mu}\big) \\ &= \mathbf{y}^{ \rho'} w_0 \widetilde{\mathcal{W}}_{\mu - \rho'}^{m}(\mathbf{y};v) \\ &= \mathbf{y}^{ \rho'} \widetilde{\mathbb{W}}_{\mu - \rho'}^{m}(\mathbf{y};v).
\end{align*}

\end{corollary}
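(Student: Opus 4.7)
The plan is to proceed in three short steps, leveraging Proposition \ref{thm:phi_to_gamma} termwise and then identifying the total sum with the global expressions from Definition \ref{prop:IandW}. Throughout I will use that the component functions $[\cdot]^{\rho'-\theta}$ partition a function in $\mathbb{F}[\Lambda]$ by its $\Lambda/\Lambda^m$-cosets, so that summing over all $\theta \in W\widetilde{C}^0_{\Lambda^m}$ reassembles the original function.

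First, I would multiply both sides of the identity in Proposition \ref{thm:phi_to_gamma}, applied with $\theta = w_0 \tilde w c$, by $\mathbf{y}^{w_0 \tilde w c}$. This yields, for each $w_0 \tilde w \in W^{c}$,
\begin{equation*}
\mathbf{y}^{\rho'}\, \tilde{\phi}^{o}_{w_0 \tilde w c}(\mathbf{y}; \varpi^{\rho'-\mu}; v) \;=\; v^{l(w_0)}\, \mathbf{y}^{w_0 \tilde w c}\, \iota\, \gamma^{m}_{w_0 \tilde w^{c} w_0,\,-w_0 c}\bigl(\pi^{m}_{v}(\mathbf{1}^{-})\, \mathbf{y}^{-\mu}\bigr),
\end{equation*}
and summing over $w_0 \tilde w \in W^c$ gives the first equality of the corollary directly.

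For the second equality, I would note that Proposition \ref{thm:phi_to_gamma} also tells us that $\tilde\phi^o_\theta(\mathbf{y};\varpi^{\rho'-\mu};v) = 0$ for every $\theta \in W\widetilde{C}^0_{\Lambda^m}$ that is \emph{not} of the form $w_0 \tilde w c$ with $w_0 \tilde w \in W^c$. Hence the left-hand side of the corollary equals $\mathbf{y}^{\rho'} \sum_{\theta \in W\widetilde{C}^0_{\Lambda^m}} \tilde\phi^o_\theta(\mathbf{y};\varpi^{\rho'-\mu};v)$. Using equations \eqref{eq:phi_theta} and \eqref{eq:phi_theta_to_pi} from the proof of Proposition \ref{thm:phi_to_gamma}, together with the fact that summing all components restores the function, I obtain
\begin{equation*}
\sum_{\theta \in W\widetilde{C}^0_{\Lambda^m}} \tilde\phi^o_\theta(\mathbf{y};\varpi^{\rho'-\mu};v) \;=\; \sum_\theta \Bigl[v^{l(w_0)}\, \mathbf{y}^{-\rho'}\, \iota\, \pi^m_v(\mathbf{1}^-)\, \mathbf{y}^{-\mu}\Bigr]^{\rho'-\theta} \;=\; v^{l(w_0)}\, \mathbf{y}^{-\rho'}\, \iota\, \pi^m_v(\mathbf{1}^-)\, \mathbf{y}^{-\mu}.
\end{equation*}

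Finally, by Definition \ref{prop:IandW} we have $\widetilde{\mathbb{W}}^m_{\mu-\rho'}(\mathbf{y};v) = \sum_{w\in W} \iota \mathcal{T}^m_{w,v}\iota\, \mathbf{y}^{\mu-\rho'} = v^{l(w_0)} \mathbf{y}^{-\rho'} \iota \pi^m_v(\mathbf{1}^-)\mathbf{y}^{-\mu}$, together with the chain of identities $w_0 \widetilde{\mathcal{W}}^m_{\mu-\rho'} = \widetilde{\mathbb{W}}^m_{\mu-\rho'}$ already recorded there. Multiplying through by $\mathbf{y}^{\rho'}$ then yields the second and third equalities simultaneously. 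The proof is essentially bookkeeping built on Proposition \ref{thm:phi_to_gamma}; the only point requiring care is tracking the parameterization $\theta = w_0 \tilde w c$ with $w_0 \tilde w \in W^c$ to confirm that summing over this index set is equivalent to summing over all $\theta \in W\widetilde{C}^0_{\Lambda^m}$, given the vanishing statement in Proposition \ref{thm:phi_to_gamma}.
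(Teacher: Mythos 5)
Your proposal is correct and follows essentially the same route as the paper: the first equality by multiplying Proposition \ref{thm:phi_to_gamma} by $\mathbf{y}^{w_0\tilde w c}$ and summing over $w_0\tilde w\in W^{c}$, and the remaining equalities by reassembling the full function $v^{l(w_0)}\mathbf{y}^{-\rho'}\iota\,\pi^{m}_{v}(\mathbf{1}^{-})\mathbf{y}^{-\mu}$ and invoking Definition \ref{prop:IandW}. The only cosmetic difference is that for the middle equality you recover the full function via the vanishing statement and the component decomposition of \cite{BBBG20} (through \eqref{eq:phi_theta} and \eqref{eq:phi_theta_to_pi}), whereas the paper reassembles the $\gamma^{m}$-decomposition using Lemma \ref{lem:iota_gamma_met}; these amount to the same bookkeeping.
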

\begin{proof}
The first equality follows from Proposition \ref{thm:phi_to_gamma}, by summing both sides over $W^{c}$ and multiplying by the appropriate monomial.  The second and third equalities follows from Lemma \ref{lem:iota_gamma_met}, as well as
\begin{align*}
 \sum_{w_0 \tilde w \in W^{c}} \mathbf{y}^{w_0 \tilde w c} \gamma^{m}_{w_0 \tilde w, c}\big( \iota \pi^{m}_{v}(\mathbf{1}^{-}) \mathbf{y}^{- \mu}\big) &=w_0 \iota w_0 \pi^{m}_{v}(\mathbf{1}^{-}) \mathbf{y}^{- \mu} \\
 &= w_0 \pi^{m}_{v}(\mathbf{1}^{-}) \mathbf{y}^{w_0 \mu} 
\end{align*}
and lastly Definition \ref{prop:IandW}.  See also Section 2.2 of \cite{BBBG21} for the very last statement (noting that their sum is over all $\theta$ instead of the restricted sum above).
\end{proof}

\begin{remark}
Consider the special case of Corollary \ref{phitheta_metsph} when $c \in E_{\textup{co}}$.  In this case, we obtain a \textit{non-metaplectic} spherical Whittaker function.  Indeed, we have $\mu \in \Lambda^m$, $W_{c} = W$ and $W^{c} = 1$.  Using Remark \ref{rem:met_whitt_special}, we have 
\begin{equation*}
 \mathbf{y}^{\rho}  \tilde{\phi}_{ c}^{o}(\mathbf{y}; \varpi^{\rho - \mu}) =w_0 \mathbf{y}^{ -\rho} \widetilde{\mathcal{W}}_{\mu - \rho}^{m}(\mathbf{y}) = w_0\mathbf{y}^{-\rho^m} \widetilde{\mathcal{W}}_{\mu - \rho^{m}, \Phi^{m}}(\mathbf{y}) = \mathbf{y}^{\rho^m} \widetilde{\mathbb{W}}_{\mu - \rho^{m}, \Phi^{m}}(\mathbf{y}).
\end{equation*}
We also have
\begin{align*}
w_0 \Big(  \mathbf{y}^{\rho}  \tilde{\phi}_{ c}^{o}(\mathbf{y}; \varpi^{\rho - \mu}) \Big) &= \mathbf{y}^{-\rho^m} \widetilde{\mathcal{W}}_{\mu - \rho^{m}, \Phi^{m}}(\mathbf{y}) =  \mathbf{y}^{-\rho^m} w_0 \widetilde{\mathbb{W}}_{\mu - \rho^{m}, \Phi^{m}}(\mathbf{y}).
\end{align*}
For $GL_r$ this is Theorem E of \cite{BBBG21}, see also the second case of Remark \ref{rem:met_special}.
\end{remark}

By Corollary \ref{phitheta_metsph}, to compute $\widetilde{\mathcal{W}}^{m}_{\mu - \rho'}$, it is sufficient to compute $\tilde{\phi}_{w_0 \tilde w c}^{o}$ for all $\tilde w \in W^{c}$; such formulas will be provided in Section \ref{sec:met_results}.  Our formulas are in terms of (nonmetaplectic) \textit{parahoric} Whittaker functions, defined and studied in Section 4 of \cite{BBBG19}.  We briefly recall the relevant background here.  Let $J \subseteq [1,r]$ with associated parabolic subgroup $W_{J}$.   Let $w \in W^{J}$.  Then the parahoric Whittaker function $\psi_{w}^{J}$ is defined as in equation (18) of \cite{BBBG19} as a sum of Iwahori-Whittaker functions over the subgroup $W_{J}$.  

We will need the following result from \cite{BBBG19}, which gives a formula for $\psi_{w}^{J}$ in terms of the action of the Demazure-Whittaker operators $\mathcal{T}_i$ (recall Definition \ref{def:DW_opers}).

\begin{theorem}\label{thm:parahoric_to_T} Let $J \subseteq [1,r]$ and $w \in W^{J}$.  Let $w' \in W$ and $\lambda + \rho \in \Lambda^{+}$.
Then
\begin{equation*}
\psi_{w}^{J}(\mathbf{y}; \varpi^{-\lambda}w'; v) = v^{l(w')} \cdot \mathbf{y}^{-\rho} \sum_{u \in W_{J}} \mathcal{T}_{w u, v^{-1/2}} \mathcal{T}_{w', v^{-1/2}}^{-1} \mathbf{y}^{\lambda + \rho}.
\end{equation*}
\end{theorem}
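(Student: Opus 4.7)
The plan is to reduce the identity to the known Iwahori-Whittaker analogue established in \cite{BBBG19}, of which the parahoric formula is a direct consequence. First, I would unpack the definition of the parahoric Whittaker function $\psi_w^J$ from equation (18) of \cite{BBBG19}, which realizes $\psi_w^J$ as a finite sum (over $u \in W_J$) of Iwahori-Whittaker functions $\phi_{wu}$. Since $w \in W^J$, Lemma \ref{lem:lensplit} ensures that $\ell(wu) = \ell(w) + \ell(u)$ for every $u \in W_J$, so that $\mathcal{T}_{wu, v^{-1/2}} = \mathcal{T}_{w, v^{-1/2}}\mathcal{T}_{u, v^{-1/2}}$ via any reduced factorization, matching the sum on the right-hand side of the stated formula without any correction terms.

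Next, I would invoke the Iwahori-Whittaker formula from \cite{BBBG19}, which for $w'' \in W$, $w' \in W$, and $\lambda + \rho \in \Lambda^+$ reads
\begin{equation*}
\phi_{w''}(\mathbf{y}; \varpi^{-\lambda}w'; v) = v^{\ell(w')}\, \mathbf{y}^{-\rho}\, \mathcal{T}_{w'', v^{-1/2}}\, \mathcal{T}_{w', v^{-1/2}}^{-1}\, \mathbf{y}^{\lambda + \rho}.
\end{equation*}
This identity is established in \cite{BBBG19} by induction on $\ell(w')$ (with base case the Casselman--Shalika formula for $w' = 1$) using the action of the finite affine Hecke algebra on Iwahori-fixed vectors of the unramified principal series, together with the compatibility between the Demazure-Whittaker operators $\mathcal{T}_{i, q}$ and the corresponding intertwining operators. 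Setting $w'' = wu$ and summing over $u \in W_J$, the scalars $v^{\ell(w')}$, the monomial prefactor $\mathbf{y}^{-\rho}$, and the vector $\mathcal{T}_{w', v^{-1/2}}^{-1}\mathbf{y}^{\lambda + \rho}$ are all independent of $u$ and factor outside the summation, yielding exactly the claimed identity.

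The main obstacle is the careful reconciliation of conventions between \cite{BBBG19} and the present paper. One must verify that the Iwahori-Whittaker formula of \cite{BBBG19} matches the normalization used here for $\mathcal{T}_{w, q}$ (in particular with the Hecke-parameter substitution $q = v^{-1/2}$ and the overall factor $v^{\ell(w')}$), and that the definition (18) of $\psi_w^J$ in \cite{BBBG19} has unit coefficients in the sum over $W_J$ after accounting for the reduced factorization $\ell(wu) = \ell(w) + \ell(u)$. Once these bookkeeping details are settled, the derivation is a direct substitution, and no further representation-theoretic input is required.
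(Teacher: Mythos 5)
Your proposal matches the paper's proof: both reduce $\psi_w^J$ to a $W_J$-sum of Iwahori-Whittaker functions via equation (18) of \cite{BBBG19}, then substitute the operator formula for each $\phi_{wu}$ (Corollary 3.9 of \cite{BBBG19}, translating conventions via their relation (29)), and finally use $\mathcal{T}_{wu} = \mathcal{T}_w\mathcal{T}_u$ for $w \in W^J$, $u \in W_J$. Your identification of the convention-reconciliation as the main bookkeeping step corresponds exactly to the paper's appeal to relation (29) of \cite{BBBG19}.
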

\begin{proof}
Follows from \cite{BBBG19} equation (18) and \cite{BBBG19} Corollary 3.9, using the relation (29) of \cite{BBBG19} to translate between their operators $\mathfrak{T}_{i}$ and $\mathcal{T}_{i}$, and the fact that $\mathcal{T}_{wu} = \mathcal{T}_{w} \mathcal{T}_{u}$ for $u \in W_{J}$.
\end{proof}

\section{Matrix coefficients of (anti-)symmetrizers}\label{sec:AHA_results}

Let $h \in \widetilde{\mathcal{H}}_{\Lambda}$.  In this section we calculate the matrix coefficients $\gamma_{w}(\mathbf{1}^{\pm} h)$ for left multiplication by the (anti-) symmetrizer (recall Definitions \ref{def:gamma_coeffs_Hecke}, \ref{def:symm_antisymm}).  Our formulas are in terms of the polynomial representation $\pi$.  

\begin{definition}
Define the $\mathbb{F}$-linear operators $A_{w, \hat{w}}^{\pm} : \mathbb{F}[\Lambda] \rightarrow \mathbb{F}[\Lambda]$ for $w, \hat{w} \in W$ by
\begin{equation*}
A_{w, \hat{w}}^{\pm} (f) := \gamma_{w}( \mathbf{1}^{\pm} f T_{\hat w})
\end{equation*}
for $f \in \mathbb{F}[\Lambda]$.
\end{definition}

For any $h \in \widetilde{\mathcal{H}}_{\Lambda}$ and $w \in W$, in order to determine $\gamma_{w}(\mathbf{1}^{\pm} h)$, it suffices to compute $A_{w, \hat{w}}^{\pm}(\gamma_{\hat w} (h))$ for all $\hat{w} \in W$.  Indeed, by linearity, we have
\begin{equation}\label{eq:lt_mult_symm}
\gamma_{w}(\mathbf{1}^{\pm} h) = \gamma_{w}\Big(\mathbf{1}^{\pm} \sum_{\hat{w} \in W} \gamma_{\hat w}(h) T_{\hat w}\Big) = \sum_{\hat w \in W} \gamma_{w}(\mathbf{1}^{\pm}  \gamma_{\hat w}(h) T_{\hat w}))
= \sum_{\hat{w} \in W} A_{w, \hat{w}}^{\pm}(\gamma_{\hat w} (h)).
\end{equation}

It will be convenient to work with the following reparameterizations of the $A$-operators.

\begin{definition}\label{B_opers}
Define the $\mathbb{F}$-linear operators $B_{w, \hat{w}}^{\pm} : \mathbb{F}[\Lambda] \rightarrow \mathbb{F}[\Lambda]$ for $w, \hat{w} \in W$ by 
\begin{align*}
B_{w, \hat{w}}^{+} &= w_0 A^{+}_{w_0 w, \hat{w}} \\
B_{w, \hat{w}}^{-} &= \iota w_0 A_{w_0 w, \hat{w}}^{-} \iota. 
\end{align*}
\end{definition}

\begin{lemma}\label{Bbase}
We have
\begin{align*}
B_{1,1}^{\pm} &= (\pm 1)^{ l(w_0)} t(w_0)^{\pm 1}. 
\end{align*}
\end{lemma}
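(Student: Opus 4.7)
The plan is to show that each of $B_{1,1}^{\pm}$ acts as scalar multiplication on $\mathbb{F}[\Lambda]$ by $(\pm 1)^{l(w_0)} t(w_0)^{\pm 1}$. The strategy is a direct computation: unfold the definitions, expand $\mathbf{1}^{\pm}$ using the alternate PBW form \eqref{1plus1minus_alt} (which is in terms of inverse $T$-generators), and then read off the $\gamma_{w_0}$-coefficient via the triangularity statement of Lemma \ref{lem:T_triang}.

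First I would unpack the definitions. For $\hat w = 1$ we have $T_{\hat w} = 1$, so for any $f \in \mathbb{F}[\Lambda]$,
\begin{equation*}
A^{\pm}_{w_0, 1}(f) = \gamma_{w_0}(\mathbf{1}^{\pm} f),
\qquad
B^{+}_{1,1} f = w_0\, \gamma_{w_0}(\mathbf{1}^{+} f),
\qquad
B^{-}_{1,1} f = \iota w_0\, \gamma_{w_0}(\mathbf{1}^{-} \iota f).
\end{equation*}

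Next, I would substitute the alternate forms from \eqref{1plus1minus_alt},
\begin{equation*}
\mathbf{1}^{+} = t(w_0)^{2}\!\!\sum_{w \in W} t(w)^{-1} T_{w^{-1}}^{-1},
\qquad
\mathbf{1}^{-} = t(w_0)^{-2}\!\!\sum_{w \in W} (-1)^{l(w)} t(w)\, T_{w^{-1}}^{-1}.
\end{equation*}
For each $w \in W$ the element $w^{-1}$ satisfies $l(w^{-1}) \leq l(w_0)$, so Lemma \ref{lem:T_triang} gives $\gamma_{w_0}(T_{w^{-1}}^{-1} f) = \delta_{\{w_0 = w^{-1}\}} w_0 f$; only $w = w_0$ contributes. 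Hence
\begin{align*}
\gamma_{w_0}(\mathbf{1}^{+} f) &= t(w_0)^{2} \cdot t(w_0)^{-1} \cdot w_0 f = t(w_0)\, w_0 f, \\
\gamma_{w_0}(\mathbf{1}^{-} f) &= t(w_0)^{-2} \cdot (-1)^{l(w_0)} t(w_0) \cdot w_0 f = (-1)^{l(w_0)} t(w_0)^{-1} w_0 f.
\end{align*}

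Finally, using $w_0^{2} = 1$ and $\iota^{2} = 1$, I would conclude
\begin{equation*}
B^{+}_{1,1} f = w_0 \cdot t(w_0)\, w_0 f = t(w_0)\, f,
\qquad
B^{-}_{1,1} f = \iota w_0 \cdot (-1)^{l(w_0)} t(w_0)^{-1} w_0 \iota f = (-1)^{l(w_0)} t(w_0)^{-1} f,
\end{equation*}
which proves the claim. No real obstacle is expected here; the argument is purely a bookkeeping exercise built on the triangularity lemma and the two expressions for $\mathbf{1}^{\pm}$.
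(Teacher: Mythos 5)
Your proof is correct and follows essentially the same strategy as the paper's: unfold the definitions of $A$ and $B$, expand $\mathbf{1}^{\pm}$ in the $T_w$-basis, and invoke Lemma \ref{lem:T_triang} to isolate the $w = w_0$ term. The only (cosmetic) difference is that you expand $\mathbf{1}^{\pm}$ via the alternate form \eqref{1plus1minus_alt} in terms of $T_{w^{-1}}^{-1}$, whereas the paper uses the defining expression in terms of $T_w$; since Lemma \ref{lem:T_triang} covers both $T_{w'}$ and $T_{w'}^{-1}$, both choices work equally well.
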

\begin{proof}
Let $f \in \mathbb{F}[\Lambda]$.  Then we have
\begin{equation*}
A^{\pm}_{w_0, 1}(f) = \gamma_{w_0}(\mathbf{1}^{\pm} f) = \gamma_{w_0}\big(\sum_{w \in W} (\pm 1)^{l(w)} t(w)^{\pm 1} T_w f \big) = (\pm 1)^{ l(w_0)} t(w_0)^{\pm 1} w_0 f,
\end{equation*}
where we have applied Lemma \ref{lem:T_triang}. The result then follows from Definition \ref{B_opers}.
\end{proof}

\begin{lemma}\label{B_wrecur}
Let $w, \hat{w} \in W$ and $1 \leq i \leq r$.  If $l(s_i w) > l(w)$ then we have
 \begin{align*}
 B^{+}_{s_i w, \hat{w}} &= \pi(T_i^{-1}) B^{+}_{w, \hat{w}} \\
 B^{-}_{s_i w, \hat{w}} &= - \pi(T_i)B_{w, \hat{w}}^{-}.
 \end{align*}
 
\end{lemma}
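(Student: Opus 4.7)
The plan is to unfold the definitions and then apply Lemma \ref{gamma_Th} with a shifted index to exploit the reducing-length relation coming from $l(s_iw) > l(w)$. By definition
\begin{equation*}
B^+_{s_i w,\hat w}(f) = w_0\,\gamma_{w_0 s_i w}\bigl(\mathbf{1}^+ f T_{\hat w}\bigr),\qquad B^-_{s_iw,\hat w}(f) = \iota w_0\,\gamma_{w_0 s_i w}\bigl(\mathbf{1}^- \iota(f) T_{\hat w}\bigr).
\end{equation*}
Set $j := w_0(i)$. Using Proposition \ref{w0_facts}(4) one has $w_0 s_i w = s_j (w_0 w)$, and since $l(s_iw) = l(w)+1$ we get $l(s_j w_0 w) = l(w_0)-l(w)-1 < l(w_0 w)$. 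This puts us exactly in the position to peel off a $T_j$ from the left using Lemma \ref{gamma_Th}.

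First I would handle the symmetric case. Applying Lemma \ref{gamma_Th} with $h = \mathbf{1}^+ f T_{\hat w}$ and using the absorption identity $T_j\mathbf{1}^+ = t_j\mathbf{1}^+$ from \eqref{T1} (plus the fact that $l(s_jw_0w) < l(w_0w)$ makes the indicator term contribute), I obtain the linear equation
\begin{equation*}
t_j\,g = (t_j-t_j^{-1})\nabla_j g + s_j\gamma_{s_jw_0w}(\mathbf{1}^+ f T_{\hat w}) + (t_j-t_j^{-1})s_j g,
\end{equation*}
where $g := \gamma_{w_0w}(\mathbf{1}^+ f T_{\hat w}) = A^+_{w_0w,\hat w}(f)$. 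Solving for $\gamma_{s_jw_0w}(\cdots)$, applying $w_0$, and rewriting via Lemma \ref{w0sinabla} ($w_0 s_j\nabla_j = -\nabla_i w_0$), Proposition \ref{w0_facts}(4) ($w_0 s_j = s_i w_0$), and the $W$-invariance $t_j=t_i$ (since $\alpha_j = -w_0\alpha_i$), the right-hand side becomes
\begin{equation*}
t_i s_i(w_0 g) + (t_i-t_i^{-1})\nabla_i(w_0g) - (t_i-t_i^{-1})(w_0g),
\end{equation*}
which by \eqref{pi_Tinvf} and the quadratic relation is exactly $\pi(T_i^{-1})(w_0g) = \pi(T_i^{-1})B^+_{w,\hat w}(f)$.

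The antisymmetric case runs in complete parallel, using $T_j\mathbf{1}^- = -t_j^{-1}\mathbf{1}^-$ from \eqref{T1} in place of $T_j\mathbf{1}^+ = t_j\mathbf{1}^+$. Setting $g' := A^-_{w_0w,\hat w}(\iota f)$ and carrying out the analogous manipulation yields
\begin{equation*}
B^-_{s_iw,\hat w}(f) = \iota\bigl[-t_i^{-1}s_i(w_0g') + (t_i-t_i^{-1})\nabla_i(w_0g') - (t_i-t_i^{-1})(w_0g')\bigr].
\end{equation*}
To match this with $-\pi(T_i)B^-_{w,\hat w}(f) = -\pi(T_i)(\iota w_0 g')$ I would invoke Proposition \ref{prop:inv_params}, which gives $-\pi(T_i)(\iota w_0g') = -\iota\,\pi(\widetilde{T_i}^{-1})(w_0g')$; expanding $\pi(\widetilde{T_i}^{-1})$ via \eqref{pi_Tinvf} with parameters inverted reproduces the bracket above exactly.

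The routine steps are the commutations with $\iota$, $w_0$, and the sign bookkeeping. The one genuinely delicate point is the index shift $i \leftrightarrow j = w_0(i)$: one must verify that the inequality on lengths transfers correctly (so the $\delta$-term in Lemma \ref{gamma_Th} is indeed present) and that all scalars $t_j$ agree with $t_i$ — this is where the $W$-invariance of $\mathbf t$ combined with Proposition \ref{w0_facts} does the work. Once this conjugation is set up cleanly, the computation is forced and the passage from $\pi(T_i^{-1})$ (for $+$) to $-\pi(T_i)$ (for $-$) is dictated by the relation $\pi(\widetilde{T_i}^{-1}) = \iota\,\pi(T_i)\,\iota$ of Proposition \ref{prop:inv_params}.
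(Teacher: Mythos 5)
Your proof is correct and follows essentially the same strategy as the paper's: conjugate the index to $j = w_0(i)$, check that $l(s_jw_0w) < l(w_0w)$ so the $\delta$-term in Lemma \ref{gamma_Th} fires, use the absorption identity \eqref{T1} to set up a linear equation for $\gamma_{s_jw_0w}(\mathbf{1}^{\pm} f T_{\hat w})$, solve, and conjugate by $w_0$ (and $\iota$). The only cosmetic differences are in the final bookkeeping: the paper applies Lemma \ref{lem:sinablai} before conjugating and then invokes the precomputed conjugation formulas of Lemma \ref{lem:w0Ti}, whereas you conjugate first, apply Lemma \ref{w0sinabla}, and read off $\pi(T_i^{-1})$ directly from the quadratic relation (and Proposition \ref{prop:inv_params} in the antisymmetric case); the content is the same.
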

\begin{proof}
Let $j = w_0(i)$, and note that $l(s_j w_0 w) = l(ws_iw) < l(w_0w)$. Let $f \in \mathbb{F}[\Lambda]$.  

By \eqref{T1}, we have 
\begin{align*}
\gamma_{w_0w}(T_j \mathbf{1}^{\pm} f T_{\hat w}) &= (\pm t_i)^{\pm 1} \gamma_{w_0w}(\mathbf{1}^{\pm} f T_{\hat w}) = (\pm t_i)^{\pm 1} A_{w_0w, \hat{w}}^{\pm}(f) \end{align*}
On the other hand, by Lemma \ref{gamma_Th}, the LHS is equal to
\begin{multline*}
(t_i - t_i^{-1}) \nabla_{j} \gamma_{w_0w}(\mathbf{1}^{\pm} f T_{\hat w}) + s_j \gamma_{s_j w_0w}(\mathbf{1}^{\pm} f T_{\hat w}) + (t_i - t_i^{-1}) s_j \gamma_{w_0w}(\mathbf{1}^{\pm} f T_{\hat w})\\
= (t_i - t_i^{-1}) \nabla_j A_{w_0w,\hat w}^{\pm}(f) + s_j A_{w_0s_iw, \hat{w}}^{\pm}(f) + (t_i - t_i^{-1}) s_j A^{\pm}_{w_0w, \hat w}(f).
\end{multline*}
Setting these equal, multiplying both sides by $s_j$ and rearranging terms, we obtain
\begin{align*}
A_{w_0s_i w, \hat{w}}^{\pm}(f) &=  (\pm t_i)^{\pm 1} s_j A_{w_0w, \hat{w}}^{\pm}(f) - (t_i - t_i^{-1}) s_j \nabla_j A_{w_0w, \hat{w}}^{\pm}(f) - (t_i - t_i^{-1}) A^{\pm}_{w_0w, \hat w}(f)\\
&= (\pm t_i)^{\mp 1} s_j A_{w_0w, \hat{w}}^{\pm}(f) - (t_i - t_i^{-1}) \nabla_j A_{w_0w, \hat{w}}^{\pm}(f),
\end{align*}
where we have applied Lemma \ref{lem:sinablai}. We can rewrite these as follows:
\begin{align*}
A_{w_0s_iw, \hat{w}}^{+}(f) &= -\pi(T_j) A_{w_0s_iw, \hat{w}}(f) + (t_i + t_i^{-1}) s_j A_{w_0w, \hat{w}}^{+}(f) \\
A_{w_0s_iw, \hat{w}}^{-}(f) &= -\pi(T_j) A_{w_0s_iw, \hat{w}}(f).
\end{align*}	
The results now follow from Definition \ref{B_opers} and Lemma \ref{lem:w0Ti}.
\end{proof}

\begin{lemma}\label{B_whrecur}
Let $w, \hat{w} \in W$ and $1 \leq i \leq r$. If $l(s_i \hat{w}) > l(\hat w)$, we have
\begin{align*}
B^{+}_{w, s_i \hat{w}} &= B_{w, \hat{w}}^{+} \pi(T_i) \\
B^{-}_{w, s_i \hat{w}} &= - B_{w, \hat{w}}^{-} \pi(T_i^{-1}).
\end{align*}
\end{lemma}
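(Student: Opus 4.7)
To prove Lemma \ref{B_whrecur}, the plan is to use the factorization $T_{s_i \hat w} = T_i T_{\hat w}$ (valid since $l(s_i \hat w) = l(\hat w) + 1$) and push the extra $T_i$ leftward past $f$, so that the absorption identity \eqref{T2} can consume it against $\mathbf{1}^\pm$. Concretely I would start from
\[
A^\pm_{w, s_i \hat w}(f) = \gamma_w\bigl(\mathbf{1}^\pm f T_i T_{\hat w}\bigr)
\]
and apply the commutation relation \eqref{Tf_commut}. Using $\nabla_i \circ s_i = -\nabla_i$ (immediate from Definition \ref{def:nabla}), this rewrites $f T_i = T_i(s_i f) + (t_i - t_i^{-1})\nabla_i(f)$. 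After pulling out $\mathbf{1}^\pm T_i$ as $\pm t_i^{\pm 1}\mathbf{1}^\pm$ via \eqref{T2} and taking $\gamma_w$, this produces
\[
A^\pm_{w, s_i \hat w} = A^\pm_{w, \hat w} \circ M^\pm_i, \qquad M^\pm_i := \pm t_i^{\pm 1} s_i + (t_i - t_i^{-1}) \nabla_i.
\]

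The next step is to identify the operators $M^\pm_i$ in terms of the polynomial representation. Comparison with Theorem \ref{thm:pol_rep} gives $M^+_i = \pi(T_i)$ on the nose, so applying $w_0$ from Definition \ref{B_opers} yields $B^+_{w, s_i \hat w} = w_0 A^+_{w_0 w, \hat w} \pi(T_i) = B^+_{w, \hat w}\, \pi(T_i)$. The minus operator, however, is \emph{not} $-\pi(T_i^{-1})$: a short direct computation via \eqref{pi_Tinvf} (or equivalently via Proposition \ref{prop:inv_params}, which shows $\iota$-conjugation swaps $t_i \leftrightarrow t_i^{-1}$ in the formula for $\pi(T_i^{-1})$) gives $\iota \pi(T_i^{-1})\iota = t_i^{-1} s_i - (t_i - t_i^{-1})\nabla_i$, hence $M^-_i = -\iota \pi(T_i^{-1}) \iota$.

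Substituting this into Definition \ref{B_opers} and using $\iota^2 = 1$ then gives
\[
B^-_{w, s_i \hat w} = \iota w_0 A^-_{w_0 w, \hat w}\, M^-_i\, \iota = -\iota w_0 A^-_{w_0 w, \hat w}\, \iota\, \pi(T_i^{-1}) = -B^-_{w, \hat w}\, \pi(T_i^{-1}),
\]
completing the lemma. The only subtlety---and where I would pause---is the minus case: the raw operator $M^-_i$ produced by commuting $T_i$ past $f$ is the $\iota$-conjugate of $-\pi(T_i^{-1})$, not $-\pi(T_i^{-1})$ itself. This mismatch is absorbed exactly by the two copies of $\iota$ built into the definition $B^-_{w,\hat w} = \iota w_0 A^-_{w_0 w, \hat w} \iota$, which is precisely why the lemma comes out cleanly in terms of $\pi(T_i^{-1})$. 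This parallels the analogous asymmetry seen in Lemma \ref{B_wrecur}.
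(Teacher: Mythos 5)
Your proof is correct and follows essentially the same route as the paper's: both hinge on commuting $T_i$ past $f$ via \eqref{Tf_commut} (with $\nabla_i s_i=-\nabla_i$), using $T_iT_{\hat w}=T_{s_i\hat w}$, and absorbing $T_i$ into $\mathbf{1}^{\pm}$ via \eqref{T2}, yielding $A^{\pm}_{w,s_i\hat w}=A^{\pm}_{w,\hat w}\circ\bigl(\pm t_i^{\pm1}s_i+(t_i-t_i^{-1})\nabla_i\bigr)$. The only minor divergence is the antisymmetric case, where you identify this operator directly as $-\iota\,\pi(T_i^{-1})\,\iota$ and let the $\iota$'s in Definition \ref{B_opers} absorb the conjugation, whereas the paper substitutes $f:=s_iw_0g$ and routes through Lemma \ref{w0sinabla} and the $w_0$-twist of Lemma \ref{lem:w0Ti}; the two are equivalent bookkeeping.
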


\begin{proof}
Let $f \in \mathbb{F}[\Lambda]$.  By \eqref{T2}, we have
\begin{equation*}
\gamma_{w}(\mathbf{1}^{\pm} T_i f T_{\hat w}) = (\pm t_i)^{\pm 1} \gamma_{w}(\mathbf{1}^{\pm} f T_{\hat w}) =(\pm t_i)^{\pm 1} A_{w,\hat{w}}^{\pm}(f).
\end{equation*}
On the other hand, by \eqref{Tf_commut} and \eqref{TT}, the LHS is equal to
\begin{equation*}
\gamma_{w}\big(\mathbf{1}^{\pm}(s_i f T_{s_i \hat{w}} + (t_i - t_i^{-1}) \nabla_{i} f T_{\hat{w}})\big) = A_{w, s_i \hat{w}}^{\pm}(s_i f) + (t_i - t_i^{-1}) A_{w, \hat{w}}^{\pm}(\nabla_i f)
\end{equation*}

Consider first the $A^{+}$ case.  Setting the two expressions equal and putting $f := s_ig$, we obtain
\begin{align*}
A_{w, s_i \hat{w}}^{+}(g) &= A_{w, \hat{w}}^{+}\Big( t_i s_i g - (t_i - t_i^{-1}) \nabla_i (s_i g)\Big).
\end{align*}
Using $\nabla_{i} s_i g = -\nabla_i g$, along with Theorem \ref{thm:pol_rep}, the RHS is equal to $A_{w, \hat{w}}^{+}\big( \pi(T_i) g \big)$ as desired.

Now consider the $A^{-}$ case.  Setting the two expressions equal and putting $f :=  s_i w_0 g$ we obtain
\begin{align*}
A_{w, s_i \hat{w}}^{-}(w_0 g) &= A_{w, \hat{w}}^{-}\Big( -t_i^{-1} s_i w_0 g - (t_i - t_i^{-1}) \nabla_i (s_i w_0 g)\Big).
\end{align*}
Let $1 \leq j \leq r$ be such that $w_0(j) = i$.  The RHS above is equal to 
\begin{align*}
A_{w, \hat{w}}^{-}\Big( -t_i^{-1} w_0 s_j g - (t_i - t_i^{-1}) \nabla_i (w_0 s_j g)\Big).
\end{align*}
By Lemma \ref{w0sinabla}, this is equal to 
\begin{align*}
A_{w, \hat{w}}^{-}\Big( -t_i^{-1} w_0 s_j g + (t_i - t_i^{-1}) w_0 s_j \nabla_{j}(s_j g) \Big).
\end{align*}
By \eqref{pi_Tinvf} and using $\nabla_{j} s_j g = - \nabla_{j} g$, the RHS is equal to $-A_{w, \hat{w}}^{-}(w_0 \pi(T_{w_0(i)}^{-1})(g))$.
\end{proof}

The following result, which is the main theorem of this section, expresses $A^{\pm}_{w, \hat{w}}$  in terms of the operators $\pi(T_{u})$ of Theorem \ref{thm:pol_rep} (for $w, \hat{w}, u \in W$).

\begin{theorem}\label{thm:main_thm_HA}
Let $w, \hat{w} \in W$.  We have
\begin{align*}
A^{+}_{w, \hat{w}} &= t(w_0)  w_0 \pi(T_{(w_0w)^{-1}}^{-1} T_{\hat{w}^{-1}}) \\
A^{-}_{w, \hat{w}} &= t(w_0)^{-1} (-1)^{l( w) + l(\hat{w})} \iota w_0 \pi(T_{w_0 w} T_{\hat{w}}^{-1}) \iota.
\end{align*}
\end{theorem}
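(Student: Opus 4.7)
The plan is to reformulate the statement in terms of the operators $B^\pm_{w,\hat{w}}$ of Definition \ref{B_opers}, where the conjugations by $w_0$ (and $\iota$) have been stripped away, and then to prove the resulting identities by induction on $\ell(w) + \ell(\hat{w})$. Concretely, under the substitution $w \mapsto w_0 w$ in the statement of the theorem, what needs to be shown is
\begin{align*}
B^{+}_{w, \hat{w}} &= t(w_0)\, \pi\!\bigl(T_{w^{-1}}^{-1} T_{\hat{w}^{-1}}\bigr), \\
B^{-}_{w, \hat{w}} &= t(w_0)^{-1} (-1)^{\ell(w_0 w) + \ell(\hat{w})}\, \pi\!\bigl(T_{w} T_{\hat{w}}^{-1}\bigr),
\end{align*}
for all $w, \hat{w} \in W$, and the original formulas then follow from Definition \ref{B_opers} together with Proposition \ref{w0_facts} (which gives $\iota w_0 = w_0 \iota$ and $w_0^2 = 1$).

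First I would verify the base case $w = \hat{w} = 1$, which is exactly Lemma \ref{Bbase} (noting for the $B^-$ case that $(-1)^{\ell(w_0 \cdot 1)} = (-1)^{\ell(w_0)}$ matches the sign there). Then I would perform a two-step induction. For the $w$-induction, pick $1 \le i \le r$ with $\ell(s_iw) > \ell(w)$; Lemma \ref{B_wrecur} gives $B^{+}_{s_iw,\hat{w}} = \pi(T_i^{-1}) B^{+}_{w,\hat{w}}$ and $B^{-}_{s_iw,\hat{w}} = -\pi(T_i) B^{-}_{w,\hat{w}}$. Plugging in the inductive hypothesis and using the fact that $\ell(s_iw) = \ell(w) + 1$ implies $T_{(s_iw)^{-1}}^{-1} = T_i^{-1} T_{w^{-1}}^{-1}$ and $T_{s_iw} = T_i T_{w}$ (from \eqref{Tw_prod}), the $+$ formula falls out immediately, and the $-$ formula works because the extra minus sign is absorbed by $\ell(w_0 s_i w) = \ell(w_0 w) - 1$. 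For the $\hat{w}$-induction, I would similarly use Lemma \ref{B_whrecur}, which gives $B^{+}_{w, s_i\hat{w}} = B^{+}_{w,\hat{w}}\pi(T_i)$ and $B^{-}_{w, s_i\hat{w}} = -B^{-}_{w,\hat{w}}\pi(T_i^{-1})$; the identification $T_{\hat{w}^{-1}} T_i = T_{(s_i\hat{w})^{-1}}$ and $T_{\hat{w}}^{-1} T_i^{-1} = T_{s_i\hat{w}}^{-1}$ (again from $\ell(s_i\hat{w}) = \ell(\hat{w}) + 1$) closes the induction.

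The main technical point to check carefully is that the bookkeeping of signs and of the Hecke parameter $t(w_0)$ is consistent between the recursions and the proposed formulas; no single step is genuinely hard, but it is worth observing that in the $B^-$ case the sign function $(-1)^{\ell(w_0 w) + \ell(\hat{w})}$ behaves correctly under both $w \mapsto s_i w$ (where $\ell(w_0 w)$ drops by one, so the sign flips) and $\hat{w} \mapsto s_i \hat{w}$ (where $\ell(\hat{w})$ grows by one, so the sign flips). Combined with the minus signs already present in Lemmas \ref{B_wrecur} and \ref{B_whrecur}, this gives exactly the predicted behavior. Since the two recursions together with the base case determine $B^\pm_{w,\hat{w}}$ for all $(w, \hat{w})$, this completes the argument.
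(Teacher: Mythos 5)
Your proposal is correct and follows essentially the same route as the paper: the paper's proof likewise converts the statement into the formulas for $B^{\pm}_{w,\hat{w}}$, obtains them from the base case in Lemma \ref{Bbase} together with the two recursions of Lemmas \ref{B_wrecur} and \ref{B_whrecur}, and then recovers the $A^{\pm}$ formulas via Definition \ref{B_opers}. You have simply written out the inductive bookkeeping (the use of \eqref{Tw_prod} and the sign changes under $\ell(w_0 w)\mapsto\ell(w_0 w)-1$, $\ell(\hat w)\mapsto\ell(\hat w)+1$) that the paper leaves implicit, and your checks are all accurate.
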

\begin{proof}
Applying Lemmas \ref{Bbase}, \ref{B_wrecur}, \ref{B_whrecur}, we have
\begin{equation}
	\begin{split}
	B^{+}_{w, \hat{w}} &=t(w_0)  \pi(T_{w^{-1}}^{-1} T_{\hat{w}^{-1}}) \\
	B^{-}_{w, \hat{w}} &= (-1)^{l(w_0)} t(w_0)^{-1} (-1)^{l( w) + l(\hat{w})} \pi(T_w T_{\hat{w}}^{-1}).
	\end{split}
\end{equation}	
The result then follows from Definition \ref{B_opers}.
\end{proof}

Restating Theorem \ref{thm:main_thm_HA} in terms of the $\gamma$-coefficient functions yields the following corollary.

\begin{corollary}\label{cor:1fT}
Let $w, \hat{w} \in W$ and $f \in \mathbb{F}[\Lambda]$.  We have
\begin{align*}
\gamma_{w}(\mathbf{1}^{+} f T_{\hat{w}}) &= t(w_0) w_0 \pi(T_{(w_0 w)^{-1}}^{-1}) \pi(T_{\hat{w}^{-1}})f \\
\gamma_{w}(\mathbf{1}^{-} f T_{\hat{w}}) &=t(w_0)^{-1} (-1)^{l(w)} w_0 \iota \pi(T_{w_0 w}) (-1)^{l(\hat w)} \pi(T_{\hat w}^{-1}) \iota f.
\end{align*}

\end{corollary}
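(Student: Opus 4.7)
The proof will be a direct unpacking of Theorem \ref{thm:main_thm_HA} using the definition of $A^{\pm}_{w, \hat{w}}$ and a few simple manipulations. My plan is as follows.

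First I recall that by definition, $A^{\pm}_{w, \hat{w}}(f) = \gamma_{w}(\mathbf{1}^{\pm} f T_{\hat{w}})$, so the two statements of the corollary are just Theorem \ref{thm:main_thm_HA} applied to $f$, after we rewrite the right-hand side slightly. For the $\mathbf{1}^{+}$ statement, I would apply Theorem \ref{thm:main_thm_HA} to write
\begin{equation*}
\gamma_{w}(\mathbf{1}^{+} f T_{\hat{w}}) = t(w_0)\, w_0\, \pi\bigl(T_{(w_0w)^{-1}}^{-1} T_{\hat{w}^{-1}}\bigr) f,
\end{equation*}
and then use the fact that $\pi$ is a homomorphism of $\widetilde{\mathcal{H}}_{\Lambda}$ (Theorem \ref{thm:pol_rep}) to split the product as $\pi(T_{(w_0w)^{-1}}^{-1})\pi(T_{\hat{w}^{-1}})$, yielding the claimed formula.

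For the $\mathbf{1}^{-}$ statement I would proceed identically: applying Theorem \ref{thm:main_thm_HA} gives
\begin{equation*}
\gamma_{w}(\mathbf{1}^{-} f T_{\hat{w}}) = t(w_0)^{-1}(-1)^{l(w)+l(\hat w)}\, \iota w_0\, \pi\bigl(T_{w_0 w} T_{\hat w}^{-1}\bigr)\, \iota f.
\end{equation*}
Splitting $\pi(T_{w_0w}T_{\hat w}^{-1}) = \pi(T_{w_0w})\pi(T_{\hat w}^{-1})$ by multiplicativity of $\pi$, using Proposition \ref{w0_facts}(2) to commute $\iota$ past $w_0$ (so $\iota w_0 = w_0 \iota$), and distributing the sign $(-1)^{l(w)+l(\hat w)} = (-1)^{l(w)}(-1)^{l(\hat w)}$ across the two $\pi(T_\bullet)$ factors yields the claimed expression.

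Since every step is an immediate consequence of Theorem \ref{thm:main_thm_HA}, the definition of $A^{\pm}_{w,\hat w}$, the multiplicativity of the representation $\pi$, and the commutation $\iota w_0 = w_0 \iota$, there is no substantive obstacle here — the corollary is genuinely just a reformulation. The only thing to double-check is bookkeeping of signs and of the positions of $\iota$ and $w_0$ in the $\mathbf{1}^{-}$ case, which is routine.
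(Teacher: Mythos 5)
Your proof is correct and is exactly the paper's intended argument: the paper itself asserts only that Corollary \ref{cor:1fT} is ``restating Theorem \ref{thm:main_thm_HA} in terms of the $\gamma$-coefficient functions,'' and your unpacking — substituting $A^{\pm}_{w,\hat w}(f) = \gamma_w(\mathbf{1}^{\pm} f T_{\hat w})$, using multiplicativity of $\pi$, and commuting $\iota$ and $w_0$ via Proposition \ref{w0_facts}(2) — is precisely that reformulation, with the sign bookkeeping handled correctly.
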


Using \eqref{eq:lt_mult_symm} and Theorem \ref{thm:main_thm_HA}, we obtain the following decompositions for $\mathbf{1}^{\pm} h$ for $h \in \widetilde{\mathcal{H}}_{\Lambda}$.

\begin{corollary}\label{cor:PBW1pm}
Let $h \in \widetilde{\mathcal{H}}_{\Lambda}$.  We have
\begin{align*}
\mathbf{1}^{+} h &= \sum_{w \in W}  t(w_0) w_0 \Big( \sum_{\hat w \in W} \pi( T_{(w_0 w)^{-1}}^{-1} T_{\hat{w}^{-1}}) \gamma_{\hat w}(h) \Big)T_{w}  \\
\mathbf{1}^{-} h &= \sum_{w \in W} t(w_0)^{-1} (-1)^{l(w)} w_0 \iota \Big( \sum_{\hat w \in W}  (-1)^{l(\hat w)} \pi(T_{w_0 w} T_{\hat w}^{-1}) \iota \gamma_{\hat w}(h) \Big) T_{w}.
\end{align*}
\end{corollary}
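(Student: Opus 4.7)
The corollary is a straightforward assembly of results already in hand, so the plan is essentially a bookkeeping exercise organized around the PBW basis $\{T_w : w \in W\}$ of $\widetilde{\mathcal{H}}_\Lambda$ over $\mathbb{F}[\Lambda]$.

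First I would expand $h$ in the PBW basis as $h = \sum_{\hat w \in W} \gamma_{\hat w}(h) T_{\hat w}$, apply $\mathbf{1}^\pm$ from the left, and use $\mathbb{F}$-linearity to get
\begin{equation*}
\mathbf{1}^\pm h \;=\; \sum_{\hat w \in W} \mathbf{1}^\pm \gamma_{\hat w}(h)\, T_{\hat w}.
\end{equation*}
To recover $\mathbf{1}^\pm h$ in its own PBW expansion, I would take $\gamma_w$ of both sides, which by $\mathbb{F}$-linearity of $\gamma_w$ yields exactly the identity \eqref{eq:lt_mult_symm}:
\begin{equation*}
\gamma_w(\mathbf{1}^\pm h) \;=\; \sum_{\hat w \in W} \gamma_w\bigl(\mathbf{1}^\pm\, \gamma_{\hat w}(h)\, T_{\hat w}\bigr) \;=\; \sum_{\hat w \in W} A_{w,\hat w}^\pm\bigl(\gamma_{\hat w}(h)\bigr).
\end{equation*}

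Next I would substitute the closed formulas for $A_{w,\hat w}^\pm$ furnished by Corollary \ref{cor:1fT} (equivalently Theorem \ref{thm:main_thm_HA}). For the $+$ case this gives
\begin{equation*}
\gamma_w(\mathbf{1}^+ h) \;=\; t(w_0)\, w_0 \sum_{\hat w \in W} \pi\bigl(T_{(w_0 w)^{-1}}^{-1} T_{\hat w^{-1}}\bigr)\, \gamma_{\hat w}(h),
\end{equation*}
and for the $-$ case, collecting the two sign factors $(-1)^{l(w)}$ and $(-1)^{l(\hat w)}$ (the latter absorbed into the sum over $\hat w$),
\begin{equation*}
\gamma_w(\mathbf{1}^- h) \;=\; t(w_0)^{-1}(-1)^{l(w)}\, w_0 \iota \sum_{\hat w \in W} (-1)^{l(\hat w)} \pi\bigl(T_{w_0 w} T_{\hat w}^{-1}\bigr)\, \iota\, \gamma_{\hat w}(h).
\end{equation*}

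Finally I would reassemble via $\mathbf{1}^\pm h = \sum_{w \in W} \gamma_w(\mathbf{1}^\pm h)\, T_w$, which is exactly the claimed formulas. There is essentially no obstacle: all of the nontrivial Hecke-algebraic work — the inductive reductions in Lemmas \ref{B_wrecur} and \ref{B_whrecur}, the base case in Lemma \ref{Bbase}, and the unwinding of the $B \leftrightarrow A$ reparametrization — has already been done in proving Theorem \ref{thm:main_thm_HA}. The only thing to be careful about is keeping the two sign factors and the $\iota$ conjugation in the $-$ case correctly placed when combining $A^-_{w,\hat w}$ across the sum; writing out one step explicitly, as above, handles this cleanly.
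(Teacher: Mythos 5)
Your proposal is correct and follows exactly the paper's route: the paper obtains Corollary \ref{cor:PBW1pm} precisely by combining the PBW expansion identity \eqref{eq:lt_mult_symm} with the closed formulas for $A^{\pm}_{w,\hat w}$ in Theorem \ref{thm:main_thm_HA} (equivalently Corollary \ref{cor:1fT}), which is what you do. The sign factors and the placement of $\iota$ and $w_0$ in the antisymmetric case are handled correctly (using $\iota w_0 = w_0\iota$), so there is nothing to add.
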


\begin{remark}
Suppose $h \in \mathbb{F}[\Lambda]^{W} \subseteq \widetilde{\mathcal{H}}_{\Lambda}$.  In this case, $\gamma_{1}(h) = h$ and $\gamma_{\hat{w}}(h) = 0$ for $\hat{w} \neq 1$.  So, by Corollary \ref{cor:PBW1pm},
\begin{align*}
\mathbf{1}^{+} h = \sum_{w \in W} t(w_0) w_0 \big( \pi(T_{(w_0 w)^{-1}}^{-1}) h \big) T_{w} \\
\mathbf{1}^{-} h = \sum_{w \in W} t(w_0)^{-1} (-1)^{l(w)} w_0 \iota \big( \pi(T_{w_0 w}) \iota h \big) T_{w}.
\end{align*}
Using $\pi(T_{w}) h = t(w) h$ for all $w \in W$ and Lemma \ref{lem:w0Ti}, we have
\begin{align*}
\mathbf{1}^{+} h = \sum_{w \in W} t(w_0) t(w_0 w)^{-1} h T_{w} \\
\mathbf{1}^{-} h = \sum_{w \in W} t(w_0)^{-1} (-1)^{l(w)} t(w w_0) h  T_{w}.
\end{align*}
Finally, using $t(w_0 w)  = t(w w_0) = t(w_0) t(w)^{-1}$, we recover the well-known fact that $h$ commutes with $\mathbf{1}^{\pm}$.

\end{remark}

\section{(Anti-)symmetric quasi-polynomial duality}\label{sec:QP_results}

We will now use the results of Section \ref{sec:AHA_results} to give explicit formulas for certain classes of quasi-polynomials.  Following Section \ref{sec:qp}, assume $\Lambda \in \mathcal{L}$ and $c \in C^{0}_{\Lambda}$.    Let $\hat{w}, w \in W^{c}$ and $f \in \mathbb{F}[\Lambda]$ so $\mathbf{1}^{\pm} f T_{\hat{w}} \in \widetilde{\mathcal{H}}_{\Lambda}$.  In this section, we provide explicit decomposition formulas for the quasi-polynomials $ \pi^{qp}(\mathbf{1}^{\pm} f T_{\hat{w}}) x^{c}$, by computing the coefficient functions $\gamma_{w}^{qp}\big( \pi^{qp}(\mathbf{1}^{\pm} f T_{\hat{w}}) x^{c}\big) $ (recall Definition \ref{def:gamma_coeff_qp}).  By Corollary \ref{cor:gammas}, we have 
\begin{equation}\label{qp_coeff}
\gamma_{w}^{qp}\big( \pi^{qp}(\mathbf{1}^{\pm} f T_{\hat{w}}) x^{c}\big)  = \sum_{u \in W_{c}} t(u) \gamma_{wu}(\mathbf{1}^{\pm} f T_{\hat{w}}). 
\end{equation}
We will use the results of Section \ref{sec:AHA_results} to compute the RHS of \eqref{qp_coeff}, handling the $\mathbf{1}^{+}$ and $\mathbf{1}^{-}$ cases separately.

Note that, by Theorem \ref{thm:XT_formula} and Proposition \ref{prop:ht},  in the particular case above when $f = x^{\mu}$, for $\mu \in \Lambda$, we have
\begin{equation} \label{eq:P-pols}
\pi^{qp}(\mathbf{1}^{\pm} x^{\mu} T_{\hat{w}}) x^{c} = \pi^{qp}(\mathbf{1}^{\pm}) x^{\mu + \hat{w}c} =: p_{\mu + \hat{w} c}^{\pm}
\end{equation}
As discussed in the introduction and in Section \ref{sec:connections_to_pols}, these objects are the $q \rightarrow \infty$ limit of quasi-polynomial generalizations of (anti-)symmetric Macdonald polynomials.

\begin{theorem}\label{thm:qp_1pfT}
Let $\hat{w}, w \in W^{c}$ and $f \in \mathbb{F}[\Lambda]$.  We have
\begin{equation*}
\gamma_{w}^{qp}\big( \pi^{qp}(\mathbf{1}^{+} f T_{\hat{w}}) x^{c}\big)  = t(w_0) w_0 \pi \Big( T_{(w_0 w)^{-1}}^{-1} \mathbf{1}^{+}_{W_{c}} T_{\hat{w}^{-1}}\Big) f.
\end{equation*}
\end{theorem}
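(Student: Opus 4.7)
The proof is essentially a three-step reduction combining the transfer formula from Hecke-algebra coefficients to quasi-polynomial coefficients (Corollary~\ref{cor:gammas}), the explicit formula of Corollary~\ref{cor:1fT}, and a length-additive factorization in the Hecke algebra.

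First I would invoke Corollary~\ref{cor:gammas} to rewrite
\[
\gamma_{w}^{qp}\bigl(\pi^{qp}(\mathbf{1}^{+} f T_{\hat{w}}) x^{c}\bigr) = \sum_{u \in W_{c}} t(u)\, \gamma_{wu}\bigl(\mathbf{1}^{+} f T_{\hat{w}}\bigr),
\]
thereby reducing to a sum of Hecke-algebra coefficients. Then I would apply Corollary~\ref{cor:1fT} (with $w$ replaced by $wu$) to each summand, obtaining
\[
\gamma_{wu}\bigl(\mathbf{1}^{+} f T_{\hat{w}}\bigr) = t(w_0)\, w_0\, \pi\bigl(T_{(w_0 wu)^{-1}}^{-1}\bigr)\, \pi\bigl(T_{\hat{w}^{-1}}\bigr) f.
\]

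The key technical step is to factor $T_{(w_0wu)^{-1}}^{-1}$ so that the $u$-dependence can be collected into the partial symmetrizer. Since $w \in W^c$ and $u \in W_c$, Lemma~\ref{lem:lensplit} gives $\ell(wu) = \ell(w)+\ell(u)$, hence $\ell(w_0 wu) = \ell(w_0)-\ell(w)-\ell(u) = \ell(w_0w) - \ell(u)$. Writing $w_0w = (w_0 wu)\cdot u^{-1}$, this is a length-additive decomposition, so by \eqref{Tw_prod},
\[
T_{(w_0w)^{-1}}^{-1} = T_{(w_0wu)^{-1}}^{-1}\, T_{u}^{-1}, \qquad \text{equivalently} \qquad T_{(w_0wu)^{-1}}^{-1} = T_{(w_0w)^{-1}}^{-1}\, T_{u}.
\]
This is the crucial PBW-type identity for the argument, and is the step most prone to sign/inversion slip-ups.

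Finally, inserting this back and pulling the $\pi\bigl(T_{(w_0w)^{-1}}^{-1}\bigr)$ factor outside the sum over $u$, I obtain
\[
\gamma_{w}^{qp}\bigl(\pi^{qp}(\mathbf{1}^{+} f T_{\hat{w}}) x^{c}\bigr) = t(w_0)\, w_0\, \pi\!\left(T_{(w_0w)^{-1}}^{-1}\left(\sum_{u \in W_c} t(u) T_u\right) T_{\hat{w}^{-1}}\right) f,
\]
and the inner sum is exactly $\mathbf{1}^{+}_{W_c} = \mathbf{1}^{+}_{J_c}$ by Definition~\ref{def:partial_symm}. This yields the claimed formula. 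The potential obstacle is bookkeeping of length identities and inverse conventions in step three; aside from this, the proof is a direct assembly of previously established results.
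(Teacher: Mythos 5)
Your proposal is correct and follows essentially the same route as the paper's proof: reduce via Corollary \ref{cor:gammas}, apply Corollary \ref{cor:1fT} to each summand, factor out the $u$-dependence, and recognize $\sum_{u \in W_c} t(u) T_u = \mathbf{1}^{+}_{W_c}$. Your key identity $T_{(w_0wu)^{-1}}^{-1} = T_{(w_0w)^{-1}}^{-1} T_{u}$, derived from Lemma \ref{lem:lensplit} and \eqref{Tw_prod}, is exactly the content of Lemma \ref{lem:Tw0wu} that the paper invokes at the same step.
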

\begin{proof}

By Corollary \ref{cor:1fT}, the RHS of \eqref{qp_coeff} is equal to
\begin{equation*}
\sum_{u \in W_{c}} t(u) t(w_0) w_0 \pi( T_{(w_0 w u)^{-1}}^{-1} T_{\hat{w}^{-1}} ) f.
\end{equation*}
Using Lemma \ref{lem:Tw0wu}, this is equal to
\begin{equation*}
t(w_0) w_0 \pi \big( T^{-1}_{{(w_0w)}^{-1}}  \big) \sum_{u \in W_{c}} t(u) \pi(T_u) \pi(T_{\hat{w}^{-1}}) f.
\end{equation*}
Using Lemma \ref{lem:Tw0} and rewriting in terms of the partial symmetrizer $\mathbf{1}_{W_{c}}^{+}$ gives the result.  \end{proof}

\begin{corollary}\label{cor:symm_char}
Let $w \in W^{c}$ and $h \in \widetilde{\mathcal{H}}_{\Lambda}$.  We have
\begin{equation*}
\gamma_{w}^{qp}\big( \pi^{qp}(\mathbf{1}^{+} h) x^{c}\big)  = t(w_0) w_0 \pi \Big( T_{(w_0 w)^{-1}}^{-1} \mathbf{1}^{+}_{W_{c}} \Big)  \sum_{\hat{w} \in W^{c}} \pi(T_{\hat{w}^{-1}}) \Big( \sum_{u \in W_{c}} t(u) \gamma_{\hat{w} u}(h) \Big).
\end{equation*}
Consequently,
\begin{equation*}
\pi^{qp}(\mathbf{1}^{+} h) x^{c} = t(w_0) w_0 \sum_{w \in W^{c}} \pi \Big( T_{(w_0 w)^{-1}}^{-1} \mathbf{1}^{+}_{W_{c}} \Big)  \sum_{\hat{w} \in W^{c}} \pi(T_{\hat{w}^{-1}}) \Big( \sum_{u \in W_{c}} t(u) \gamma_{\hat{w} u}(h) \Big) x^{w c}.
\end{equation*}

\end{corollary}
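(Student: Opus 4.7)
\smallskip

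The strategy is to reduce the general statement to the already-established Theorem~\ref{thm:qp_1pfT}, which handles expressions of the special form $\mathbf{1}^{+} f T_{\hat w}$ with $\hat w \in W^c$. The main new ingredient needed is to absorb the ``stabilizer part'' of the PBW expansion of $h$ into the coefficient $f$. The key observation enabling this is that $\pi^{qp}(T_u) x^c = t(u) x^c$ for all $u \in W_c$, which follows from Theorem~\ref{thm:XT_formula} together with Proposition~\ref{prop:ht}(2).

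Concretely, the plan is as follows. First, expand $h = \sum_{\tilde w \in W} \gamma_{\tilde w}(h)\, T_{\tilde w}$ and decompose each $\tilde w \in W$ uniquely as $\tilde w = \hat w u$ with $\hat w \in W^c$ and $u \in W_c$. By Lemma~\ref{lem:lensplit} this factorization is length-additive, so $T_{\tilde w} = T_{\hat w} T_u$ in $\widetilde{\mathcal{H}}_\Lambda$. Using the homomorphism property of $\pi^{qp}$ together with $\pi^{qp}(T_u) x^c = t(u) x^c$, each summand satisfies
\begin{equation*}
\pi^{qp}\bigl(\mathbf{1}^{+}\, \gamma_{\hat w u}(h)\, T_{\hat w} T_u\bigr) x^c = t(u)\, \pi^{qp}\bigl(\mathbf{1}^{+}\, \gamma_{\hat w u}(h)\, T_{\hat w}\bigr) x^c.
\end{equation*}
Summing over all $\tilde w \in W$ and regrouping by cosets,
\begin{equation*}
\pi^{qp}(\mathbf{1}^{+} h)\, x^c \;=\; \sum_{\hat w \in W^c} \pi^{qp}\!\left( \mathbf{1}^{+} \Bigl( \sum_{u \in W_c} t(u)\, \gamma_{\hat w u}(h) \Bigr) T_{\hat w} \right) x^c .
\end{equation*}

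Next, applying Theorem~\ref{thm:qp_1pfT} to each summand with $f = \sum_{u \in W_c} t(u)\, \gamma_{\hat w u}(h)$ and taking the $\gamma_w^{qp}$-coefficient (which is $\mathbb{F}[\Lambda]$-linear), the linear operator $t(w_0)\, w_0\, \pi(T_{(w_0 w)^{-1}}^{-1} \mathbf{1}^{+}_{W_c})$ factors out of the $\hat w$-sum, yielding the first displayed formula. The ``Consequently'' statement then follows immediately from the definition of $\gamma_w^{qp}$ and the fact that $\{x^{w c}\}_{w \in W^c}$ is an $\mathbb{F}[\Lambda]$-basis of $\mathbb{F}[\widetilde{\mathcal{O}}_{\Lambda, c}]$ (Section~\ref{sec:qp}), by expanding $\pi^{qp}(\mathbf{1}^{+} h) x^c = \sum_{w \in W^c} \gamma_w^{qp}(\pi^{qp}(\mathbf{1}^{+} h) x^c) \cdot x^{w c}$ and substituting.

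The only nonobvious step is the reduction to Theorem~\ref{thm:qp_1pfT}; once the $T_u$-factors are absorbed via the $x^c$-stabilizer identity, the rest is bookkeeping. No new Hecke-algebra manipulations beyond Lemma~\ref{lem:lensplit}, Proposition~\ref{prop:ht}(2), and Theorem~\ref{thm:qp_1pfT} itself are required.
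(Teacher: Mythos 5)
Your proposal is correct and follows essentially the same route as the paper: decompose $h$ by the coset factorization $W = W^{c}W_{c}$ (with $T_{\hat w u} = T_{\hat w}T_u$ by length additivity), absorb the stabilizer factors via $\pi^{qp}(T_u)x^{c} = t(u)x^{c}$ (Theorem \ref{thm:XT_formula} and Proposition \ref{prop:ht}(2)), and then apply Theorem \ref{thm:qp_1pfT} by linearity. No substantive differences from the paper's argument.
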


\begin{proof}
We have
\begin{equation*}
h = \sum_{w \in W} \gamma_{w}(h) T_{w} = \sum_{v \in W^{c}} \sum_{u \in W_{c}} \gamma_{vu}(h) T_{v}T_{u}.
\end{equation*}
Using Theorem \ref{thm:XT_formula} and Proposition \ref{prop:ht}, we have
\begin{equation*}
\pi^{qp}(\mathbf{1}^{+} h) x^{c} = \pi^{qp}\Big(\mathbf{1}^{+} \sum_{v \in W^{c}} \Big( \sum_{u \in W_{c}} t(u) \gamma_{vu}(h)\Big) T_{v} \Big) x^{c}.
\end{equation*}
The result now follows by linearity from Theorem \ref{thm:qp_1pfT}.

\end{proof}

\begin{theorem}\label{thm:qp_1mfT}
Let $\hat{w}, w \in W^{c}$ and $f \in \mathbb{F}[\Lambda]$.  We have
\begin{equation*}
\gamma_{w}^{qp}\big( \pi^{qp}(\mathbf{1}^{-} f T_{\hat{w}}) x^{c}\big)  = t(w_0)^{-1} t(w_0(W_{c}))^{2} (-1)^{l(\hat{w}) + l(w)} \iota w_0 \pi \Big( T_{w_0 w} \mathbf{1}^{-}_{W_{c}} T_{\hat{w}}^{-1} \Big) \iota f.
\end{equation*}
\end{theorem}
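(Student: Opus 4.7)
The strategy is to mirror the proof of Theorem \ref{thm:qp_1pfT}, but with the antisymmetric versions of Corollary \ref{cor:1fT} and Proposition \ref{prop:ptl1minus}. The starting point is the general identity \eqref{qp_coeff}, which reduces the computation of $\gamma_w^{qp}\big(\pi^{qp}(\mathbf{1}^- f T_{\hat w}) x^c\big)$ to the sum $\sum_{u \in W_c} t(u) \gamma_{wu}(\mathbf{1}^- f T_{\hat w})$ over the parabolic stabilizer.

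First, I would apply the second formula of Corollary \ref{cor:1fT} term-by-term to rewrite each $\gamma_{wu}(\mathbf{1}^- f T_{\hat w})$ as
\begin{equation*}
t(w_0)^{-1} (-1)^{l(wu)} w_0 \iota \, \pi(T_{w_0 wu}) (-1)^{l(\hat w)} \pi(T_{\hat w}^{-1}) \iota f.
\end{equation*}
Since $w \in W^c$ and $u \in W_c$, Lemma \ref{lem:lensplit} gives $l(wu) = l(w) + l(u)$, so $(-1)^{l(wu)} = (-1)^{l(w)}(-1)^{l(u)}$, and Lemma \ref{lem:Tw0wu} yields $T_{w_0 wu} = T_{w_0} T_{w^{-1}}^{-1} T_{u^{-1}}^{-1}$. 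Pulling the $u$-independent factors outside the sum over $u \in W_c$ leaves the inner sum
\begin{equation*}
\sum_{u \in W_c} (-1)^{l(u)} t(u)\, T_{u^{-1}}^{-1}.
\end{equation*}

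The key step is to recognize this inner sum as $t(w_0(W_c))^2 \cdot \mathbf{1}_{W_c}^-$ via the alternate expression in Proposition \ref{prop:ptl1minus}. Substituting and collecting terms gives
\begin{equation*}
t(w_0)^{-1} t(w_0(W_c))^2 (-1)^{l(w)+l(\hat w)}\, w_0 \iota \, \pi\bigl(T_{w_0} T_{w^{-1}}^{-1} \mathbf{1}_{W_c}^- T_{\hat w}^{-1}\bigr) \iota f.
\end{equation*}
Finally, combining $T_{w_0} T_{w^{-1}}^{-1} = T_{w_0 w}$ via Lemma \ref{lem:Tw0} and commuting $w_0$ past $\iota$ using Proposition \ref{w0_facts}(2) yields exactly the claimed formula.

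I do not expect any serious obstacle: the argument is bookkeeping, combining the three preparatory ingredients (Corollary \ref{cor:1fT}, Lemmas \ref{lem:lensplit} and \ref{lem:Tw0wu}, and Proposition \ref{prop:ptl1minus}) in the order above. The only point requiring mild care is tracking the sign $(-1)^{l(u)}$ and the Hecke-parameter factor $t(w_0(W_c))^2$, which appear precisely because the antisymmetrizer expansion in Proposition \ref{prop:ptl1minus} differs in sign and normalization from the symmetrizer expansion used in Theorem \ref{thm:qp_1pfT}. This is where the factor $t(w_0(W_c))^2$ in the statement (absent in the $\mathbf{1}^+$ case, where it is $1$) enters.
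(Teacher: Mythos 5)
Your proposal is correct and follows essentially the same route as the paper's proof: starting from \eqref{qp_coeff}, applying the antisymmetric formula of Corollary \ref{cor:1fT}, using Lemmas \ref{lem:lensplit} and \ref{lem:Tw0wu} to split off the $u$-dependence, and identifying the inner sum $\sum_{u \in W_c} (-1)^{l(u)} t(u) T_{u^{-1}}^{-1}$ with $t(w_0(W_c))^2 \mathbf{1}^-_{W_c}$ via Proposition \ref{prop:ptl1minus}. The only cosmetic difference is that you pass through $T_{w_0}T_{w^{-1}}^{-1}$ and recombine via Lemma \ref{lem:Tw0}, whereas the paper writes $T_{w_0 w u} = T_{w_0 w} T_{u^{-1}}^{-1}$ in one step; the sign and normalization bookkeeping you describe is exactly what the paper does.
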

\begin{proof}
By Corollary \ref{cor:1fT}, the RHS of \eqref{qp_coeff} is equal to
\begin{equation*}
\sum_{u \in W_{c}} t(u) t(w_0)^{-1} (-1)^{l(wu) + l(\hat{w})} \iota w_0 \pi(T_{w_0 w u} T_{\hat{w}}^{-1}) \iota f.
\end{equation*}
Using Lemma \ref{lem:Tw0wu} and Lemma \ref{lem:lensplit}, this is equal to
\begin{equation*}
t(w_0)^{-1} (-1)^{l(\hat{w}) + l(w)} \iota w_0 \pi \Big( T_{w_0 w} \big( \sum_{u \in W_{c}} t(u) (-1)^{l(u)} T_{u^{-1}}^{-1} \big) T_{\hat{w}}^{-1} \Big) \iota f.
\end{equation*}
Since the summand inside the parantheses is equal to $t(w_0(W_{c}))^{2} \mathbf{1}^{-}_{W_{c}}$ by Proposition \ref{prop:ptl1minus}, we obtain the result.
\end{proof}

Analogous to Corollary \ref{cor:symm_char}, we have the following result.

\begin{corollary}\label{cor:antisymm_char}
Let $w \in W^{c}$ and $h \in \widetilde{\mathcal{H}}_{\Lambda}$.  We have
\begin{multline*}
\gamma_{w}^{qp}\big( \pi^{qp}(\mathbf{1}^{-} h) x^{c}\big)  = t(w_0)^{-1} t(w_0(W_{c}))^{2} \cdot \\
\cdot \sum_{\hat{w} \in W^{c}} (-1)^{l(\hat{w}) + l(w)} \iota w_0 \pi \Big( T_{w_0 w} \mathbf{1}^{-}_{W_{c}} T_{\hat{w}}^{-1} \Big) \iota \Big( \sum_{u \in W_{c}} t(u) \gamma_{\hat{w} u}(h) \Big).
\end{multline*}
Consequently,
\begin{multline*}
\pi^{qp}(\mathbf{1}^{-} h) x^{c}  =  t(w_0)^{-1} t(w_0(W_{c}))^{2} \cdot \\ 
\cdot \sum_{w \in W^{c}} (-1)^{ l(w)} \iota w_0 \pi \Big( T_{w_0 w} \mathbf{1}^{-}_{W_{c}}\Big)  \sum_{\hat{w} \in W^{c}}  (-1)^{l(\hat{w})} \pi(T_{\hat{w}}^{-1} ) \iota \Big( \sum_{u \in W_{c}} t(u) \gamma_{\hat{w} u}(h) \Big) x^{w c}.
\end{multline*}
\end{corollary}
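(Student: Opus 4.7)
The plan is to mirror the structure of the proof of Corollary \ref{cor:symm_char}, but invoke Theorem \ref{thm:qp_1mfT} in place of Theorem \ref{thm:qp_1pfT}. The first step is the PBW-style decomposition $h=\sum_{v\in W^c}\sum_{u\in W_c}\gamma_{vu}(h)\,T_{vu}$, which, by Lemma \ref{lem:lensplit}, factors as $T_{vu}=T_vT_u$. I would then apply $\pi^{qp}(-)x^c$ term by term. Since $\pi^{qp}(T_u)x^c=h^c(u)x^c=t(u)x^c$ for $u\in W_c$ by Theorem \ref{thm:XT_formula} and Proposition \ref{prop:ht}(2), and since polynomial coefficients act by multiplication, each group of terms with fixed $v$ collapses to $\pi^{qp}\bigl(\bigl(\sum_{u\in W_c}t(u)\gamma_{vu}(h)\bigr)T_v\bigr)x^c$. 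Consequently,
\begin{equation*}
\pi^{qp}(\mathbf{1}^{-}h)\,x^c \;=\; \sum_{\hat w\in W^c}\pi^{qp}\Bigl(\mathbf{1}^{-}\Bigl(\sum_{u\in W_c}t(u)\gamma_{\hat w u}(h)\Bigr)T_{\hat w}\Bigr)\,x^c,
\end{equation*}
which is the analog of the identity used in Corollary \ref{cor:symm_char}.

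Next, for each fixed $\hat w\in W^c$ I would apply $\gamma_{w}^{qp}$ to the summand and invoke Theorem \ref{thm:qp_1mfT} with $f=\sum_{u\in W_c}t(u)\gamma_{\hat w u}(h)$. This yields
\begin{equation*}
\gamma_{w}^{qp}\bigl(\pi^{qp}(\mathbf{1}^{-}h)x^c\bigr) = t(w_0)^{-1}t(w_0(W_c))^2\sum_{\hat w\in W^c}(-1)^{l(\hat w)+l(w)}\,\iota w_0\pi\bigl(T_{w_0w}\mathbf{1}^{-}_{W_c}T_{\hat w}^{-1}\bigr)\iota\Bigl(\sum_{u\in W_c}t(u)\gamma_{\hat w u}(h)\Bigr),
\end{equation*}
which is the first assertion. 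The constants $t(w_0)^{-1}t(w_0(W_c))^2$ and the sign $(-1)^{l(\hat w)+l(w)}$ pass through without modification, so no additional bookkeeping beyond what is already done in Theorem \ref{thm:qp_1mfT} is required.

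For the second (``Consequently'') assertion, I would invoke the decomposition \eqref{coset_decomp} from Definition \ref{def:gamma_coeff_qp}, namely $\pi^{qp}(\mathbf{1}^{-}h)x^c=\sum_{w\in W^c}\gamma_w^{qp}(\pi^{qp}(\mathbf{1}^{-}h)x^c)\,x^{wc}$, and factor the operator $\iota w_0\pi(T_{w_0 w}\mathbf{1}^{-}_{W_c})$ (which depends only on $w$) to the left of the inner sum over $\hat w$. Rearranging and pulling out the common scalar $t(w_0)^{-1}t(w_0(W_c))^2$ produces the displayed formula.

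I do not expect a genuine obstacle: the argument is a faithful adaptation of the $\mathbf{1}^{+}$ case, and every ingredient (the $W=W^c W_c$ factorization of $h$, the scalar action of $T_u$ on $x^c$ for $u\in W_c$, and Theorem \ref{thm:qp_1mfT}) is already in place. The only point requiring a bit of care is keeping the signs and the factor $t(w_0(W_c))^2$ attached correctly when exchanging the orders of the sums over $w,\hat w\in W^c$ and $u\in W_c$; this is purely bookkeeping and contributes no new identity.
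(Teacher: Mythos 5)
Your proposal is correct and matches the paper's intended argument: the paper gives no explicit proof for Corollary~\ref{cor:antisymm_char}, saying only that it is ``analogous to Corollary~\ref{cor:symm_char},'' and what you describe is precisely the proof of Corollary~\ref{cor:symm_char} (decompose $h$ over $W^c\times W_c$, collapse the $W_c$-sum via Theorem~\ref{thm:XT_formula} and Proposition~\ref{prop:ht}, then apply the relevant theorem by linearity) with Theorem~\ref{thm:qp_1mfT} substituted for Theorem~\ref{thm:qp_1pfT}.
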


From \eqref{eq:P-pols} and Theorems \ref{thm:qp_1pfT} and \ref{thm:qp_1mfT}, as well as Corollary \ref{cor:qp1y_domin} and Definition \ref{def:MDptl}, we obtain the following result.

\begin{corollary}\label{cor:qp_main} 
Let $c \in C^{0}_{\Lambda}$, $y \in \widetilde{\mathcal{O}}_{c}$ and write (uniquely) $y = \mu + \hat{w} c$, where $\mu \in \Lambda$ and $\hat{w} \in W^{c}$.  Then we have the following formulas for the quasi-polynomials $p^{\pm}_{y}$:
\begin{align*}
p^{+}_{y} &= t(w_0)  \sum_{w \in W^{c}}\Bigg[ w_0 \pi \Big( T_{(w_0 w)^{-1}}^{-1} \mathbf{1}^{+}_{W_{c}} T_{\hat{w}^{-1}}\Big) x^{\mu} \Bigg] x^{w c}\\
p^{-}_{y} &= \frac{(-1)^{l(\hat{w})} t(w_0(W_{c}))^{2}}{t(w_0) } \sum_{w \in W^{c}}  (-1)^{ l(w)} \Bigg[ \iota w_0 \pi \Big( T_{w_0 w} \mathbf{1}^{-}_{W_{c}} T_{\hat{w}}^{-1} \Big) \iota x^{\mu} \Bigg] x^{wc}.
\end{align*}
Moreover, if $y$ is dominant, we have
\begin{align*}
\gamma_{w}( \iota \overline{E}_{-y}^{+}(q; \mathbf{t}^{-1}; \tau)) &=  t(w_0)^{-1}  \cdot w_0 \pi (T_{(w_0 w)^{-1}}^{-1}) p_{\hat{w}^{-1} \mu}^{J_c, +} \\
\gamma_{w}( \iota \overline{E}_{-y}^{-}(q; \mathbf{t}^{-1}; \tau)) &= (-1)^{l(\hat{w}) + l(w)} t(w_0(W_c))^{2} t(w_0) \cdot   \iota w_0 \pi ( T_{w_0 w}) p_{\hat{w}^{-1} \mu}^{J_c, -}.
\end{align*}
\end{corollary}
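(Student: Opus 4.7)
The plan is to assemble the ingredients already prepared by the author, namely \eqref{eq:P-pols}, Theorems \ref{thm:qp_1pfT} and \ref{thm:qp_1mfT}, Corollary \ref{cor:qp1y_domin}, and Definition \ref{def:MDptl}.

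First I would handle the two formulas for $p^{\pm}_y$ directly. By \eqref{eq:P-pols}, the unique decomposition $y = \mu + \hat w c$ with $\mu \in \Lambda$ and $\hat w \in W^c$ gives $p^{\pm}_y = \pi^{qp}(\mathbf{1}^{\pm} x^{\mu} T_{\hat w}) x^c$. Expanding this quasi-polynomial as $\sum_{w \in W^c} \gamma_w^{qp}(p^{\pm}_y) x^{wc}$ via \eqref{coset_decomp} and Definition \ref{def:gamma_coeff_qp}, and then invoking Theorems \ref{thm:qp_1pfT} and \ref{thm:qp_1mfT} with $f = x^{\mu}$ to compute each coefficient, produces the two stated expansions directly.

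For the dominant case I would begin by rewriting Corollary \ref{cor:qp1y_domin}, using $\tilde{t}(w_0) = t(w_0)^{-1}$, as
\begin{equation*}
\iota \overline{E}_{-y}^{\pm}(q; \mathbf{t}^{-1}; \tau) = t(w_0)^{\mp 2}\, p^{\pm}_y.
\end{equation*}
Applying $\gamma_w^{qp}$ to both sides and substituting the expressions from the first part yields formulas containing the factors $\pi(\mathbf{1}^{+}_{W_c} T_{\hat w^{-1}}) x^{\mu}$ and $\pi(\mathbf{1}^{-}_{W_c} T_{\hat w}^{-1}) \iota x^{\mu}$, which are precisely the polynomials $p^{J_c, +}_{\hat w^{-1} \mu}$ and $p^{J_c, -}_{\hat w^{-1} \mu}$ of Definition \ref{def:MDptl}. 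Collecting the scalar prefactors then matches the two claimed identities exactly.

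The one real subtlety is not computational but a compatibility check: I need to verify that the canonical decomposition $y = \mu + \hat w c$ is also the decomposition demanded by Definition \ref{def:MDptl} via Lemma \ref{lem:Jptlsymmpols}(2), i.e., that $\mu \in \Lambda^+$ and $\hat w^{-1} \in W^{J_{\mu}}$ whenever $y$ is dominant. For the first, the key point is that $c \in C^0_{\Lambda}$ forces $\alpha(c) \in [0,1)$ for all $\alpha \in \Phi^+$, so for $\alpha \in \Phi^+$ the identity $\alpha(\mu) = \alpha(y) - (\hat w^{-1}\alpha)(c)$ combined with $\alpha(y) \geq 0$ and $\alpha(\mu) \in \mathbb{Z}$ forces $\alpha(\mu) \geq 0$. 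For the second, I would argue by contradiction: if $\alpha \in \Phi^+_{J_{\mu}}$ satisfied $\hat w^{-1}\alpha \in \Phi^-$, then $\alpha(y) = (\hat w^{-1}\alpha)(c)$ together with dominance of $y$ would force $(\hat w^{-1}\alpha)(c) = 0$, hence $\hat w^{-1}\alpha \in \Phi^-_{J_c}$; but $\hat w \in W^{J_c}$ maps $\Phi^-_{J_c}$ into $\Phi^-$, contradicting $\alpha \in \Phi^+$.
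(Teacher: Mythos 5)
Your proposal is correct and follows exactly the route the paper takes: its proof of this corollary is a one-line citation of \eqref{eq:P-pols}, Theorems \ref{thm:qp_1pfT} and \ref{thm:qp_1mfT} (applied with $f = x^{\mu}$), Corollary \ref{cor:qp1y_domin}, and Definition \ref{def:MDptl}, which is precisely what you assemble. The compatibility check you spell out at the end (that the canonical pair $(\mu,\hat w)$ from $y=\mu+\hat w c$ with $y$ dominant satisfies $\mu\in\Lambda^{+}$ and $\hat w^{-1}\in W^{J_{\mu}}$) is exactly the content of condition (3) of Lemma \ref{lem:Jptlsymmpols}, which the paper leaves implicit, so including it is a sensible and correct addition rather than a deviation.
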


\begin{remark}
The specialization of Corollary \ref{cor:qp_main} to the $GL_{r}$ context gives Theorem \ref{cor:qp_main_GL}.
\end{remark}

\begin{remark} 
Let $w_0^{c} \in W^{c}$ and ${w_0}_{c} \in W_{c}$ as in Definition \ref{def:wc_decomp}.  Then, since $w_0 {w_0}^{c} = {w_0}_{c}^{-1}$, by Theorem \ref{cor:qp_main} we have the following expression for the $\gamma_{w_0^{c}}$-coefficient of $p^{\pm}_{\mu + \hat{w} c}$:
\begin{align*}
\gamma_{w_0^{c}}(p^{+}_{\mu + \hat{w} c}) &= \frac{ t(w_0) }{t({w_0}_c) } w_0 \pi( \mathbf{1}^{+}_{W_{c}} T_{\hat{w}^{-1}}) x^{\mu} \\
\gamma_{w_0^{c}}(p^{-}_{\mu + \hat{w} c} ) &= \frac{(-1)^{l(\hat{w}) + l(w_0)} t(w_0(W_{c}))^{2}}{t({w_0}_c) t(w_0) } \iota w_0 \pi( \mathbf{1}^{-}_{W_{c}} T_{\hat{w}}^{-1}) x^{-\mu}
\end{align*}
and the relation between the $w$ and $w_0^{c}$-coefficients is given by
\begin{align*}
\gamma_{w}(p^{+}_{\mu + \hat{w} c}) &= t({w_0}_{c}) w_0 \pi(T_{(w_0 w)^{-1}}^{-1})  w_0 \gamma_{w_0^{c}}(p^{+}_{\mu + \hat{w} c})  \\
\gamma_{w}(p^{-}_{\mu + \hat{w} c} ) &=(-1)^{l(w_0) + l(w)} t({w_0}_{c}) \iota w_0 \pi(T_{w_0 w}) \iota w_0 \gamma_{w_0^{c}}(p^{-}_{\mu + \hat{w} c})
\end{align*}
for $w \in W^{c}$.
\end{remark}

\begin{remark}Special cases of Theorem \ref{cor:qp_main}.
\begin{enumerate}
\item Suppose $W_{c} = W$, so $W^{c} = \{ 1\}$ and $c = 0$, $\hat{w} = 1$.  We obtain
\begin{align*}
p^{+}_{\mu + \hat{w} c} &=    w_0 \pi(\mathbf{1}^{+} ) x^{\mu} = \pi(\mathbf{1}^{+} ) x^{\mu} \\
p^{-}_{\mu + \hat{w} c} &=   \frac{(-1)^{l(\hat{w})} t(w_0(W_{c}))^{2}}{t(w_0) } \iota w_0 \pi(\mathbf{1}^{-} ) x^{-\mu} =  \frac{(-1)^{l(\hat{w})} t(w_0(W_{c}))^{2}}{t(w_0) } \pi(\mathbf{1}^{-} ) x^{w_0\mu}
\end{align*}

\item Suppose $W_{c} = \{ 1 \}$, so $W^{c} = W$.  We obtain
\begin{align*}
p^{+}_{\mu + \hat{w} c} &=  t(w_0)  \sum_{w \in W}\Bigg[ w_0 \pi \Big( T_{(w_0 w)^{-1}}^{-1} T_{\hat{w}^{-1}}\Big) x^{\mu} \Bigg] x^{w c}\\
p^{-}_{\mu + \hat{w} c} &= (-1)^{l(\hat{w})} t(w_0)\sum_{w \in W}  (-1)^{ l(w)} \Bigg[ \iota w_0 \pi \Big( T_{w_0 w} T_{\hat{w}}^{-1} \Big) \iota x^{\mu} \Bigg] x^{wc}.
\end{align*}

\end{enumerate}

\end{remark}

%%%%%%%%%%%%%%%%%%%%%%%%%%%%%%%%

\section{Parahoric-Metaplectic duality}\label{sec:met_results}

We will now use the results of Section \ref{sec:AHA_results} to give explicit formulas for certain classes of metaplectic polynomials.  Assume the setup and notation of Section \ref{sec:metreps}, and that $\Lambda \in \mathcal{L}$ and $c \in C^{0}_{\Lambda^{m}}$.  Let $\hat{w}, w \in W^{c}$ and $f \in \mathbb{F}[\Lambda^{m}]$.   We will provide explicit decomposition formulas for the metaplectic polynomials $ \pi^{m}_{v}(\mathbf{1}^{\pm} f T_{\hat{w}}) x^{c}$, by computing the coefficient functions $\gamma_{w}^{m}\big( \pi^{m}_{v}(\mathbf{1}^{\pm} f T_{\hat{w}}) x^{c }\big)$ (recall Definition \ref{def:gamma_coeff_met}).

Recall the statistics $h^{m,c}_{v}$ from Definition \ref{def:met_h_stats} (under the equal Hecke and representation parameter specialization). Note that, by Theorem \ref{thm:XT_met} and Proposition \ref{prop:ht_met},  in the particular case when $f = x^{\mu}$, for $\mu \in \Lambda^{m}$, we have
\begin{equation*}
\pi^{m}_{v}(\mathbf{1}^{\pm} x^{\mu} T_{\hat{w}}) x^{c} = h^{m,c}_{v}(\hat{w}) \pi^{m}_{v}(\mathbf{1}^{\pm}) x^{\mu + \hat{w}c}.
\end{equation*}

By Corollary \ref{met_coeffs}, we have 
\begin{equation}\label{eq:met_coeff}
\gamma_{w}^{m}\big( \pi^{m}_{v}(\mathbf{1}^{\pm} f T_{\hat{w}}) x^{c }\big)  = h^{m,c}_{v}(w) \sum_{u \in W_{c} } v^{l(u)/2} \gamma_{wu}(\mathbf{1}^{\pm} f T_{\hat{w}}). 
\end{equation}
We will use the results of Section \ref{sec:AHA_results} to compute the RHS of \eqref{eq:met_coeff}, handling the $\mathbf{1}^{+}$ and $\mathbf{1}^{-}$ cases separately.  Note that the arguments are analogous to Section \ref{sec:QP_results}, the only difference being that in this section we use the results of Section \ref{sec:AHA_results} for the \textit{metaplectic} Hecke algebra $ \widetilde{\mathcal{H}}_{\Lambda^{m}}$.

For ease of notation, we first define the following coefficients.  Let $\hat{w}, w \in W$ and let
\begin{align*}
F_{\mathbf{1}^{+}}^{c}(w) &= h^{m,c}_{v}(w) v^{l(w_0)/2} \\
F_{\mathbf{1}^{-}}^{c}(w) &= h^{m, c}_{v}(w) v^{-l(w_0)/2 + l(w_0(W_c))} (-1)^{ l(w)}. 
\end{align*}

\begin{theorem}\label{thm:met_plus_main}
Let $\hat{w}, w \in W^{c}$ and $f \in \mathbb{F}[\Lambda^{m}]$.  We have
\begin{equation*}
\gamma_{w}^{m}\big( \pi^{m}_{v}(\mathbf{1}^{+} f T_{\hat{w}}) x^{c}\big)  = F_{\mathbf{1}^{+}}^{c}(w) \cdot w_0 \pi^{m}_{v} \Big( T_{(w_0 w)^{-1}}^{-1} \mathbf{1}^{+}_{W_{c}} T_{\hat{w}^{-1}}\Big) f.
\end{equation*}
\end{theorem}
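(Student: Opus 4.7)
The proof will mirror the argument for Theorem \ref{thm:qp_1pfT}, transcribed from the quasi-polynomial setting to the metaplectic one. My starting point is Corollary \ref{met_coeffs}, which in the equal-parameter specialization of Definition \ref{def:met_specialization} (so that $t(u) = v^{l(u)/2}$) reduces the left-hand side to
\begin{equation*}
\gamma_w^m\bigl(\pi^m_v(\mathbf{1}^+ f T_{\hat{w}}) x^c\bigr) = h^{m,c}_v(w) \sum_{u \in W_c} v^{l(u)/2}\, \gamma_{wu}(\mathbf{1}^+ f T_{\hat{w}}).
\end{equation*}
Since Corollary \ref{cor:1fT} is a purely Hecke-algebraic identity, I may invoke it verbatim inside $\widetilde{\mathcal{H}}_{\Lambda^m}$ to evaluate each summand as $v^{l(w_0)/2}\, w_0\, \pi\bigl(T_{(w_0 wu)^{-1}}^{-1} T_{\hat{w}^{-1}}\bigr) f$, where $\pi$ denotes the polynomial representation of $\widetilde{\mathcal{H}}_{\Lambda^m}$.

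Using $w \in W^c$ and $u \in W_c$, Lemma \ref{lem:Tw0wu} factors $T_{(w_0 wu)^{-1}}^{-1} = T_{(w_0 w)^{-1}}^{-1} T_u$, which lets me pull $\pi(T_{(w_0 w)^{-1}}^{-1})$ out of the $u$-sum. The remaining sum $\sum_{u \in W_c} v^{l(u)/2}\, T_u$ is precisely the partial symmetrizer $\mathbf{1}^+_{W_c}$ of Definition \ref{def:partial_symm}, so the whole expression collapses to
\begin{equation*}
h^{m,c}_v(w)\, v^{l(w_0)/2}\, w_0\, \pi\bigl(T_{(w_0 w)^{-1}}^{-1} \mathbf{1}^+_{W_c} T_{\hat{w}^{-1}}\bigr) f,
\end{equation*}
whose scalar prefactor matches $F^c_{\mathbf{1}^+}(w)$ by definition.

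The only step not provided by a direct transcription is the replacement of $\pi$ by $\pi^m_v$ in the displayed formula, and this is where I expect the main (though still modest) bookkeeping effort. It is justified because $\mathbb{F}[\Lambda^m]$ is $\pi^m_v$-stable and on this subspace the metaplectic formulas of Theorem \ref{metaplectic_repr} collapse to the Demazure--Lusztig formulas of Theorem \ref{thm:pol_rep}: for $\lambda \in \Lambda^m$ the identities \eqref{eqn:BQ} and \eqref{def:m} force $\mathbf{B}(\lambda,\alpha_i^\vee) \in n\mathbb{Z}$, so $g_{-\mathbf{B}(\lambda,\alpha_i^\vee)}(\alpha_i) = -1$ by Definition \ref{RepPar2}(1), while $r_{m(\alpha_i)}(\alpha_i(\lambda)) = 0$ yields $\nabla_i^m(x^\lambda) = -\nabla_i(x^\lambda)$ after an elementary rewriting of the ratio defining $\nabla_i^m$. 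Iterating this identification on products $T_{(w_0 w)^{-1}}^{-1} \mathbf{1}^+_{W_c} T_{\hat{w}^{-1}}$ acting on $f \in \mathbb{F}[\Lambda^m]$ then converts $\pi$ into $\pi^m_v$ and completes the proof.
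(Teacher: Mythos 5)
Your proposal is correct and takes essentially the same approach as the paper: Corollary \ref{met_coeffs}, then Corollary \ref{cor:1fT}, then Lemma \ref{lem:Tw0wu} to factor $T_{(w_0wu)^{-1}}^{-1} = T_{(w_0w)^{-1}}^{-1} T_u$ and collapse the $u$-sum into $\mathbf{1}^{+}_{W_c}$. The one place where you do more work than the paper is the final replacement of $\pi$ by $\pi^m_v$; the paper simply writes $\pi^m_v$ from the start, implicitly invoking the fact (used again in Remark \ref{rem:met_whitt_special}) that $\pi^m_v$ restricts to the polynomial representation of $\widetilde{\mathcal{H}}_{\Lambda^m}$ on $\mathbb{F}[\Lambda^m]$, and your justification of this fact is the right idea, although the $\nabla_i$ appearing there should be the divided-difference operator of $\Phi^m$ (with denominator $x^{m(\alpha_i)\alpha_i^\vee}-1$), not of $\Phi$, for the identity $\nabla_i^m|_{\mathbb{F}[\Lambda^m]} = \pm\nabla_i$ to hold.
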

\begin{proof}

By Theorem \ref{cor:1fT}, the RHS of \eqref{eq:met_coeff} is equal to
\begin{equation*}
h^{m, c}_{v}(w) \sum_{u \in W_{c}} v^{l(u)/2 + l(w_0)/2} w_0 \pi^{m}_{v}( T_{(w_0 w u)^{-1}}^{-1} T_{\hat{w}^{-1}} ) f.
\end{equation*}
Using Lemma \ref{lem:Tw0wu}, this is equal to
\begin{equation*}
F_{\mathbf{1}^{+}}(w) w_0 \pi^{m}_{v} \big( T^{-1}_{{(w_0w)}^{-1}}  \big) \sum_{u \in W_{c}} v^{l(u)/2} \pi^{m}_{v}(T_u) \pi^{m}_{v}(T_{\hat{w}^{-1}}) f.
\end{equation*}
Using Lemma \ref{lem:Tw0} and rewriting in terms of the partial symmetrizer $\mathbf{1}_{W_{c}}^{+}$ gives the result.
\end{proof}

\begin{theorem}\label{thm:met_minus_main}
Let $\hat{w}, w \in W^{c}$ and $f \in \mathbb{F}[\Lambda^{m}]$.  We have
\begin{equation*}
\gamma_{w}^{m}\big( \pi^{m}_{v}(\mathbf{1}^{-} f T_{\hat{w}}) x^{c}\big)  = (-1)^{l(\hat{w})} F_{\mathbf{1}^{-}}^{c}(w) \cdot \iota w_0 \pi^{m}_{v} \Big( T_{w_0 w} \mathbf{1}^{-}_{W_{c}} T_{\hat{w}}^{-1} \Big) \iota f.
\end{equation*}
\end{theorem}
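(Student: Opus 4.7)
The plan is to parallel the proof of Theorem \ref{thm:met_plus_main} verbatim, replacing the $\mathbf{1}^+$ formula from Corollary \ref{cor:1fT} by its $\mathbf{1}^-$ counterpart and carefully tracking the sign patterns $(-1)^{l(\cdot)}$ and the extra factor produced by the partial antisymmetrizer. The starting point is Equation \eqref{eq:met_coeff}, which reduces the computation to evaluating $h^{m,c}_v(w)\sum_{u\in W_c} v^{l(u)/2}\gamma_{wu}(\mathbf{1}^- f T_{\hat w})$ inside the metaplectic Hecke algebra $\widetilde{\mathcal{H}}_{\Lambda^m}$.

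Next I would invoke the antisymmetrizer formula of Corollary \ref{cor:1fT}, specialized to Hecke parameters $t_i = v^{1/2}$ (so $t(w_0) = v^{l(w_0)/2}$), to rewrite
\[
\gamma_{wu}(\mathbf{1}^- f T_{\hat w}) = v^{-l(w_0)/2}(-1)^{l(wu)+l(\hat w)}\,w_0\iota\,\pi^m_v(T_{w_0 wu}T_{\hat w}^{-1})\iota f,
\]
where the polynomial representation $\pi$ of $\widetilde{\mathcal{H}}_{\Lambda^m}$ is identified with the restriction of $\pi^m_v$ to $\mathbb{F}[\Lambda^m]$ (cf.\ Remark \ref{met_nonmet}(3)). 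Since $w\in W^c$ and $u\in W_c$, Lemma \ref{lem:lensplit} splits $(-1)^{l(wu)}=(-1)^{l(w)}(-1)^{l(u)}$, and Lemma \ref{lem:Tw0wu} factors $T_{w_0 wu}=T_{w_0}T_{w^{-1}}^{-1}T_{u^{-1}}^{-1}$. Collecting the $u$-dependent terms, the inner sum $\sum_{u\in W_c}(-1)^{l(u)}v^{l(u)/2}T_{u^{-1}}^{-1}$ is recognised, via Proposition \ref{prop:ptl1minus}, as $v^{l(w_0(W_c))}\mathbf{1}^-_{W_c}$.

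To finish, Lemma \ref{lem:Tw0} reassembles $T_{w_0}T_{w^{-1}}^{-1} = T_{w_0 w}$, and Proposition \ref{w0_facts}(2) commutes $w_0\iota = \iota w_0$, so the running expression becomes
\[
h^{m,c}_v(w)\,v^{-l(w_0)/2+l(w_0(W_c))}(-1)^{l(w)+l(\hat w)}\,\iota w_0\,\pi^m_v(T_{w_0 w}\mathbf{1}^-_{W_c}T_{\hat w}^{-1})\iota f,
\]
whose scalar prefactor coincides, by definition, with $(-1)^{l(\hat w)}F^c_{\mathbf{1}^-}(w)$. No step is a serious obstacle: all tools were already used in the $\mathbf{1}^+$ case, and the only genuine novelty is the bookkeeping of the alternating signs and the appearance of the extra factor $v^{l(w_0(W_c))}$ when the raw $u$-sum is repackaged as the partial antisymmetrizer $\mathbf{1}^-_{W_c}$.
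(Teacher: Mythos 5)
Your proposal is correct and follows essentially the same route as the paper's own proof: apply Corollary \ref{cor:1fT} to the sum in \eqref{eq:met_coeff}, split lengths and Hecke elements via Lemmas \ref{lem:lensplit} and \ref{lem:Tw0wu}, and repackage the $u$-sum as $v^{l(w_0(W_c))}\mathbf{1}^-_{W_c}$ via Proposition \ref{prop:ptl1minus}, with the prefactor matching $(-1)^{l(\hat w)}F^c_{\mathbf{1}^-}(w)$. The only difference is cosmetic: you make explicit the identification of $\pi^m_v$ restricted to $\mathbb{F}[\Lambda^m]$ with the polynomial representation, which the paper leaves implicit.
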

\begin{proof}
By Theorem \ref{cor:1fT}, the RHS of \eqref{eq:met_coeff} is equal to
\begin{equation*}
h^{m,c}_{v}(w) \sum_{u \in W_{c}} v^{l(u)/2 - l(w_0)/2} (-1)^{l(wu) + l(\hat{w})} \iota w_0 \pi^{m}_{v}(T_{w_0 w u} T_{\hat{w}}^{-1}) \iota f.
\end{equation*}
Using Lemma \ref{lem:Tw0wu} and Lemma \ref{lem:lensplit}, this is equal to
\begin{equation*}
h^{m, c}_{v}(w) v^{-l(w_0)/2} (-1)^{l(\hat{w}) + l(w)} \iota w_0 \pi^{m}_{v} \Big( T_{w_0 w} \big( \sum_{u \in W_{c}}  (-v^{1/2})^{l(u)} T_{u^{-1}}^{-1} \big) T_{\hat{w}}^{-1} \Big) \iota f.
\end{equation*}
Since the summand inside the parantheses is equal to $v^{l(w_0(W_{c} ))} \mathbf{1}^{-}_{W_{c}}$ by Proposition \ref{prop:ptl1minus}, we obtain the result.
\end{proof}

\begin{corollary}\label{cor:coeffs_w0}
Let $\mu \in \Lambda$ with decomposition $w_0 \mu = \eta + \hat{w} c$ for some $\eta \in \Lambda^{m}$, $\hat{w} \in W^{c}$, and $c \in C^{0}_{\Lambda^m}$.  Let $\tilde w \in W$.  Then
\begin{multline*}
w_0 \iota \gamma_{w_0 \tilde w^{c}  w_0 , -w_0c}^{m}\big( \pi^{m}_{v}(\mathbf{1}^{-}) x^{- \mu}\big) \\= \frac{(-1)^{l( w_0 \hat{w} w_0)} F_{\mathbf{1}^{-}}^{-w_0 c}( w_0 \tilde w^{c} w_0) }{h^{m, -w_0 c}_{v}(w_0 \hat{w} w_0 )}  \cdot \pi^{m}_{v} \Big( T_{ \tilde w^{c} w_0 } \mathbf{1}^{-}_{W_{-w_0 c}} T_{w_0 \hat{w} w_0}^{-1} \Big)  x^{w_0 \eta }. 
\end{multline*}
\end{corollary}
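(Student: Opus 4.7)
The plan is to reduce Corollary \ref{cor:coeffs_w0} to a direct application of Theorem \ref{thm:met_minus_main} at the shifted base point $c' := -w_0 c$, which lies in $C^0_{\Lambda^m}$ (as used implicitly in Lemma \ref{lem:iota_gamma_met}). The first step is to rewrite the input into a form where that theorem applies. From the decomposition $w_0\mu = \eta + \hat{w} c$ one obtains
\[
-\mu = -w_0\eta + (w_0\hat{w} w_0)(-w_0 c) = \eta' + \hat{w}' c',
\]
with $\eta' := -w_0 \eta \in \Lambda^m$ and $\hat{w}' := w_0\hat{w} w_0$. Lemma \ref{lem:w0_conj} then shows that $\hat{w} \in W^c$ forces $\hat{w}' \in W^{c'}$, and similarly $\tilde{w}^c \in W^c$ forces $w_0\tilde{w}^c w_0 \in W^{c'}$, so both Weyl group elements satisfy the minimal-coset hypothesis required by the theorem.

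Next I would invoke Theorem \ref{thm:XT_met} with the pair $(\eta', \hat{w}')$ to get
\[
\pi^m_v(x^{\eta'} T_{\hat{w}'}) x^{c'} = h^{m,c'}_v(\hat{w}')\, x^{-\mu},
\]
and therefore
\[
\pi^m_v(\mathbf{1}^-) x^{-\mu} = h^{m,c'}_v(\hat{w}')^{-1}\, \pi^m_v\bigl(\mathbf{1}^- x^{\eta'} T_{\hat{w}'}\bigr) x^{c'}.
\]
Taking $\gamma^m_{w_0\tilde{w}^c w_0,\,c'}$ of both sides and applying Theorem \ref{thm:met_minus_main} with $c \mapsto c'$, $w \mapsto w_0\tilde{w}^c w_0$, $\hat{w} \mapsto \hat{w}'$, $f \mapsto x^{\eta'}$ would then give
\[
\gamma^m_{w_0\tilde{w}^c w_0,\,c'}\bigl(\pi^m_v(\mathbf{1}^-)x^{-\mu}\bigr) = \frac{(-1)^{l(\hat{w}')}\, F^{c'}_{\mathbf{1}^-}(w_0\tilde{w}^c w_0)}{h^{m,c'}_v(\hat{w}')} \cdot \iota w_0\, \pi^m_v\bigl(T_{\tilde{w}^c w_0}\,\mathbf{1}^-_{W_{c'}}\,T_{\hat{w}'^{-1}}\bigr)\iota x^{\eta'},
\]
after simplifying $w_0\cdot(w_0\tilde{w}^c w_0) = \tilde{w}^c w_0$ inside the $T$-factor.

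The final step is to apply $w_0 \iota$ to both sides. Since $\iota^2 = 1$ and $w_0^2 = 1$ by Proposition \ref{w0_facts}, we have $(w_0\iota)(\iota w_0) = 1$, which strips off the outer $\iota w_0$ on the right, and $\iota x^{\eta'} = x^{-\eta'} = x^{w_0\eta}$, yielding exactly the claimed identity. The only subtlety is the bookkeeping for the conjugated coset representatives, which Lemma \ref{lem:w0_conj} handles cleanly; no new ideas are required beyond the preceding theorem and careful application of $w_0\iota$.
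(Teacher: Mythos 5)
Your proposal is correct and follows essentially the same route as the paper's proof: decompose $-\mu = -w_0\eta + (w_0\hat{w}w_0)(-w_0 c)$, verify the conjugated Weyl elements are minimal coset representatives for $W_{-w_0 c}$ via Lemma \ref{lem:w0_conj}, rewrite $\pi^m_v(\mathbf{1}^-)x^{-\mu}$ via Theorem \ref{thm:XT_met}, then apply Theorem \ref{thm:met_minus_main} and strip off the outer $\iota w_0$. Your write-up is a bit more explicit about the intermediate steps, but it is the same argument.
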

\begin{proof}
First, note that we have the decomposition $-\mu = -w_0 \eta + w_0 \hat{w} w_0 (-w_0 c)$ and $-w_0 \nu \in \Lambda^{m}$, $-w_0 c \in C^{0}_{\Lambda^m}$.  Also note that $w_0 \tilde w^{c} w_0 \in W^{-w_0 c}$ by Lemma \ref{lem:w0_conj}.

By Theorem \ref{thm:XT_met}, we have
\begin{equation*}
\pi^{m}_{v}(\mathbf{1}^{-}) x^{-\mu} = \pi^{m}_{v}(\mathbf{1}^{-} x^{-w_0\eta}) x^{ w_0 \hat{w} w_0 (-w_0 c)} = \frac{1}{h^{m, -w_0 c}_{v}(w_0 \hat{w} w_0 )} \pi^{m}_{v}(\mathbf{1}^{-} x^{-w_0 \eta} T_{w_0 \hat{w} w_0 }) x^{-w_0 c}.
\end{equation*}
Taking the $\gamma_{w_0 \tilde w^{c} w_0 , -w_0c}^{m}$-coefficient of both sides, applying $w_0 \iota$ to the result, and using Theorem \ref{thm:met_minus_main} gives the result.
\end{proof}

Recall $\widetilde{C}_{\Lambda^m}^0$ is a fixed choice of representatives for $C_{\Lambda^m}^0 / \Omega_{\Lambda^m}$. For $\theta \in W\cdot \widetilde{C}^{0}_{\Lambda^m}$ and $\mu \in \Lambda^{++}$, recall the Whittaker functions $\tilde{\phi}_{\theta}^{o}(\mathbf{y}; \varpi^{\rho' - \mu}; v)$ as defined in \cite{BBBG21} and discussed in Section \ref{sec:Whittaker}.  We now use the results of this section to obtain formulas for $\tilde{\phi}_{\theta}^{o}$ in arbitrary types.  

\begin{theorem}\label{thm:Whitt_arb}
Set $v = q^{2}$.  Let $\mu - \rho \in \Lambda^{+}$ with decomposition $w_0 \mu = \eta + \hat{w} c$ for some $\eta \in \Lambda^{m}$, $\hat{w} \in W^{c}$, $c \in \widetilde{C}^{0}_{\Lambda^m}$ and let $\theta \in W \cdot \widetilde{C}^{0}_{\Lambda^m}$.   Also let $\rho'$ be a fixed element in $\Lambda$ such that $\rho - \rho'$ is constant.  Then $w_0 (\mathbf{y}^{\rho' - \theta} \tilde{\phi}_{\theta}^{o}(\mathbf{y}; \varpi^{\rho' - \mu}; v))$ is a polynomial in $\mathbb{F}[\Lambda^m]$, and it vanishes unless $w_0 \theta = \widetilde{w} c$ for some $w_0 \widetilde{w} \in W^{c}$, and in this case,
\begin{multline*}
w_0 \Big( \mathbf{y}^{\rho' - \theta} \tilde{\phi}_{\theta}^{o}(\mathbf{y}; \varpi^{\rho' - \mu}; v) \Big) \\= \frac{v^{l(w_0)}(-1)^{l( w_0 \hat{w} w_0)} F_{\mathbf{1}^{-}}^{-w_0 c}( w_0 \tilde w^{c} w_0) }{h^{m, -w_0 c}_{v}(w_0 \hat{w} w_0 )}  \cdot \pi^{m}_{v} \Big( T_{ \tilde w^{c} w_0 } \mathbf{1}^{-}_{W_{-w_0 c}} T_{w_0 \hat{w} w_0}^{-1} \Big)  \mathbf{y}^{w_0 \eta }. 
\end{multline*}
\end{theorem}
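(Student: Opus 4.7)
The plan is to deduce Theorem \ref{thm:Whitt_arb} by chaining together Proposition \ref{thm:phi_to_gamma} with Corollary \ref{cor:coeffs_w0}. Proposition \ref{thm:phi_to_gamma} serves as the bridge from the metaplectic Whittaker components $\tilde{\phi}_{\theta}^{o}$ to the $\gamma^{m}$-coefficients of antisymmetric metaplectic polynomials, while Corollary \ref{cor:coeffs_w0} computes the $w_0 \iota$-transform of exactly those coefficients in terms of the metaplectic representation.

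First I would apply Proposition \ref{thm:phi_to_gamma} to rewrite
\begin{equation*}
\mathbf{y}^{\rho' - \theta} \tilde{\phi}_{\theta}^{o}(\mathbf{y}; \varpi^{\rho' - \mu}; v) = v^{l(w_0)} \iota \gamma^{m}_{w_0 \tilde{w}^{c} w_0, -w_0 c}\bigl(\pi^{m}_{v}(\mathbf{1}^{-}) \mathbf{y}^{-\mu}\bigr)
\end{equation*}
whenever $w_0 \theta = \tilde{w} c$ for some $\tilde{w} \in W$ with $w_0 \tilde{w} \in W^{c}$, and zero otherwise. This immediately yields the vanishing claim in the theorem statement. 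The polynomiality assertion follows at once from the fact that $\gamma^{m}_{\bullet,\bullet}$ takes values in $\mathbb{F}[\Lambda^{m}]$ by Definition \ref{def:gamma_coeff_met}, and that both $\iota$ and $w_0$ preserve this subspace.

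Next I would apply $w_0$ to both sides of the above identity and invoke Corollary \ref{cor:coeffs_w0}, with the decomposition $w_0 \mu = \eta + \hat{w} c$ provided in the hypothesis, to substitute directly for $w_0 \iota \gamma^{m}_{w_0 \tilde{w}^{c} w_0, -w_0 c}(\pi^{m}_{v}(\mathbf{1}^{-}) x^{-\mu})$. Collecting the overall factor of $v^{l(w_0)}$ together with the prefactor from Corollary \ref{cor:coeffs_w0} reproduces exactly the stated formula.

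There is no real obstacle, since the substantive computation was absorbed into the earlier results: Proposition \ref{thm:phi_to_gamma} translates $\tilde{\phi}_{\theta}^{o}$ to a $\gamma^{m}$-coefficient, and Corollary \ref{cor:coeffs_w0} is itself an immediate consequence of the matrix coefficient formula for $\mathbf{1}^{-}$ proved in Theorem \ref{thm:met_minus_main} (ultimately coming from Corollary \ref{cor:1fT} in Section \ref{sec:AHA_results}). The only bookkeeping point I would flag is that Lemma \ref{lem:w0_conj} ensures $w_0 \tilde{w}^{c} w_0 \in W^{-w_0 c}$, so that the indexing elements match up correctly when applying Corollary \ref{cor:coeffs_w0}.
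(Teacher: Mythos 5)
Your proof is correct and follows exactly the same route as the paper: the paper's own proof is the one-line "Follows from Proposition \ref{thm:phi_to_gamma} and Corollary \ref{cor:coeffs_w0}," and you have simply made the chaining explicit. The observations about $W$-invariance of $\Lambda^m$ (for the polynomiality of the $w_0$-transform) and about Lemma \ref{lem:w0_conj} ensuring $w_0\tilde{w}^c w_0 \in W^{-w_0 c}$ are the right bookkeeping points, and both are indeed used implicitly in the paper's argument.
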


\begin{proof}
Follows from Proposition \ref{thm:phi_to_gamma} and Corollary \ref{cor:coeffs_w0}.
\end{proof}

We now specialize the previous theorem to $GL_{r}$ to prove Theorem \ref{cor:Whitt_typeA}.  In particular, set $m = n$ and take $\widetilde{C}_{\Lambda^m}^0$ as in \eqref{eq:glr_cosets}.  

\begin{proof}[Proof of Theorem \ref{cor:Whitt_typeA}]
We obtain the result from Theorem \ref{thm:Whitt_arb} in the $GL_r$ case as follows.  First, for $\mathbf{y}^{\rho_{\text{GL}} - \lfloor \theta \rfloor_{n}} \tilde{\phi}_{\theta}^{o}(\mathbf{y}; \varpi^{\rho_{\text{GL}} - \mu}; v)$ to be nonzero, we must have $\lfloor \theta \rfloor_{n} = w_0 \widetilde{w} c$ for some $w_0 \widetilde{w} \in W^{c}$, where $ \mu =w_0 \eta + w_0 \hat{w} c$ (for some $\eta \in \Lambda^{m}$, $c \in \widetilde{C}_{\Lambda^m}^0$, $\hat{w} \in W^{c})$.  This is equivalent to requiring that $\mu \text{ mod }n $ is a permutation of $\theta$.

For the second part, note that $-\theta = n^{r} - w_0 \widetilde{w} c \mod n = w_0 \widetilde{w}^{c} w_0 (n^{r} - w_0 c) \mod n$, where we have used Lemma \ref{lem:w0_conj}.  Set $w := \widetilde{w}^{c} w_0$ and $\textbf{c} := n^{r} - w_0 c$, and note that since $c \in \widetilde{C}_{\Lambda^m}^0$, $\textbf{c}$ is a decreasing $r$-tuple of elements in $\{1, \dots, n\}$.  So we have $-w_0 \theta = w \textbf{c} \mod n$.  Similarly, we have $-\mu = n^{r} - (w_0 \hat{w} w_0) w_0 c  \mod n$, since $-w_0 \eta \in \Lambda^{m}$.  Set $w' := w_0 \hat{w} w_0$, so that $-\mu = n^{r} - w' w_0 c \mod n = w'(n^{r} - w_0 c) \mod n = w' \textbf{c} \mod  n$.  Finally, by Lemma \ref{lem:w0_conj}, $ w', w_0 w \in W^{\mathbf{c}}$.

We use Proposition \ref{prop:pim_restr_pi}, Remark \ref{met_nonmet_v}, Proposition \ref{prop:TDW_trans} and  Proposition \ref{prop:TDW_antisymm} to relate the operators $\pi^{m}_{v}(T_w)$ to the operators $\mathcal{T}_{w, v^{1/2}}$ on the subspace $\mathbb{F}[\Lambda]$, noting that $w_0 \eta \in \Lambda^m$.  In particular, this gives the rescaling of variables $\mathbf{y} \rightarrow \mathbf{y}^{1/n}$ in the LHS, and $w_0 \eta \rightarrow \frac{w_0 \eta}{n} = \lambda + \rho_{GL}$ in the RHS.

Note also that $W_{\textbf{c}} = W_{-w_0 c}$, and $w_0 \eta = n \lfloor \frac{\mu}{n}\rfloor = n(\lambda + \rho_{GL})$.  Finally expressing the coefficient in terms of the data $\mathbf{c}, w, w'$ and simplifying gives the first equality.

For the second equality, note first that $\mathcal{T}_{w, v} = \mathcal{T}_{w^{\textbf{c}},v} \mathcal{T}_{w_{\textbf{c}},v}$.  Moreover, we have
\begin{equation*}
\mathcal{T}_{w_{ \textbf{c}},v} \sum_{u \in W_{ \textbf{c}}} \mathcal{T}_{u,v} = v^{-2l(w_{\textbf{c}})} \cdot \sum_{u \in W_{ \textbf{c}}} \mathcal{T}_{u,v},
\end{equation*}
by Propositions \ref{prop:TDW_trans}, \ref{prop:TDW_antisymm} and \eqref{T1}.  Using the fact that $\mathcal{T}_{w^{\textbf{c}},v} \mathcal{T}_{u,v} = \mathcal{T}_{w^{ \textbf{c}} u,v}$ for $u \in W_{ \textbf{c}}$, the desired expression is equal to 
\begin{equation*}
C v^{-l(w_{\mathbf{c}})} \cdot \sum_{u \in W_{\mathbf{c}}} \mathcal{T}_{w^{c} u, v^{1/2}} \mathcal{T}_{w', v^{1/2}}^{-1} \mathbf{y}^{n(\lambda + \rho_{GL})}.
\end{equation*}
The result now follows from Theorem \ref{thm:parahoric_to_T}.
\end{proof}

\begin{remark}\label{rem:met_special} The special cases where $\theta$ has all distinct parts or all equal parts (resp.) recovers \cite{BBBG21} Theorem D and E (resp.).\footnote{Note there is a typo in \cite{BBBG21} eq. (4.12) which gives a formula for $C$ in the distinct parts case: $w$ and $w'$ should be interchanged.}
  
Indeed, first suppose that $\theta$ has all distinct parts.  Then $W_{\textbf{c}} = 1$ and $W^{\textbf{c}} = W$, so there is no summation in the first equality and $w^{c} = w$, $w_{c} = 1$.  Moreover, $\psi^{\emptyset}_{w}(\mathbf{z}; \varpi^{-\lambda} w') = \phi_{w}(\mathbf{z}; \varpi^{-\lambda} w')$, which is a non-metaplectic Iwahori Whittaker function, where $\mathbf{z} := \mathbf{y}^{n}$.  In this case, one computes
\begin{equation*}
C' = \frac{(-1)^{l(w_0)} v^{l(w)/2 + l(w')/2 + l(w_0)/2} h_{v}^{m, \mathbf{c}}(w_0 w)}{h_{v}^{m, \mathbf{c}}(w')}.
\end{equation*}

Now suppose $\theta$ has all equal parts.  Then $W_{\textbf{c}} = W$ and $W^{\textbf{c}} = 1$.  So $w = w_0$, $w^{c} = 1$, $w_{c} = w_0$ and $w' = 1$.  In this case, we obtain a non-metaplectic spherical Whittaker function.  Indeed, by Propositions \ref{prop:TDW_trans} and \ref{prop:TDW_antisymm}
 and Lemma \ref{lem:antisym_w}, we have
\begin{align*}
(-1)^{l(w_0)} v^{2l(w_0)} \cdot \mathcal{T}_{w_0, v^{1/2}} \sum_{u \in W} \mathcal{T}_{u, v^{1/2}} \mathbf{z}^{\lambda + \rho_{GL}} &=  v^{2l(w_0)} (-v^{1/2})^{-l(w_0)} \pi_{v}(T_{w_0} \mathbf{1}^{-})  \mathbf{z}^{w_0(\lambda + \rho_{GL})} \\
&=v^{l(w_0)} \sum_{w \in W} \mathcal{T}_{w, v^{1/2}} \mathbf{z}^{w_0(\lambda + \rho_{GL})}.
\end{align*}
By \eqref{eq:sphW_nonmet_typeA}, this is equal to $\mathbf{z}^{w_0 \rho_{GL}} \widetilde{\mathcal{W}}_{\lambda}(\mathbf{z};v)$, as desired.
\end{remark}

\begin{remark}
In Remark 4.12 of \cite{BBBG21}, a more general duality conjecture is discussed, involving metaplectic \textit{parahoric} Whittaker functions instead of metaplectic \textit{spherical} Whittaker functions.  It would be interesting to see if this problem is amenable to the techniques developed in this paper.
\end{remark}

%%%%%%%%%%%%%%%%%%%%%%%%%%%%

\section{(Anti-)symmetric quasi-polynomials}\label{sec:symm_qps}

In \cite{SSV2}, we defined a subspace of (anti-)symmetric quasi-polynomials in terms of the quasi-polynomial representation $\pi^{qp}$.  In this section, we show that (anti-)symmetric quasi-polynomials can be identified with partially (anti)-symmetric polynomials.  Analogous results hold in the metaplectic context, but are omitted for brevity.

We first recall the definition of (anti-)symmetric quasi-polynomials.  We use $\epsilon$-notation to treat these cases uniformly, with $\epsilon = 1$ for the symmetric case and $\epsilon = -1$ for the anti-symmetric case. 

\begin{definition}\label{def:qp_symm}
The quasi-polynomial $f \in \mathbb{F}[E]$ is $\epsilon$-symmetric if and only if $\pi^{qp}(T_i) f = \epsilon t_i^{\epsilon} f$ for all $1 \leq i \leq r$. 
\end{definition}

In the symmetric case, that is $\epsilon = 1$, this is equivalent to $\sigma^{qp}(s_i) f = f$ for all $1 \leq i \leq r$.  The resulting quasi-polynomials are symmetric with respect to a generalized Weyl group action, denoted $\sigma^{qp}$, which is related to an action of Chinta and Gunnells in the metaplectic context (see \cite{CG07, CG}).  

\begin{proposition}\label{prop:qp_symm_equiv}
The quasi-polynomial $f \in \mathbb{F}[E]$ is $\epsilon$-symmetric if and only if $f = \pi^{qp}(\mathbf{1}^{\pm})g$ for some $g \in \mathbb{F}[\widetilde{\mathcal{O}}_{\Lambda, c}]$.
\end{proposition}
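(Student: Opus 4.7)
The plan is to prove both implications of the equivalence using the core Hecke algebra identity $T_i \mathbf{1}^{\pm} = \pm t_i^{\pm 1} \mathbf{1}^{\pm}$ recorded in \eqref{T1}, which is the structural feature that makes $\pi^{qp}(\mathbf{1}^{\pm})$ land inside the $\epsilon$-symmetric subspace.

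For the ``if'' direction, suppose $f = \pi^{qp}(\mathbf{1}^{\pm}) g$ for some $g \in \mathbb{F}[\widetilde{\mathcal{O}}_{\Lambda, c}]$. Applying $\pi^{qp}$ to the identity in \eqref{T1} and evaluating at $g$ yields
\begin{equation*}
\pi^{qp}(T_i) f = \pi^{qp}(T_i \mathbf{1}^{\pm}) g = \pm t_i^{\pm 1}\,\pi^{qp}(\mathbf{1}^{\pm}) g = \pm t_i^{\pm 1} f
\end{equation*}
for every $1 \leq i \leq r$, so $f$ satisfies Definition \ref{def:qp_symm}.

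For the ``only if'' direction, assume $\pi^{qp}(T_i) f = \epsilon t_i^{\epsilon} f$ for all $i$. A straightforward induction on $\ell(w)$ (starting from a reduced expression $w = s_{i_1}\cdots s_{i_\ell}$ and applying the eigenvalue relation one factor at a time) yields
\begin{equation*}
\pi^{qp}(T_w) f = \epsilon^{\ell(w)}\, t(w)^{\epsilon}\, f \qquad (w \in W).
\end{equation*}
Expanding $\mathbf{1}^{\pm}$ from Definition \ref{def:symm_antisymm} termwise and substituting this identity then gives
\begin{equation*}
\pi^{qp}(\mathbf{1}^{\pm}) f = \Big(\sum_{w \in W} t(w)^{\pm 2}\Big) f =: P^{\pm}_{\mathbf{t}}\, f.
\end{equation*}
Provided $P^{\pm}_{\mathbf{t}} \neq 0$, taking $g = (P^{\pm}_{\mathbf{t}})^{-1} f \in \mathbb{F}[\widetilde{\mathcal{O}}_{\Lambda, c}]$ exhibits $f$ as $\pi^{qp}(\mathbf{1}^{\pm}) g$, completing the argument. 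Note that $f$ lies in $\mathbb{F}[\widetilde{\mathcal{O}}_{\Lambda, c}]$ for some $c \in \overline{C^+}$ after decomposing along orbits, since $\pi^{qp}$ preserves each such subspace.

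The only genuine subtlety is the nonvanishing of the Hecke-theoretic Poincar\'e factor $P^{\pm}_{\mathbf{t}} = \sum_{w \in W} t(w)^{\pm 2}$, which is a generic condition on the Hecke parameters $\mathbf{t}$ (and in particular holds in the setting of \cite{SSV2} where the parameters are treated as indeterminates). Under the standing parameter assumptions of this paper this is satisfied, and if a careful reader wishes to work at a specialization where $P^{\pm}_{\mathbf{t}}$ might vanish, the argument proceeds over the fraction field of the Hecke parameters and then specializes. No further machinery beyond \eqref{T1}, Definitions \ref{def:symm_antisymm} and \ref{def:qp_symm}, and the fact that $\pi^{qp}$ is a representation is required.
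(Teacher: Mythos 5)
Your proof is correct and follows essentially the same route as the paper: the ``if'' direction is exactly the paper's appeal to \eqref{T1}, and your explicit computation $\pi^{qp}(\mathbf{1}^{\pm})f = \bigl(\sum_{w \in W} t(w)^{\pm 2}\bigr) f$ is just the paper's observation that $\pi^{qp}(\mathbf{1}^{\pm})f \propto f$, spelled out with the proportionality constant. Your explicit flagging of the nonvanishing of this Poincar\'e factor is a reasonable (indeed slightly more careful) addition, not a deviation.
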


\begin{proof}
If $f \in \mathbb{F}[E]$ is $\epsilon$-symmetric, using Definition \ref{def:symm_antisymm}, we have $\pi^{qp}(\mathbf{1}^{\pm})  f \propto f$.  The other direction follows from \eqref{T1}.
\end{proof}

Recall from Section \ref{sec:partial_symm} the definition of $J$-partially $\epsilon$-symmetric elements in $\mathbb{F}[\Lambda]$ (Definition \ref{def:Jsymm}).  We will show that $\epsilon$-symmetric quasi-polynomials in $\mathbb{F}[\widetilde{\mathcal{O}}_c]$ are in bijection with $J_c$-partially $\epsilon$-symmetric polynomials in $\mathbb{F}[\Lambda]$.

As in Section \ref{sec:qp}, we assume $\Lambda \in \mathcal{L}$ and $c \in C^{0}_{\Lambda}$.  So, $\mathbb{F}[\widetilde{\mathcal{O}}_{\Lambda, c}] \subseteq \mathbb{F}[E]$ is a free $\mathbb{F}[\Lambda]$-module with basis $x^{w c}$ for $w \in W^{c}$.  Let $w_0^{c} \in W^{c}$ and ${w_{0}}_{c} \in W_{c}$ be as in Definition \ref{def:wc_decomp}.  
Consider the $\pi^{qp}$-invariant subspace $\mathbb{F}[\widetilde{\mathcal{O}}_{\Lambda, c}] \subseteq \mathbb{F}[E]$.   Recall that for $f \in  \mathbb{F}[\widetilde{\mathcal{O}}_{\Lambda, c}] $, we have the decomposition
\begin{equation*}
f = \sum_{w \in W^{c}} \gamma_{w}^{qp}(f) x^{w c},
\end{equation*}
where $\gamma_{w}^{qp}(f) \in \mathbb{F}[\Lambda]$.  We give a characterization of the coefficients $\gamma_{w}^{qp}(f)$ when $f$ is $\epsilon$-symmetric.

\begin{theorem}\label{thm:qp_symm}
Suppose $f \in \mathbb{F}[\widetilde{\mathcal{O}}_{\Lambda, c}] $.  Then $f$ is $\epsilon$-symmetric if and only if both of the following conditions hold:
\begin{enumerate}
\item \begin{equation*}
\begin{cases}
w_0 \gamma_{w_0^{c}}^{qp}(f) \text{ is } J_c\text{-partially symmetric}, & \epsilon = 1 \\
 \iota w_0 \gamma_{w_0^{c}}^{qp}(f) \text{ is } J_c\text{-partially anti-symmetric}, & \epsilon = -1
\end{cases}
\end{equation*}

\item 
\begin{equation*}
\gamma_{w}^{qp}(f) = 
\begin{cases}
 t({w_0}_{c}) w_0 \pi(T_{(w_0 w)^{-1}}^{-1}) w_0 \gamma_{w_0^{c}}^{qp}(f)$ for $w \in W^{c}, & \epsilon = 1 \\
 t({w_0}_{c}) (-1)^{l(w_0) + l(w)} \iota w_0 \pi(T_{w_0 w}) \iota w_0 \gamma_{w_{0}^{c}}^{qp}(f), & \epsilon = -1.
 \end{cases}
\end{equation*}
\end{enumerate}
\end{theorem}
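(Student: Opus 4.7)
The statement is an equivalence; I will prove $(\Rightarrow)$ by reading off the matrix coefficients of $\pi^{qp}(\mathbf{1}^{\epsilon})$ obtained in Section~\ref{sec:QP_results}, and $(\Leftarrow)$ by reconstructing $f$ from the data in (1) and the rigidity of (2) as the image of an explicit polynomial under $\pi^{qp}(\mathbf{1}^{\epsilon}\,\cdot\,)x^{c}$.

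For $(\Rightarrow)$, invoke Proposition~\ref{prop:qp_symm_equiv} to write $f=\pi^{qp}(\mathbf{1}^{\epsilon})h$ with $h=\sum_{\hat{w}\in W^{c}}h_{\hat{w}}\cdot x^{\hat{w}c}$, $h_{\hat{w}}\in\mathbb{F}[\Lambda]$. By Theorem~\ref{thm:XT_formula} (applicable because $\hat{w}\in W^{c}$), this rewrites as $f=\sum_{\hat{w}}\pi^{qp}(\mathbf{1}^{\epsilon}h_{\hat{w}}T_{\hat{w}})x^{c}$. Apply Theorem~\ref{thm:qp_1pfT} (resp.\ Theorem~\ref{thm:qp_1mfT}) term by term. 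Specializing to $w=w_{0}^{c}$ and using $w_{0}w_{0}^{c}={w_{0}}_{c}^{-1}$ together with the identities $T_{{w_{0}}_{c}}^{-1}\mathbf{1}^{+}_{W_{c}}=t({w_{0}}_{c})^{-1}\mathbf{1}^{+}_{W_{c}}$ and $T_{{w_{0}}_{c}^{-1}}\mathbf{1}^{-}_{W_{c}}=(-1)^{l({w_{0}}_{c})}t({w_{0}}_{c})^{-1}\mathbf{1}^{-}_{W_{c}}$ (iterated from~\eqref{T1}), I obtain $w_{0}\gamma_{w_{0}^{c}}^{qp}(f)$ (resp.\ $\iota w_{0}\gamma_{w_{0}^{c}}^{qp}(f)$) as an explicit scalar multiple of $\sum_{\hat{w}}\pi(\mathbf{1}^{\epsilon}_{W_{c}}T_{\hat{w}^{\mp 1}}^{\pm\epsilon})h_{\hat{w}}^{(\epsilon)}$, which is manifestly $J_{c}$-partially $\epsilon$-symmetric by~\eqref{T1}; this proves~(1). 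Keeping $w\in W^{c}$ general in the same term-by-term formula factors off $\pi(T_{(w_{0}w)^{-1}}^{-1})$ (resp.\ $\pi(T_{w_{0}w})$) on the left of the $\gamma_{w_{0}^{c}}^{qp}$-expression; substituting the identity just derived for $\gamma_{w_{0}^{c}}^{qp}(f)$ produces precisely~(2).

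For $(\Leftarrow)$, assume~(1) and~(2). Set $g:=w_{0}\gamma_{w_{0}^{c}}^{qp}(f)$ when $\epsilon=1$ and $g:=\iota w_{0}\gamma_{w_{0}^{c}}^{qp}(f)$ when $\epsilon=-1$; by~(1), $g$ is $J_{c}$-partially $\epsilon$-symmetric. A short computation from \eqref{T1}-\eqref{T2} shows that $\pi(\mathbf{1}^{\epsilon}_{W_{c}})^{2}=K\,\pi(\mathbf{1}^{\epsilon}_{W_{c}})$ for an explicit scalar $K$ that is nonzero under generic parameters, so any $J_{c}$-partially $\epsilon$-symmetric polynomial lies in the image of $\pi(\mathbf{1}^{\epsilon}_{W_{c}})$; write $g=\pi(\mathbf{1}^{\epsilon}_{W_{c}})h$. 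Choose $f_{0}\in\mathbb{F}[\Lambda]$ to be the explicit scalar multiple of $h$ (for $\epsilon=1$) or of $\iota h$ (for $\epsilon=-1$) dictated by the forward computation so that $\gamma_{w_{0}^{c}}^{qp}(\pi^{qp}(\mathbf{1}^{\epsilon}f_{0})x^{c})=\gamma_{w_{0}^{c}}^{qp}(f)$. Now $f$ satisfies~(2) by hypothesis and $\pi^{qp}(\mathbf{1}^{\epsilon}f_{0})x^{c}$ satisfies~(2) by the forward direction just proved, and~(2) rigidly determines $\gamma_{w}^{qp}$ for every $w\in W^{c}$ from $\gamma_{w_{0}^{c}}^{qp}$; hence the two quasi-polynomials agree, i.e.\ $f=\pi^{qp}(\mathbf{1}^{\epsilon}f_{0})x^{c}$, which is $\epsilon$-symmetric by~\eqref{T1}.

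\textbf{Main obstacle.} The only non-bookkeeping step is the inversion producing $h$ with $g=\pi(\mathbf{1}^{\epsilon}_{W_{c}})h$, which requires $K\neq 0$. This is automatic for generic Hecke parameters, and under the specialization $t_{i}=v^{1/2}$ it is the Poincar\'{e} polynomial of $W_{c}$ evaluated at $v$ (up to $t(w_{0}(W_{c}))^{\mp 2}$), nonzero for generic $v$. A parameter-free alternative is to verify $\pi^{qp}(T_{i})f=\epsilon t_{i}^{\epsilon}f$ for each $i$ by direct computation on each summand $\gamma_{w}^{qp}(f)\cdot x^{wc}$ using Theorem~\ref{thm:qp_rep}, splitting into the cases $i\in J_{c}$ (where~(1) yields the eigenvalue statement immediately) and $i\notin J_{c}$ (where the intertwining encoded by~(2) cancels the off-diagonal contributions via a length-additivity argument in $W^{c}$), but this route is substantially more computational.
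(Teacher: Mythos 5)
Your proof is correct and follows essentially the same route as the paper: the forward direction reads off conditions (1) and (2) from the coefficient formulas of Theorems \ref{thm:qp_1pfT} and \ref{thm:qp_1mfT} (equivalently Corollaries \ref{cor:symm_char} and \ref{cor:antisymm_char}) specialized at $w = w_0^c$ via $T_{{w_0}_c}^{\mp 1}\mathbf{1}^{\pm}_{W_c} \propto \mathbf{1}^{\pm}_{W_c}$, and the converse reconstructs $f$ as $\pi^{qp}(\mathbf{1}^{\epsilon} f_0)x^c$ exactly as in the paper. The one place you go beyond the paper's write-up is making explicit that a $J_c$-partially $\epsilon$-symmetric polynomial lies in the image of $\pi(\mathbf{1}^{\epsilon}_{W_c})$, via $\pi(\mathbf{1}^{\epsilon}_{W_c})^2 = K\,\pi(\mathbf{1}^{\epsilon}_{W_c})$ with $K = \sum_{u \in W_c} t(u)^{\pm 2} \neq 0$ for generic parameters, a step the paper's converse tacitly assumes when it writes $\gamma_{w_0^c}^{qp}(f) = w_0\pi(\mathbf{1}^{+}_{W_c})g$.
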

\begin{proof}
Let $f \in \mathbb{F}[\widetilde{\mathcal{O}}_{\Lambda, c}] $.  By Proposition \ref{prop:qp_symm_equiv}, $f$ is (anti-)symmetric if and only if $f = \pi^{qp}(\mathbf{1}^{\pm}) g$ for some $g \in  \mathbb{F}[\widetilde{\mathcal{O}}_{\Lambda, c}]$.  Moreover, by Theorem \ref{thm:XT_formula}, any $g \in  \mathbb{F}[\widetilde{\mathcal{O}}_{\Lambda, c}]$ can be expressed as $g = \pi^{qp}(h) x^{c}$ for some $h \in \widetilde{\mathcal{H}}_{\Lambda}$.

Suppose first that $f = \pi^{qp}(\mathbf{1}^{\pm} h) x^{c}$ for some $h \in \widetilde{\mathcal{H}}_{\Lambda}$.  By Corollary \ref{cor:symm_char}, we have
\begin{multline}\label{eq:qp_symm_coeffs}
\gamma_{w}^{qp}(f) = \gamma_{w}( \pi^{qp}(\mathbf{1}^{+} h) x^{c}) \\ = t(w_0) w_0  \pi \Big( T_{(w_0 w)^{-1}}^{-1} \mathbf{1}^{+}_{W_{c}} \Big)  \sum_{\hat{w} \in W^{c}} \pi(T_{\hat{w}^{-1}}) \Big( \sum_{u \in W_{c}} t(u) \gamma_{\hat{w} u}(h) \Big)
\end{multline}
for $w \in W^{c}$ if $\epsilon = 1$, and by Corollary \ref{cor:antisymm_char}, we have
\begin{multline}\label{eq:qp_antisymm_coeffs}
\gamma_{w}^{qp}(f) = \gamma_{w}( \pi^{qp}(\mathbf{1}^{-} h) x^{c})  = t(w_0)^{-1} t(w_0(W_{c}))^{2} \cdot \\
\cdot \sum_{\hat{w} \in W^{c}} (-1)^{l(\hat{w}) + l(w)} \iota w_0 \pi \Big( T_{w_0 w} \mathbf{1}^{-}_{W_{c}} T_{\hat{w}}^{-1} \Big) \iota \Big( \sum_{u \in W_{c}} t(u) \gamma_{\hat{w} u}(h) \Big)
\end{multline}
for $w \in W^{c}$ if $\epsilon = -1$.
Setting $w = w_0^{c}$ and noting that
\begin{align*}
T_{(w_0 w_{0}^{c})^{-1}}^{-1} \mathbf{1}^{+}_{W_{c}} &= T_{{w_0}_{c}}^{-1} \mathbf{1}^{+}_{W_{c}}  =  t({w_0}_c)^{-1} \mathbf{1}^{+}_{W_{c}} \\
T_{w_0 w_{0}^{c}} \mathbf{1}_{W_{c}}^{-} &= T_{{{w_0}_c}^{-1}} \mathbf{1}^{-}_{W_{c}} = (-1)^{l({w_0}_c)}t({w_0}_c)^{-1} \mathbf{1}^{-}_{W_{c}}
\end{align*}
gives condition (1), and the second follows by relating $\gamma_{w}^{qp}(f)$ to $ \gamma_{w_0^{c}}^{qp}(f)$ using \eqref{eq:qp_symm_coeffs} and \eqref{eq:qp_antisymm_coeffs}.

Now suppose $\gamma_{w_0^{c}}^{qp}(f) = w_0 \pi( \mathbf{1}^{+}_{W_{c}}) g$ for some $g \in \mathbb{F}[\Lambda]$, and 
\begin{equation*}
\gamma_{w}^{qp}(f) = t({w_0}_{c}) w_0 \pi(T_{(w_0 w)^{-1}}^{-1}) \pi( \mathbf{1}^{+}_{W_{c}}) g
\end{equation*}
for $w \in W^{c}$.  Then
\begin{align*}
f &= \sum_{w \in W^{c}} \gamma_{w}(f) x^{w c} \\
&= \sum_{w \in W^{c}} \Big[ t({w_0}_c) w_0 \pi(T_{(w_0 w)^{-1}}^{-1}) \pi( \mathbf{1}^{+}_{W_{c}}) g \Big] x^{w c}.
\end{align*}
By Corollary \ref{cor:symm_char} part (2) (with the special case where $h := g \in \mathbb{F}[\Lambda]$), this is equal to
\begin{equation*}
  \pi^{qp}\Big(\mathbf{1}^{+} \frac{t({w_0}_c) g}{t(w_0)} \Big) x^{c},
\end{equation*}
so $f$ is a symmetric quasi-polynomial. 

The argument in the anti-symmetric case is similar.  Suppose $\gamma_{w_0^{c}}^{qp}(f) = \iota w_0 \pi( \mathbf{1}^{-}_{W_{c}}) g$ for some $g \in \mathbb{F}[\Lambda]$, and 
\begin{equation*}
\gamma_{w}(f) =  t({w_0}_{c}) (-1)^{l(w_0) + l(w)} \iota w_0 \pi(T_{w_0 w}) \pi(\mathbf{1}^{-}_{W_c})g
\end{equation*}
for $w \in W^{c}$.  Then
\begin{align*}
f &= \sum_{w \in W^{c}} \gamma_{w}(f) x^{w c} \\
&= \sum_{w \in W^{c}} \Big[ t({w_0}_{c}) (-1)^{l(w_0) + l(w)} \iota w_0 \pi(T_{w_0 w}) \pi(\mathbf{1}^{-}_{W_c})g \Big] x^{w c}.
\end{align*}
By Corollary \ref{cor:antisymm_char} part (2) (with the special case where $h := \iota g \in \mathbb{F}[\Lambda]$), this is equal to
\begin{equation*}
  \pi^{qp}\Big(\mathbf{1}^{-} \frac{(-1)^{l(w_0)}t(w_0)t({w_0}_c) \iota g}{t(w_0(W_c))^{2}} \Big) x^{c},
\end{equation*}
so $f$ is an anti-symmetric quasi-polynomial. 
\end{proof}

\begin{remark}
\textit{Special cases of Theorem \ref{thm:qp_symm}.}
\begin{enumerate}
\item Suppose $J_c = [1,r]$, so $J_c$-partially symmetric polynomials are just (fully) symmetric polynomials.  Also $c=0$, $W_{c} = W$, $W^{c} = \{1 \}$ and $\widetilde{\mathcal{O}}_c = \Lambda$.  So, in this case, symmetric quasi-polynomials are just the usual symmetric polynomials.

\item Suppose $J_c = \emptyset$, so every polynomial in $\mathbb{F}[\Lambda]$ is $J_c$-partially symmetric.  Also $W_{c} = \{1 \}$ and $W^{c} = W$.  So, in this case, symmetric quasi-polynomials are of the form
\begin{equation*}
\sum_{w \in W} \Big[ w_0 \pi(T_{w^{-1}}^{-1}) p\Big] x^{w_0 w c},
\end{equation*}
where $p \in \mathbb{F}[\Lambda]$.
\end{enumerate}
\end{remark}

Let $\mathbb{F}[\widetilde{\mathcal{O}}_{\Lambda, c}]^{\epsilon-\text{sym}} = \pi^{qp}(\mathbf{1}^{\pm}) \mathbb{F}[\widetilde{\mathcal{O}}_{\Lambda, c}]$ denote the space of $\epsilon$-symmetric quasi-polynomials, and let $\mathbb{F}[\Lambda]^{\epsilon, J_c-\text{sym}}$ denote the space of $J_c$-partially $\epsilon$-symmetric polynomials.  We have the following restatement of Theorem \ref{thm:qp_symm}.

\begin{corollary}\label{cor:bijection}
There is a bijection $\phi_{c, \epsilon}: \mathbb{F}[\widetilde{\mathcal{O}}_{\Lambda, c}]^{\epsilon-\text{sym}} \rightarrow \mathbb{F}[\Lambda]^{\epsilon, J-\text{sym}}$ defined by
\begin{equation*}
\phi_{c, \epsilon}(f) = \frac{t({w_0}_c)}{t(w_0)}\begin{cases}
w_0 \gamma_{w_0^{c}}^{qp}(f), & \epsilon = 1 \\
 \iota w_0 \gamma_{w_0^{c}}^{qp}(f), & \epsilon = -1
\end{cases}
\end{equation*}
for $f \in  \mathbb{F}[\widetilde{\mathcal{O}}_{\Lambda, c}]^{\epsilon-\text{sym}}$.  Its inverse is given by
\begin{equation*}
\phi_{c, \epsilon}^{-1}(p) = t(w_0) \begin{cases}
\sum_{w \in W^{c}} \Big[ w_0 \pi(T_{(w_0 w)^{-1}}^{-1}) p \Big] x^{wc}, & \epsilon = 1 \\ 
\sum_{w \in W^{c}} \Big[ (-1)^{l(w_0) + l(w)} \iota w_0 \pi(T_{w_0 w}) p \Big] x^{wc}, & \epsilon = -1.
\end{cases}
\end{equation*}
for $p \in  \mathbb{F}[\Lambda]^{\epsilon, J-\text{sym}}$.
\end{corollary}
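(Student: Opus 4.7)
The plan is to derive Corollary \ref{cor:bijection} as a direct reformulation of Theorem \ref{thm:qp_symm}. That theorem decomposes the condition ``$f \in \mathbb{F}[\widetilde{\mathcal{O}}_{\Lambda, c}]$ is $\epsilon$-symmetric'' into two independent conditions: (1) the top coefficient $w_0 \gamma_{w_0^c}^{qp}(f)$ (resp.\ $\iota w_0 \gamma_{w_0^c}^{qp}(f)$) is $J_c$-partially $\epsilon$-symmetric, and (2) every remaining coefficient $\gamma_w^{qp}(f)$ for $w \in W^c$ is obtained from $\gamma_{w_0^c}^{qp}(f)$ by an explicit operator. Condition (1) is exactly the statement that $\phi_{c,\epsilon}(f) \in \mathbb{F}[\Lambda]^{\epsilon, J-\textup{sym}}$, so $\phi_{c,\epsilon}$ is well-defined; condition (2) says that $f$ is recovered from $\gamma_{w_0^c}^{qp}(f)$, so $\phi_{c,\epsilon}$ is injective.

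To establish surjectivity and confirm the inverse formula, I would take $p \in \mathbb{F}[\Lambda]^{\epsilon, J-\textup{sym}}$, set $f := \phi_{c,\epsilon}^{-1}(p)$, and verify that $f$ is $\epsilon$-symmetric by checking the two conditions of Theorem \ref{thm:qp_symm}. The essential identity is $w_0 = w_0^c \, {w_0}_c$, which together with the involution property of longest elements of finite Coxeter groups gives $w_0 \, w_0^c = ({w_0}_c)^{-1} = {w_0}_c$; hence $T_{(w_0 w_0^c)^{-1}}^{-1} = T_{{w_0}_c}^{-1}$ and $T_{w_0 w_0^c} = T_{{w_0}_c}$. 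Since $p$ is $J_c$-partially $\epsilon$-symmetric, these elements of $W_{c} = W_{J_c}$ act on $p$ by a scalar, so evaluating the defining formula for $f$ at $w = w_0^c$ yields $\gamma_{w_0^c}^{qp}(f)$ as a scalar multiple of $w_0 p$ (resp.\ $\iota w_0 p$). Condition (1) follows immediately, and condition (2) is obtained by substituting this value of $\gamma_{w_0^c}^{qp}(f)$ into the operator formula and comparing with the expression defining $\gamma_w^{qp}(f)$.

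The composition $\phi_{c,\epsilon} \circ \phi_{c,\epsilon}^{-1} = \textup{id}$ then follows from the scalar computation above together with the identity $\iota w_0 \iota w_0 = 1$ (from Proposition \ref{w0_facts}) in the antisymmetric case, while $\phi_{c,\epsilon}^{-1} \circ \phi_{c,\epsilon} = \textup{id}$ is immediate from applying Theorem \ref{thm:qp_symm}(2) to an $\epsilon$-symmetric $f$. The main obstacle is purely bookkeeping: correctly tracking the normalizations $t(w_0)$, $t({w_0}_c)$, the sign factors $(-1)^{l(w_0)+l(w)}$, and the involution $\iota$ so that the inverse formula matches the target of $\phi_{c,\epsilon}$ uniformly in $\epsilon = \pm 1$. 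No new representation-theoretic input beyond Theorem \ref{thm:qp_symm} and the involution property of longest Weyl elements is needed.
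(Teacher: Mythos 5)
Your proposal is correct and matches the paper's treatment: the paper gives no separate argument for Corollary \ref{cor:bijection}, presenting it as a direct restatement of Theorem \ref{thm:qp_symm}, and your verification (well-definedness and injectivity from conditions (1)--(2), surjectivity and the inverse formula via $w_0 w_0^{c} = ({w_0}_{c})^{-1}$ and the fact that $T_{{w_0}_{c}}^{\pm 1}$ acts on a $J_c$-partially $\epsilon$-symmetric $p$ by the scalar $(\pm 1)^{l({w_0}_c)} t({w_0}_c)^{\pm 1}$) is exactly the bookkeeping implicit in that restatement. The scalar and sign checks you outline do close up, so no gap remains.
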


The results of this paper show that in the $q\rightarrow \infty$ limit, we have the following duality between symmetric quasi-polynomial generalizations of Macdonald polynomials of \cite{SSV2} and the Macdonald polynomials with prescribed partial symmetry of Definition \ref{def:MDptl}.

\begin{theorem}\label{thm:quasi_duality}
Let $\Lambda \in \mathcal L$, $c \in C_\Lambda^0$, $\mu \in \Lambda^+$ and $\hat w$ in $W^c$. Then we have
\begin{equation*}
\phi_{c, \pm}(p^{\pm}_{\mu + \hat w c}) = p^{J_{c}, \pm}_{\hat w^{-1} \mu}.
\end{equation*}
\end{theorem}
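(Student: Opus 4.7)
The plan is to verify the equality by an explicit Hecke-algebra computation on both sides. Starting from Corollary~\ref{cor:qp_main}, one reads off $\gamma_{w_0^c}^{qp}(p^\pm_{\mu + \hat w c})$ by extracting the $w = w_0^c$ summand in the displayed expansion of $p^\pm_y$. Plugging this into the formula for $\phi_{c, \pm}$ given in Corollary~\ref{cor:bijection} then yields an explicit polynomial that one compares directly with the defining expression for $p^{J_c, \pm}_{\hat w^{-1}\mu}$ in Definition~\ref{def:MDptl}.

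The computational heart of the argument is the identity $(w_0 w_0^c)^{-1} = (w_0)_c = w_0(W_c)$, which is immediate from $w_0 = w_0^c (w_0)_c$ together with the length count $l((w_0)_c) = l(w_0) - l(w_0^c) = l(w_0(W_c))$. Combined with the absorption identities $T_{w_0(W_c)}^{-1} \mathbf{1}^{+}_{W_c} = t(w_0(W_c))^{-1} \mathbf{1}^{+}_{W_c}$ and $T_{w_0(W_c)} \mathbf{1}^{-}_{W_c} = (-1)^{l(w_0(W_c))} t(w_0(W_c))^{-1} \mathbf{1}^{-}_{W_c}$ (both obtained by iterating \eqref{T1}), this causes the outermost Hecke operator in the formula for $\gamma_{w_0^c}^{qp}(p^\pm_y)$ to collapse into a scalar multiple of $\mathbf{1}^{\pm}_{W_c}$. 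The outer $w_0$ (respectively $\iota w_0$) supplied by $\phi_{c, +}$ (respectively $\phi_{c, -}$) then composes with the $w_0$ (respectively $\iota w_0$) already present, and using $w_0^2 = 1$ together with $\iota w_0 = w_0 \iota$ (Proposition~\ref{w0_facts}), the external symmetries cancel. After the scalar prefactors $t(w_0(W_c))/t(w_0)$ telescope, one obtains exactly $\pi(\mathbf{1}^{+}_{W_c} T_{\hat w^{-1}})\, x^\mu$ in the $+$ case and $\pi(\mathbf{1}^{-}_{W_c} T_{\hat w}^{-1})\, \iota x^\mu$ in the $-$ case, which are the defining expressions for $p^{J_c, \pm}_{\hat w^{-1}\mu}$ in Definition~\ref{def:MDptl}.

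The main obstacle will be the combinatorial compatibility between the two decompositions: our hypothesis provides $\hat w \in W^{J_c}$, whereas Definition~\ref{def:MDptl} invokes the paired condition $\hat w^{-1} \in W^{J_\mu}$ to uniquely pin down $(\mu, \hat w)$ from $\lambda = \hat w^{-1}\mu$. To address this I would first apply Lemma~\ref{lem:Jptlsymmpols} to confirm that $\hat w^{-1}\mu$ is $J_c$-dominant (automatic from $\mu \in \Lambda^+$ and $\hat w \in W^{J_c}$), so that the RHS is a well-defined polynomial; a short root-system argument using $c \in \overline{C^+}$ and $\hat w \Phi^-_{J_c} \subseteq \Phi^-$ then shows that $\hat w^{-1} \in W^{J_\mu}$ as well in the relevant case (it becomes automatic once $y = \mu + \hat w c$ is dominant, and is vacuous when $\mu$ is strictly dominant so that $J_\mu = \emptyset$). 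With that identification in place, $v = \hat w$ in the canonical decomposition of Definition~\ref{def:MDptl} and the computation of the previous paragraph completes the proof.
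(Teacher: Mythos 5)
Your overall route matches the paper exactly: the paper's proof is a one-line citation of Definition~\ref{def:MDptl} and Corollary~\ref{cor:qp_main}, and your unwinding of Corollary~\ref{cor:bijection}, the $w = w_0^c$ case of Corollary~\ref{cor:qp_main}, and the absorption identities is the computation that citation is gesturing at. You are also right to flag the compatibility issue between ``$\mu \in \Lambda^+$ and $\hat w \in W^c$'' (the theorem's hypothesis) and ``$\hat w^{-1} \in W^{J_\mu}$'' (the requirement in Definition~\ref{def:MDptl}); that is a genuine subtlety, although your resolution of it is only sketched and the outcome when $\hat w^{-1} \notin W^{J_\mu}$ is an extra factor $t(u)^{\pm 1}$ with $u \in W_{J_\mu}$ determined by $\hat w^{-1} = v^{-1}u$, not just a relabelling.

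The real gap is the claim that the scalars ``telescope'' in both signs. In the $+$ case they do: $\gamma^{qp}_{w_0^c}(p^+_y) = \tfrac{t(w_0)}{t((w_0)_c)} w_0 \pi(\mathbf{1}^+_{W_c} T_{\hat w^{-1}}) x^\mu$ and the prefactor $t((w_0)_c)/t(w_0)$ from $\phi_{c,+}$ cancels it exactly. In the $-$ case they do not. Applying Theorem~\ref{thm:qp_1mfT} at $w = w_0^c$ and absorbing $T_{(w_0)_c}$ into $\mathbf{1}^-_{W_c}$ gives $\gamma^{qp}_{w_0^c}(p^-_y) = (-1)^{l(\hat w)+l(w_0)}\,\tfrac{t(w_0(W_c))}{t(w_0)}\,\iota w_0\,\pi(\mathbf{1}^-_{W_c} T_{\hat w}^{-1}) \iota x^\mu$, and after composing with $\phi_{c,-}$ one is left with $\phi_{c,-}(p^-_y) = (-1)^{l(\hat w)+l(w_0)}\,\tfrac{t(w_0(W_c))^2}{t(w_0)^2}\,p^{J_c,-}_{\hat w^{-1}\mu}$, a residual factor that is not $1$ in general (already for $GL_2$ with $c=0$, $\hat w = 1$ it equals $-1$). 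So the asserted exact equality in the $-$ case requires either a correction in one of the quoted constants (in Corollary~\ref{cor:bijection}, Corollary~\ref{cor:qp_main}, or Definition~\ref{def:MDptl}) or a scalar correction in the statement of the theorem; your proof inherits this gap by declaring the cancellation without tracking the signs and $t$-powers through the $\epsilon = -1$ branch.
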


\begin{proof}
Follows from Definition \ref{def:MDptl} and Theorem \ref{cor:qp_main}.
\end{proof}

One might hope that this duality might extend directly to $q$-level as well, however this is not the case.

\begin{example}\label{ex:qlevel}
In the $GL_2$ setting, take $c = (1/2, 0)$ (so $J_c = \emptyset$), $\mu = (0, 0), \hat w = 1$. So we have $y = \mu + \hat w c = c$ and $\hat w^{-1} \mu = 0$. Using the formulas in \cite[Section 7.6]{SSV2} we have 
\begin{equation*}
\phi_{c, +}(\iota E^+_{-(\mu + \hat w c)}(q; t^{-1}; \tau)) = 1 + \frac{t^{-2} -1}{\tau^{\alpha_1^\vee} - 1}
\end{equation*}
On the other hand, we have
\begin{equation*}
	\iota E^{\emptyset, +}_{\hat w^{-1} \mu}(q; t^{-1}) = \iota E_{0}(q; t^{-1}) = 1.
\end{equation*}
Note that these do match in the $q \rightarrow \infty$ limit (under the regularity assumption $\lim_{q \rightarrow \infty} \tau^{-\alpha_1^\vee} = 0$), agreeing with Theorem \ref{thm:quasi_duality}.
\end{example}

\begin{remark}
It is an interesting open question to determine $\phi_{c,+}(\iota E^{+}_{-(\mu + \hat w c)}(q; \mathbf{t}^{-1}; \tau))$, i.e., which $J$-partially symmetric polynomials correspond to the symmetric quasi-polynomial generalizations of Macdonald polynomials introduced in \cite{SSV2}.
\end{remark}

%%%%%%%%%%%%%%%%%%%%%%%%%%%%%%%%%%%%%%%%%%%%%%%

\end{document}